\newtheorem{thm}{Theorem}[section]
\newtheorem{prop}[thm]{Proposition}
\newtheorem{cor}[thm]{Corollary}
\newtheorem{lem}[thm]{Lemma}
\newtheorem*{thmnn}{Theorem}
\theoremstyle{remark}
\newtheorem{rmk}[thm]{Remark}
\theoremstyle{definition}
\newtheorem{defn}[thm]{Definition}
\begin{document}

\title{Hermitian Jacobi Forms Having Modules as their Index and Vector-Valued Jacobi Forms}

\author{Shaul Zemel}

\maketitle

\section*{Introduction}

The theory of Jacobi forms is detailed, in the classical setting, in \cite{[EZ]}, with the Skew-holomorphic analogue described in \cite{[Sk]}. They serve as a bridge between (elliptic) modular forms and Siegel modular forms of degree 2, by showing up as the Fourier--Jacobi coefficients of the latter. This notion was expanded to have a lattice index initially in \cite{[BK]}, but was developed in many following works, like \cite{[CWR]}, \cite{[BRi]}, and \cite{[BRR]}, the theses \cite{[A]} and \cite{[Mo]}, and recently \cite{[BS]}, as well as \cite{[Ze3]} in the indefinite lattice index case.

We note that the lattice index can be interpreted, in the appropriate coordinates, as a matrix index for Jacobi forms. This is important for considering (scalar-valued) Siegel modular forms of degree $r+1$, as the coefficients of their Fourier--Jacobi expansions of co-genus $r$ (in the terminology of \cite{[BRa]} and \cite{[Xi]}) are Jacobi forms which have $r \times r$ matrices as their indices.

A crucial relation to (vector-valued) elliptic modular forms is the fact that Jacobi forms admit a theta decomposition, which is described classically in \cite{[EZ]}, but has its precursors in the relation with the plus-spaces from \cite{[K1]} and \cite{[K2]}, generalized later for some congruence subgroups in \cite{[LZ]}. This amounts to the identification of (scalar-valued) Jacobi forms with vector-valued modular forms with the appropriate Weil representation, and have neat descriptions for indices which are either scalars, lattices, or matrices.

When the even lattice $L$ is positive definite of degree $b_{+}$, so that the constructions behave well in terms of holomorphicity, the theta decomposition produces isomorphisms \[J_{k,L}(\Gamma) \longleftrightarrow M_{k-b_{+}/2}(\Gamma,\rho_{\overline{L}})\quad\mathrm{and}\quad J_{k,L}^{!}(\Gamma) \longleftrightarrow M_{k-b_{+}/2}^{!}(\Gamma,\rho_{\overline{L}})\] for every integral weight $k$ and finite index subgroup $\Gamma$ of the metaplectic double cover of $\operatorname{SL}_{2}(\mathbb{Z})$ between the corresponding spaces of holomorphic and weakly holomorphic Jacobi and modular forms, where $\rho_{\overline{L}}$ is the Weil representation dual to that of $L$.

\smallskip

Siegel modular forms have Hermitian analogues, now known as Hermitian modular forms, and whose theory was initiated in \cite{[Bra1]}, \cite{[Bra2]}, and \cite{[Bra3]}. In degree 2 they also admit Fourier--Jacobi expansions, whose members are known as Hermitian Jacobi forms. Their basic systematic theory was initiated in \cite{[Hav1]} (or the shorter version \cite{[Hav2]}), and a deeper treatise in the thesis \cite{[De]} also involving the relation to the Hermitian modular forms. The thesis \cite{[He]} was concerned with some arithmetic calculations involving these objects. Virtually all of these manuscripts consider Hermitian Jacobi forms with a rational (or integral) index.

To make the distinction between the notions more visible, note that in the rank 1 case, the classical (also called \emph{orthogonal}) Jacobi forms have one variable from the upper half-plane $\mathcal{H}$ and one from $\mathbb{C}$. However, the Hermitian Jacobi forms considered in the references mentioned above have the variable from $\mathcal{H}$ and \emph{two} variables from $\mathbb{C}$, with the complex conjugation on the field showing up in the parts of the defining formulae involving one of them. In symbols, \[\Phi(\tau,\zeta)\ -\ \mathrm{orthogonal\ Jacobi\ form},\qquad\Phi(\tau,\zeta,\omega)\ -\ \mathrm{Hermitian\ Jacobi\ form}\] (with an orthogonal lattice $L$, the variable $\zeta$ is from $L_{\mathbb{C}}$, and when $M$ is a Hermitian lattice, the variables $\zeta$ and $\omega$ come from the appropriate parts of its complexification---see Definition \ref{Jacdef} below).

The interplay between the natural setting of a lattice index and of a matrix index shows up in many applications, in particular for Siegel modularity results of cycles, as in \cite{[Zh]} and \cite{[BRa]}. Recently, \cite{[Xi]} established the Hermitian modularity of cycles in some cases of the unitary setting (namely when the imaginary quadratic field is norm-Euclidean), a result which involves Hermitian theta series.

Now, in all these references, the Siegel or Hermitian modular forms are vector-valued, and their Fourier--Jacobi coefficients can thus have non-integral matrices as their indices. This point was mentioned briefly in these references by considering the Jacobi group as living inside the Siegel or Hermitian modular group, which allowed these references to use these Jacobi forms without needing to deal with some intricate details of their behavior.

\smallskip

This paper thus has 3 goals, which are interwoven with one another.
\begin{enumerate}[1.]
\item The construction in detail the basic parts of a systematic theory of Hermitian Jacobi forms having lattice index of arbitrary rank over some order in an imaginary quadratic field.
\item The investigation of vector-valued Jacobi forms (both for orthogonal and Hermitian lattices).
\item The establishment of good definitions and theories for Jacobi forms (again in both settings) having non-integral matrices as their indices.
\end{enumerate}
Regarding the first goal, it seems that such a theory does not exist in the literature. The second one allows for, and requires, more restrictive and delicate periodicity properties, as we shall soon explain. The third goal was the initial motivation for writing this paper, and it serves as the motivation for the second one (despite the latter being interesting in its own right), since the desired Jacobi forms must indeed be vector-valued in general, and present this more delicate periodicity.

We will give more details after a few remarks. First, orthogonal and Hermitian Jacobi forms of non-integral matrix index, as in our third goal, are expected to have many applications, like in the Kudla program, concerning the modularity of special cycles of any codimension on orthogonal and unitary Shimura varieties. Indeed, the modularity results from \cite{[BRa]} and \cite{[Xi]} involve vector-valued Siegel and Hermitian modular forms, which are established to be such using their Fourier--Jacobi expansions, the members of which are precisely Jacobi forms on the sort investigated in this paper. This is also the case in our joint work \cite{[BRZ]}, the work on which has led to me investigate the questions answered in this paper.

\smallskip

Note that many papers proving modularity results, like, e.g., \cite{[KM]}, work in the Ad\'{e}lic setting, and thus consider more general but less explicit Schwartz functions on the finite Ad\'{e}lic part in their results and calculations. As the cycles in \cite{[BRa]}, \cite{[Xi]}, \cite{[BRZ]}, and others come with well-defined indices and associated finite-dimensional Weil representations, it is useful and important to have the tools to establish such results in this level of explicitness. The tools developed in this paper serve this purpose, in addition to the interesting relations that they satisfy among themselves and with vector-valued elliptic modular forms.

\smallskip

We remark that while for most applications only the holomorphic, positive definite setting is required, in \cite{[Ze3]} we could show how to define Jacobi forms with indefinite lattice index in a way that all the important properties (except for holomorphicity in the variable from the upper half-plane) carry out to this indefinite setting, with the same proofs. In this paper we thus work with the same generality as in \cite{[Ze3]}. However, we do give at the end, for future use, the statements of the results for positive definite lattices, as they are a but simpler to present, and show up, due to their holomorphicity, in the applications.

\smallskip

We can now describe the content of this paper with more details. The vector-valued Jacobi theta function $\Theta_{L}$ of an even lattice $L$ (holomorphic when $L$ is positive definite, or the more general one defined in \cite{[Ze3]}) is a function of $\tau$ from the upper half-plane $\mathcal{H}$ and $\zeta$ from the complex vector space $L_{\mathbb{C}}$. For fixed $\tau$ it is ``periodic'' in the variable $\zeta \in L_{\mathbb{C}}$ with respect to $L$ and $\tau L$, with some explicit exponential factors. In symbols, when $L$ is positive definite, this property reads \[\Theta_{L}(\tau,\zeta+\tau\sigma+\nu)=\mathbf{e}\big(-\tau\sigma^{2}/2-(\sigma,\zeta)\big)\Theta_{L}(\tau,\zeta)\quad\mathrm{for\ }\sigma\mathrm{\ and\ }\nu\mathrm{\ in\ }L.\] Thus any combination of its scalar-valued components, with coefficients that are functions only of the variable $\tau$ from the upper half-plane $\mathcal{H}$, also shares this periodicity property.

Conversely, any function on $\mathcal{H} \times L_{\mathbb{C}}$ exhibiting this behavior, and is smooth so that periodicity in $L$ implies the existence of a Fourier expansion, must be obtained in this way. This is proved in Lemma 2.1 of \cite{[Ze3]} in the indefinite orthogonal setting (though for definite lattices it was clearly known before), and see our Lemma \ref{Fourinv} for the Hermitian one, to be explained below. Relating the modularity property on both sides produces the isomorphisms mentioned above, which explicitly becomes \[F \in M_{k-b_{+}/2}^{!}(\Gamma,\rho_{L}^{*})\quad\mapsto\quad\textstyle{\Phi:=\sum_{\gamma \in L^{*}/L}f_{\gamma}\theta_{\gamma} \in J_{k,L}^{!}(\Gamma)}\] (with the image of elements of $M_{k-b_{+}/2}(\Gamma,\rho_{\overline{L}})$ covering precisely $J_{k,L}(\Gamma)$).

However, when one considers the full vector-valued theta function $\Theta_{L}$, then one obtains periodicity with respect to the larger lattice $L^{*}$ and $\tau L^{*}$, with the former multiplying the elements of the natural basis of $\mathbb{C}[L^{*}/L]$ by appropriate roots of unity, while the latter translates the basis elements of $\mathbb{C}[L^{*}/L]$ in which $\Theta_{L}$ is valued. Using the notation from Definition \ref{opersDM}, the periodicity property from above extends to \[\Theta_{L}(\tau,\zeta+\tau\sigma+\nu)=\mathbf{e}\big(-\tau\sigma^{2}/2-(\sigma,\zeta)\big)\chi_{\nu}t_{\sigma}\Theta_{L}(\tau,\zeta)\quad\mathrm{for\ }\sigma\mathrm{\ and\ }\nu\mathrm{\ in\ }L^{*}.\] The idea is that by constructing vector-valued combinations of the components of $\Theta_{L}$, if those actions of $\chi_{\nu}$ and $t_{\sigma}$ remain well-defined for some $\nu$ and $\sigma$ from $L^{*}$ that are not necessarily in $L^{*}$, then the periodicity property for such $\nu$ and $\sigma$ carries over, with these definitions, to the combination in question.

To describe that in a clearer and more explicit manner, take $D$ to be any discriminant form, let $H$ be an isotropic subgroup of $D_{L} \oplus D$ such that its projections $H_{L}$ onto $D_{L}$ and $H_{D}$ onto $D$ are isomorphic to $H$ (such subgroups are called \emph{horizontal} in Definition \ref{horizdef} below), and set $\Delta_{H}:=H^{\perp}/H$. This is a very natural situation, occurring when $L$ is a primitive sublattice of some lattice $K$, the discriminant form $D$ is the one arising from the orthogonal complement $L^{\perp}$ of $L$ in $K$, and $\Delta_{H}$ produces $D_{K}$ from $D_{L} \oplus D_{L^{\perp}}$. Using the arrow operator $\downarrow_{H}$ we then obtain a map \[F:\mathcal{H}\to\mathbb{C}[D]\quad\mapsto\quad\Phi:=\downarrow_{H}(\Theta_{L} \otimes F):\mathcal{H} \times L_{\mathbb{C}}\to\mathbb{C}[\Delta_{H}].\] A crucial point is that the operators $t_{\sigma}$ and $\chi_{\nu}$ remain well-defined on $\mathbb{C}[\Delta_{H}]$, as long as $\sigma$ and $\nu$ from $L^{*}$ are such that their $D_{L}$-images are in the subgroup $H_{L}^{\perp} \subseteq D_{L}$ that is perpendicular to $H_{L}$. Moreover, the aforementioned extended periodicity property of $\Theta_{L}$ implies that $\Phi$ exhibits the vector-valued periodicity with respect to elements of $L^{*}$ and of $\tau L^{*}$ using these operators, provided that the $D_{L}$-images of the elements from $L^{*}$ are in $H_{L}^{\perp}$ (see Proposition \ref{pairper} below for the Hermitian setting).

In fact, we prove a much stronger result, in Theorems \ref{VVmain} and \ref{VVorth} for the Hermitian and orthogonal settings respectively.
\begin{thmnn}
The construction from the previous paragraph produces an isomorphism between the space of modular forms having representation $\rho_{D}$ and vector-valued Jacobi forms having representation $\rho_{\Delta_{H}}$ and this extended periodicity property, which in the holomorphic setting can be written as \[J_{k,L}^{H}(\Gamma,\rho_{\Delta_{H}}) \longleftrightarrow M_{k-b_{+}/2}(\Gamma,\rho_{D})\quad\mathrm{and}\quad J_{k,L}^{H,!}(\Gamma,\rho_{\Delta_{H}}) \longleftrightarrow M_{k-b_{+}/2}^{!}(\Gamma,\rho_{D}).\]
\end{thmnn}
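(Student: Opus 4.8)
The plan is to prove the isomorphism by exhibiting the map $F \mapsto \Phi := \mathord{\downarrow}_H(\Theta_L \otimes F)$ explicitly, verifying that it lands in the target space, constructing an inverse, and checking the two are mutually inverse. I would organize this exactly in parallel with the classical theta-decomposition argument, but track the extra bookkeeping coming from the discriminant form $D$, the horizontal subgroup $H$, and the arrow operator. First I would establish the forward direction: given $F \in M_{k-b_+/2}(\Gamma, \rho_D)$, one checks that $\Phi$ satisfies (i) the required transformation law under the Jacobi group for $\Gamma$, (ii) the extended periodicity in $L^*$ and $\tau L^*$ via the operators $t_\sigma$, $\chi_\nu$ on $\mathbb{C}[\Delta_H]$, and (iii) the growth/holomorphicity condition at the cusps (or the weakly holomorphic version). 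Property (ii) is essentially Proposition~\ref{pairper} (resp.\ its orthogonal analogue), which I would invoke directly; property (i) follows by combining the known modular transformation of $\Theta_L$ under the Weil representation $\rho_L$ with the transformation of $F$ under $\rho_D$, observing that the arrow operator $\mathord{\downarrow}_H$ intertwines $\rho_L \otimes \rho_D$ restricted to $H^\perp$ with $\rho_{\Delta_H}$ (this compatibility of $\mathord{\downarrow}_H$ with the Weil representation is the content I would cite from the earlier sections on the operators $\mathord{\downarrow}_H$, $t_\sigma$, $\chi_\nu$). Property (iii) is a routine comparison of Fourier expansions, using that $\Theta_L$ has the expected exponential decay/growth in the definite case.

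For the reverse direction, the key tool is the Fourier-inversion lemma (Lemma~\ref{Fourinv} in the Hermitian setting, Lemma~2.1 of \cite{[Ze3]} in the orthogonal one): any smooth $\Phi: \mathcal{H} \times L_\mathbb{C} \to \mathbb{C}[\Delta_H]$ exhibiting the ordinary periodicity with respect to $L$ and $\tau L$ is a unique $\mathbb{C}[\Delta_H]$-combination of the components $\theta_\gamma$ of $\Theta_L$ with coefficients $f_\gamma$ that are functions of $\tau$ alone. So, starting from $\Phi \in J^H_{k,L}(\Gamma, \rho_{\Delta_H})$, I would first write $\Phi = \sum_\gamma f_\gamma \theta_\gamma$ using Lemma~\ref{Fourinv}, then use the \emph{extended} periodicity with respect to $L^*$ and $\tau L^*$ to show that the vector of $f_\gamma$'s is pulled back, via $\mathord{\downarrow}_H$, from a well-defined function $F: \mathcal{H} \to \mathbb{C}[D]$ — concretely, the extended periodicity forces precisely the linear relations among the $f_\gamma$ that cut out the image of $\mathord{\downarrow}_H^*$ (equivalently, the $f_\gamma$ depend only on the class in $\Delta_H$ and the $H_L$-constraint, which is exactly the data of an element of $\mathbb{C}[D]$ via the horizontality isomorphism $H_L \cong H \cong H_D$). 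Then translating the modularity of $\Phi$ back through the theta transformation shows $F \in M_{k-b_+/2}(\Gamma, \rho_D)$, and the growth conditions match up in the same way as in the forward direction. Finally, the two constructions are visibly inverse to each other by the uniqueness clause in Lemma~\ref{Fourinv}.

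I expect the main obstacle to be the reverse direction's passage from ``the coefficient vector $(f_\gamma)_\gamma$ satisfies the relations imposed by extended $L^*$-periodicity'' to ``$(f_\gamma)_\gamma = \mathord{\downarrow}_H^*(F)$ for a genuine $F$ valued in $\mathbb{C}[D]$''. This is where the horizontality hypothesis on $H$ is used essentially: one must check that the kernel/image description of $\mathord{\downarrow}_H$ on the level of $\mathbb{C}$-vector spaces matches exactly the span of relations $f_{\gamma + \lambda} = (\text{root of unity}) \cdot f_\gamma$ coming from $\lambda \in L^*$ with $D_L$-image in $H_L^\perp$, together with the identifications $f_\gamma = f_{\gamma'}$ when $\gamma, \gamma'$ have the same image under the translations $t_\sigma$. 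Getting this combinatorial matching right — and in particular verifying that the projections $H_L, H_D$ being isomorphic to $H$ is precisely what makes the operators $t_\sigma, \chi_\nu$ descend to $\mathbb{C}[\Delta_H]$ and what makes the coefficient space of the correct dimension $|D|$ — is the technical heart of the proof. The rest (modular transformation under $\Gamma$, holomorphicity at cusps, weakly-holomorphic variant) is bookkeeping parallel to the scalar case, and the orthogonal version (Theorem~\ref{VVorth}) follows by the same argument with $D_L$ replaced throughout by the orthogonal discriminant form, as promised by the ``same proofs'' philosophy inherited from \cite{[Ze3]}.
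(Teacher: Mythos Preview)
Your proposal is correct and follows essentially the same approach as the paper's proof of Theorem~\ref{VVmain} (and its orthogonal analogue Theorem~\ref{VVorth}). The paper makes your ``combinatorial matching'' step concrete by first using the $\chi_\nu$-periodicity to constrain which theta components $\theta_\alpha$ can appear in each $\Delta_H$-coordinate (only those with $\alpha \in \beta_M + H_M$), and then using the $t_\sigma$-periodicity together with a fixed set of representatives $R$ for $D_M/H_M$ (and the bookkeeping Lemma~\ref{addRsubgrp}, needed because $H_M$ may be degenerate so $R$ is not a subgroup) to derive a functional equation on the coefficients from which $F:\mathcal{H}\to\mathbb{C}[D]$ is built component-by-component and shown to be independent of the choices made.
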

Our theorem generalizes the scalar-valued case, which is obtained when we take $D$ to be $D_{\overline{L}}$ and an appropriate $H$, with trivial $H_{L}^{\perp}$ and $\Delta_{H}$. In order to understand the subspaces of Jacobi forms carrying higher periodicity properties (either scalar-valued or vector-valued), we consider the case where the function $F$ is $\uparrow_{I}G$ for an isotropic subgroup $I$ of $D$ in Proposition \ref{FuparrowG} below, and show in what sense these higher periodicity properties characterize the images of such functions $F$. This requires a lot more notations for introducing explicitly, and the results are given in Theorems \ref{peruparrow} and \ref{largerHperp} below. This establishes the second goal of this paper (in both settings).

\smallskip

Before turning to the third one, we describe the construction of Hermitian Jacobi forms of lattice index. For this we consider an order $\mathcal{O}$ in an imaginary quadratic field $\mathbb{K}$, which we view as embedded in $\mathbb{C}$ by some fixed embedding, and a projective module $M$ over $\mathcal{O}$. Denote by $V$ the vector space generated by $M$ over $\mathbb{K}$, and let $\mathcal{D}$ be the different of $\mathcal{O}$, with inverse $\mathcal{D}^{-1}$. We take a non-degenerate Hermitian pairing \[\langle\cdot,\cdot\rangle:V \times V\to\mathbb{K},\mathrm{\ with\ }\langle M,M \rangle \subseteq\mathcal{D}^{-1},\mathrm{\ and\ set\ }(\cdot,\cdot):=\operatorname{Tr}^{\mathbb{K}}_{\mathbb{Q}}\circ\langle\cdot,\cdot\rangle:V \times V\to\mathbb{Q}\] be the associated orthogonal pairing. The dual $M^{*}$ of $M$ defined by pairing with $M$ to $\mathcal{D}^{-1}$ is the same as its dual as an orthogonal lattice, and we assume an appropriate even condition. Thus $M$ admits a Hermitian Jacobi theta function $\Theta_{M}$ (which in our extended, possibly indefinite setting, is a Hermitian Siegel--Jacobi theta function), as well as a theta function $\widetilde{\Theta}_{M}$ obtained from $M$ as an orthogonal lattice.

Now, the $\mathcal{O}$-lattice $M$ (or the space $V$) has three complexifications, one via the embedding of $\mathbb{K}$ into $\mathbb{C}$ (we denote this one by $V_{\mathbb{C}}$ in Definition \ref{tensprod}), one through letting $\mathbb{K}$ be embedded into $\mathbb{C}$ in the complex conjugate way (producing the space $V_{\overline{\mathbb{C}}}$), and the tensor with $\mathbb{C}$ over $\mathbb{Q}$, which is the direct sum of these spaces. These subspaces are the eigenspaces of the action of a traceless element of $\mathbb{K}$, and this element can be used to reproduce the Hermitian pairing from the $\mathbb{Q}$-valued one, as it behaves like a generalized complex structure (see Lemma \ref{pairchar} below). The Hermitian Jacobi theta function of $M$ has variables $\tau$ from $\mathcal{H}$, $\zeta$ from $V_{\mathbb{C}}$, and also $\omega$ from $V_{\overline{\mathbb{C}}}$.

With Proposition \ref{decompair} explaining the relations of the two pairings with the spaces $V_{\mathbb{C}}$ and $V_{\overline{\mathbb{C}}}$, we have the equality \[\Theta_{M}(\tau,\zeta,\omega)=\widetilde{\Theta}_{M}(\tau,\zeta+\omega)\quad\mathrm{for}\quad\tau\in\mathcal{H},\ \zeta \in V_{\mathbb{C}},\mathrm{\ and\ }\omega \in V_{\overline{\mathbb{C}}},\] with $\zeta+\omega$ considered as a variable from the complexification $V\otimes_{\mathbb{Q}}\mathbb{C}$. Using this equality (generalized in Lemma \ref{compThetaM} to the indefinite setting) and the relations from Proposition \ref{decompair}, we can transfer all the important properties of $\widetilde{\Theta}_{M}$ already established in \cite{[Ze3]} for $M$ as an even lattice to the Hermitian setting and the theta function $\Theta_{M}$. Then Hermitian Jacobi forms with lattice index $M$ are defined in such a way that their properties transfer to those of orthogonal Jacobi forms of lattice index $M$ over $\mathbb{Z}$, via $\Theta_{M}$ playing the role of $\widetilde{\Theta}_{M}$.

Note that the signature of $M$ over $\mathbb{Z}$ is twice the one of $M$ over $\mathcal{O}$, affecting the change of weights between Jacobi forms and the associated elliptic modular forms. In particular, in the positive definite setting and holomorphic forms, the isomorphisms will be \[J_{k,M/\mathcal{O}}(\Gamma) \longleftrightarrow M_{k-b_{+}}(\Gamma,\rho_{\overline{M}})\quad\mathrm{and}\quad J_{k,M/\mathcal{O}}^{!}(\Gamma) \longleftrightarrow M_{k-b_{+}}^{!}(\Gamma,\rho_{\overline{M}}),\] with the index $M/\mathcal{O}$ implying Hermitian Jacobi forms, but the Weil representations are the same ones. As the weights are integral, $\Gamma$ is just a finite index subgroup of $\operatorname{SL}_{2}(\mathbb{Z})$ (without a metaplectic cover), a group which for all but finitely many orders $\mathcal{O}$ coincides with the unitary group $\operatorname{U}(1,1)(\mathbb{Z})$. For the remaining orders, the latter group can be a bit larger, and the Weil representation extends in Proposition \ref{extWeil} below. This achieves the first goal, and allows us to complete the other two in the orthogonal and Hermitian settings simultaneously.

We remark that the theory of Jacobi froms over totally real fields as developed in \cite{[Boy]} can probably combine with our observations to produce the basis for a theory of Hermitian Jacobi forms over CM fields of larger degree over $\mathbb{Q}$.

\smallskip

For the third goal we restrict attention to positive definite lattices, but consider an extended notion in which the matrix can be positive semidefinite (this is required in \cite{[BRa]}, \cite{[Xi]}, and \cite{[BRZ]}). After presenting the form of our results in the holomorphic setting, we define a Jacobi form of non-integral matrix index $S$ to take values in $\mathbb{C}[\Delta]$ for some discriminant form $\Delta$, using a map $\psi$ from integral vectors to $\Delta$ that must preserve the quadratic values in the appropriate sense. This translates to \[\psi:\mathbb{Z}^{c_{+}}\to\Delta\quad\mathrm{with}\quad\tfrac{(\psi a,\psi a)}{2}=a^{t}Sa+\mathbb{Z}\quad\mathrm{and}\quad\psi(a)=0\mathrm{\ when\ }Sa=0,\] in the orthogonal setting (where $S$ is of size $c_{+} \times c_{+}$), and similarly with $\mathcal{O}^{c_{+}}$ and the equality $\langle\psi a,\psi a\rangle=a^{*}Sa+\mathbb{Z}$ in the Hermitian one. This property of $\psi$ is the reason why for non-integral $S$ one cannot define scalar-valued Jacobi forms, and a good notion only exists for vector-valued functions, using the results established in our second goal.

Indeed, using $\psi$ we determine the lattice which can produce a Jacobi form with coordinates whose pairings are represented by our matrix $S$ as expected, but the image of the basis used in the definition must be contained in the dual lattice, and not just the lattice itself if $S$ if not integral. The operators analogous to $t_{\sigma}$ and $\chi_{\nu}$ from above are defined in terms of the map $\psi$, as is the group corresponding to $H_{L}^{\perp}$ (or $H_{M}^{\perp}$ in the Hermitian setting), and then we can complete the definition of Jacobi forms with index $S$ as vector-valued Jacobi forms with the appropriate lattice and enhanced periodicity properties. This is done in Definition \ref{indSdef} and \ref{defindS} in the orthogonal and Hermitian settings respectively, with their respective properties given in Theorems \ref{JacindS} and \ref{HJS} below. We conclude by presenting the form of the Fourier expansions of these Jacobi forms in Remark \ref{FourS} at the end.

\smallskip

This paper is divided into six sections. Section \ref{HermMod} introduces the fine details of the notions involving Hermitian linear spaces over imaginary quadratic fields. Section \ref{HJTheta} presents the theory of Hermitian Siegel--Jacobi theta functions, while Section \ref{HJacForms} defines the Hermitian Jacobi forms in question and establishes their properties. Section \ref{VVJF} then gives the details of the constructions involving orthogonal and Hermitian Jacobi forms in the vector-valued setting, and Section \ref{RelsArrow} investigates the relations with the arrow operators. Finally, Section \ref{HolJForms} contains the statements of the results for positive definite lattices, and shows how to transform the formulations into Hermitian matrix indices as well.

\smallskip

I would like to thank E. Rosu, B. Williams, and J. Xia for useful comments which helped to improve the presentation of this paper. Many thanks also to J. Bruinier  and A. Krieg for interesting discussions around this topic, as well as the introduction of some references, including the new pre-print \cite{[BS]}.

\section{Hermitian Spaces and Modules \label{HermMod}}

Let $\mathbb{K}$ be an imaginary quadratic field, so that $\mathbb{K}=\mathbb{Q}(\sqrt{-d})$ for some integer $d>0$, which we consider as embedded in $\mathbb{C}$, with $\sqrt{-d}$ standing for $\sqrt{d} \cdot i$. Then a finite-dimensional vector space $V$ over $\mathbb{K}$ can be viewed as a vector space over $\mathbb{Q}$, endowed with an endomorphism $J\in\operatorname{End}_{\mathbb{Q}}(V)$, representing the action of $\sqrt{-d}\in\mathbb{K}$ on $V$, which satisfies $J^{2}=-d\operatorname{Id}_{V}$ in $\operatorname{End}_{\mathbb{Q}}(V)$. One can think of $J$ as some kind of generalized complex structure on $V$.

Using this structure, we define the following tensor products.
\begin{defn}
The shorthand $V_{\mathbb{C}}$ stands for $V\otimes_{\mathbb{K}}\mathbb{C}$, through the fixed embedding of $\mathbb{K}$ into $\mathbb{C}$. When $\mathbb{K}$ is embedded via the complex conjugate embedding, the corresponding tensor product of $V$ over $\mathbb{K}$ with $\mathbb{C}$ is denoted by $V_{\overline{\mathbb{C}}}$. The notation $V_{\mathbb{R}}$ stands for $V\otimes_{\mathbb{Q}}\mathbb{R}$. \label{tensprod}
\end{defn}
Indeed, for the last notation in Definition \ref{tensprod}, the field $\mathbb{K}$ cannot be embedded into $\mathbb{R}$, so that $V_{\mathbb{R}}$ has the only possible meaning.

We shall be using the following decomposition of the complexification $V\otimes_{\mathbb{Q}}\mathbb{C}$, and its projections.
\begin{lem}
The following assertions hold:
\begin{enumerate}[$(i)$]
\item The space $V\otimes_{\mathbb{Q}}\mathbb{C}$ is the direct sum of $V_{\mathbb{C}}$ and $V_{\overline{\mathbb{C}}}$ from Definition \ref{tensprod}.
\item Given $\lambda \in V\otimes_{\mathbb{Q}}\mathbb{C}$, write it as $\lambda_{\mathbb{C}}+\lambda_{\overline{\mathbb{C}}}$ according to the decomposition from part $(i)$. Then we have $\lambda_{\mathbb{C}}=\frac{\lambda}{2}-\sqrt{-d}\frac{J\lambda}{2d}$ and $\lambda_{\overline{\mathbb{C}}}=\frac{\lambda}{2}+\sqrt{-d}\frac{J\lambda}{2d}$.
\item The complex conjugation on $\mathbb{C}$ interchanges the two summands from part $(i)$. The space $V_{\mathbb{R}}$ from Definition \ref{tensprod} embeds into $V\otimes_{\mathbb{Q}}\mathbb{C}$ as the set of pairs $\{(\lambda,\overline{\lambda})|\lambda \in V_{\mathbb{C}}\}$, and the restrictions to it of the projections onto $V_{\mathbb{C}}$ and onto $V_{\overline{\mathbb{C}}}$ are isomorphisms of real vector spaces.
\end{enumerate} \label{VQdecom}
\end{lem}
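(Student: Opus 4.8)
The plan is to verify the three assertions in order, using that $J$ is a $\mathbb{Q}$-linear endomorphism with $J^{2}=-d\operatorname{Id}_{V}$, so that after extending $J$ $\mathbb{C}$-linearly to $V\otimes_{\mathbb{Q}}\mathbb{C}$ its only possible eigenvalues are the two square roots of $-d$ in $\mathbb{C}$, namely $\pm\sqrt{-d}=\pm\sqrt{d}\,i$. First I would identify $V_{\mathbb{C}}=V\otimes_{\mathbb{K}}\mathbb{C}$ inside $V\otimes_{\mathbb{Q}}\mathbb{C}$ as the locus on which the extended $J$ acts by the scalar $\sqrt{-d}$ (this is exactly the statement that tensoring over $\mathbb{K}$ through the fixed embedding forces $\sqrt{-d}\in\mathbb{K}$ to act as the complex number $\sqrt{-d}$), and similarly $V_{\overline{\mathbb{C}}}$ as the $(-\sqrt{-d})$-eigenspace, since the conjugate embedding sends $\sqrt{-d}$ to $-\sqrt{-d}$. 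Because $-\sqrt{-d}\neq\sqrt{-d}$ in $\mathbb{C}$ and the minimal polynomial $X^{2}+d$ of $J$ splits into distinct linear factors over $\mathbb{C}$, the operator $J$ on $V\otimes_{\mathbb{Q}}\mathbb{C}$ is diagonalizable with $V\otimes_{\mathbb{Q}}\mathbb{C}=\ker(J-\sqrt{-d})\oplus\ker(J+\sqrt{-d})$; this gives part $(i)$, and a dimension count ($\dim_{\mathbb{C}}V\otimes_{\mathbb{Q}}\mathbb{C}=2\dim_{\mathbb{K}}V=\dim_{\mathbb{C}}V_{\mathbb{C}}+\dim_{\mathbb{C}}V_{\overline{\mathbb{C}}}$) confirms that the two eigenspaces are precisely the two tensor products and not something smaller.

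For part $(ii)$, once part $(i)$ is in hand the decomposition $\lambda=\lambda_{\mathbb{C}}+\lambda_{\overline{\mathbb{C}}}$ is determined by applying the spectral projectors. Concretely, from $\lambda=\lambda_{\mathbb{C}}+\lambda_{\overline{\mathbb{C}}}$ and $J\lambda=\sqrt{-d}\,\lambda_{\mathbb{C}}-\sqrt{-d}\,\lambda_{\overline{\mathbb{C}}}$ one solves the linear system to get $\lambda_{\mathbb{C}}=\frac{1}{2}\big(\lambda+\frac{1}{\sqrt{-d}}J\lambda\big)$ and $\lambda_{\overline{\mathbb{C}}}=\frac{1}{2}\big(\lambda-\frac{1}{\sqrt{-d}}J\lambda\big)$; rewriting $\frac{1}{\sqrt{-d}}=\frac{-\sqrt{-d}}{d}$ yields exactly the claimed formulas $\lambda_{\mathbb{C}}=\frac{\lambda}{2}-\sqrt{-d}\frac{J\lambda}{2d}$ and $\lambda_{\overline{\mathbb{C}}}=\frac{\lambda}{2}+\sqrt{-d}\frac{J\lambda}{2d}$. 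It is worth checking directly that $J$ applied to the right-hand side of the first formula returns $\sqrt{-d}$ times it (using $J^{2}\lambda=-d\lambda$), which both re-derives part $(i)$ and confirms the projector formula.

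For part $(iii)$, complex conjugation on $V\otimes_{\mathbb{Q}}\mathbb{C}=V\otimes_{\mathbb{Q}}\mathbb{C}$ acts $\mathbb{Q}$-linearly on $V$ and conjugates scalars, hence commutes with the $\mathbb{Q}$-linear $J$ in the sense that $\overline{J\mu}=J\overline{\mu}$; applying conjugation to $J\mu=\sqrt{-d}\,\mu$ gives $J\overline{\mu}=\overline{\sqrt{-d}}\,\overline{\mu}=-\sqrt{-d}\,\overline{\mu}$, so conjugation carries the $\sqrt{-d}$-eigenspace $V_{\mathbb{C}}$ onto the $(-\sqrt{-d})$-eigenspace $V_{\overline{\mathbb{C}}}$ and vice versa. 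Since $V_{\mathbb{R}}=V\otimes_{\mathbb{Q}}\mathbb{R}$ is exactly the fixed locus of complex conjugation on $V\otimes_{\mathbb{Q}}\mathbb{C}$, an element of $V_{\mathbb{R}}$ is characterized by $\overline{\lambda_{\mathbb{C}}+\lambda_{\overline{\mathbb{C}}}}=\lambda_{\mathbb{C}}+\lambda_{\overline{\mathbb{C}}}$, i.e.\ by $\overline{\lambda_{\mathbb{C}}}=\lambda_{\overline{\mathbb{C}}}$ (using that conjugation swaps the summands), so $V_{\mathbb{R}}=\{(\lambda,\overline{\lambda})\mid\lambda\in V_{\mathbb{C}}\}$; the projection to the first coordinate is then $\mathbb{R}$-linear and bijective with inverse $\lambda\mapsto(\lambda,\overline{\lambda})$, and likewise for the projection to $V_{\overline{\mathbb{C}}}$. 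I do not anticipate a serious obstacle here; the only point requiring slight care is keeping the two embeddings of $\mathbb{K}$ straight and being explicit that ``$J$ acts as the honest complex number $\sqrt{-d}$'' is the defining property of $V_{\mathbb{C}}$ — everything else is linear algebra over the splitting field of $X^{2}+d$.
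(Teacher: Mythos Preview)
Your proof is correct and follows essentially the same approach as the paper: both identify $V_{\mathbb{C}}$ and $V_{\overline{\mathbb{C}}}$ as the $\pm\sqrt{-d}$-eigenspaces of $J$ on $V\otimes_{\mathbb{Q}}\mathbb{C}$ (using $J^{2}=-d\operatorname{Id}_{V}$), handle part $(ii)$ via the spectral projectors (the paper simply verifies the stated formulas are the right eigenvectors summing to $\lambda$, while you derive them by solving the linear system---equivalent arguments), and deduce part $(iii)$ from the fact that complex conjugation interchanges the eigenvalues of $J$. Your write-up is somewhat more detailed than the paper's terse version, but there is no substantive difference in method.
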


\begin{proof}
The fact that $J^{2}=-d\operatorname{Id}_{V}$ in $\operatorname{End}_{\mathbb{Q}}(V)$ implies that over $\mathbb{C}$, the endomorphism $J$ decomposes $V$ into the direct sum of its $\pm\sqrt{-d}$-eigenspaces, yielding part $(i)$ via Definition \ref{tensprod}. For part $(ii)$ we just note that the two asserted vectors $v_{\mathbb{C}}$ and $v_{\overline{\mathbb{C}}}$ are eigenvectors of $J$ with eigenvalues $\sqrt{-d}$ and $-\sqrt{-d}$ respectively, and they sum to $V$. As complex conjugation interchanges the aforementioned eigenvalues of $J$, we deduce the first assertion in part $(iii)$, from which the second one follows and immediately implies the third one as well. This proves the lemma.
\end{proof}

The following simple consequence of our definitions will turn out crucial in what follows.
\begin{cor}
Consider $V$ with its structure as a vector space over $\mathbb{K}$, with $\mathbb{K}$ embedded in $\mathbb{C}$. Then the two projections from part $(ii)$ of Lemma \ref{VQdecom} are $\mathbb{Q}$-linear, with $\lambda\mapsto\lambda_{\mathbb{C}}$ being $\mathbb{K}$-linear, while $\lambda\mapsto\lambda_{\overline{\mathbb{C}}}$ is $\mathbb{K}$-conjugate-linear. \label{Konspaces}
\end{cor}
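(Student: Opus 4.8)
The plan is to reduce the whole statement to the behavior of the maps on the single generator $\sqrt{-d}$ of $\mathbb{K}$ over $\mathbb{Q}$, exploiting that the two projections are manifestly $\mathbb{Q}$-linear.

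First I would record $\mathbb{Q}$-linearity: by the explicit formulas $\lambda_{\mathbb{C}}=\frac{\lambda}{2}-\sqrt{-d}\frac{J\lambda}{2d}$ and $\lambda_{\overline{\mathbb{C}}}=\frac{\lambda}{2}+\sqrt{-d}\frac{J\lambda}{2d}$ from part $(ii)$ of Lemma \ref{VQdecom}, each projection is a fixed $\mathbb{C}$-scalar combination of $\operatorname{Id}_{V}$ and of the $\mathbb{Q}$-endomorphism $J$, hence is $\mathbb{Q}$-linear; equivalently, they are the two components of the $\mathbb{C}$-linear, a fortiori $\mathbb{Q}$-linear, decomposition of part $(i)$.

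Next, since $\mathbb{K}=\mathbb{Q}(\sqrt{-d})$ and the maps are already $\mathbb{Q}$-linear, to prove $\mathbb{K}$-linearity of $\lambda\mapsto\lambda_{\mathbb{C}}$ it suffices to check compatibility with multiplication by $\sqrt{-d}$, which on $V$ is the endomorphism $J$; that is, I must verify $(J\lambda)_{\mathbb{C}}=\sqrt{-d}\,\lambda_{\mathbb{C}}$, where on the right $\sqrt{-d}=\sqrt{d}\,i$ acts as a complex scalar on $V\otimes_{\mathbb{Q}}\mathbb{C}$. This follows by substituting $J\lambda$ for $\lambda$ in the formula above and invoking $J^{2}=-d\operatorname{Id}_{V}$, giving the one-line identity $(J\lambda)_{\mathbb{C}}=\frac{J\lambda}{2}+\sqrt{-d}\,\frac{\lambda}{2}=\sqrt{-d}\,\lambda_{\mathbb{C}}$; conceptually it is the statement that $\lambda_{\mathbb{C}}$ lies in the $\sqrt{-d}$-eigenspace of the $\mathbb{C}$-linearly extended $J$, which commutes with the projection. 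The identical computation for $\lambda\mapsto\lambda_{\overline{\mathbb{C}}}$, whose image is the $-\sqrt{-d}$-eigenspace of $J$, gives $(J\lambda)_{\overline{\mathbb{C}}}=-\sqrt{-d}\,\lambda_{\overline{\mathbb{C}}}$; since $-\sqrt{-d}=\overline{\sqrt{-d}}$ is the image of $\sqrt{-d}$ under the conjugate embedding of $\mathbb{K}$, this is precisely the assertion of $\mathbb{K}$-conjugate-linearity.

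There is no genuine obstacle here: the entire content is keeping track of which embedding $\mathbb{K}\hookrightarrow\mathbb{C}$ governs each side of the equations, and the only real calculation is the displayed one-line check using $J^{2}=-d\operatorname{Id}_{V}$. I would close by emphasizing that this is exactly the structural fact justifying the notation $V_{\mathbb{C}}$ versus $V_{\overline{\mathbb{C}}}$, and that it is what will be used repeatedly to split Hermitian data into holomorphic and antiholomorphic parts in the sequel.
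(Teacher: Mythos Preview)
Your proposal is correct and follows essentially the same approach as the paper: note $\mathbb{Q}$-linearity from the explicit formulas, reduce to the generator $\sqrt{-d}$ acting as $J$, and use $J^{2}=-d\operatorname{Id}_{V}$ to verify $(J\lambda)_{\mathbb{C}}=\sqrt{-d}\,\lambda_{\mathbb{C}}$ and $(J\lambda)_{\overline{\mathbb{C}}}=-\sqrt{-d}\,\lambda_{\overline{\mathbb{C}}}$. The paper's proof is slightly terser but the content is identical.
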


\begin{proof}
The fact that both maps are $\mathbb{Q}$-linear is immediate, and for the statements about $\mathbb{K}$, we only need to consider the action of $\sqrt{-d}\in\mathbb{K}$. But on $V$ this is precisely the endomorphism $J$, and using the fact that $J^{2}\lambda=-d\lambda$ one easily verifies from the formulae that $(J\lambda)_{\mathbb{C}}=\sqrt{-d}\cdot\lambda_{\mathbb{C}}$ in $V_{\mathbb{C}}$, while in $V_{\overline{\mathbb{C}}}$ we have $(J\lambda)_{\overline{\mathbb{C}}}=-\sqrt{-d}\cdot\lambda_{\overline{\mathbb{C}}}$. This proves the corollary.
\end{proof}

\smallskip

We now make the following definition, in order to set our conventions for the rest of the paper.
\begin{defn}
A \emph{Hermitian pairing} on $V$ is a map $\langle\cdot,\cdot\rangle:V \times V\to\mathbb{K}$, which is $\mathbb{K}$-linear in the first variable and $\mathbb{K}$-conjugate-linear in the second one. In this case we say that $\big(V,\langle\cdot,\cdot\rangle\big)$ is a \emph{Hermitian space over $\mathbb{K}$}. Such a pairing also endows $V$ with the $\mathbb{Q}$-bilinear pairing $(\cdot,\cdot):=\operatorname{Tr}^{\mathbb{K}}_{\mathbb{Q}}\circ\langle\cdot,\cdot\rangle:V \times V\to\mathbb{Q}$. \label{pairdef}
\end{defn}
If $\big(V,\langle\cdot,\cdot\rangle\big)$ is a Hermitian space over $\mathbb{K}$, as in Definition \ref{pairdef}, then we shorthand the pairing $\langle\lambda,\lambda\rangle\in\mathbb{Q}$ of $\lambda \in V$ with itself to $|\lambda|^{2}$. The resulting pairing $(\cdot,\cdot)$ makes every unitary space over $\mathbb{K}$ also an orthogonal space over $\mathbb{Q}$. If we write $(\lambda,\lambda)$ for $\lambda \in V$ as simply $\lambda^{2}$, then the associated quadratic form on $V$ over $\mathbb{Q}$ takes $\lambda$ to $\frac{\lambda^{2}}{2}$, which also coincides with $|\lambda|^{2}$.

Given a bilinear pairing on $V$, the fact that it arises from a Hermitian pairing as in Definition \ref{pairdef} yields several consequences, which in turn characterize the pairings that are obtained in this way.
\begin{lem}
For a bilinear pairing $(\cdot,\cdot):V \times V\to\mathbb{Q}$, with associated quadratic form $\lambda\mapsto\frac{\lambda^{2}}{2}$, the following are equivalent:
\begin{enumerate}[$(i)$]
\item The pairing $(\cdot,\cdot)$ is $\operatorname{Tr}^{\mathbb{K}}_{\mathbb{Q}}\circ\langle\cdot,\cdot\rangle$ for a Hermitian pairing $\langle\cdot,\cdot\rangle:V \times V\to\mathbb{K}$.
\item We have $(J\lambda,J\mu)=d(\lambda,\mu)$ for every $\lambda$ and $\mu$ in $V$.
\item The quadratic form satisfies $\frac{(J\lambda)^{2}}{2}=d\frac{\lambda^{2}}{2}$ for each $\lambda \in V$.
\item The equality $(J\lambda,\mu)=-(\lambda,J\mu)$ holds for any $\lambda$ and $\mu$ in $V$.
\item The pairing $(J\lambda,\lambda)$ vanishes for every element $\lambda \in V$.
\item The expression $\langle\lambda,\mu\rangle:=\frac{(\lambda,\mu)}{2}-\sqrt{-d}\frac{(J\lambda,\mu)}{2d}$ defines a Hermitian pairing $\langle\cdot,\cdot\rangle:V \times V\to\mathbb{K}$.
\item By defining $\langle\lambda,\mu\rangle:=\frac{(\lambda,\mu)}{2}+\sqrt{-d}\frac{(\lambda,J\mu)}{2d}$ one obtains a Hermitian pairing $\langle\cdot,\cdot\rangle:V \times V\to\mathbb{K}$.
\end{enumerate} \label{pairchar}
\end{lem}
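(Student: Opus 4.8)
The plan is to prove the seven conditions equivalent by establishing a cycle of implications, using the decomposition of $V\otimes_{\mathbb{Q}}\mathbb{C}$ from Lemma \ref{VQdecom} and the $\mathbb{K}$-linearity statements from Corollary \ref{Konspaces} as the main technical inputs. I would organize it as $(i)\Rightarrow(ii)\Rightarrow(iii)\Rightarrow(iv)\Rightarrow(v)\Rightarrow(vi)\Rightarrow(i)$, together with the parallel chain ending in $(vii)$, since $(vi)$ and $(vii)$ are ``conjugate'' to each other and each independently recovers $(i)$.

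First, $(i)\Rightarrow(ii)$: if $(\cdot,\cdot)=\operatorname{Tr}^{\mathbb{K}}_{\mathbb{Q}}\circ\langle\cdot,\cdot\rangle$, then since $J$ acts as multiplication by $\sqrt{-d}$, we have $\langle J\lambda,J\mu\rangle=\sqrt{-d}\cdot\overline{\sqrt{-d}}\cdot\langle\lambda,\mu\rangle=d\langle\lambda,\mu\rangle$ (the conjugate appears because $\langle\cdot,\cdot\rangle$ is conjugate-linear in the second slot), and applying the trace gives $(ii)$. The implication $(ii)\Rightarrow(iii)$ is the specialization $\mu=\lambda$. For $(iii)\Rightarrow(iv)$ I would polarize: expand $\frac{(J(\lambda+\mu))^{2}}{2}=d\frac{(\lambda+\mu)^{2}}{2}$ using bilinearity and $(iii)$ applied to $\lambda$ and to $\mu$ separately, obtaining $(J\lambda,J\mu)=d(\lambda,\mu)$; then replace $\mu$ by $J\mu$ and use $J^{2}=-d\operatorname{Id}$ to get $-d(J\lambda,\mu)=d(\lambda,J\mu)$, which is $(iv)$. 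The step $(iv)\Rightarrow(v)$ is immediate by setting $\mu=\lambda$, since $(iv)$ then reads $(J\lambda,\lambda)=-(\lambda,J\lambda)=-(J\lambda,\lambda)$ by symmetry of $(\cdot,\cdot)$. For $(v)\Rightarrow(iv)$ (which I need to then close the loop cleanly, or I can instead polarize $(v)$ directly), polarize $(J(\lambda+\mu),\lambda+\mu)=0$ and use symmetry to get $(J\lambda,\mu)+(J\mu,\lambda)=0$, i.e. $(J\lambda,\mu)=-(\lambda,J\mu)$.

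The heart of the argument is $(iv)\Rightarrow(vi)$ and $(vi)\Rightarrow(i)$. For the first, I would check directly that $\langle\lambda,\mu\rangle:=\frac{(\lambda,\mu)}{2}-\sqrt{-d}\frac{(J\lambda,\mu)}{2d}$ is $\mathbb{K}$-linear in $\lambda$: additivity is clear, and $\mathbb{K}$-linearity reduces to checking the action of $\sqrt{-d}$, i.e. of $J$; one computes $\langle J\lambda,\mu\rangle=\frac{(J\lambda,\mu)}{2}-\sqrt{-d}\frac{(J^{2}\lambda,\mu)}{2d}=\frac{(J\lambda,\mu)}{2}+\sqrt{-d}\frac{(\lambda,\mu)}{2}=\sqrt{-d}\langle\lambda,\mu\rangle$, as required. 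For conjugate-linearity in $\mu$, the analogous computation uses $(iv)$ to move $J$ across the pairing: $\langle\lambda,J\mu\rangle=\frac{(\lambda,J\mu)}{2}-\sqrt{-d}\frac{(J\lambda,J\mu)}{2d}=-\frac{(J\lambda,\mu)}{2}+\sqrt{-d}\frac{(\lambda,\mu)}{2}=-\sqrt{-d}\langle\lambda,\mu\rangle=\overline{\sqrt{-d}}\,\langle\lambda,\mu\rangle$, where I also used $(J\lambda,J\mu)=-(\lambda,J^{2}\mu)=d(\lambda,\mu)$, a consequence of $(iv)$. Finally $(vi)\Rightarrow(i)$: since $\operatorname{Tr}^{\mathbb{K}}_{\mathbb{Q}}(\sqrt{-d}\cdot x)=0$ for $x\in\mathbb{Q}$ and $\operatorname{Tr}^{\mathbb{K}}_{\mathbb{Q}}$ is the identity on $\mathbb{Q}$, applying the trace to the formula in $(vi)$ kills the second term and returns $(\lambda,\mu)$, so $(\cdot,\cdot)=\operatorname{Tr}^{\mathbb{K}}_{\mathbb{Q}}\circ\langle\cdot,\cdot\rangle$. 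The chain $(iv)\Rightarrow(vii)\Rightarrow(i)$ is the mirror image, with the roles of the two variables swapped and the sign of the $\sqrt{-d}$-term flipped; alternatively one observes that the pairing in $(vii)$ is $\overline{\langle\mu,\lambda\rangle}$ for the pairing in $(vi)$, and that $\operatorname{Tr}^{\mathbb{K}}_{\mathbb{Q}}$ is conjugation-invariant, so $(vii)$ follows formally from $(vi)$ once $(vi)$ is known.

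I do not expect a genuine obstacle here — every step is a short direct computation — but the point requiring the most care is keeping track of which slot is linear versus conjugate-linear, and correspondingly where condition $(iv)$ (rather than mere bilinearity) is genuinely needed; it is needed precisely to verify conjugate-linearity in the second variable in $(vi)\Rightarrow$(well-definedness), i.e. to see that the candidate $\langle\cdot,\cdot\rangle$ actually lands in a Hermitian pairing and not just an arbitrary $\mathbb{Q}$-bilinear one. One should also remember to note non-degeneracy is inherited (if relevant in the ambient conventions), though as stated the lemma only claims ``defines a Hermitian pairing,'' so this is not required.
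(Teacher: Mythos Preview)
Your proof is correct and takes essentially the same approach as the paper's: both establish $(i)\Rightarrow(ii)$ via $\langle J\lambda,J\mu\rangle=d\langle\lambda,\mu\rangle$, pass between $(ii)$, $(iii)$, $(iv)$, $(v)$ by polarization and $J^{2}=-d\operatorname{Id}$, and close the loop by checking sesquilinearity of the formula in $(vi)$ (the paper checks $\mathbb{K}$-linearity on $(vi)$ and conjugate-linearity on $(vii)$ separately and then uses $(iv)$ to identify them, while you check both directly on $(vi)$, but this is only an organizational difference). Two small corrections: $\operatorname{Tr}^{\mathbb{K}}_{\mathbb{Q}}$ is multiplication by $2$ on $\mathbb{Q}$, not the identity---the factor $\tfrac{1}{2}$ in $(vi)$ is what makes $\operatorname{Tr}^{\mathbb{K}}_{\mathbb{Q}}\langle\lambda,\mu\rangle=(\lambda,\mu)$ come out right---and despite your preamble, neither Lemma~\ref{VQdecom} nor Corollary~\ref{Konspaces} is actually used (nor are they in the paper's proof).
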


\begin{proof}
By assuming $(i)$ and noting that $\langle J\lambda,J\mu \rangle=d\langle\lambda,\mu\rangle$ by the action of $J$ on both sides, we obtain $(ii)$. The equivalence of $(ii)$ and $(iii)$ is an immediate consequence of the relations between a quadratic form and the associated bilinear form in characteristic different from 2. If we assume $(ii)$, then $-d(\lambda,J\mu)=(J^{2}\lambda,J\mu)$ equals $d(J\lambda,\mu)$, and $(iv)$ follows. Conversely, if $(iv)$ holds then $(J\lambda,J\mu)$ equals $-(J^{2}\lambda,\mu)=d(\lambda,\mu)$, completing the equivalence of $(ii)$ and $(iv)$. Moreover, setting $\mu=\lambda$ in $(iii)$ immediately implies $(v)$. As if $(v)$ holds then applying it to $\lambda$, $\mu$, and $\lambda+\mu$ and using the bilinearity and the linearity of $J$ yields $(iv)$, the equivalence of $(iv)$ and $(v)$ also follows.

Next, note that both $(vi)$ and $(vii)$ involve $\mathbb{Q}$-bilinear maps $V \times V\to\mathbb{K}$, and as composing any of them with $\operatorname{Tr}^{\mathbb{K}}_{\mathbb{Q}}$ reproduces $(\cdot,\cdot)$, we deduce that either $(vi)$ or $(vii)$ implies $(i)$. Moreover, note that by replacing $\lambda$ by $J\mu$ in the pairing from $(v)$ we obtain $\frac{(J\lambda,\mu)}{2}+\sqrt{-d}\frac{(\lambda,\mu)}{2}=\sqrt{-d}\langle\lambda,\mu\rangle$ (yielding the $\mathbb{K}$-linearity), while writing $J\mu$ instead of $\mu$ in the pairing from $(vi)$ produces $\frac{(\lambda,J\mu)}{2}-\sqrt{-d}\frac{(\lambda,\mu)}{2}=-\sqrt{-d}\langle\lambda,\mu\rangle$ (showing that this pairing is $\mathbb{K}$-conjugate-linear in the second variable). But as condition $(iv)$ implies that the pairings from $(vi)$ and $(vii)$ coincide, this joint pairing is Hermitian, yielding both $(vi)$ and $(vii)$ as desired. This proves the lemma.
\end{proof}
It is also clear from the proof of Lemma \ref{pairchar} that when the equivalent conditions are satisfied, the pairing yielding Condition $(i)$ is unique, and is given by the formulae from both conditions $(vi)$ and $(vii)$ there (which were seen to be equal).

\smallskip

When extending scalars, all the operations become linear in the extended scalars. Therefore we shall distinguish the action of $\sqrt{-d}\in\mathbb{K}$ from that of $\sqrt{-d}$ in some field of extended scalars (which can be $\mathbb{C}$, or $\mathbb{K}$ again), we shall write $\sqrt{-d}$ for the latter, and use the endomorphism $J$ for the former. We shall thus obtain the following relations between the spaces from Definition \ref{tensprod} and Lemma \ref{VQdecom} and any pairings, related as in Lemma \ref{pairchar}.
\begin{prop}
Let $\big(V,\langle\cdot,\cdot\rangle\big)$ be a Hermitian space over $\mathbb{K}$, with the associated pairing $(\cdot,\cdot):V \times V\to\mathbb{Q}$, and extend the scalars from $\mathbb{Q}$ to $\mathbb{C}$.
\begin{enumerate}[$(i)$]
\item The spaces $V_{\mathbb{C}}$ and $V_{\overline{\mathbb{C}}}$ are both isotropic with respect to $(\cdot,\cdot)$.
\item If $\zeta \in V_{\mathbb{C}}$ then $\langle\lambda,\zeta\rangle=0$ for every $\lambda \in V\otimes_{\mathbb{Q}}\mathbb{C}$, and given $\mu \in V\otimes_{\mathbb{Q}}\mathbb{C}$ we get $\langle\zeta,\mu\rangle=(\zeta,\mu)$. This also equals $\langle\alpha,\mu\rangle$, when $\alpha \in V_{\mathbb{R}}$ corresponds to $\zeta$ via the isomorphism from part $(iii)$ of Lemma \ref{VQdecom}.
\item For $\omega \in V_{\overline{\mathbb{C}}}$ and two elements $\lambda$ and $\mu$ from $V\otimes_{\mathbb{Q}}\mathbb{C}$ we have $\langle\omega,\mu\rangle=0$ and $\langle\lambda,\omega\rangle=(\lambda,\omega)$. If $\omega$ is associated with $\beta \in V_{\mathbb{R}}$ through part $(iii)$ of Lemma \ref{VQdecom}, then the latter pairing coincides with $\langle\lambda,\beta\rangle$.
\item Given $\zeta \in V_{\mathbb{C}}$ and $\omega \in V_{\overline{\mathbb{C}}}$, with $\alpha$ and $\beta$ as in parts $(ii)$ and $(iii)$, we have $\langle\omega,\zeta\rangle=0$ as well as $\langle\zeta,\omega\rangle=(\zeta,\omega)=\langle\alpha,\beta\rangle$.
\end{enumerate} \label{decompair}
\end{prop}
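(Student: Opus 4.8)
The plan is to extend scalars in the Hermitian pairing $\langle\cdot,\cdot\rangle$ and its associated $\mathbb{Q}$-bilinear form $(\cdot,\cdot)$ from $\mathbb{Q}$ to $\mathbb{C}$, keeping the endomorphism $J$ as the (now $\mathbb{C}$-linear) operator representing multiplication by the scalar $\sqrt{-d}\in\mathbb{K}$, and to compute everything against the $\pm\sqrt{-d}$-eigenspace decomposition $V\otimes_{\mathbb{Q}}\mathbb{C}=V_{\mathbb{C}}\oplus V_{\overline{\mathbb{C}}}$ from Lemma \ref{VQdecom}. The key identity to invoke repeatedly is the extension of condition $(iv)$ of Lemma \ref{pairchar}, namely $(J\lambda,\mu)=-(\lambda,J\mu)$ for all $\lambda,\mu\in V\otimes_{\mathbb{Q}}\mathbb{C}$, together with the extended formula for $\langle\cdot,\cdot\rangle$ from conditions $(vi)$/$(vii)$, i.e.\ $\langle\lambda,\mu\rangle=\tfrac{(\lambda,\mu)}{2}-\sqrt{-d}\tfrac{(J\lambda,\mu)}{2d}=\tfrac{(\lambda,\mu)}{2}+\sqrt{-d}\tfrac{(\lambda,J\mu)}{2d}$.

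For part $(i)$, I would take $\zeta_1,\zeta_2\in V_{\mathbb{C}}$, so that $J\zeta_i=\sqrt{-d}\,\zeta_i$; then $(J\zeta_1,\zeta_2)=\sqrt{-d}(\zeta_1,\zeta_2)$, while condition $(iv)$ gives $(J\zeta_1,\zeta_2)=-(\zeta_1,J\zeta_2)=-\sqrt{-d}(\zeta_1,\zeta_2)$, forcing $(\zeta_1,\zeta_2)=0$; the argument for $V_{\overline{\mathbb{C}}}$ is identical with the opposite eigenvalue. For part $(ii)$: writing $\zeta\in V_{\mathbb{C}}$ and any $\lambda$, in the formula $\langle\lambda,\zeta\rangle=\tfrac{(\lambda,\zeta)}{2}+\sqrt{-d}\tfrac{(\lambda,J\zeta)}{2d}$ we substitute $J\zeta=\sqrt{-d}\,\zeta$ and get $\tfrac{(\lambda,\zeta)}{2}+\sqrt{-d}\cdot\sqrt{-d}\tfrac{(\lambda,\zeta)}{2d}=\tfrac{(\lambda,\zeta)}{2}-\tfrac{(\lambda,\zeta)}{2}=0$; for $\langle\zeta,\mu\rangle$ we instead use $\langle\zeta,\mu\rangle=\tfrac{(\zeta,\mu)}{2}-\sqrt{-d}\tfrac{(J\zeta,\mu)}{2d}$ with $J\zeta=\sqrt{-d}\,\zeta$, giving $\tfrac{(\zeta,\mu)}{2}-\tfrac{-d}{2d}(\zeta,\mu)=(\zeta,\mu)$. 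The identification with $\langle\alpha,\mu\rangle$ for the corresponding $\alpha\in V_{\mathbb{R}}$ follows because under the embedding of Lemma \ref{VQdecom}$(iii)$, $\alpha$ maps to $\zeta+\overline{\zeta}$ with $\overline{\zeta}\in V_{\overline{\mathbb{C}}}$, and the already-proved vanishing $\langle\overline{\zeta},\mu\rangle=0$ (which is part $(iii)$, proved symmetrically) kills the extra term — so I would prove $(iii)$ in parallel with $(ii)$, swapping the roles of the eigenvalue $\sqrt{-d}\leftrightarrow-\sqrt{-d}$ and of the two formulae for $\langle\cdot,\cdot\rangle$.

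Part $(iv)$ is then essentially a corollary: $\langle\omega,\zeta\rangle=0$ is the special case of $(iii)$ with $\mu=\zeta$; $\langle\zeta,\omega\rangle=(\zeta,\omega)$ is the special case of $(ii)$ with $\mu=\omega$; and $(\zeta,\omega)=\langle\alpha,\beta\rangle$ follows by combining the $V_{\mathbb{R}}$-identifications from $(ii)$ and $(iii)$ — $\langle\alpha,\beta\rangle=\langle\zeta,\beta\rangle$ by $(ii)$ applied with the second argument $\beta\leftrightarrow\omega$, which in turn equals $(\zeta,\omega)$. I would also note once, at the start, that the bilinearity/sesquilinearity statements make sense because $V_{\mathbb{R}}\hookrightarrow V\otimes_{\mathbb{Q}}\mathbb{C}$ as $\{(\lambda,\overline{\lambda})\}$ and the projections restrict to $\mathbb{R}$-isomorphisms, so the phrase ``$\alpha$ corresponds to $\zeta$'' is unambiguous.

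The only mild obstacle is bookkeeping: one must be careful that $\sqrt{-d}$ now denotes the \emph{scalar} in $\mathbb{C}$ (or in $\mathbb{K}$ after re-extension), that $J$ acts on the eigenspaces exactly as $\pm\sqrt{-d}$ times the identity, and that the two displayed formulas for $\langle\cdot,\cdot\rangle$ from Lemma \ref{pairchar}$(vi)$--$(vii)$ remain valid after extension (which is immediate since both sides are $\mathbb{C}$-bilinear and agree on $V\times V$). Choosing, for each assertion, whichever of the two formulas puts the $J$ next to the eigenvector makes every computation collapse to the identity $\sqrt{-d}\cdot\sqrt{-d}=-d$ or $(-\sqrt{-d})\cdot\sqrt{-d}=d$, with no genuine difficulty beyond that. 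I would present parts $(i)$, then $(ii)$ and $(iii)$ together, then $(iv)$, in that order.
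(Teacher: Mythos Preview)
Your proposal is correct and follows essentially the same route as the paper's proof: both arguments compute everything against the $J$-eigenspace decomposition using the formulae from Lemma~\ref{pairchar}$(vi)$--$(vii)$, with part $(i)$ obtained from the eigenvalue contradiction (the paper invokes condition $(ii)$ of Lemma~\ref{pairchar} where you invoke the equivalent condition $(iv)$), and part $(iv)$ as a corollary of $(ii)$ and $(iii)$. The only cosmetic difference is that for the $\alpha$- and $\beta$-identifications the paper writes $\zeta=\tfrac{\alpha}{2}-\sqrt{-d}\tfrac{J\alpha}{2d}$ and uses $\mathbb{K}$-linearity in the first variable, whereas you write $\alpha=\zeta+\overline{\zeta}$ and use the already-established vanishing on the $V_{\overline{\mathbb{C}}}$-component; these are inverse presentations of the same relation.
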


\begin{proof}
Let $\zeta$ and $\omega$ elements that both belong either to $V_{\mathbb{C}}$ or to $V_{\overline{\mathbb{C}}}$, and consider the pairing $(J\zeta,J\omega)$. On one hand, the action of $J$ multiplies both vectors by $\pm\sqrt{-d}$ (with the same sign), meaning, by bilinearity, that this pairing equals $-d(\zeta,\omega)$. As condition $(ii)$ of Lemma \ref{pairchar} implies (after extending scalars) that this pairing also equals $d(\zeta,\omega)$, we must have $(\zeta,\omega)=0$, and part $(i)$ follows. In part $(ii)$ we have $J\zeta=\sqrt{-d}\zeta$, and we express $\langle\lambda,\zeta\rangle$ via part $(vii)$ of Lemma \ref{pairchar} and $\langle\zeta,\mu\rangle$ using part $(vi)$ there. The asserted expressions then follow, and since $\zeta=\frac{\alpha}{2}-\sqrt{-d}\frac{J\alpha}{2d}$ via part $(ii)$ of Lemma \ref{VQdecom}, the relation with $\langle\alpha,\mu\rangle$ is now a consequence of the $\mathbb{K}$-linearity in the first variable. For part $(iii)$ we again use the same parts for writing $\langle\lambda,\zeta\rangle$ and $\langle\omega,\mu\rangle$, and use the fact that $J\omega=-\sqrt{-d}\omega$ and $\omega=\frac{\beta}{2}+\sqrt{-d}\frac{J\beta}{2d}$ (with the conjugate-linearity) to deduce the desired results. Part $(iv)$, together with the additional values $\langle\omega,\omega\rangle$ and $\langle\zeta,\beta\rangle$, is now an immediate consequences of parts $(ii)$ and $(iii)$. This proves the proposition.
\end{proof}

\begin{rmk}
In Definition \ref{tensprod} it suffices to consider the $J$-eigenspaces $V_{\mathbb{K}}$ and $V_{\overline{\mathbb{K}}}$ inside $V\otimes_{\mathbb{Q}}\mathbb{K}$ (and $V$ as the rational subspace inside). Lemmas \ref{VQdecom} and \ref{pairchar}, Corollary \ref{Konspaces}, and Proposition \ref{decompair} hold as stated, with the same formulae and proofs, also in this algebraic setting. However, as the relevant variables of Hermitian Jacobi forms below will be taken from $V\otimes_{\mathbb{Q}}\mathbb{C}$ (or its subspaces that we considered above), we worked in the setting of real and complex vector spaces throughout. \label{overK}
\end{rmk}

\begin{rmk}
The pairing $\langle\cdot,\cdot\rangle$ determines a $\mathbb{K}$-conjugate-linear map from $V$ to the dual space $V^{*}=\operatorname{Hom}_{\mathbb{K}}(V,\mathbb{K})$, by sending $\mu \in V$ to $\lambda\mapsto\langle\lambda,\mu\rangle$. The pairing is \emph{non-degenerate} when this map is injective (hence bijective, since $\dim V<\infty$), and this happens if and only if the orthogonal pairing $(\cdot,\cdot)$, on $V$ as a vector space over $\mathbb{Q}$, is non-degenerate. We then write the signature of $\big(V,\langle\cdot,\cdot\rangle\big)$ as a Hermitian space as $(b_{+},b_{-})$, and the signature of the orthogonal space $\big(V,(\cdot,\cdot)\big)$ over $\mathbb{Q}$ is thus $(2b_{+},2b_{-})$ (indeed, elements of an orthogonal basis for $V$ over $\mathbb{K}$ remain orthogonal over $\mathbb{Q}$, they are also orthogonal to all their $J$-images, and these $J$-images also form an orthogonal set, with the same signature via part $(ii)$ of Lemma \ref{pairchar}, with $d>0$). \label{mapstodual}
\end{rmk}

\smallskip

Let now $\mathcal{O}$ be an order in $\mathbb{K}$, normalize $-d$ to the discriminant of $\mathcal{O}$ (so that $\mathcal{O}$ is spanned over $\mathbb{Z}$ by 1 and $\frac{d+\sqrt{d}}{2}$), and we denote the module $\frac{1}{\sqrt{-d}}\mathcal{O}$ by $\mathcal{D}^{-1}$ (extending the inverse different from the case where $\mathcal{O}$ is the maximal order $\mathcal{O}_{\mathbb{K}}$ in $\mathbb{K}$. The objects we will work with are the following subgroups of $V$.
\begin{defn}
A subgroup $M$ of $V$ is called an \emph{$\mathcal{O}$-lattice} in $V$ if $\mathcal{O}M \subseteq M$; $M$ spans $V$ over $\mathbb{Q}$ (or equivalently over $\mathbb{K}$); $\langle M,M \rangle\subseteq\mathcal{D}^{-1}$; And the multiplication from $\mathcal{O}$ makes $M$ a projective $\mathcal{O}$-module. Noting that $|\lambda|^{2}\in\mathcal{D}^{-1}\cap\mathbb{Q}=\frac{1}{2}\mathbb{Z}$ for every $\lambda$ in an $\mathcal{O}$-lattice $M$, we say that $M$ is \emph{even} when $|\lambda|^{2}\in\mathbb{Z}$ for any $\lambda \in M$. \label{Olat}
\end{defn}

\begin{rmk}
It is clear from Definition \ref{Olat} that any $\mathcal{O}$-lattice is a torsion-free, finitely generated Abelian group with an action of $\mathcal{O}$. When $\mathcal{O}=\mathcal{O}_{\mathbb{K}}$ this already implies the projective property, but when $\mathcal{O}\subsetneq\mathcal{O}_{\mathbb{K}}$ this need not be the case, as the example where $V$ is spanned over $\mathbb{K}$ by two elements, say $u$ and $v$, and $M=\mathcal{O}x\oplus\mathcal{O}_{\mathbb{K}}y$, shows. Such an $\mathcal{O}$-module is not considered an $\mathcal{O}$-lattice in Definition \ref{Olat} (regardless of the assumption on the pairing)---see Remark \ref{notlat} below. \label{projlat}
\end{rmk}

For duals of lattices, we make the following observation.
\begin{lem}
If $M$ is an $\mathcal{O}$-lattice, then an element $y$ satisfies $\langle y,M \rangle\subseteq\mathcal{D}^{-1}$ if and only if $\langle M,y \rangle\subseteq\mathcal{D}^{-1}$ if and only if $(y,M)\subseteq\mathbb{Z}$. \label{duallat}
\end{lem}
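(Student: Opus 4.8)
The plan is to split the double equivalence into two formal pieces and one arithmetic statement about the order $\mathcal{O}$. The arithmetic input is the description of $\mathcal{D}^{-1}=\tfrac{1}{\sqrt{-d}}\mathcal{O}$ as the trace-complement of $\mathcal{O}$, namely
\[\mathcal{D}^{-1}=\big\{x\in\mathbb{K}\ :\ \operatorname{Tr}^{\mathbb{K}}_{\mathbb{Q}}(x\mathcal{O})\subseteq\mathbb{Z}\big\}.\]
Since $\mathcal{O}$ carries the explicit $\mathbb{Z}$-basis $\big\{1,\tfrac{d+\sqrt{-d}}{2}\big\}$, proving this amounts to checking, for $x=\alpha+\beta\sqrt{-d}$ with $\alpha,\beta\in\mathbb{Q}$, that the pair of conditions $\operatorname{Tr}^{\mathbb{K}}_{\mathbb{Q}}(x)\in\mathbb{Z}$ and $\operatorname{Tr}^{\mathbb{K}}_{\mathbb{Q}}\!\big(x\cdot\tfrac{d+\sqrt{-d}}{2}\big)\in\mathbb{Z}$ is equivalent to $x\in\tfrac{1}{\sqrt{-d}}\mathcal{O}$ --- a short finite computation. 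From this I would extract the two consequences actually used below: $\operatorname{Tr}^{\mathbb{K}}_{\mathbb{Q}}(\mathcal{D}^{-1})\subseteq\mathbb{Z}$ (take the generator $1$), and $\overline{\mathcal{D}^{-1}}=\mathcal{D}^{-1}$ (because $\overline{\mathcal{O}}=\mathcal{O}$ and $\overline{1/\sqrt{-d}}=-1/\sqrt{-d}$).

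With this in hand, the equivalence of the first two conditions is purely formal: the Hermitian pairing is conjugate-symmetric, so $\langle M,y\rangle=\overline{\langle y,M\rangle}$, and since $\mathcal{D}^{-1}$ is stable under conjugation one inclusion holds in $\mathcal{D}^{-1}$ if and only if the other does. The implication $\langle y,M\rangle\subseteq\mathcal{D}^{-1}\Rightarrow(y,M)\subseteq\mathbb{Z}$ is then immediate by applying the trace: $(y,m)=\operatorname{Tr}^{\mathbb{K}}_{\mathbb{Q}}\langle y,m\rangle\in\operatorname{Tr}^{\mathbb{K}}_{\mathbb{Q}}(\mathcal{D}^{-1})\subseteq\mathbb{Z}$ for every $m\in M$.

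The only implication carrying real content is the converse $(y,M)\subseteq\mathbb{Z}\Rightarrow\langle y,M\rangle\subseteq\mathcal{D}^{-1}$, and here I would bring in the module condition $\mathcal{O}M\subseteq M$ from Definition \ref{Olat}, which has not yet been used. Fix $m\in M$; for every $a\in\mathcal{O}$ we have $am\in M$, so by the $\mathbb{K}$-conjugate-linearity of $\langle\cdot,\cdot\rangle$ in the second variable,
\[\operatorname{Tr}^{\mathbb{K}}_{\mathbb{Q}}\big(\overline{a}\,\langle y,m\rangle\big)=\operatorname{Tr}^{\mathbb{K}}_{\mathbb{Q}}\langle y,am\rangle=(y,am)\in\mathbb{Z}.\]
As $a$ ranges over $\mathcal{O}$ so does $\overline{a}$, hence $\operatorname{Tr}^{\mathbb{K}}_{\mathbb{Q}}\big(\langle y,m\rangle\cdot\mathcal{O}\big)\subseteq\mathbb{Z}$, and the trace-complement description from the first paragraph forces $\langle y,m\rangle\in\mathcal{D}^{-1}$. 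Letting $m$ range over $M$ gives $\langle y,M\rangle\subseteq\mathcal{D}^{-1}$, closing the chain.

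I expect the only genuine obstacle to be the arithmetic statement of the first paragraph: for a non-maximal order $\mathcal{O}\subsetneq\mathcal{O}_{\mathbb{K}}$ one cannot simply quote the clean Dedekind-domain theory of the different, so it has to be verified by hand, but with the explicit $\mathbb{Z}$-basis of $\mathcal{O}$ this is routine. Everything else is a formal consequence of conjugate-symmetry, the conjugation-stability of $\mathcal{O}$ and $\mathcal{D}^{-1}$, and the $\mathcal{O}$-module structure of $M$.
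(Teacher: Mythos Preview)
Your proof is correct and takes a genuinely different, more elementary route than the paper. The paper invokes the projectivity hypothesis from Definition~\ref{Olat}: it chooses a complement $N$ so that $M\oplus N$ is free, extends the pairing trivially, and then verifies the converse implication by an explicit computation in the $\mathbb{Z}$-basis $\{\lambda_i,\tfrac{d+J}{2}\lambda_i\}$ of a free module. Your argument bypasses all of this by using only the bare $\mathcal{O}$-module condition $\mathcal{O}M\subseteq M$: since $am\in M$ for every $a\in\mathcal{O}$, the single assumption $(y,M)\subseteq\mathbb{Z}$ already forces $\operatorname{Tr}^{\mathbb{K}}_{\mathbb{Q}}\big(\mathcal{O}\cdot\langle y,m\rangle\big)\subseteq\mathbb{Z}$ for each $m$, and the trace-complement description of $\mathcal{D}^{-1}$ finishes. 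The arithmetic input you flag---that $\mathcal{D}^{-1}$ is exactly the trace-complement of $\mathcal{O}$---is also stated and used in the paper's proof, so there is no extra burden there. What your approach buys is that projectivity is simply not needed for the lemma (it is needed later, e.g.\ for the projectivity of $M^{*}$ in Corollary~\ref{Mindual}); in fact your argument shows that Remark~\ref{notlat}, which asserts the equivalences ``need not hold as stated without the projective property,'' is too pessimistic for this particular lemma.
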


\begin{proof}
The fact that $\frac{1}{\sqrt{-d}}$ is inverted by conjugation implies that $\mathcal{D}^{-1}$ is stable under conjugation, yielding the equivalence of the first two conditions. Next, we note that $\mathcal{D}^{-1}$ is also the set of elements $r\in\mathbb{K}$ such that $\operatorname{Tr}^{\mathbb{K}}_{\mathbb{Q}}(r\mathcal{O})\subseteq\mathbb{Z}$ (indeed, the $\mathbb{Z}$-basis $\frac{1}{\sqrt{-d}}$ and $\frac{1+\sqrt{-d}}{2}$ of $\mathcal{D}^{-1}$ is the $\mathbb{Q}$-basis of $\mathbb{K}$ that is dual to our $\mathbb{Z}$-basis for $\mathcal{O}$ via this bilinear form taking $r$ and $s$ to $\operatorname{Tr}^{\mathbb{K}}_{\mathbb{Q}}(rs)$), which means that the third condition follows from any of the two others.

For the converse implication, we use the fact that $M$ is a projective $\mathcal{O}$-module. Thus there is a module $N$ such that $M \oplus N$ is free over $\mathcal{O}$, so in particular $N$ is also torsion-free, and we view $N$ as embedded in $W=N\otimes_{\mathcal{O}}\mathbb{K}$ and $M \oplus N$ is contained in $V \oplus W$. We extend the Hermitian pairing $\langle\cdot,\cdot\rangle$ from $V$ to $V \oplus W$ by letting $W$ be isotropic and perpendicular to $V$, hence so is $N$ and the pairing (either Hermitian or orthogonal) with $M \oplus N$ yields the same values as the one with $M$.

But if $M \oplus N$ is the free module $\bigoplus_{i=1}^{r}\mathcal{O}\lambda_{i}$ then it is freely generated over $\mathbb{Z}$ by the $\lambda_{i}$'s and the elements $\frac{d+J}{2}\lambda_{i}$, $1 \leq i \leq r$. Thus $(\mu,M)=(M,\mu)\subseteq\mathbb{Z}$ if and only if $(M \oplus N,\mu)\subseteq\mathbb{Z}$ if and only if $(\lambda_{i},\mu)$ and $\big(\frac{d+J}{2}\lambda_{i},\mu\big)$ are in $\mathbb{Z}$ for every $1 \leq i \leq r$, and the fact that $\mathcal{D}^{-1}$ is an $\mathcal{O}$-module implies that $\langle M,\mu \rangle\subseteq\mathcal{D}^{-1}$ if and only if $\langle M \oplus N,\mu \rangle\subseteq\mathcal{D}^{-1}$ if and only if $\langle\lambda_{i},\mu\rangle\in\mathcal{D}^{-1}$ for any $1 \leq i \leq r$. But condition $(vi)$ of Lemma \ref{pairchar} expresses $\langle\lambda_{i},\mu\rangle$ as $\frac{(\lambda_{i},\mu)}{2}-\sqrt{-d}\frac{(J\lambda_{i},\mu)}{2d}$, which we can write as $-\frac{\sqrt{-d}}{d}\big(\frac{d+J}{2}\lambda_{i},\mu\big)+\frac{1+\sqrt{-d}}{2}\frac{(\lambda_{i},\mu)}{2}$. As this is a presentation of an element of $\mathbb{K}$ in the basis for $\mathcal{D}^{-1}$ over $\mathbb{Z}$ from above, our assumption is that the coefficients are integral, we indeed obtain that desired condition for establishing the remaining implication. This completes the proof of the lemma.
\end{proof}
We shall denote the set of elements satisfying the equivalent conditions from Lemma \ref{duallat} by $M^{*}$, and call it the \emph{dual lattice} of $M$. This equivalence can thus be phrased as the following consequence.
\begin{cor}
The dual of $M$ as an $\mathcal{O}$-lattice coincides with its dual as an orthogonal lattice over $\mathbb{Z}$. This dual, as a subgroup of $V^{*}$, is also an $\mathcal{O}$-module, which contains the image of $M \subseteq V$ under the map from Remark \ref{mapstodual}, with the quotient $M^{*}/M$ being finite and $M^{*}$ being projective over $\mathcal{O}$ in case the pairings are non-degenerate. \label{Mindual}
\end{cor}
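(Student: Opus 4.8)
The statement is Corollary \ref{Mindual}, which collects several facts about $M^*$. I would organize the proof around the four assertions in order: (1) the $\mathcal{O}$-lattice dual equals the $\mathbb{Z}$-orthogonal dual; (2) $M^*$ is an $\mathcal{O}$-submodule of $V^*$; (3) $M^* \supseteq M$ (under the map from Remark \ref{mapstodual}) with finite quotient; (4) $M^*$ is projective over $\mathcal{O}$ when the pairings are non-degenerate. The first point is essentially immediate from Lemma \ref{duallat}: that lemma says the conditions $\langle y, M\rangle \subseteq \mathcal{D}^{-1}$ and $(y,M)\subseteq\mathbb{Z}$ define the same set, and this set is by definition $M^*$ in both the Hermitian and orthogonal senses, so there is nothing more to do. For (2), I would note that $\mathcal{D}^{-1}$ is an $\mathcal{O}$-module, and that $\langle\cdot,\cdot\rangle$ is $\mathbb{K}$-linear in the first slot (or use the trace-pairing characterization: if $(y,M)\subseteq\mathbb{Z}$ and $r\in\mathcal{O}$, then $(ry, M) = (y, \bar r M) \subseteq (y,M)\subseteq \mathbb{Z}$ since $\bar r M\subseteq M$), giving that $M^*$ is closed under the $\mathcal{O}$-action; additivity is clear, so $M^*$ is an $\mathcal{O}$-submodule of $V^*$ (identifying $V^*$ with $V$ via the conjugate-linear isomorphism of Remark \ref{mapstodual} when the pairing is non-degenerate, or leaving $M^*\subseteq V^*$ abstractly otherwise).

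For (3), the containment $M\subseteq M^*$ is precisely the defining condition $\langle M,M\rangle\subseteq\mathcal{D}^{-1}$ from Definition \ref{Olat}, transported through the map $\mu\mapsto\langle\cdot,\mu\rangle$. Finiteness of $M^*/M$ reduces, via the $\mathbb{Z}$-structure, to the standard fact for orthogonal lattices: since $M$ spans $V$ over $\mathbb{Q}$ and $(\cdot,\cdot)$ is non-degenerate, $M$ is a full-rank lattice in the $\mathbb{Q}$-vector space $V$, the dual lattice $M^*$ (with respect to a non-degenerate bilinear form) is also full-rank, and $[M^*:M]$ is finite with order the absolute value of the determinant (Gram) of $M$. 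This uses only linear algebra over $\mathbb{Z}$, together with (1) to identify the two notions of dual. For (4), I would run the same projectivity argument as in the proof of Lemma \ref{duallat}: choose $N$ with $M\oplus N$ free over $\mathcal{O}$, extend the pairing to $V\oplus W$ with $W$ isotropic and $\perp V$; then one must be careful that the "dual" of $M\oplus N$ in this degenerate ambient space is not what we want — instead I would argue directly that $M^*$, being a finitely generated torsion-free $\mathcal{O}$-module, is projective precisely because $M^*$ sits in an exact sequence with $M$ (finite-index, hence $M^*\otimes\mathbb{Q}\cong V^*$), and for the *maximal* order this is automatic, while for a non-maximal order I would instead verify projectivity by exhibiting $M^*$ as a direct summand of a free module. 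The cleanest route: the form gives an $\mathcal{O}$-conjugate-linear isomorphism $V\to V^*$ sending $M^*$ isomorphically onto an $\mathcal{O}$-lattice $M^\vee$ in $V$ with $M\subseteq M^\vee$; then $M^\vee$ is an $\mathcal{O}$-lattice in the sense of Definition \ref{Olat} provided it is projective, and projectivity of $M^\vee$ follows because $M^\vee$ is reflexive and over an order in a number field finitely generated torsion-free reflexive modules over Gorenstein orders are projective — but this is more than I want to invoke.

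**Main obstacle.** The genuinely delicate point is (4), projectivity of $M^*$ over a *non-maximal* order $\mathcal{O}$. Over $\mathcal{O}_{\mathbb{K}}$ (a Dedekind domain) every finitely generated torsion-free module is projective, so there is nothing to prove; but over $\mathcal{O}\subsetneq\mathcal{O}_{\mathbb{K}}$, Remark \ref{projlat} explicitly warns that torsion-free finitely generated need not imply projective. I expect the correct argument is: $M$ projective means $M\oplus N$ is free, say $\cong\mathcal{O}^r$, and the non-degenerate pairing identifies the $\mathbb{Z}$-dual of $\mathcal{O}^r$ inside $V^*\oplus W^*$ with $(\mathcal{D}^{-1})^r$ in a suitable sense; the point is that $(\mathcal{D}^{-1})^r \cong \mathcal{D}^{-1}\otimes_{\mathcal{O}}\mathcal{O}^r$ is projective (as $\mathcal{D}^{-1}$ is an invertible, hence projective, $\mathcal{O}$-module — even for non-maximal orders $\mathcal{D}^{-1}=\frac{1}{\sqrt{-d}}\mathcal{O}$ is free of rank one!), and $M^*$ is a direct summand of it because the decomposition $V\oplus W$ is orthogonal and $M^*\oplus N^* \cong (M\oplus N)^* \cong (\mathcal{D}^{-1})^r \cong \mathcal{O}^r$. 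That is, $M^*$ is a summand of the free module $(M\oplus N)^*$, hence projective. Since $\mathcal{D}^{-1}$ is in fact free of rank one over $\mathcal{O}$ by the normalization in the paragraph before Definition \ref{Olat}, this summand argument goes through verbatim as in Lemma \ref{duallat}, and I would present (4) as a short remark that the same $N$ and the same free presentation used there show $M^*\oplus N^*$ is free, whence $M^*$ is projective. The remaining assertions are routine and I would dispatch them in a sentence or two each.
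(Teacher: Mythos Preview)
Your proposal is correct and lands on essentially the same argument as the paper for every point. The paper is more direct on projectivity: rather than resurrecting the auxiliary module $N$ from Lemma~\ref{duallat} or invoking reflexivity and Gorenstein orders, it simply observes that when the pairing is non-degenerate $M^{*}$ is identified with $\operatorname{Hom}_{\mathcal{O}}(M,\mathcal{D}^{-1})\cong\operatorname{Hom}_{\mathcal{O}}(M,\mathcal{O})$ (since $\mathcal{D}^{-1}=\frac{1}{\sqrt{-d}}\mathcal{O}$ is free of rank~$1$), and the $\mathcal{O}$-dual of a finitely generated projective module is projective; this one-line identification also gives the finiteness of $M^{*}/M$ (same $\mathbb{Z}$-rank) without appealing to Gram determinants. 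Your unfolded version via $M^{*}\oplus N^{*}\cong(\mathcal{D}^{-1})^{r}$ is exactly this same fact with the summand $N$ made explicit, so the two routes coincide once you strip away the detours.
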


\begin{proof}
The coincidence of duals is just the content of Lemma \ref{duallat}, and as the action of $\mathcal{O}$ on $V$ preserves $M$ and $\mathcal{D}^{-1}$ is an $\mathcal{O}$-module in $\mathbb{K}$, it follows that $M^{*}$ is also preserved under this action, making it an $\mathcal{O}$-module. The fact that it contains the image of $M$ follows from the assumption that $\langle M,M \rangle\subseteq\mathcal{D}^{-1}$ in Definition \ref{Olat}. Finally, as when the pairing is non-degenerate, $M^{*}$ coincides with the subgroup $\operatorname{Hom}_{\mathcal{O}}(M,\mathcal{D}^{-1})$ (or just $\operatorname{Hom}_{\mathbb{Z}}(M,\mathbb{Z})$) of $V^{*}$, which is finitely generated of the same rank as $M$ and its injective image, the finiteness of $M^{*}/M$ in this case follows. This also combines with the fact that $M$ is projective of finite rank over $\mathcal{O}$ and $\mathcal{D}^{-1}=\frac{1}{\sqrt{-d}}\mathcal{O}$ is free of rank 1 over $\mathcal{O}$ to show that $M^{*}$ is also projective. This proves the corollary.
\end{proof}

\begin{rmk}
The proof of Lemma \ref{duallat} used the projectivity assumption for showing that all the conditions there are equivalent. We shall use the module from Remark \ref{projlat} to exemplify what happens without it. Assume that $\lambda$ and $\mu$ there are orthogonal, with $|\lambda|^{2}$ and $|\mu|^{2}$ non-zero inside $\mathcal{D}^{-1}$ (or more precisely, its intersection with $\mathbb{Q}$, namely $\frac{1}{2}\mathbb{Z}$), then the $\mathbb{Z}$-dual of $M$ consists of vectors $a\lambda+b\mu$ with $a$ and $b$ from $\mathbb{K}$ such that $\operatorname{Tr}^{\mathbb{K}}_{\mathbb{Q}}(a)|\lambda|^{2}$ and $\operatorname{Tr}^{\mathbb{K}}_{\mathbb{Q}}(b)|\mu|^{2}$ are in $\mathbb{Z}$. This amounts to $a|\lambda|^{2}$ being in $\mathcal{D}^{-1}$, while $b|\mu|^{2}$ has to be in the usual inverse different $\mathcal{D}_{\mathbb{K}}^{-1}$ of $\mathcal{O}_{\mathbb{K}}$. Thus $M$ only has an $\mathcal{O}$-module structure (and not an $\mathcal{O}_{\mathbb{K}}$-module structure), but $M^{*}$ cannot be described in terms of the values of the Hermitian pairing with $M$ in the non-projective case, showing why the equivalences in Lemma \ref{duallat} need not hold as stated without the projective property from Definition \ref{Olat}. \label{notlat}
\end{rmk}

\section{Hermitian Jacobi--Siegel Theta Functions \label{HJTheta}}

As we will be working in general, indefinite setting, we shall be needing the symmetric space of the corresponding unitary group. Let $\big(V,\langle\cdot,\cdot\rangle\big)$ be a Hermitian space over $\mathbb{K}$ as in Definition \ref{pairdef}, which we henceforth assume to be non-degenerate of some signature $(b_{+},b_{-})$. Then the associated bilinear pairing $(\cdot,\cdot)$ is also non-degenerate, of $V_{\mathbb{R}}$ is $(2b_{+},2b_{-})$, and by considering $V_{\mathbb{R}}$ as a real orthogonal space, we obtain the associated Grassmannian
\begin{equation}
\operatorname{Gr}(V_{\mathbb{R}}):=\big\{V_{\mathbb{R}}=v_{+} \oplus v_{-}\big|v_{+}\gg0,\ v_{-}\ll0,\ v_{+} \perp v_{-}\big\}, \label{Grassdef}
\end{equation}
which is a real manifold of dimension $4b_{+}b_{-}$.

By viewing $V_{\mathbb{R}}$ as a unitary space (with $\sqrt{-d}\in\mathbb{K}\subseteq\mathbb{C}$ acting as the endomorphism $J$), the symmetric space consists of similar decompositions, but of spaces over $\mathbb{C}$ (namely $\mathbb{R}\otimes_{\mathbb{Q}}\mathbb{K}$), thus producing a complex manifold of dimension $b_{+}b_{-}$ (thus with real dimension $2b_{+}b_{-}$), which embeds into the manifold from Equation \eqref{Grassdef} (as in, e.g., \cite{[Ho2]}). In fact, being closed under this action of $\mathbb{C}$ amounts to being preserved under $J$, and conditions $(ii)$ and $(iv)$ of Lemma \ref{pairchar} immediately imply that the orthogonal complement of a $J$-invariant subspace is also $J$-invariant. Thus our complex manifold can be identified with the submanifold
\begin{equation}
\operatorname{Gr}^{J}(V_{\mathbb{R}}):=\big\{V_{\mathbb{R}}=v_{+} \oplus v_{-}\big|v_{+}\gg0,\ v_{-}\ll0,\ v_{+} \perp v_{-},\ Jv_{+} \subseteq v_{+},\ Jv_{-} \subseteq v_{-}\big\}, \label{GrassJ}
\end{equation}
of the one from Equation \eqref{Grassdef}, whose elements are characterized by $J$-invariance.

Given an element of $\operatorname{Gr}^{J}(V_{\mathbb{R}})$ from Equation \eqref{GrassJ}, any element $\lambda$ in $V$ (or in $V_{\mathbb{R}}$, or of the two summands $V_{\mathbb{C}}$ and $V_{\overline{\mathbb{C}}}$ from Definition \ref{tensprod}, or of their direct sum $V\otimes_{\mathbb{Q}}\mathbb{C}$---see part $(i)$ of Lemma \ref{VQdecom}) decomposes as the sum of its orthogonal projections $\lambda_{v_{+}}$ and $\lambda_{v_{-}}$ onto the spaces $v_{+}$ and $v_{-}$ respectively. This decomposition is the same regardless of whether we consider $v$ as a unitary decomposition, or as an orthogonal one as in Equations \eqref{Grassdef} and \eqref{GrassJ}. We thus have that $|\lambda|^{2}=\frac{\lambda^{2}}{2}$ is the sum of $|\lambda_{v_{+}}|^{2}=\lambda_{v_{+}}^{2}/2\geq0$ and of $|\lambda_{v_{-}}|^{2}=\lambda_{v_{-}}^{2}/2\leq0$, with $\langle\lambda_{v_{+}},\lambda_{v_{-}}\rangle=(\lambda_{v_{+}},\lambda_{v_{-}})=0$.

\smallskip

Let now $M$ be an even $\mathcal{O}$-lattice in $V$ as in Definition \ref{Olat}, with dual $M^{*}$ as in Lemma \ref{duallat} and Corollary \ref{Mindual}, and we shall henceforth denote the quotient $M^{*}/M$ by $D_{M}$. We recall that a matrix $A=\binom{a\ \ b}{c\ \ d}\in\operatorname{SL}_{2}(\mathbb{R})$ acts on the upper half-plane $\mathcal{H}:=\{\tau=x+iy\in\mathbb{C}|y>0\}$ by the formula $A\tau=\frac{a\tau+b}{c\tau+d}$, with the factor of automorphy $j(A,\tau):=c\tau+d$, and we use the notation $T:=\binom{1\ \ 1}{0\ \ 1}$ and $S:=\binom{0\ \ -1}{1\ \ \ 0\ }$ for the generators of the group $\operatorname{SL}_{2}(\mathbb{Z})$, as usual. Now, on $M$ as an even lattice over $\mathbb{Z}$, and on $M^{*}$, the pairing $(\cdot,\cdot)$ and the quadratic form $\lambda\mapsto\frac{\lambda^{2}}{2}$ yield well-defined $\mathbb{Q}/\mathbb{Z}$-valued functions on the quotient $D_{M}$, for which we shall use the same notation. To $M$ one then attaches the \emph{Weil representation} of $\operatorname{SL}_{2}(\mathbb{Z})$ on $\mathbb{C}[D_{M}]$, which is defined on the generators by
\begin{equation}
\rho_{M}(T)\mathfrak{e}_{\gamma}=\mathbf{e}\big(\tfrac{\gamma^{2}}{2}\big)\mathfrak{e}_{\gamma},\qquad\rho_{M}(S)\mathfrak{e}_{\gamma}=\frac{i^{b_{-}-b_{+}}}{\sqrt{|D_{M}|}}\sum_{\delta \in D_{M}}\mathbf{e}\big(-(\gamma,\delta)\big)\mathfrak{e}_{\delta}, \label{Weildef}
\end{equation}
where we have introduced the shorthand $\mathbf{e}(z):=e^{2\pi iz}$ for any $z$ from $\mathbb{C}$ or from $\mathbb{C}/\mathbb{Z}$ (for the root of unity in $\rho_{M}(S)$, recall that the signature of $M$ over $\mathbb{Z}$ is $(2b_{+},2b_{-})$, and the fact that $M$ has even rank over $\mathbb{Z}$ implies that this is indeed a representation of $\operatorname{SL}_{2}(\mathbb{Z})$, and no metaplectic double covers have to be considered).

\smallskip

Hermitian theta functions and Jacobi forms play a role in the theory of Hermitian modular forms, where the natural groups acting are the unitary groups of the following type.
\begin{defn}
For any $g\geq1$, the unitary group $\operatorname{U}(g,g)$ is the group of $2g\times2g$ matrices $A$ in the $\sqrt{-d}$-extension of the ring of definition which satisfy $A\binom{0\ \ -I_{g}}{I_{g}\ \ \ 0\ }A^{*}=\binom{0\ \ -I_{g}}{I_{g}\ \ \ 0\ }$, where $A^{*}$ is the conjugate transpose $\overline{A}^{t}$ of $A$. In particular, for $\operatorname{U}(g,g)(\mathbb{R})$ we consider matrices in $\operatorname{GL}_{2g}(\mathbb{C})$ satisfying this equality, the group $\operatorname{U}(g,g)(\mathbb{Q})$ contains matrices from $\operatorname{GL}_{2g}(\mathbb{K})$ with this property, and for $\operatorname{U}(g,g)(\mathbb{Z})$, the entries of the matrices have to come from $\mathcal{O}_{\mathbb{K}}$. The group $\operatorname{SU}(g,g)$ is intersection with the corresponding $\operatorname{SL}_{2g}$ group. \label{unitgrp}
\end{defn}

The Hermitian modular forms containing Hermitian Jacobi forms in their Fourier--Jacobi expansion involve the groups $\operatorname{U}(g,g)$ and $\operatorname{SU}(g,g)$ from Definition \ref{unitgrp} with $g\geq2$. However, we will require the groups $\operatorname{U}(1,1)$ and $\operatorname{SU}(1,1)$, especially over $\mathbb{Z}$. We now make the following identifications of groups, already alluded to in \cite{[Ho1]} and others.
\begin{lem}
We have $\operatorname{SU}(1,1)(\mathbb{Z})=\operatorname{SL}_{2}(\mathbb{Z})$, and $\operatorname{U}(1,1)(\mathbb{Z})=\mu_{\mathbb{K}}\cdot\operatorname{SL}_{2}(\mathbb{Z})$, where $\mu_{\mathbb{K}}$ is the group of roots of unity in $\mathbb{K}$, viewed as scalar $2\times2$ matrices. \label{SU11SL2}
\end{lem}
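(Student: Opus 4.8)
The plan is to unwind Definition \ref{unitgrp} in the case $g=1$ and see that the unitary condition, which a priori lives over $\mathcal{O}_{\mathbb{K}}$, collapses to a rationality/integrality condition that forces the entries into $\mathbb{Z}$ up to a scalar root of unity. Write $A=\binom{a\ \ b}{c\ \ d}$ with entries in $\mathcal{O}_{\mathbb{K}}$ (here the ring of definition is $\mathcal{O}_{\mathbb{K}}$, so $A^*=\overline{A}^t$ uses the complex conjugation of $\mathbb{K}$), and spell out the equation $A\binom{0\ -1}{1\ \ 0}A^*=\binom{0\ -1}{1\ \ 0}$. Comparing entries, I expect to get $a\overline{d}-b\overline{c}=1$, together with $a\overline{c}=\overline{a}c$ (i.e.\ $a\overline{c}\in\mathbb{Q}$, hence $a\overline{c}\in\mathbb{Z}$ since $a,c\in\mathcal{O}_{\mathbb{K}}$) and similarly $b\overline{d}=\overline{b}d\in\mathbb{Z}$. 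In other words, the $2\times2$ unitary relation for $g=1$ is equivalent to $\det A$ having absolute value $1$ in a suitable sense and the two columns being "real up to the Hermitian pairing".

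From there I would argue as follows. Taking determinants (or norms) in $a\overline{d}-b\overline{c}=1$ shows $|\det A|=1$, and combined with $\det A\in\mathcal{O}_{\mathbb{K}}$ we get $\det A\in\mathcal{O}_{\mathbb{K}}^\times$, which for an imaginary quadratic field is exactly $\mu_{\mathbb{K}}$. Now let $u:=\det A\in\mu_{\mathbb{K}}$ and consider $B:=u^{-1}\cdot(\text{cofactor stuff})$, or more directly: scale so as to reduce to the case $\det A=1$, i.e.\ to $\operatorname{SU}(1,1)(\mathbb{Z})$. For that case I want to show the entries are forced to be rational, hence in $\mathbb{Z}$. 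The relations $a\overline{c}\in\mathbb{Z}$, $b\overline{d}\in\mathbb{Z}$, and $a\overline{d}-b\overline{c}=1$ should, by a short elimination, pin down $a,b,c,d\in\mathbb{Q}$ (one can for instance multiply the determinant relation by $\overline{a}$ and use $\overline{a}b\overline{c}=\overline{a}\,\overline{b}c$ type identities to conclude that the imaginary parts vanish); once rational and in $\mathcal{O}_{\mathbb{K}}$ they lie in $\mathbb{Z}$, and $\det A=ad-bc=1$ gives exactly $\operatorname{SL}_2(\mathbb{Z})$. Conversely every element of $\operatorname{SL}_2(\mathbb{Z})$ obviously satisfies the unitary relation (complex conjugation acts trivially on $\mathbb{Z}$, so $A^*=A^t$ and the relation is the usual symplectic one for $2\times 2$), giving $\operatorname{SU}(1,1)(\mathbb{Z})=\operatorname{SL}_2(\mathbb{Z})$.

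Finally, for the full group $\operatorname{U}(1,1)(\mathbb{Z})$: given $A$ with $\det A=u\in\mu_{\mathbb{K}}$, observe that the scalar matrix $\zeta I_2$ lies in $\operatorname{U}(1,1)(\mathbb{Z})$ for any $\zeta\in\mu_{\mathbb{K}}$ (since $(\zeta I)\binom{0\ -1}{1\ \ 0}(\overline\zeta I)=|\zeta|^2\binom{0\ -1}{1\ \ 0}=\binom{0\ -1}{1\ \ 0}$) and has determinant $\zeta^2$. Picking $\zeta$ with $\zeta^2=u^{-1}$ (possible since squaring is surjective on the cyclic group $\mu_{\mathbb{K}}$, of order $2$, $4$, or $6$), the matrix $\zeta A$ has determinant $1$, lies in $\operatorname{U}(1,1)(\mathbb{Z})$, hence in $\operatorname{SU}(1,1)(\mathbb{Z})=\operatorname{SL}_2(\mathbb{Z})$; therefore $A\in\zeta^{-1}\operatorname{SL}_2(\mathbb{Z})\subseteq\mu_{\mathbb{K}}\cdot\operatorname{SL}_2(\mathbb{Z})$. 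The reverse inclusion $\mu_{\mathbb{K}}\cdot\operatorname{SL}_2(\mathbb{Z})\subseteq\operatorname{U}(1,1)(\mathbb{Z})$ is immediate from the scalar computation above together with $\operatorname{SL}_2(\mathbb{Z})\subseteq\operatorname{SU}(1,1)(\mathbb{Z})$. This proves the lemma.

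\medskip

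\emph{Main obstacle.} The only genuinely fiddly step is the elimination showing that the entries of an element of $\operatorname{SU}(1,1)(\mathbb{Z})$ are rational (equivalently, that $a\overline c\in\mathbb{Z}$, $b\overline d\in\mathbb{Z}$ together with $a\overline d-b\overline c=1$ kill the imaginary parts). I expect this to come out cleanly by writing each entry in the $\mathbb{Z}$-basis $\{1,\tfrac{d+\sqrt{d}}{2}\}$ of $\mathcal{O}_{\mathbb{K}}$ and tracking the coefficient of $\sqrt{-d}$ in the three relations, but one must be slightly careful because $\mathcal{O}$ here is a general order (for the statement about $\operatorname{U}(1,1)(\mathbb Z)$ one works over $\mathcal{O}_{\mathbb K}$, matching Definition \ref{unitgrp}); the edge cases are exactly the orders with extra units, $d=1$ and $d=3$, which is why the surjectivity-of-squaring remark is phrased for the small cyclic groups $\mu_{\mathbb{K}}$.
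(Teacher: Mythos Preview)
Your reduction from $\operatorname{U}(1,1)(\mathbb{Z})$ to $\operatorname{SU}(1,1)(\mathbb{Z})$ has a genuine gap: squaring is \emph{not} surjective on $\mu_{\mathbb{K}}$. Since $\mu_{\mathbb{K}}$ is cyclic of order $2$, $4$, or $6$ (all even), the squaring map has index-$2$ image. So given $A\in\operatorname{U}(1,1)(\mathbb{Z})$ with $\det A=u\in\mu_{\mathbb{K}}$, there is a priori no $\zeta\in\mu_{\mathbb{K}}$ with $\zeta^{2}=u^{-1}$; for instance, over $\mathbb{Q}(i)$ you would need to rule out $\det A=i$ before your scaling step makes sense. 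The fact that $\det A$ always \emph{is} a square in $\mu_{\mathbb{K}}$ is a consequence of the lemma, not an input to it, so this step is circular as written.

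The paper sidesteps this by never trying to invert the determinant map. It uses \emph{both} sets of relations---$a\overline{b},\,c\overline{d}\in\mathbb{R}$ from $AJA^{*}=J$ and $\overline{a}c,\,\overline{b}d\in\mathbb{R}$ from $A^{*}JA=J$---to conclude that all four entries lie on a single real line through the origin in $\mathbb{C}$. Over $\mathcal{O}_{\mathbb{K}}$ this puts them in a rank-one $\mathbb{Z}$-module $\alpha\mathbb{Z}$; then $\alpha^{2}$ divides the unit $ad-bc$, forcing $\alpha\in\mu_{\mathbb{K}}$, and one reads off $A=\alpha A'$ with $A'\in\operatorname{SL}_{2}(\mathbb{Z})$ directly (using $a\overline{d}-b\overline{c}=|\alpha|^{2}\det A'=1$). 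Note incidentally that the diagonal relations you wrote down ($a\overline{c},\,b\overline{d}$ real) come from $A^{*}JA=J$, not from $AJA^{*}=J$; you need the other pair as well to get the ``single line'' conclusion, and with all four in hand the elimination you flag as the main obstacle becomes the easy part.
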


\begin{proof}
If $A=\binom{a\ \ b}{c\ \ d}\in\operatorname{U}(1,1)(\mathbb{R})$ then the complex numbers $a\overline{b}$ and $c\overline{d}$ are real. Since $A^{*}$ also lies in that group, we deduce that $\overline{a}c$ and $\overline{b}d$ are real as well. It follows that $a$, $b$, $c$, and $d$ lie in a single real line inside $\mathbb{C}$, and we can thus write $A$ as the product of a scalar from $S^{1}$ times a real matrix. Moreover, the $\operatorname{U}(1,1)$-condition also implies that $a\overline{d}-b\overline{c}=1$, and as the scalar multiplier from $S^{1}$ does not affect the latter equality, we deduce that the real matrix is from $\operatorname{SL}_{2}(\mathbb{R})$. As one easily verifies that matrices from $\operatorname{SL}_{2}(\mathbb{R})$ and scalar matrices from $S^{1}$ both lie in $\operatorname{U}(1,1)$ (via Definition \ref{unitgrp}), it follows that $\operatorname{U}(1,1)(\mathbb{R})=S^{1}\cdot\operatorname{SL}_{2}(\mathbb{R})$ (this is not a direct product, as $-I$ lies in the intersection of these groups).

Now, the determinant of an element from $S^{1}\cdot\operatorname{SL}_{2}(\mathbb{R})$ is just the square of the scalar from $S^{1}$, yielding that $\operatorname{SU}(1,1)(\mathbb{R})=\operatorname{SL}_{2}(\mathbb{R})$. In particular, by intersecting with $\operatorname{SL}_{2}(\mathcal{O}_{\mathbb{K}})$, the first assertion follows. For the second one, we saw that if $A=\binom{a\ \ b}{c\ \ d}\in\operatorname{U}(1,1)(\mathbb{Z})$ then $a$, $b$, $c$, and $d$ from $\mathcal{O}_{\mathbb{K}}$ all lie in one real line, hence they are all in the same additive subgroup of $\mathcal{O}_{\mathbb{K}}$, and if $\alpha$ generates this subgroup then they all lie in the ideal generated by $\alpha$. But then the determinant $ad-bc$, which is invertible in $\mathcal{O}_{\mathbb{K}}$ for a matrix from $\operatorname{GL}_{2}(\mathcal{O}_{\mathbb{K}})$, must be a multiple of $\alpha^{2}$, meaning that $\alpha$ has to be a unit in $\mathcal{O}_{\mathbb{K}}$ hence a root of unity. But then this $\alpha$ lies in $S^{1}$, and we obtain that our matrix is in $\mu_{\mathbb{K}}\cdot\operatorname{SL}_{2}(\mathbb{Z})$, as desired. This proves the lemma.
\end{proof}
Note that over $\mathbb{Q}$ we get $\operatorname{SU}(1,1)(\mathbb{Q})=\operatorname{SL}_{2}(\mathbb{Q})$ as well (in fact, $\operatorname{SU}(1,1)$ is the same as $\operatorname{SL}_{2}$ as algebraic groups over $\mathbb{Z}$), but $\operatorname{U}(1,1)(\mathbb{Q})$ does not decompose as the product of $\operatorname{SL}_{2}(\mathbb{Q})$ and the group of norm 1 elements of $\mathbb{K}$.

Via Definition \ref{unitgrp} and Lemma \ref{SU11SL2}, we may view the Weil representation from Equation \eqref{Weildef} as a representation of $\operatorname{SU}(1,1)(\mathbb{Z})$. Moreover, when the discriminant of $\mathbb{K}$ is not $-3$ or $-4$ (or more precisely, when $\mathcal{O}$ is not $\mathbb{Z}\big[\frac{1+\sqrt{-3}}{2}\big]$ or $\mathbb{Z}[i]$), the group of roots of unity in $\mathcal{O}_{\mathbb{K}}$ (or in $\mathcal{O}$) is just $\pm1$, so that this representation is of $\operatorname{U}(1,1)(\mathbb{Z})$, when the order in question is $\mathcal{O}$, over which $M$ is projective. However, it will be useful for us to extend the representation to $\operatorname{U}(1,1)(\mathbb{Z})$ also for the two orders containing more roots of unity, as follows.
\begin{prop}
When $M$ is an $\mathcal{O}$-lattice, any $\xi$ in the group $\mu(\mathcal{O})$ of roots of unity in $\mathcal{O}$ acts on $D_{M}$, and by setting $\rho_{M}(\xi)\mathfrak{e}_{\gamma}=\xi^{b_{-}-b_{+}}\mathfrak{e}_{\overline{\xi}\gamma}$ we obtain an extension of the representation from Equation \eqref{Weildef} to $\operatorname{U}(1,1)(\mathbb{Z})$. \label{extWeil}
\end{prop}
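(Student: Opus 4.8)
The plan is to check directly that the prescribed formula, combined with the Weil representation of Equation \eqref{Weildef}, assembles into a well-defined representation of $\operatorname{U}(1,1)(\mathbb{Z})$, exploiting the decomposition $\operatorname{U}(1,1)(\mathbb{Z})=\mu_{\mathbb{K}}\cdot\operatorname{SL}_{2}(\mathbb{Z})$ furnished by Lemma \ref{SU11SL2} (the content being the case where $\mu(\mathcal{O})=\mu_{\mathbb{K}}$ is strictly larger than $\{\pm I\}$; otherwise $\mu(\mathcal{O})=\{\pm I\}\subseteq\operatorname{SL}_{2}(\mathbb{Z})$ and there is nothing new).

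First I would settle the action on $D_{M}$. A root of unity $\xi\in\mu(\mathcal{O})$ is a unit in $\mathcal{O}$, and $\overline{\xi}=\xi^{-1}$ since it has absolute value $1$ in $\mathbb{C}$, so both $\xi$ and $\overline{\xi}$ lie in $\mathcal{O}$; hence multiplication by $\xi$ is an automorphism of the $\mathcal{O}$-module $M$, and by Corollary \ref{Mindual} also of $M^{*}$, so it descends to an automorphism of $D_{M}=M^{*}/M$. Since $\xi\overline{\xi}=1$, for $\gamma\in M^{*}$ we have $|\xi\gamma|^{2}=\xi\overline{\xi}|\gamma|^{2}=|\gamma|^{2}$, so this automorphism preserves the $\mathbb{Q}/\mathbb{Z}$-valued quadratic form $\gamma\mapsto\frac{\gamma^{2}}{2}$ on $D_{M}$, and by polarization also the bilinear form. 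Finally, using that $\langle\cdot,\cdot\rangle$ is $\mathbb{K}$-linear in the first slot and $\mathbb{K}$-conjugate-linear in the second one gives $\langle\overline{\xi}\gamma,\delta\rangle=\overline{\xi}\langle\gamma,\delta\rangle=\langle\gamma,\xi\delta\rangle$, whence $(\overline{\xi}\gamma,\delta)=(\gamma,\xi\delta)$; this ``adjointness'' of multiplication by $\xi$ and by $\overline{\xi}$ is the key relation for the computation with $\rho_{M}(S)$.

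Next I would verify the group-theoretic compatibilities. That $\xi\mapsto\rho_{M}(\xi)$ is a homomorphism on the cyclic group $\mu(\mathcal{O})$ is immediate from the formula, since $\rho_{M}(\xi_{1})\rho_{M}(\xi_{2})\mathfrak{e}_{\gamma}=(\xi_{1}\xi_{2})^{b_{-}-b_{+}}\mathfrak{e}_{\overline{\xi_{1}\xi_{2}}\gamma}$ and $\rho_{M}(1)=\operatorname{Id}$. Then I would check that $\rho_{M}(\xi)$ commutes with $\rho_{M}(T)$ and with $\rho_{M}(S)$, hence with all of $\rho_{M}(\operatorname{SL}_{2}(\mathbb{Z}))$ (matching the centrality of the scalars in $\operatorname{U}(1,1)$): the commutation with $\rho_{M}(T)$ reduces to $\mathbf{e}\big(\tfrac{(\overline{\xi}\gamma)^{2}}{2}\big)=\mathbf{e}\big(\tfrac{\gamma^{2}}{2}\big)$, which is the form-invariance from the previous paragraph, while the commutation with $\rho_{M}(S)$ follows after the substitution $\delta\mapsto\overline{\xi}\delta$ in the Gauss sum defining $\rho_{M}(S)$, using $(\gamma,\xi\delta)=(\overline{\xi}\gamma,\delta)$. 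Lastly, since $-I=S^{2}$ in $\operatorname{SL}_{2}(\mathbb{Z})$, the standard evaluation $\rho_{M}(S)^{2}\mathfrak{e}_{\gamma}=(-1)^{b_{-}-b_{+}}\mathfrak{e}_{-\gamma}$ (a Gauss-sum computation using the non-degeneracy of the bilinear form on the discriminant form $D_{M}$) equals the operator $\mathfrak{e}_{\gamma}\mapsto(-1)^{b_{-}-b_{+}}\mathfrak{e}_{-\gamma}$ that our formula assigns to the scalar matrix $-I\in\mu_{\mathbb{K}}$; so the two copies of $-I$ in $\mu_{\mathbb{K}}\cap\operatorname{SL}_{2}(\mathbb{Z})=\{\pm I\}$ act by one and the same operator.

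To conclude, I would invoke Lemma \ref{SU11SL2}: $\operatorname{U}(1,1)(\mathbb{Z})$ is the amalgam of $\operatorname{SL}_{2}(\mathbb{Z})$ and the central subgroup $\mu(\mathcal{O})$ of scalar matrices, glued along the common subgroup $\{\pm I\}$. The three facts just established — the homomorphism property on $\mu(\mathcal{O})$, the commutation with $\rho_{M}(\operatorname{SL}_{2}(\mathbb{Z}))$, and the coincidence of the two actions of $-I$ — are exactly what is needed for $\rho_{M}$ and the new formula to patch into a single representation of $\operatorname{U}(1,1)(\mathbb{Z})$ extending the one from Equation \eqref{Weildef}. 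I expect the only point requiring genuine care (though still a short calculation) to be this last item: confirming that the scalar-matrix formula and the $\operatorname{SL}_{2}(\mathbb{Z})$-representation assign the same operator to $-I$, i.e.\ the $\rho_{M}(S)^{2}$ evaluation, since everything else is routine once the adjointness $(\overline{\xi}\gamma,\delta)=(\gamma,\xi\delta)$ is in hand.
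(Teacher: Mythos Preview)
Your proposal is correct and follows essentially the same route as the paper's proof: establish the action on $D_{M}$ via Corollary \ref{Mindual}, verify the homomorphism property on $\mu(\mathcal{O})$, check the coincidence at $-I$, and prove commutation with $\rho_{M}(T)$ and $\rho_{M}(S)$ using the invariance of the quadratic form under multiplication by $\overline{\xi}$ and a change of variable $\delta\mapsto\overline{\xi}\delta$ in the Gauss sum. The only cosmetic difference is that the paper phrases the key pairing identity as $(\overline{\xi}\gamma,\overline{\xi}\delta)=(\gamma,\delta)$ rather than your equivalent adjointness form $(\overline{\xi}\gamma,\delta)=(\gamma,\xi\delta)$, and it simply quotes the known formula $\rho_{M}(-I)\mathfrak{e}_{\gamma}=(-1)^{b_{-}-b_{+}}\mathfrak{e}_{-\gamma}$ rather than reducing it to $\rho_{M}(S)^{2}$.
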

We remark again that the signature of $M$ over $\mathbb{Z}$ is even, meaning that no double covers are required, and the extension from Proposition \ref{extWeil} is indeed to the group $\operatorname{U}(1,1)(\mathbb{Z})$.
\begin{proof}
As $\xi\in\mathcal{O}$ acts both on $M$ and on $M^{*}$ (by Corollary \ref{Mindual}), we obtain its action also on $D_{M}$. The fact that our formula for $\rho_{M}$ yields a representation of $\mu(\mathcal{O})$ is evident, and we first need to make sure that on the intersection $\pm I$, the formula resulting from Equation \eqref{Weildef} coincides with the one we defined. But with the even signature $2b_{+}-2b_{-}$ of $M$, the we have $\rho_{M}(-I)\mathfrak{e}_{\gamma}=(-1)^{b_{-}-b_{+}}\mathfrak{e}_{-\gamma}$, which coincides with $\rho_{M}(-1)$ for $-1\in\mu(\mathcal{O})$, and the action of $I$ (or 1) is trivial in both.

As the multipliers $\operatorname{SU}(1,1)(\mathbb{Z})=\operatorname{SL}_{2}$ and $\mu(\mathcal{O})$ commute inside $\operatorname{U}(1,1)(\mathbb{Z})$, we will obtain a representation of $\operatorname{U}(1,1)(\mathbb{Z})$ with these formulae once we show that $\rho_{M}(\xi)$ commutes with $\rho_{M}(A)$ for every $A\in\operatorname{SL}_{2}$, and it suffices to verify this for $A=T$ and $A=S$. We first note that $\frac{\gamma^{2}}{2}$ is the $\mathbb{Q}/\mathbb{Z}$-image of $\frac{\lambda^{2}}{2}=|\lambda|^{2}$ for $\lambda \in M+\gamma$, and that $|\overline{\xi}\lambda|^{2}=\langle\overline{\xi}\lambda,\overline{\xi}\lambda\rangle=|\overline{\xi}|^{2}\langle\lambda,\lambda\rangle=|\lambda|^{2}$ (with $|\overline{\xi}|^{2}=1$ as a root of unity in $\mathbb{K}$). This means that $\frac{(\overline{\xi}\gamma)^{2}}{2}=\frac{\gamma^{2}}{2}\in\mathbb{Q}/\mathbb{Z}$ for every $\gamma \in D_{M}$, and therefore $\rho_{M}(T)$ multiplies both $\mathfrak{e}_{\gamma}$ and $\xi^{b_{-}-b_{+}}\mathfrak{e}_{\overline{\xi}\gamma}=\rho_{M}(\xi)\mathfrak{e}_{\gamma}$ by the same scalar $\mathbf{e}\big(\frac{\gamma^{2}}{2}\big)$, whence it commutes with $\rho_{M}(\xi)$ on all the generators of $\mathbb{C}[D_{M}]$.

For $S$, applying Equation \eqref{Weildef} to $\rho_{M}(\xi)\mathfrak{e}_{\gamma}=\xi^{b_{-}-b_{+}}\mathfrak{e}_{\overline{\xi}\gamma}$ produces \[\frac{(i\xi)^{b_{-}-b_{+}}}{\sqrt{|D_{M}|}}\sum_{\delta \in D_{M}}\mathbf{e}\big(-(\overline{\xi}\gamma,\delta)\big)\mathfrak{e}_{\delta}=\frac{(i\xi)^{b_{-}-b_{+}}}{\sqrt{|D_{M}|}}\sum_{\delta \in D_{M}}\mathbf{e}\big(-(\overline{\xi}\gamma,\overline{\xi}\delta)\big)\mathfrak{e}_{\overline{\xi}\delta}\] for any $\gamma \in D_{M}$, where we replaced the summation index $\delta$ by $\overline{\xi}\delta$. But as the same argument from the previous paragraph shows that $\langle\overline{\xi}\lambda,\overline{\xi}\mu\rangle=\langle\lambda,\mu\rangle\in\mathbb{K}$ for $\lambda \in M+\gamma$ and $\mu \in M+\delta$, we obtain that $(\overline{\xi}\lambda,\overline{\xi}\mu)=(\lambda,\mu)\in\mathbb{Q}$ (via Definition \ref{pairdef}) and hence $(\overline{\xi}\gamma,\overline{\xi}\delta)=(\gamma,\delta)\in\mathbb{Q}/\mathbb{Z}$. But this compares the latter expression with \[\frac{i^{b_{-}-b_{+}}}{\sqrt{|D_{M}|}}\sum_{\delta \in D_{M}}\mathbf{e}\big(-(\gamma,\delta)\big)\xi^{b_{-}-b_{+}}\mathfrak{e}_{\overline{\xi}\delta}=\rho_{M}(\xi)\rho_{M}(S)\mathfrak{e}_{\gamma},\] meaning that $\rho_{M}(\xi)$ commutes with $\rho_{M}(S)$ as well. This completes the proof of the proposition.
\end{proof}
We shall thus henceforth consider the Weil representation $\rho_{M}$ to be the extension of that from Equation \eqref{Weildef} described in Proposition \ref{extWeil}, namely a representation of $\operatorname{U}(1,1)(\mathbb{Z})$ (recalling that except for two possible choices of the order $\mathcal{O}$, this is just a change of notation for the original group and representation).

\smallskip

We shall write $V_{\mathbb{C}}$, $V_{\overline{\mathbb{C}}}$, and $V_{\mathbb{R}}$ from Definition \ref{tensprod} as $M_{\mathbb{C}}$, $M_{\overline{\mathbb{C}}}$, and $M_{\mathbb{R}}$ respectively (indeed, they are $M\otimes_{\mathcal{O}}\mathbb{C}$, the same tensor product with the complex conjugate embedding, and $M\otimes_{\mathbb{Z}}\mathbb{R}$ respectively). For an element $\tau\in\mathcal{H}$, vectors $\zeta \in M_{\mathbb{C}}$ and $\omega \in M_{\overline{\mathbb{C}}}$, and an element $v$ in $\operatorname{Gr}^{J}(M_{\mathbb{R}})$ from Equation \eqref{GrassJ}, the (vector-valued) \emph{Hermitian Jacobi--Siegel theta function} associated with $M$ is defined to be
\begin{equation}
\Theta_{M}(\tau,\zeta,\omega;v):=\sum_{\gamma \in D_{M}}\textstyle{\big[\sum_{\lambda \in M+\gamma}\mathbf{e}\big(\tau|\lambda_{v_{+}}|^{2}+\overline{\tau}|\lambda_{v_{-}}|^{2}+\langle\zeta,\lambda\rangle+\langle\lambda,\omega\rangle\big)\big]\mathfrak{e}_{\gamma}}, \label{JacTheta}
\end{equation}
where the coefficient multiplying $\mathfrak{e}_{\gamma}$ in Equation \eqref{JacTheta} is the (scalar-valued) Hermitian Jacobi--Siegel theta function $\theta_{M+\gamma}(\tau,\zeta,\omega;v)$.

When $M$ is considered as an orthogonal lattice over $\mathbb{Z}$, we can identify the element $v\in\operatorname{Gr}^{J}(M_{\mathbb{R}})$ with its image in $\operatorname{Gr}(M_{\mathbb{R}})$ from Equation \eqref{Grassdef}, and the pair $\zeta$ and $\omega$ with the vector $\zeta+\omega \in M\otimes_{\mathbb{Z}}\mathbb{C}$ as in part $(i)$ of Lemma \ref{VQdecom}. Then Equation (7) of \cite{[Ze3]}, with these variables, defines the (orthogonal) Siegel--Jacobi theta function
\begin{equation}
\widetilde{\Theta}_{M}(\tau,\zeta+\omega;v):=\sum_{\gamma \in D_{M}}\Bigg[\sum_{\lambda \in M+\gamma}\mathbf{e}\bigg(\tau\frac{\lambda_{v_{+}}^{2}}{2}+\overline{\tau}\frac{\lambda_{v_{-}}^{2}}{2}+(\lambda,\zeta+\omega)\bigg)\Bigg]\mathfrak{e}_{\gamma} \label{orthTheta}
\end{equation}
(we will henceforth use $\Theta$ and $\theta$ for Hermitian theta functions, and $\widetilde{\Theta}$, and possibly $\widetilde{\theta}$, for their orthogonal counterparts, to emphasize our point of view at every step).

All the properties of our $\Theta_{M}$ essentially follow from the following comparison, using the results of \cite{[Ze3]}. However, the form that these properties take can be a bit more delicate, so we will state most of them explicitly in the Hermitian setting.
\begin{lem}
The function from Equation \eqref{JacTheta} coincides with the orthogonal Siegel--Jacobi theta function from Equation \eqref{orthTheta}. \label{compThetaM}
\end{lem}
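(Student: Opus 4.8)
The plan is to compare the two theta functions term by term, matching the $\gamma$-th coefficient of $\Theta_M$ from Equation \eqref{JacTheta} with the $\gamma$-th coefficient of $\widetilde{\Theta}_M$ from Equation \eqref{orthTheta}. Since both sums run over $\gamma \in D_M$ and, for each $\gamma$, over $\lambda \in M+\gamma$, it suffices to check that the exponents agree for each individual $\lambda$. The exponents differ only in two places: the quadratic part involving $\tau$ and $\overline{\tau}$, and the linear part involving the complex variable(s).

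For the quadratic part, I would invoke the discussion following Equation \eqref{GrassJ}, which records that the orthogonal projections $\lambda_{v_+}$ and $\lambda_{v_-}$ are the same whether $v$ is viewed as a unitary decomposition in $\operatorname{Gr}^J(M_{\mathbb{R}})$ or as an orthogonal decomposition in $\operatorname{Gr}(M_{\mathbb{R}})$, and that $|\lambda_{v_\pm}|^2 = \lambda_{v_\pm}^2/2$. Hence $\tau|\lambda_{v_+}|^2 + \overline{\tau}|\lambda_{v_-}|^2$ in \eqref{JacTheta} is literally equal to $\tau\frac{\lambda_{v_+}^2}{2} + \overline{\tau}\frac{\lambda_{v_-}^2}{2}$ in \eqref{orthTheta}. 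For the linear part, the point is to show that $\langle\zeta,\lambda\rangle + \langle\lambda,\omega\rangle = (\lambda,\zeta+\omega)$ for $\zeta \in M_{\mathbb{C}}$, $\omega \in M_{\overline{\mathbb{C}}}$, and $\lambda \in V$ (extended to $V\otimes_{\mathbb{Q}}\mathbb{C}$ as needed). This is exactly where Proposition \ref{decompair} enters: part $(ii)$ gives $\langle\zeta,\lambda\rangle = (\zeta,\lambda) = (\lambda,\zeta)$ by symmetry of the bilinear pairing, and part $(iii)$ gives $\langle\lambda,\omega\rangle = (\lambda,\omega)$. Adding these and using $\mathbb{Q}$-bilinearity of $(\cdot,\cdot)$ yields $(\lambda,\zeta) + (\lambda,\omega) = (\lambda,\zeta+\omega)$, which is precisely the linear term appearing in \eqref{orthTheta}.

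Having matched both the quadratic and linear exponents for every $\lambda$, the two scalar-valued coefficients $\theta_{M+\gamma}(\tau,\zeta,\omega;v)$ and the corresponding bracket in \eqref{orthTheta} coincide for each $\gamma$, and therefore the vector-valued sums over $\gamma \in D_M$ agree, giving the claimed equality $\Theta_M(\tau,\zeta,\omega;v) = \widetilde{\Theta}_M(\tau,\zeta+\omega;v)$. One should also note briefly that the identification of $v \in \operatorname{Gr}^J(M_{\mathbb{R}})$ with its image in $\operatorname{Gr}(M_{\mathbb{R}})$, and of the pair $(\zeta,\omega)$ with $\zeta+\omega \in M\otimes_{\mathbb{Z}}\mathbb{C}$, is exactly the identification spelled out in the paragraph preceding the lemma, so no ambiguity arises on the right-hand side.

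I do not anticipate a genuine obstacle here: the statement is essentially a bookkeeping identity, and all the needed inputs — the agreement of projections, the relation $|\lambda_{v_\pm}|^2 = \lambda_{v_\pm}^2/2$, and the pairing identities of Proposition \ref{decompair} — are already in place. The only point requiring a modicum of care is making sure the convergence and rearrangement of the (possibly indefinite) theta series is legitimate, but since the two series have termwise-equal summands and \cite{[Ze3]} already establishes convergence of $\widetilde{\Theta}_M$ in the Siegel--Jacobi (indefinite) setting, no separate convergence argument is needed — the equality holds at the level of the formal sums and hence wherever one of them converges.
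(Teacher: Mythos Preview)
Your proposal is correct and follows essentially the same approach as the paper's proof: a term-by-term comparison using the equality $|\lambda_{v_\pm}|^2 = \lambda_{v_\pm}^2/2$ from the discussion after Equation \eqref{GrassJ} for the quadratic part, and parts $(ii)$ and $(iii)$ of Proposition \ref{decompair} for the linear part. The only point the paper adds that you leave implicit is a citation of Corollary \ref{Mindual} to confirm that the index sets $D_M$ and the cosets $M+\gamma$ genuinely coincide in the Hermitian and orthogonal pictures, but you correctly take this as already established.
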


\begin{proof}
The projections $\lambda_{v_{\pm}}$ were seen to be the same in both interpretations of $v$, and the Hermitian norms $|\lambda_{v_{\pm}}|^{2}$ become $\lambda_{v_{\pm}}^{2}/2$ using the quadratic form. Moreover, parts $(ii)$ and $(iii)$ of Proposition \ref{decompair} compare the pairings $\langle\zeta,\lambda\rangle$ and $\langle\lambda,\omega\rangle$ with $(\zeta,\lambda)=(\lambda,\zeta)$ and $(\lambda,\omega)$ respectively. As $M^{*}$ is the same dual for $M$ as an $\mathcal{O}$-lattice and as a $\mathbb{Z}$-lattice (by Corollary \ref{Mindual}), so are the two meanings of the quotient $D_{M}$ and of the cosets $M+\gamma$. Thus both the set of elements $\lambda$ that show up in the two equations, and the summands associated with each $\lambda$ in them, coincide, yielding the asserted equality. This proves the lemma.
\end{proof}

\smallskip

Given any element $\sigma \in M\otimes_{\mathbb{Z}}\mathbb{C}$, recall that we have the projections onto the spaces $v_{\pm}$ associated with $v\in\operatorname{Gr}^{J}(M_{\mathbb{R}})$ (or their extensions of scalars to $\mathbb{C}$), as well as the projections onto the components $M_{\mathbb{C}}$ and $M_{\overline{\mathbb{C}}}$, as in Lemma \ref{VQdecom}. These projections commute, i.e., we have $(\sigma_{v_{\pm}})_{\mathbb{C}}=(\sigma_{\mathbb{C}})_{v_{\pm}}$ and $(\sigma_{v_{\pm}})_{\overline{\mathbb{C}}}=(\sigma_{\overline{\mathbb{C}}})_{v_{\pm}}$, and we will write these vectors as $\sigma_{v_{\pm},\mathbb{C}}$ and $\sigma_{v_{\pm},\overline{\mathbb{C}}}$ for short. The first property of $\Theta_{M}$ is the ``periodicity'' in $\zeta$ and $\omega$, as in Proposition 1.1 of \cite{[Ze3]}, but we shall need (especially for the vector-valued case later) to extend to translations in elements from $M^{*}$ as well. In order to do this, we make the following definition.
\begin{defn}
Let $\alpha$ and $\beta$ be elements of $D_{M}$. We then define the \emph{translation operator} $t_{\alpha}$ to be the linear operator on $\mathbb{C}[D_{M}]$ taking the basis vector $\mathfrak{e}_{\gamma}$ to $\mathfrak{e}_{\gamma-\alpha}$, and the \emph{character operator} $\chi_{\beta}$ to be the one multiplying each $\mathfrak{e}_{\gamma}$ by $\mathbf{e}\big(\langle\beta,\gamma\rangle+\langle\gamma,\beta\rangle\big)$. Note that the latter is just $\mathbf{e}\big((\beta,\gamma)\big)$ when $M$ is viewed as an orthogonal lattice, and both operators are defined on $D_{M}$ for any orthogonal lattice $M$ (or more precisely, on any discriminant module in the language of \cite{[Sch]} and \cite{[Ze1]}, or finite quadratic module in the terminology of \cite{[Str]}). For $\sigma$ and $\nu$ in $M^{*}$, we shall write just $t_{\sigma}$ and $\chi_{\nu}$ for the operators $t_{\sigma+M}$ and $\chi_{\nu+M}$ respectively. They are trivial if and only if $\sigma$ and $\nu$ are in $M$. \label{opersDM}
\end{defn}

\begin{rmk}
While $\alpha \mapsto t_{\alpha}$ and $\beta\mapsto\chi_{\beta}$ from Definition \ref{opersDM} are group homomorphisms from $D_{M}$ into $\operatorname{GL}_{2}\big(\mathbb{C}[D_{M}]\big)$ (and even into the subgroup of unitary operators), they do not form a homomorphism from the product, but rather from an appropriate Heisenberg group. This is related to the fact that the periodicity of theta functions and Jacobi forms is based, inside the larger, Siegel or Hermitian modular group, on the action of a Heisenberg group, rather that the Abelian self-product of the lattice (this is more clearly visible in the associated Lie group). However, we will only be using the combination $\chi_{\beta}t_{\alpha}$, sending $\mathfrak{e}_{\gamma}$ to $\mathbf{e}\big(\langle\beta,\gamma-\alpha\rangle+\langle\gamma-\alpha,\beta\rangle\big)\mathfrak{e}_{\gamma-\alpha}$ (or just $\mathbf{e}\big((\beta,\gamma-\alpha)\big)\mathfrak{e}_{\gamma-\alpha}$). In particular, if an element $X$ of $\mathbb{C}[D_{M}]$ is defined by the absolutely convergent series $\sum_{\lambda \in M^{*}}a_{\lambda}\mathfrak{e}_{\lambda+M}$ (like our theta functions), then the series $\sum_{\lambda \in M^{*}}a_{\lambda+\sigma}\mathbf{e}\big(\langle\nu,\lambda\rangle+\langle\lambda,\nu\rangle\big)\mathfrak{e}_{\lambda+M}$ (or just $\sum_{\lambda \in M^{*}}a_{\lambda+\sigma}\mathbf{e}\big((\lambda,\nu)\big)\mathfrak{e}_{\lambda+M}$ in the orthogonal notation) produces $\chi_{\nu}t_{\sigma}X$. \label{chitrels}
\end{rmk}

We can now state and prove the periodicity formula for our theta functions.
\begin{prop}
Take two elements $\sigma$ and $\nu$ from $M^{*}$, and let $\tau\in\mathcal{H}$ and $v\in\operatorname{Gr}^{J}(M_{\mathbb{R}})$ be fixed. Then, as a function of $\zeta \in M_{\mathbb{C}}$ and $\omega \in M_{\overline{\mathbb{C}}}$, the Hermitian Jacobi--Siegel theta function from Equation \eqref{JacTheta} satisfies the equality \[\Theta_{M}(\tau,\zeta+\tau\sigma_{v_{+},\mathbb{C}}+\overline{\tau}\sigma_{v_{-},\mathbb{C}}+\nu_{\mathbb{C}},\omega+\tau\sigma_{v_{+},\overline{\mathbb{C}}}+\overline{\tau}\sigma_{v_{-},\overline{\mathbb{C}}}+ \nu_{\overline{\mathbb{C}}};v)=\] \[=\mathbf{e}\big(-\tau|\sigma_{v_{+}}|^{2}-\overline{\tau}|\sigma_{v_{-}}^{2}|-\langle\zeta,\sigma\rangle-\langle\sigma,\omega\rangle\big)\chi_{\nu}t_{\sigma}\Theta_{M}(\tau,\zeta,\omega;v).\] When $\sigma$ and $\nu$ are in $M$, the latter multiplier is just $\Theta_{M}(\tau,\zeta,\omega;v)$ itself. \label{perTheta}
\end{prop}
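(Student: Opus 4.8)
The plan is to deduce the formula from the already-established periodicity of the \emph{orthogonal} Siegel--Jacobi theta function (Proposition 1.1 of \cite{[Ze3]}, together with the elementary extension of its shift to elements of $M^{*}$) by means of the comparison in Lemma \ref{compThetaM}, which says $\Theta_{M}(\tau,\zeta,\omega;v)=\widetilde{\Theta}_{M}(\tau,\zeta+\omega;v)$ as $\mathbb{C}[D_{M}]$-valued functions, $\zeta+\omega$ being read as a variable in $M\otimes_{\mathbb{Z}}\mathbb{C}=V\otimes_{\mathbb{Q}}\mathbb{C}$. First I would note that the two prescribed translations reassemble correctly: since $\sigma_{v_{\pm}}=\sigma_{v_{\pm},\mathbb{C}}+\sigma_{v_{\pm},\overline{\mathbb{C}}}$ and $\nu=\nu_{\mathbb{C}}+\nu_{\overline{\mathbb{C}}}$ under the direct sum of part $(i)$ of Lemma \ref{VQdecom}, the sum of $\zeta+\tau\sigma_{v_{+},\mathbb{C}}+\overline{\tau}\sigma_{v_{-},\mathbb{C}}+\nu_{\mathbb{C}}\in M_{\mathbb{C}}$ and $\omega+\tau\sigma_{v_{+},\overline{\mathbb{C}}}+\overline{\tau}\sigma_{v_{-},\overline{\mathbb{C}}}+\nu_{\overline{\mathbb{C}}}\in M_{\overline{\mathbb{C}}}$ equals $(\zeta+\omega)+\tau\sigma_{v_{+}}+\overline{\tau}\sigma_{v_{-}}+\nu$. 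Hence by Lemma \ref{compThetaM} the left-hand side of the asserted identity is $\widetilde{\Theta}_{M}\big(\tau,(\zeta+\omega)+\tau\sigma_{v_{+}}+\overline{\tau}\sigma_{v_{-}}+\nu;v\big)$, to which the orthogonal periodicity applies, producing the factor $\mathbf{e}\big(-\tau\sigma_{v_{+}}^{2}/2-\overline{\tau}\sigma_{v_{-}}^{2}/2-(\sigma,\zeta+\omega)\big)$ times $\chi_{\nu}t_{\sigma}\widetilde{\Theta}_{M}(\tau,\zeta+\omega;v)$.

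It then remains to translate everything back into Hermitian notation. The quadratic terms are $-\tau|\sigma_{v_{+}}|^{2}$ and $-\overline{\tau}|\sigma_{v_{-}}|^{2}$ by the very definition of $|\cdot|^{2}$; for the linear term, parts $(ii)$ and $(iii)$ of Proposition \ref{decompair} give $(\sigma,\zeta)=(\zeta,\sigma)=\langle\zeta,\sigma\rangle$ (as $\zeta\in M_{\mathbb{C}}$) and $(\sigma,\omega)=\langle\sigma,\omega\rangle$ (as $\omega\in M_{\overline{\mathbb{C}}}$), so that $(\sigma,\zeta+\omega)=\langle\zeta,\sigma\rangle+\langle\sigma,\omega\rangle$. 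Finally, Definition \ref{opersDM} records that the operators $t_{\sigma}$ and $\chi_{\nu}$ on $\mathbb{C}[D_{M}]$ are literally the same whether $M$ is regarded as Hermitian or as orthogonal over $\mathbb{Z}$ (since $\mathbf{e}(\langle\nu,\gamma\rangle+\langle\gamma,\nu\rangle)=\mathbf{e}((\nu,\gamma))$), and $\widetilde{\Theta}_{M}=\Theta_{M}$ by Lemma \ref{compThetaM}, so $\chi_{\nu}t_{\sigma}\widetilde{\Theta}_{M}(\tau,\zeta+\omega;v)=\chi_{\nu}t_{\sigma}\Theta_{M}(\tau,\zeta,\omega;v)$. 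Collecting these gives the claimed equality; when in addition $\sigma$ and $\nu$ lie in $M$, the operators $t_{\sigma}$ and $\chi_{\nu}$ are trivial by the last sentence of Definition \ref{opersDM}, so the multiplier reduces to $\Theta_{M}(\tau,\zeta,\omega;v)$ itself.

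If a self-contained argument is preferred, the same identity follows directly from Equation \eqref{JacTheta}: shift the summation variable $\lambda$ by $\sigma$ in each inner sum $\theta_{M+\gamma}$ --- legitimate because $\sigma\in M^{*}$ merely permutes the cosets $M+\gamma$, which is exactly what $t_{\sigma}$ records --- and complete the square in the exponent using $|\lambda_{v_{\pm}}|^{2}$ and the bi-additivity of $\langle\cdot,\cdot\rangle$; the element $\nu\in M^{*}$ contributes only the phase $\mathbf{e}\big(\langle\nu,\lambda\rangle+\langle\lambda,\nu\rangle\big)$, which is constant on each coset $M+\gamma$ with value $\mathbf{e}\big((\nu,\gamma)\big)$, i.e. the eigenvalue of $\chi_{\nu}$ on $\mathfrak{e}_{\gamma}$. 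In either route I expect the one genuinely delicate point to be the bookkeeping of the four commuting projections (onto $v_{\pm}$ and onto $M_{\mathbb{C}}$, $M_{\overline{\mathbb{C}}}$) and the verification that the two complex translations reassemble into the single orthogonal one; once that identification is set up, the remaining manipulations are routine.
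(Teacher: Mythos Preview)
Your proposal is correct and follows essentially the same approach as the paper: reduce to the orthogonal theta function $\widetilde{\Theta}_{M}$ via Lemma \ref{compThetaM}, apply the orthogonal periodicity, and translate the exponent and operators back into Hermitian notation via Proposition \ref{decompair} and Definition \ref{opersDM}. The only cosmetic difference is that the paper re-derives the orthogonal periodicity inline (completing the square in $\lambda$ and invoking Remark \ref{chitrels}) rather than citing Proposition 1.1 of \cite{[Ze3]} as a black box, precisely because that reference only states the case $\sigma,\nu\in M$ and the extension to $M^{*}$ with the operators $\chi_{\nu}t_{\sigma}$ is what is being established here; your ``self-contained'' alternative is in fact exactly the computation the paper carries out.
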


\begin{proof}
We apply Lemma \ref{compThetaM} to work with $\widetilde{\Theta}_{M}$ from Equation \eqref{orthTheta}, and, as in the proof of Proposition 1.1 of \cite{[Ze3]}, we consider the summand associated with some $\lambda \in M^{*}$ but in which we add $\tau\sigma_{v_{+},\mathbb{C}}+\overline{\tau}\sigma_{v_{-},\mathbb{C}}+\nu_{\mathbb{C}}$ to $\zeta$ as well as $\tau\sigma_{v_{+},\overline{\mathbb{C}}}+\overline{\tau}\sigma_{v_{-},\overline{\mathbb{C}}}+\nu_{\overline{\mathbb{C}}}$ to $\omega$. Since every vector is the sum of its projections onto $M_{\mathbb{C}}$ and $M_{\overline{\mathbb{C}}}$, this amounts to adding just $\tau\sigma_{v_{+}}+\overline{\tau}\sigma_{v_{-}}+\nu$ to the sum $\zeta+\omega$, the pairing with which shows up in Equation \eqref{orthTheta}. The fact that the projections onto $v_{\pm}$ are orthogonal implies that this summand is multiplied by $\mathbf{e}\big(\tau(\lambda_{v_{+}},\sigma_{v_{+}})+\overline{\tau}(\lambda_{v_{-}},\sigma_{v_{-}})\big)$ and also by $\mathbf{e}\big(\langle\nu,\lambda\rangle+\langle\lambda,\nu\rangle\big)=\mathbf{e}\big((\lambda,\nu)\big)$.

Then this summand involves, aside from the exponent of $(\lambda,\zeta+\omega)$, also that of $\tau$ or $\overline{\tau}$ multiplied by $\frac{\lambda_{v_{\pm}}^{2}}{2}+(\lambda_{v_{\pm}},\sigma_{v_{\pm}})=\frac{(\lambda+\sigma)_{v_{\pm}}^{2}}{2}-\frac{\sigma_{v_{\pm}}^{2}}{2}$. We thus make the change of variable $\lambda\mapsto\lambda+\sigma$, with the pairing with $\lambda$ becomes that with $\lambda+\sigma$ minus that with $\sigma$, and then the multipliers that are independent of $\lambda$ reduce to the exponent of $-\tau\sigma_{v_{+}}^{2}/2-\overline{\tau}\sigma_{v_{+}}^{2}/2-(\sigma,\zeta+\omega)$. For the remaining sum over $\lambda$ we obtain via Remark \ref{chitrels}, due the translation by $\sigma$ and the multiplier from the previous paragraph, the image under $\chi_{\nu}t_{\sigma}$ of the theta function $\widetilde{\Theta}_{M}(\tau,\zeta+\omega;v)$, which becomes the desired one on the right hand side via Lemma \ref{compThetaM}, whose proof also shows that the exponent is the asserted one as well. This establishes the desired general formula, from which the special case with $\sigma$ and $\nu$ in $M$ follows directly because $\chi_{\nu}$ and $t_{\sigma}$ from Definition \ref{opersDM} are then trivial. This proves the proposition.
\end{proof}
Of course, the Proposition 1.1 of \cite{[Ze3]} can be extended, as in Proposition \ref{perTheta}, to show that the theta function from Equation (7) there satisfies such a periodicity for $\sigma$ and $\nu$ in the dual lattice, using the combination $\chi_{\nu}t_{\sigma}$ from Definition \ref{opersDM} and Remark \ref{chitrels}.

\smallskip

The second property of our Hermitian theta function $\Theta_{M}$ is the following modularity transformation.
\begin{thm}
For every matrix $A\in\operatorname{U}(1,1)(\mathbb{Z})$, and every elements $\tau\in\mathcal{H}$, $\zeta \in V_{\mathbb{C}}$, and $\omega \in V_{\overline{\mathbb{C}}}$, we have the equality \[\Theta_{M}\Big(A\tau,\tfrac{\zeta_{v_{+}}}{j(A,\tau)}+\tfrac{\zeta_{v_{-}}}{\overline{j^{c}(A,\tau)}},\tfrac{\omega_{v_{+}}}{j^{c}(A,\tau)}+\tfrac{\omega_{v_{-}}}{\overline{j(A,\tau)}};v\Big)=\] \[=j(A,\tau)^{b_{+}}\overline{j(A,\tau)}^{b_{-}}\mathbf{e}\Big(\tfrac{j_{A}'\langle\zeta_{v_{+}},\omega_{v_{+}}\rangle}{j(A,\tau)}+\tfrac{\overline{j_{A}'}\langle\zeta_{v_{-}},\omega_{v_{-}}\rangle}{\overline{j(A,\tau)}}\Big) \rho_{M}(A)\Theta_{M}(\tau,\zeta,\omega;v),\] where we recall that if $A=\binom{a\ \ b}{c\ \ d}$ then $j(A,\tau)=c\tau+d$, so that $j_{A}'$ is the derivative $c$ with respect to $\tau$ of $j_{A}(\tau)=j(A,\tau)$, and $j^{c}(A,\tau)$ stands for $\overline{c}\tau+\overline{d}$. \label{modTheta}
\end{thm}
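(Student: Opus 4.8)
The plan is to reduce the Hermitian modularity statement to the already-known modularity of the orthogonal Siegel--Jacobi theta function $\widetilde{\Theta}_{M}$ via Lemma \ref{compThetaM}, exactly as the periodicity was reduced in Proposition \ref{perTheta}. First I would recall that by Lemma \ref{SU11SL2} any $A\in\operatorname{U}(1,1)(\mathbb{Z})$ is of the form $\xi B$ with $\xi\in\mu(\mathcal{O})$ a scalar and $B\in\operatorname{SL}_{2}(\mathbb{Z})$, and that $\rho_{M}$ has been extended multiplicatively to $\operatorname{U}(1,1)(\mathbb{Z})$ in Proposition \ref{extWeil}; since both sides of the asserted identity behave multiplicatively under composition in $A$ (the cocycle relations for $j(A,\tau)$, $j^{c}(A,\tau)$, and the half-integral weight factor, together with $\rho_M$ being a representation), it suffices to verify the formula separately for $A=T$, $A=S$, and $A=\xi$ a root of unity. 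The point is that for the scalar $\xi$, $j(\xi,\tau)=\overline{\xi}$, $j^{c}(\xi,\tau)=\xi$, the action on $\tau$ is trivial, and the claimed identity becomes $\Theta_M(\tau,\zeta/\overline{\xi},\omega/\xi;v)=\overline\xi^{\,b_+}\xi^{\,b_-}\cdot(\text{trivial }\mathbf{e}\text{-factor})\cdot\rho_M(\xi)\Theta_M$, which follows by the change of variables $\lambda\mapsto\overline\xi\lambda$ in the defining sum \eqref{JacTheta}, using $\langle\zeta/\overline\xi,\overline\xi\lambda\rangle=\langle\zeta,\lambda\rangle$, $\langle\overline\xi\lambda,\omega/\xi\rangle=\langle\lambda,\omega\rangle$, $|\overline\xi\lambda_{v_\pm}|^2=|\lambda_{v_\pm}|^2$, and $\overline\xi\gamma$ running over $D_M$ — this is essentially the computation already carried out in the proof of Proposition \ref{extWeil}.

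For $A=T$ the identity is immediate from the first formula in \eqref{Weildef}: $T\tau=\tau+1$, $j(T,\tau)=j^c(T,\tau)=1$, so the variables $\zeta,\omega$ are unchanged, and replacing $\tau$ by $\tau+1$ in \eqref{JacTheta} multiplies the $\gamma$-component by $\mathbf{e}(|\lambda_{v_+}|^2-|\lambda_{v_-}|^2)=\mathbf{e}\big(\tfrac{\lambda^2}{2}\big)=\mathbf{e}\big(\tfrac{\gamma^2}{2}\big)$, which is exactly $\rho_M(T)$, while the $\mathbf{e}$-prefactor on the right is $1$ since $j_T'=0$. The substantive case is $A=S$. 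Here I would invoke the orthogonal modularity transformation — Theorem 1.3 (or the corresponding statement) of \cite{[Ze3]} for the Siegel--Jacobi theta function $\widetilde{\Theta}_M$ — which gives $\widetilde\Theta_M\big({-1}/{\tau},\,\zeta_{v_+}'/\tau-\zeta_{v_-}'/\overline\tau;v\big)$ (schematically) equal to $\tau^{b_+^{\mathrm{orth}}}\overline\tau^{b_-^{\mathrm{orth}}}\mathbf{e}(\text{quadratic term})\rho_M(S)\widetilde\Theta_M(\tau,\zeta';v)$, where the orthogonal signature is $(2b_+,2b_-)$ so the weight is $(b_+,b_-)$ — matching the claimed $j(A,\tau)^{b_+}\overline{j(A,\tau)}^{b_-}$. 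The translation of this into the Hermitian shape is where the bookkeeping sits: I put $\zeta'=\zeta+\omega$ as in Lemma \ref{compThetaM}, note $j(S,\tau)=\tau=-j^c(S,\tau)^{-1}\cdot(\pm1)$... more precisely $j^c(S,\tau)=\tau$ as well since $S$ has integer (hence self-conjugate) entries, and $j_S'=1=\overline{j_S'}$; then one checks that $\tfrac{\zeta_{v_+}}{j(S,\tau)}+\tfrac{\zeta_{v_-}}{\overline{j^c(S,\tau)}}+\tfrac{\omega_{v_+}}{j^c(S,\tau)}+\tfrac{\omega_{v_-}}{\overline{j(S,\tau)}}$, when the $\mathbb{C}$-component and $\overline{\mathbb{C}}$-component are recombined, is exactly $\tfrac{(\zeta+\omega)_{v_+}}{\tau}+\tfrac{(\zeta+\omega)_{v_-}}{\overline\tau}$ — using that projection onto $V_{\mathbb{C}}$ is $\mathbb{K}$-linear and onto $V_{\overline{\mathbb{C}}}$ is $\mathbb{K}$-conjugate-linear (Corollary \ref{Konspaces}), together with the commutation of the $v_\pm$- and $\mathbb{C}/\overline{\mathbb{C}}$-projections noted before Definition \ref{opersDM}. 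Finally the quadratic exponent: Proposition \ref{decompair}(iv) gives $\langle\zeta_{v_\pm},\omega_{v_\pm}\rangle=(\zeta_{v_\pm},\omega_{v_\pm})$ and these are the only surviving cross terms in $(\zeta'_{v_\pm})^2/2=(\zeta_{v_\pm}+\omega_{v_\pm})^2/2$ since $V_{\mathbb{C}}$ and $V_{\overline{\mathbb{C}}}$ are each isotropic by part (i); so the orthogonal exponent $-c\tau^{-1}\cdot\tfrac{(\zeta'_{v_+})^2}{2}-\overline c\,\overline\tau^{-1}\cdot\tfrac{(\zeta'_{v_-})^2}{2}$ collapses to $\tfrac{\langle\zeta_{v_+},\omega_{v_+}\rangle}{\tau}+\tfrac{\langle\zeta_{v_-},\omega_{v_-}\rangle}{\overline\tau}$, which is the claimed $\mathbf{e}$-factor with $j_S'=\overline{j_S'}=1$.

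The main obstacle I anticipate is purely notational rather than conceptual: keeping straight the four distinct ``$j$''-type factors ($j(A,\tau)=c\tau+d$ and $j^c(A,\tau)=\overline c\tau+\overline d$, and their complex conjugates) and verifying that the Hermitian variable substitution in the theorem really is the image under the two commuting projection systems of the single orthogonal substitution $\zeta'\mapsto \zeta'_{v_+}/j(A,\tau)+\zeta'_{v_-}/\overline{j(A,\tau)}$, once the scalar part $\xi$ is allowed to make $j$ and $j^c$ genuinely different. Concretely, for $A=\xi B$ one has $j(A,\tau)=\overline\xi\, j(B,\tau)$ but $j^c(A,\tau)=\xi\, j(B,\tau)$, and one must check that the $\overline\xi$ versus $\xi$ discrepancy is precisely absorbed by the conjugate-linearity of the $V_{\overline{\mathbb{C}}}$-projection and by the $\xi^{b_--b_+}$ in the definition of $\rho_M(\xi)$ in Proposition \ref{extWeil}; this is exactly consistent by the root-of-unity computation above, so no genuine difficulty arises, but it is the step most prone to sign/conjugation errors and the one I would write out most carefully.
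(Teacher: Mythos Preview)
Your approach is essentially the paper's: reduce to the orthogonal modularity of $\widetilde{\Theta}_{M}$ via Lemma \ref{compThetaM} and Proposition \ref{decompair} for $A\in\operatorname{SL}_{2}(\mathbb{Z})$, then treat the scalar $\xi\in\mu(\mathcal{O})$ separately by the change of summation variable $\lambda\mapsto\overline{\xi}\lambda$ and the formula from Proposition \ref{extWeil}. The paper simply invokes Theorem 1.2 of \cite{[Ze3]} for all of $\operatorname{SL}_{2}(\mathbb{Z})$ at once rather than checking $T$ and $S$ individually, but it explicitly remarks that your generator-by-generator reduction is an equivalent route.

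One small slip, of precisely the kind you warned yourself about: for the scalar matrix $\xi I=\binom{\xi\ 0}{0\ \xi}$ one has $j(\xi,\tau)=\xi$ and $j^{c}(\xi,\tau)=\overline{\xi}$, not the other way around; correspondingly for $A=\xi B$ the paper has $j(A,\tau)=\xi\, j(B,\tau)$ and $j^{c}(A,\tau)=\overline{\xi}\, j(B,\tau)$. Your conjugates are consistently swapped in the $\xi$-discussion. This does not affect the structure of the argument, but you should correct it before writing out the details.
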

Recall that unless the discriminant of $\mathcal{O}$ is in $\{-3,-4\}$, the group $\operatorname{U}(1,1)(\mathbb{Z})$ is $\operatorname{SU}(1,1)(\mathbb{Z})=\operatorname{SL}_{2}(\mathbb{Z})$, for elements $A$ of which we have $j^{c}(A,\tau)=j(A,\tau)$ for any $\tau\in\mathcal{H}$. However, the form given in Theorem \ref{modTheta} is the one that holds also for these exceptional cases, and is the one whose extension to Jacobi forms that are not necessarily theta functions generalizes better to Hermitian Jacobi forms that are modular with respect to $\operatorname{U}(g,g)$ for larger $g$.

\begin{proof}
Consider again the theta function $\widetilde{\Theta}_{M}$ from Equation \eqref{orthTheta}, take an element $A\in\operatorname{SL}_{2}(\mathbb{Z})$, and then for such $\tau$, $\zeta$, and $\omega$, we obtain from Theorem 1.2 of \cite{[Ze3]} (with the signature $(2b_{+},2b_{-})$, rather than $(2b_{+},b_{-})$ as in that reference) the fact that $\widetilde{\Theta}_{M}\big(A\tau,(\zeta+\omega)_{v_{+}}\big/j(A,\tau)+(\zeta+\omega)_{v_{-}}\big/\overline{j(A,\tau)};v\big)$ equals \[j(A,\tau)^{b_{+}}\overline{j(A,\tau)}^{b_{-}}\mathbf{e}\Big(\tfrac{j_{A}'(\zeta+\omega)_{v_{+}}^{2}}{2j(A,\tau)}+\tfrac{j_{A}'(\zeta+\omega)_{v_{-}}^{2}}{2\overline{j(A,\tau)}}\Big)\rho_{M}(A) \widetilde{\Theta}_{M}(\tau,\zeta+\omega;v).\] Lemma \ref{compThetaM} then compares this left hand side to the desired one and the theta function on the right hand side to that of the required one, we have the equalities $j^{c}(A,\tau)=j(A,\tau)$ and $\overline{j_{A}'}=j_{A}'$, and since the $M_{\mathbb{C}}$-part of $(\zeta+\omega)_{v_{\pm}}$ is $\zeta_{v_{\pm}}$ and its $M_{\overline{\mathbb{C}}}$-part is $\omega_{v_{\pm}}$, parts $(i)$ and $(ii)$ of Proposition \ref{decompair} shows that the argument of the exponent is also the asserted one. This establishes the case $A\in\operatorname{SL}_{2}(\mathbb{Z})$ (in fact, by using the appropriate co-cycle relations the proof for such an $A$ one could have reduced a direct proof to checking the very straightforward case $A=T$ and the case $A=S$, the latter of which amounts, as usual, to the classical calculations using Fourier transforms and the Poisson summation formula).

Assume now that $A$ is the product of the scalar $\xi\in\mu(\mathcal{O})$ and the matrix $B\in\operatorname{SL}_{2}(\mathbb{Z})$. Then $A\tau=B\tau$ (since $\xi$ is a scalar matrix), $j(A,\tau)=\xi j(B,\tau)$, and $j^{c}(A,\tau)=\overline{\xi}j(B,\tau)$, and Corollary \ref{Konspaces} shows that the action of $\xi\in\mathcal{O}$ on $\zeta$ multiplies it by $\xi$, but it multiplies $\omega$ by $\overline{\xi}=1/\xi$. This compares the left hand side with $\Theta_{M}\Big(B\tau,\tfrac{\overline{\xi}\zeta_{v_{+}}}{j(B,\tau)}+\tfrac{\overline{\xi}\zeta_{v_{-}}}{\overline{j(B,\tau)}},\tfrac{\overline{\xi}\omega_{v_{+}}}{j(B,\tau)}+ \tfrac{\overline{\xi}\omega_{v_{-}}}{\overline{j(B,\tau)}};v\Big)$, and by what we saw for $B\in\operatorname{SL}_{2}(\mathbb{Z})$ the latter equals \[j(B,\tau)^{b_{+}}\overline{j(B,\tau)}^{b_{-}}\mathbf{e}\Big(\tfrac{j_{B}'\langle\overline{\xi}\zeta_{v_{+}},\overline{\xi}\omega_{v_{+}}\rangle}{j(B,\tau)}+ \tfrac{j_{B}'\langle\overline{\xi}\zeta_{v_{-}},\overline{\xi}\omega_{v_{-}}\rangle}{\overline{j(B,\tau)}}\Big)\rho_{M}(B)\Theta_{M}(\tau,\overline{\xi}\zeta,\overline{\xi}\omega;v).\] We have $j_{A}'/j(A,\tau)=j_{B}'/j(B,\tau)$ (by canceling $\xi$) and $\overline{j_{A}'}/\overline{j(A,\tau)}=j_{B}'/\overline{j(B,\tau)}$ (after canceling $\overline{\xi}$), and as $\langle\overline{\xi}\zeta_{v_{\pm}},\overline{\xi}\omega_{v_{\pm}}\rangle=\langle\zeta_{v_{\pm}},\omega_{v_{\pm}}\rangle$ as in the proof of Proposition \ref{extWeil}, the exponent is the desired one. Moreover, the two external $j$-factors are the asserted ones times $\xi^{b_{-}-b_{+}}$, and the remaining multiplier is the $\rho_{M}(B)$-image of this root of unity times $\sum_{\gamma \in D_{M}}\theta_{M+\overline{\xi}\gamma}(\tau,\overline{\xi}\zeta,\overline{\xi}\omega;v)\mathfrak{e}_{\overline{\xi}\gamma}$. But in the summand associated with $\overline{\xi}\lambda$ for $\lambda \in M+\gamma$, the multiplication by $\overline{\xi}$ does not affect the Hermitian norms $|\lambda_{v_{\pm}}|^{2}$, and the pairings $\langle\overline{\xi}\zeta,\overline{\xi}\lambda\rangle$ and $\langle\overline{\xi}\lambda,\overline{\xi}\omega\rangle$ were seen to reproduce $\langle\zeta,\lambda\rangle$ and $\langle\lambda,\omega\rangle$ yet again. Our remaining multiplier thus equals $\rho_{M}(B)\sum_{\gamma \in D_{M}}\theta_{M+\gamma}(\tau,\zeta,\omega;v)\xi^{b_{-}-b_{+}}\mathfrak{e}_{\overline{\xi}\gamma}$, which via the formula from Proposition \ref{extWeil} equals $\rho_{M}(B)\rho_{M}(\xi)\Theta_{M}(\tau,\zeta,\omega;v)=\rho_{M}(A)\Theta_{M}(\tau,\zeta,\omega;v)$, as desired. This completes the proof of the theorem.
\end{proof}

\smallskip

We now turn to the differential properties of Hermitian theta functions. In the setting of Jacobi forms associated with orthogonal lattices, many differential operators were considered in \cite{[BRR]}, \cite{[CWR]}, \cite{[WR]}, and \cite{[RR]}. The operators that we will work with are similar to those from \cite{[Ze3]}, which were based on some constructions from \cite{[Ze2]}. We shall shorthand $\frac{\partial}{\partial z}$ with respect to any variable $z$ to $\partial_{z}$, and recall that if $W$ is a non-degenerate $r$-dimensional real quadratic space, and $\{\eta_{i}\}_{i=1}^{r}$ are the coordinates with respect to a basis for $W$, then by letting $\{\eta_{i}^{*}\}_{i=1}^{r}$ be the coordinates with respect to the dual basis for $W$ through the bilinear form, the Laplacian operator for $W$ (which is defined, up to a scalar multiple perhaps, by the action of the Casimir operator of the Lie algebra of $\operatorname{O}(W)$) is $\sum_{j=1}^{r}\partial_{\eta_{i}}\partial_{\eta_{i}^{*}}$, acting on smooth functions on $W$ (and this operator is independent of the basis chosen and is invariant under the action of $\operatorname{O}(W)$). The only extension from \cite{[Ze2]} that we shall need for this operator is by considering functions on $W\otimes_{\mathbb{R}}\mathbb{C}$, and replacing the derivative with respect to any real variable $\eta_{i}$ or $\eta_{i}^{*}$ by one with respect to an associated complex holomorphic one (this is an operator of Hodge weight $(2,0)$, and the other ones from \cite{[Ze2]}, with the other Hodge weights, will not show up here). One can show, in fact, that when $W$ comes with an endomorphism $J$ with $J^{2}=-d\operatorname{Id}_{W}$, and $W\otimes_{\mathbb{R}}\mathbb{C}$ decomposes as in Definition \ref{pairdef} and Lemma \ref{VQdecom}, then each summand in the Laplacian $\Delta_{W}$ will involve a derivative with respect to a variable from $W_{\mathbb{C}}$ times one associated with a variable from $W_{\overline{\mathbb{C}}}$.

The spaces for which we will consider these Laplacian operators are the spaces $v_{\pm}$ associated with an element of $\operatorname{Gr}^{J}(M_{\mathbb{R}})$ from Equation \eqref{GrassJ}, for which we will adopt the notation $\Delta_{v_{\pm}}^{h}$ from \cite{[Ze1]}, \cite{[Ze2]}, and others (see \cite{[Ze3]} for the relations between these operators, their real counterparts, the Laplacian of $V$ itself, and the Laplacian that was used in the theta functions defined in \cite{[Bor]}, or in \cite{[Ze5]}). Proposition 1.3 of \cite{[Ze3]} then combines with Lemma \ref{compThetaM} to produce the following result.
\begin{prop}
The theta function $\Theta_{M}$ from Equation \eqref{JacTheta} is sent to 0 by the operators $4\pi i\partial_{\tau}-\Delta_{v_{+}}^{h}$, $4\pi i\partial_{\overline{\tau}}-\Delta_{v_{-}}^{h}$, and any anti-holomorphic derivative $\partial_{\overline{\zeta}_{i}}$ or $\partial_{\overline{\omega}_{i}}$ that arises from some basis for $V_{\mathbb{C}}$ or $V_{\overline{\mathbb{C}}}$. \label{difeq}
\end{prop}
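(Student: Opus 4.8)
The plan is to deduce all three assertions from the comparison of Lemma \ref{compThetaM} together with Proposition 1.3 of \cite{[Ze3]}, with no direct manipulation of the series \eqref{JacTheta}. By Lemma \ref{compThetaM} we have $\Theta_{M}(\tau,\zeta,\omega;v)=\widetilde{\Theta}_{M}(\tau,\zeta+\omega;v)$, where $\xi:=\zeta+\omega$ is regarded as a single variable from $M\otimes_{\mathbb{Q}}\mathbb{C}=M_{\mathbb{C}}\oplus M_{\overline{\mathbb{C}}}$ via part $(i)$ of Lemma \ref{VQdecom}. Proposition 1.3 of \cite{[Ze3]}, applied to $M$ as an even orthogonal $\mathbb{Z}$-lattice of signature $(2b_{+},2b_{-})$, already asserts that $\widetilde{\Theta}_{M}$, as a function of $\tau$ and of $\xi$, is annihilated by $4\pi i\partial_{\tau}-\Delta_{v_{+}}^{h}$, by $4\pi i\partial_{\overline{\tau}}-\Delta_{v_{-}}^{h}$, and by every antiholomorphic derivative $\partial_{\overline{\xi}_{j}}$ coming from a basis of $M\otimes_{\mathbb{Q}}\mathbb{C}$. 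It remains only to translate these operators through the linear substitution $\xi=\zeta+\omega$.

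For the antiholomorphic derivatives this is immediate: using the decomposition $M\otimes_{\mathbb{Q}}\mathbb{C}=M_{\mathbb{C}}\oplus M_{\overline{\mathbb{C}}}$ one may choose a basis of $M\otimes_{\mathbb{Q}}\mathbb{C}$ that is the union of a basis of $M_{\mathbb{C}}$ and a basis of $M_{\overline{\mathbb{C}}}$, and with respect to such a basis the coordinates of $\xi=\zeta+\omega$ are precisely the coordinates of $\zeta$ followed by those of $\omega$. Hence $\partial_{\zeta_{i}}$ and $\partial_{\omega_{i}}$ coincide with the corresponding $\partial_{\xi_{j}}$, and likewise for the conjugate variables; so the operators $\partial_{\overline{\xi}_{j}}$ arising from a basis of $M\otimes_{\mathbb{Q}}\mathbb{C}$ are exactly the operators $\partial_{\overline{\zeta}_{i}}$ and $\partial_{\overline{\omega}_{i}}$ in the statement, and the last assertion follows.

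For the two heat-type operators, $\partial_{\tau}$ and $\partial_{\overline{\tau}}$ are untouched by the substitution, while the real subspaces $v_{+}$ and $v_{-}$ of $M_{\mathbb{R}}$, and the restriction of $(\cdot,\cdot)$ to them, are literally the same objects whether $M$ is viewed as orthogonal or Hermitian. Since $v\in\operatorname{Gr}^{J}(M_{\mathbb{R}})$ forces $v_{\pm}$ to be $J$-invariant, the complexification $v_{\pm}\otimes_{\mathbb{R}}\mathbb{C}$ splits compatibly with $M_{\mathbb{C}}\oplus M_{\overline{\mathbb{C}}}$, so by the discussion preceding the proposition each summand of $\Delta_{v_{\pm}}^{h}$ is a product of a derivative in an $M_{\mathbb{C}}$-direction and one in an $M_{\overline{\mathbb{C}}}$-direction, i.e.\ of a $\zeta$-derivative and an $\omega$-derivative. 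These are the same constant-coefficient operators whether written in the variable $\xi$ or in the pair $(\zeta,\omega)$, so $\Delta_{v_{\pm}}^{h}$ acts identically before and after the substitution, and therefore $4\pi i\partial_{\tau}-\Delta_{v_{+}}^{h}$ and $4\pi i\partial_{\overline{\tau}}-\Delta_{v_{-}}^{h}$ annihilate $\Theta_{M}$.

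The one point that requires care — and the reason the comparison is preferable to a termwise computation — is the intrinsic nature of $\Delta_{v_{\pm}}^{h}$: one must know that it depends only on the restriction of the bilinear form to $v_{\pm}$ together with the holomorphic-variable modification, hence is unambiguous in the two settings, and that the factorization of its summands along $M_{\mathbb{C}}$ and $M_{\overline{\mathbb{C}}}$ genuinely uses the $J$-invariance built into $\operatorname{Gr}^{J}(M_{\mathbb{R}})$. If one prefers to avoid \cite{[Ze3]} entirely, one can instead differentiate \eqref{JacTheta} term by term, justified by the local uniform convergence for $\operatorname{Im}\tau>0$: parts $(ii)$ and $(iii)$ of Proposition \ref{decompair} show that $\langle\zeta,\lambda\rangle$ and $\langle\lambda,\omega\rangle$ are $\mathbb{C}$-linear in $\zeta$ and in $\omega$ respectively, so each summand is holomorphic in $\zeta$ and in $\omega$, which gives the antiholomorphic vanishing; and applying $\Delta_{v_{+}}^{h}$ to a summand pulls down $(2\pi i)^{2}\lambda_{v_{+}}^{2}=4\pi i\cdot2\pi i\,|\lambda_{v_{+}}|^{2}$, matching the effect of $4\pi i\partial_{\tau}$, and similarly for $v_{-}$ and $\overline{\tau}$.
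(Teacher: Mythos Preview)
Your argument is correct and follows exactly the approach indicated in the paper: the proposition is stated there as a direct consequence of Proposition~1.3 of \cite{[Ze3]} together with Lemma~\ref{compThetaM}, and the paper offers no further details beyond the remark that the annihilation by $\partial_{\overline{\zeta}_i}$ and $\partial_{\overline{\omega}_i}$ is simply holomorphicity in $\zeta$ and $\omega$. Your write-up is in fact more careful than the paper's, since you spell out explicitly why the Laplacians $\Delta_{v_\pm}^h$ are the same operator in the $(\zeta,\omega)$ and $\xi$ variables (using the $J$-invariance of $v_\pm$ from $\operatorname{Gr}^J(M_{\mathbb{R}})$ and the resulting compatible splitting of $v_\pm\otimes_{\mathbb{R}}\mathbb{C}$), a point the paper only alludes to in the paragraph preceding the proposition.
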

The annihilation of $\Theta_{M}$ by all the differential operators except for the first two is just the holomorphicity of this function in the variables $\zeta$ and $\omega$, as observed above.

\section{Hermitian Jacobi Forms \label{HJacForms}}

Hermitian Jacobi forms with lattice index have theta expansions, similar to the Jacobi forms in other settings. We will first make this connection, since it will give us some notions that are needed for defining them formally. In this section we will only consider scalar-valued Jacobi forms, in an analysis that is parallel to \cite{[Ze3]}, and the next section, after discussing discriminant forms in more detail, we will show how to modify these constructions for vector-valued modular forms.

The first step is the following analysis, involving Fourier expansions, that follows Lemma 2.1 of \cite{[Ze3]}.
\begin{lem}
Let $\Phi:\mathcal{H} \times M_{\mathbb{C}} \times M_{\overline{\mathbb{C}}}\to\mathbb{C}$ be a smooth function, that for fixed $\tau\in\mathcal{H}$ is holomorphic in the variables $\zeta \in M_{\mathbb{C}}$ and $\omega \in M_{\overline{\mathbb{C}}}$, and is invariant under taking $\zeta$ and $\omega$ to $\zeta+\nu_{\mathbb{C}}$ and $\omega+\nu_{\overline{\mathbb{C}}}$ with $\nu \in M$. Then there exist smooth functions $f_{\lambda}$ for $\lambda \in M^{*}$ such that \[\Phi(\tau,\zeta,\omega):=\textstyle{\sum_{\lambda \in M^{*}}f_{\lambda}(\tau)\mathbf{e}\big(\tau|\lambda_{v_{+}}|^{2}+\overline{\tau}|\lambda_{v_{-}}|^{2}+\langle\zeta,\lambda\rangle+\langle\lambda,\omega\rangle\big)}.\] Assuming that $\Phi$ satisfies the full scalar-valued relation from Proposition \ref{perTheta} (or more precisely, from Equation \eqref{perJac} in Definition \ref{Jacdef} below), for $\sigma$ and $\nu$ from $M$, the function $f_{\lambda}$ remains invariant under changing $\lambda$ by an element of $M$. \label{Fourinv}
\end{lem}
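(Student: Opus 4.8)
The plan is to mimic the proof of Lemma 2.1 of \cite{[Ze3]} in the orthogonal setting, transferred to the Hermitian variables via the identification from part $(i)$ of Lemma \ref{VQdecom} and the pairing comparisons in Proposition \ref{decompair}. First I would fix $\tau\in\mathcal{H}$ and $v\in\operatorname{Gr}^{J}(M_{\mathbb{R}})$, and set $g(\tau,\zeta,\omega):=\Phi(\tau,\zeta,\omega)\mathbf{e}\big(-\tau|\zeta_{v_{+}}^{?}|\cdots\big)$ — more precisely, strip off the ``quasi-periodic'' factor by multiplying $\Phi$ by the inverse of the exponential appearing in the theta summands, so that the resulting function $g$ is genuinely $M$-periodic in the combined variable $\zeta+\omega\in M\otimes_{\mathbb{Q}}\mathbb{C}$. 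Here one uses that, by Lemma \ref{VQdecom}(i), giving $\zeta\in M_{\mathbb{C}}$ and $\omega\in M_{\overline{\mathbb{C}}}$ is the same as giving the single vector $\zeta+\omega$ in the full complexification, and that the hypothesis makes $g$ invariant under $\zeta+\omega\mapsto\zeta+\omega+\nu$ for $\nu\in M$. Smoothness of $\Phi$ (in all real variables) is inherited by $g$.

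Next I would invoke the existence of a Fourier expansion for a smooth function on the real torus $(M\otimes_{\mathbb{Q}}\mathbb{R})/M$ — equivalently on $M_{\mathbb{C}}/M$ via the real isomorphism of Lemma \ref{VQdecom}(iii) — to write $g(\tau,\zeta,\omega)=\sum_{\lambda\in M^{*}}c_{\lambda}(\tau,\ldots)\mathbf{e}\big(\langle\zeta,\lambda\rangle+\langle\lambda,\omega\rangle\big)$, the characters of the torse being indexed precisely by $M^{*}$ (this is where $M^{*}$, rather than $M$ itself, enters, and it is the reason the $t_\sigma,\chi_\nu$ operators live on $D_M$). The Fourier coefficients a priori depend on $\tau$ and on the ``radial'' directions, but holomorphicity of $\Phi$ — hence of $g$ up to the explicit exponential — in $\zeta$ and $\omega$ forces each coefficient to collapse to a function of $\tau$ alone: a Fourier mode that is holomorphic in the torus variable must have its coefficient depend holomorphically, and the standard argument (differentiate under the integral sign defining $c_\lambda$, or compare the two expansions term by term) pins down the dependence, reproducing exactly the exponential $\mathbf{e}\big(\tau|\lambda_{v_+}|^2+\overline\tau|\lambda_{v_-}|^2\big)$ when one reinstates the factor stripped off in the first step. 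Reassembling gives the asserted series with smooth coefficients $f_\lambda(\tau)$; absolute convergence is a consequence of smoothness as in \cite{[Ze3]}.

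For the final claim, suppose $\Phi$ satisfies the full scalar-valued periodicity, i.e.\ Equation \eqref{perJac} with $\sigma,\nu\in M$. By Proposition \ref{perTheta} (or directly from \eqref{perJac}), translating $\zeta,\omega$ by the $M_{\mathbb C}$- and $M_{\overline{\mathbb C}}$-components of $\tau\sigma_{v_+}+\overline\tau\sigma_{v_-}+\nu$ multiplies $\Phi$ by the explicit exponential factor and leaves it otherwise unchanged (since $\chi_\nu$ and $t_\sigma$ are trivial on $D_M$ for $\sigma,\nu\in M$). Substituting the Fourier expansion into both sides, performing the change of summation index $\lambda\mapsto\lambda+\sigma$ exactly as in the proof of Proposition \ref{perTheta}, and matching the coefficient of $\mathbf{e}\big(\tau|\lambda_{v_+}|^2+\overline\tau|\lambda_{v_-}|^2+\langle\zeta,\lambda\rangle+\langle\lambda,\omega\rangle\big)$ on the two sides yields $f_{\lambda+\sigma}(\tau)=\mathbf{e}\big(\langle\nu,\lambda\rangle+\langle\lambda,\nu\rangle\big)f_\lambda(\tau)=f_\lambda(\tau)$, where the last equality uses $(\lambda,\nu)=\operatorname{Tr}^{\mathbb K}_{\mathbb Q}\langle\lambda,\nu\rangle\in\mathbb Z$ because $\lambda\in M^{*}$ and $\nu\in M$. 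Taking $\nu=0$ already gives $f_{\lambda+\sigma}=f_\lambda$ for all $\sigma\in M$, which is the asserted invariance.

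The routine parts — stripping the exponential, citing torus Fourier theory, convergence — are straightforward. The one genuine point requiring care, which I expect to be the main obstacle, is the rigorous passage from ``each Fourier coefficient of a holomorphic function is itself holomorphic/constant in the transverse directions'' to the precise exponential shape $\mathbf{e}\big(\tau|\lambda_{v_+}|^2+\overline\tau|\lambda_{v_-}|^2\big)$: one must use the Cauchy--Riemann equations in the genuinely complex variables $\zeta\in M_{\mathbb C}$, $\omega\in M_{\overline{\mathbb C}}$ (not merely in a real torus direction), together with the $v_+\oplus v_-$ decomposition, to see that the ``imaginary part'' directions are forced to contribute exactly the Gaussian factor and nothing else. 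This is handled in \cite{[Ze3]}, Lemma 2.1, in the orthogonal language, and via Lemma \ref{compThetaM}'s dictionary and Corollary \ref{Konspaces} (ensuring the projections onto $M_{\mathbb C}$, $M_{\overline{\mathbb C}}$ interact correctly with $J$ and with the $v_\pm$-projections) the same argument applies verbatim here.
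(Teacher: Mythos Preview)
Your overall strategy---pass to the single variable $\zeta+\omega\in M\otimes_{\mathbb Q}\mathbb C$, use holomorphic $M$-periodicity to get a Fourier expansion indexed by $M^{*}$, then compare coefficients after the $\sigma$-translation---is exactly the paper's route. Two points in your write-up are overcomplicated, however, and one of them reflects a genuine misconception.

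First, the ``stripping off the quasi-periodic factor'' step is unnecessary and confused as written. The hypothesis already says $\Phi$ is \emph{invariant} under $(\zeta,\omega)\mapsto(\zeta+\nu_{\mathbb C},\omega+\nu_{\overline{\mathbb C}})$ for $\nu\in M$; there is nothing quasi-periodic to remove. The paper simply observes that $\Phi$, viewed as a holomorphic function of $\zeta+\omega$, is $M$-periodic, and writes down the Fourier expansion $\Phi(\tau,\zeta,\omega)=\sum_{\lambda\in M^{*}}\tilde f_{\lambda}(\tau)\,\mathbf e\big((\lambda,\zeta+\omega)\big)$ directly.

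Second---and this is the real issue---the exponential $\mathbf e\big(\tau|\lambda_{v_+}|^{2}+\overline\tau|\lambda_{v_-}|^{2}\big)$ is \emph{not} forced by any Cauchy--Riemann argument; it is a pure normalization. The paper just \emph{defines} $f_{\lambda}(\tau):=\tilde f_{\lambda}(\tau)\,\mathbf e\big(-\tau|\lambda_{v_+}|^{2}-\overline\tau|\lambda_{v_-}|^{2}\big)$ and is done with the first assertion. Your ``main obstacle'' does not exist: nothing about holomorphicity in $\zeta,\omega$ singles out this $\tau$-dependent factor (indeed $\tilde f_\lambda$ would serve equally well for the first claim). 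The only reason to choose this particular normalization is that it makes the coefficient comparison in the second part clean---after the $\sigma$-shift the quadratic terms complete the square to $|(\lambda+\sigma)_{v_\pm}|^{2}$, so the reindexing $\lambda\mapsto\lambda+\sigma$ matches both sides and yields $f_{\lambda+\sigma}=f_\lambda$ immediately. Your final paragraph handles that comparison correctly.
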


\begin{proof}
When we replace the variables $\zeta$ and $\omega$ by $\zeta+\omega \in M\otimes_{\mathbb{Z}}\mathbb{C}$, our assumption is that $\Phi$ is holomorphic and $M$-periodic in this variable. It follows that we can write $\Phi(\tau,\zeta,\omega)$ as a sum $\sum_{\lambda \in L^{*}}\tilde{f}_{\lambda}(\tau)\mathbf{e}\big((\lambda,\zeta+\omega)\big)$ for smooth functions $\tilde{f}_{\lambda}:\mathcal{H}\to\mathbb{C}$, and the proof of Lemma \ref{compThetaM} shows that the argument of the exponent also equals $\langle\zeta,\lambda\rangle+\langle\lambda,\omega\rangle$. By writing $\tilde{f}_{\lambda}$ as $\tau\mapsto\mathbf{e}\big(\tau|\lambda_{v_{+}}|^{2}+\overline{\tau}|\lambda_{v_{-}}|^{2}\big)$ times another function $f_{\lambda}$, we obtain the first assertion.

For the second one, we express both sides of the equality from Proposition \ref{perTheta} (with $\Phi$ rather than $\Theta_{M}$) using the expansion that we already have, and then on the left hand side $f_{\lambda}(\tau)$ is multiplied by the exponent of \[\tau\big(|\lambda_{v_{+}}|^{2}+\langle\sigma_{+},\lambda\rangle+\langle\lambda,\sigma_{+}\rangle\big)+\overline{\tau}\big(|\lambda_{v_{+}}|^{2}+\langle\sigma_{-},\lambda\rangle+\langle\lambda,\sigma_{-}\rangle\big) +\langle\zeta,\lambda\rangle+\langle\lambda,\omega\rangle\] (note that multiplication by $\tau$ and $\overline{\tau}$ is via the extended scalars to $\mathbb{C}$, not through the endomorphism $J$, which is why we have $\mathbb{C}$-linearity in these expressions also in the second variable of the pairing $\langle\cdot,\cdot\rangle$). On the right hand side, it is multiplied by the exponent of \[\tau|\lambda_{v_{+}}|^{2}+\overline{\tau}|\lambda_{v_{-}}|^{2}+\langle\zeta,\lambda\rangle+\langle\lambda,\omega\rangle-\tau|\sigma_{v_{+}}|^{2}-\overline{\tau}|\sigma_{v_{-}}^{2}|-\langle\zeta,\sigma\rangle- \langle\sigma,\omega\rangle,\] and as the pairings of $\lambda$ with $\sigma_{\pm}$ coincide with those of $\lambda_{\pm}$, it follows that replacing $\lambda$ by $\lambda+\sigma$ in the latter expression yields the former one. We therefore have an equality between two Fourier expansions, in which comparing the coefficients yields the equality $f_{\lambda+\sigma}(\tau)=f_{\lambda}(\tau)$ for every $\tau\in\mathcal{H}$ and $\lambda \in M^{*}$, as desired. This proves the lemma.
\end{proof}

We saw that $\Theta_{M}$ is a $\mathbb{C}[D_{M}]$-valued function, and we denote by $\overline{M}$ the lattice $M$ in which all the pairings are inverted. Then $\mathbb{C}[D_{\overline{M}}]$ is the natural dual of $\mathbb{C}[D_{M}]$, with its natural basis, which we denote by $\{\mathfrak{e}_{\gamma}^{*}\}_{\gamma \in D_{M}}$, being dual to the natural one $\{\mathfrak{e}_{\gamma}\}_{\gamma \in D_{M}}$ of $\mathbb{C}[D_{M}]$, as representation spaces of $\operatorname{SL}_{2}(\mathbb{Z})$. Moreover, since the signature of $\overline{M}$ is the opposite to that of $M$, and the action of $\mu(\mathcal{O})$ is the same on both, this duality extends to a duality of representations of $\operatorname{U}_{1,1}(\mathbb{Z})$ via Proposition \ref{extWeil}. We shall denote the (bilinear) pairing between $\mathbb{C}[D_{M}]$ and $\mathbb{C}[D_{\overline{M}}]$ by $[\cdot,\cdot]_{M}$, and then from Lemma \ref{Fourinv} we deduce the following consequence.
\begin{cor}
A smooth function $\Phi:\mathcal{H} \times M_{\mathbb{C}} \times M_{\overline{\mathbb{C}}}\to\mathbb{C}$ satisfying the transformation law from Proposition \ref{perTheta}, or Equation \eqref{perJac}, can be presented for $\tau\in\mathcal{H}$, $\zeta \in M_{\mathbb{C}}$ and $\omega \in M_{\overline{\mathbb{C}}}$, as $\Phi(\tau,\zeta,\omega)=[\Theta_{M}(\tau,\zeta,\omega;v),F(\tau)]_{M}$ for a unique smooth function $F:\mathcal{H}\to\mathbb{C}[D_{\overline{M}}]$. \label{decomtheta}
\end{cor}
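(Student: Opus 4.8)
The plan is to use Lemma \ref{Fourinv} to obtain the Fourier expansion of $\Phi$ and then reorganize it according to cosets of $M$ inside $M^{*}$. First I would apply Lemma \ref{Fourinv} directly: since $\Phi$ satisfies the full scalar-valued transformation law, it is in particular smooth, holomorphic in $\zeta$ and $\omega$ for fixed $\tau$, and invariant under $\zeta\mapsto\zeta+\nu_{\mathbb{C}}$, $\omega\mapsto\omega+\nu_{\overline{\mathbb{C}}}$ for $\nu\in M$, so the lemma gives smooth functions $f_{\lambda}:\mathcal{H}\to\mathbb{C}$ for $\lambda\in M^{*}$ with
\[\Phi(\tau,\zeta,\omega)=\sum_{\lambda\in M^{*}}f_{\lambda}(\tau)\mathbf{e}\big(\tau|\lambda_{v_{+}}|^{2}+\overline{\tau}|\lambda_{v_{-}}|^{2}+\langle\zeta,\lambda\rangle+\langle\lambda,\omega\rangle\big),\]
and moreover, by the second assertion of that lemma (using the full transformation law for $\sigma,\nu\in M$), $f_{\lambda}$ depends only on the class $\gamma:=\lambda+M\in D_{M}$. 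So I may write $f_{\gamma}$ for this common value and group the sum as $\sum_{\gamma\in D_{M}}f_{\gamma}(\tau)\sum_{\lambda\in M+\gamma}\mathbf{e}(\cdots)$, where the inner sum is precisely the component $\theta_{M+\gamma}(\tau,\zeta,\omega;v)$ of $\Theta_{M}$ from Equation \eqref{JacTheta}.

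Next I would define $F:\mathcal{H}\to\mathbb{C}[D_{\overline{M}}]$ by $F(\tau):=\sum_{\gamma\in D_{M}}f_{\gamma}(\tau)\mathfrak{e}_{\gamma}^{*}$, which is smooth because each $f_{\gamma}$ is. Then by the very definition of the bilinear pairing $[\cdot,\cdot]_{M}$ between $\mathbb{C}[D_{M}]$ and $\mathbb{C}[D_{\overline{M}}]$, which is characterized by $[\mathfrak{e}_{\gamma},\mathfrak{e}_{\delta}^{*}]_{M}=\delta_{\gamma,\delta}$, we get $[\Theta_{M}(\tau,\zeta,\omega;v),F(\tau)]_{M}=\sum_{\gamma\in D_{M}}\theta_{M+\gamma}(\tau,\zeta,\omega;v)f_{\gamma}(\tau)=\Phi(\tau,\zeta,\omega)$, establishing existence.

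For uniqueness, suppose $F'(\tau)=\sum_{\gamma}g_{\gamma}(\tau)\mathfrak{e}_{\gamma}^{*}$ also represents $\Phi$. Then $\sum_{\gamma}(f_{\gamma}-g_{\gamma})(\tau)\theta_{M+\gamma}(\tau,\zeta,\omega;v)\equiv 0$. Expanding each $\theta_{M+\gamma}$ as its own Fourier series in $\zeta+\omega$ over the coset $M+\gamma$, and using that the cosets $M+\gamma$ for distinct $\gamma\in D_{M}$ are disjoint subsets of $M^{*}$, the exponentials $\mathbf{e}(\langle\zeta,\lambda\rangle+\langle\lambda,\omega\rangle)$ appearing are linearly independent as functions of $(\zeta,\omega)$ for distinct $\lambda\in M^{*}$; comparing the coefficient of any $\lambda\in M+\gamma$ (noting the $\tau$-dependent exponential prefactor $\mathbf{e}(\tau|\lambda_{v_{+}}|^{2}+\overline{\tau}|\lambda_{v_{-}}|^{2})$ is nonvanishing) forces $f_{\gamma}(\tau)=g_{\gamma}(\tau)$ for every $\tau$. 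Hence $F=F'$. The only point needing a little care is the extraction of Fourier coefficients over the (possibly indefinite) lattice $M$, i.e. the injectivity of the Fourier transform for smooth periodic functions on $M_{\mathbb{C}}\times M_{\overline{\mathbb{C}}}$ — but this is exactly the mechanism already invoked in the proof of Lemma \ref{Fourinv}, so I would simply cite it; I do not expect any genuine obstacle here, the statement being essentially a repackaging of Lemma \ref{Fourinv} in the dual-space language.
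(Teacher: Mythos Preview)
Your proof is correct and follows essentially the same approach as the paper: apply Lemma \ref{Fourinv} to obtain the expansion with coefficients $f_{\lambda}$ depending only on $\lambda+M$, group by cosets to recognize the components $\theta_{M+\gamma}$, and package the coefficients as $F=\sum_{\gamma}f_{\gamma}\mathfrak{e}_{\gamma}^{*}$ so that the pairing $[\Theta_{M},F]_{M}$ reproduces $\Phi$. The paper's version is terser and does not spell out the uniqueness argument, which you handle correctly via the linear independence of the characters (i.e., uniqueness of Fourier expansions).
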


\begin{proof}
Lemma \ref{Fourinv} implies that there are functions $f_{\gamma}$ for $\gamma \in D_{M}$ such that $\Phi(\tau,\zeta,\omega)$ expands as $\sum_{\lambda \in M^{*}}f_{\lambda+M}(\tau)\mathbf{e}\big(\tau|\lambda_{v_{+}}|^{2}+\overline{\tau}|\lambda_{v_{-}}|^{2}+\langle\zeta,\lambda\rangle+\langle\lambda,\omega\rangle\big)$. As $f_{\gamma}(\tau)$ multiplies the sum over $\lambda \in M+\gamma$ of these exponents, and the latter sum is $\theta_{M+\gamma}(\tau,\zeta,\omega;v)$ by Equation \eqref{JacTheta}, the desired formula follows from the definition of the pairing $[\cdot,\cdot]_{M}$. This proves the corollary.
\end{proof}

\smallskip

The main object of this paper can now be defined, as follows.
\begin{defn}
Take two integral weights $k$ and $l$, as well as a finite index subgroup $\Gamma\subseteq\operatorname{U}(1,1)(\mathbb{Z})$, and fix an $\mathcal{O}$-lattice $M$ as in Definition \ref{Olat} and an element $v$ in $\operatorname{Gr}^{J}(M_{\mathbb{R}})$ from Equation \eqref{GrassJ}. Then a \emph{Hermitian Jacobi form of weight $(k,l)$ and index $(M,v)$ with respect to $\Gamma$} is defined to be a smooth function $\Phi:\mathcal{H} \times M_{\mathbb{C}} \times M_{\overline{\mathbb{C}}}\to\mathbb{C}$, with the following properties. First, $\Phi$ is holomorphic in the variables from the two vector spaces. Second, given elements $\sigma$ and $\nu$ from $M$ we have, for any $\tau\in\mathcal{H}$, $\zeta \in M_{\mathbb{C}}$, and $\omega \in M_{\overline{\mathbb{C}}}$, the equality
\[\Phi(\tau,\zeta+\tau\sigma_{v_{+},\mathbb{C}}+\overline{\tau}\sigma_{v_{-},\mathbb{C}}+\nu_{\mathbb{C}},\omega+\tau\sigma_{v_{+},\overline{\mathbb{C}}}+\overline{\tau}\sigma_{v_{-},\overline{\mathbb{C}}}+ \nu_{\overline{\mathbb{C}}})=\]
\begin{equation}
=\mathbf{e}\big(-\tau|\sigma_{v_{+}}|^{2}-\overline{\tau}|\sigma_{v_{-}}^{2}|-\langle\zeta,\sigma\rangle-\langle\sigma,\omega\rangle\big)\Phi(\tau,\zeta,\omega). \label{perJac}
\end{equation}
Third, given such $\tau$, $\zeta$, and $\omega$, as well as a matrix $A\in\Gamma$, we have
\[\Phi\Big(A\tau,\tfrac{\zeta_{v_{+}}}{j(A,\tau)}+\tfrac{\zeta_{v_{-}}}{\overline{j^{c}(A,\tau)}},\tfrac{\omega_{v_{+}}}{j^{c}(A,\tau)}+\tfrac{\omega_{v_{-}}}{\overline{j(A,\tau)}}\Big)=\] \begin{equation}
=j(A,\tau)^{k}\overline{j(A,\tau)}^{l}\mathbf{e}\Big(\tfrac{j_{A}'\langle\zeta_{v_{+}},\omega_{v_{+}}\rangle}{j(A,\tau)}+\tfrac{\overline{j_{A}'}\langle\zeta_{v_{-}},\omega_{v_{-}}\rangle}{\overline{j(A,\tau)}}\Big) \Phi(\tau,\zeta,\omega), \label{modJac}
\end{equation}
with $j(A,\tau)$ as usual and $j^{c}(A,\tau)$ as defined in Theorem \ref{modTheta}. And fourth, we recall that Equation \eqref{perJac} produces, via Lemma \ref{Fourinv}, the smooth functions $f_{\lambda}:\mathcal{H}\to\mathbb{C}$, and we demand that their growth towards each cusp of $\Gamma$ is linear exponential. Given a Hermitian Jacobi form $\Phi$, we call it \emph{pseudo-holomorphic} if the operator $4\pi i\partial_{\overline{\tau}}-\Delta_{v_{-}}^{h}$ from Proposition \ref{difeq} takes $\Phi$ to 0. \label{Jacdef}
\end{defn}
We remark that by working with the other operator $4\pi i\partial_{\tau}-\Delta_{v_{+}}^{h}$ from Proposition \ref{difeq}, we can consider skew-holomorphic Hermitian Jacobi forms, which can be described analogously to \cite{[Ze3]}, thus extending the notion from \cite{[Sk]} and \cite{[Hay]} (among others). In addition, Equations \eqref{perJac} and \eqref{modJac} can be seen as invariance under the slash operators arising from a Hermitian version of a Heisenberg group as in Remark 2.4 of \cite{[Ze3]}, or from a semi-direct product involving this Heisenberg group and the group $\Gamma$ (or, in the real case, all of $\operatorname{U}(1,1)(\mathbb{R})$ or of $\operatorname{SU}(1,1)(\mathbb{R})$). One way of considering groups that contain non-integral entries involves characteristics as in Section 4 of \cite{[Ze3]} or in Theorem \ref{propgen} below, but we shall mainly be using the operators from Definition \ref{opersDM} and their consequences.

\smallskip

We recall that for $k$ and $l$ in $\mathbb{Z}$, a \emph{modular form of weight $(k,l)$ and representation $\rho_{\overline{M}}$ with respect to $\Gamma$} is a function $F:\mathcal{H}\to\mathbb{C}[D_{\overline{M}}]$ that satisfies, for every $\tau\in\mathcal{H}$ and $A\in\Gamma$ the functional equation
\begin{equation}
F(A\tau)=j(A,\tau)^{k}\overline{j(A,\tau)}^{l}\rho_{\overline{M}}(A)F(\tau), \label{modeq}
\end{equation}
and that is smooth and grows at most linearly exponentially towards any cusp of $\Gamma$. A weight $(k,0)$ will be written simply as weight $k$, as usual, with $F$ being \emph{weakly holomorphic} in case it is holomorphic on $\mathcal{H}$ with this growth condition, \emph{holomorphic} if in addition it is bounded at the cusps of $\Gamma$, and \emph{cuspidal} when in even decreases towards them. Using this notion we can now state the main relation involving Hermitian Jacobi forms.
\begin{thm}
Let $\Phi$ be a Hermitian Jacobi form of weight $(k,l)$ and index $(M,v)$ with respect to $\Gamma$, as in Definition \ref{Jacdef}, and let $F$ be the function associated with $\Phi$ via Corollary \ref{decomtheta}. Then $F$ is a modular form of weight $(k-b_{+},l-b_{-})$ and representation $\rho_{\overline{M}}$ with respect to $\Gamma$. If $F$ is such a modular form, then the function defined by $\Phi_{M,v}^{F}(\tau,\zeta,\omega):=[\Theta_{M}(\tau,\zeta,\omega;v),F(\tau)]_{M}$ is a Hermitian Jacobi form with these parameters. \label{main}
\end{thm}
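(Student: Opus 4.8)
The plan is to transfer everything through the orthogonal comparison $\widetilde{\Theta}_M$, exactly as in the proof of the corresponding statement in \cite{[Ze3]}, so that one only needs to match up weights, exponential factors, and growth conditions.

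First I would prove the forward direction. Start with a Hermitian Jacobi form $\Phi$ of weight $(k,l)$ and index $(M,v)$, and let $F=\sum_{\gamma\in D_M}f_\gamma\mathfrak{e}^*_\gamma$ be the function furnished by Corollary \ref{decomtheta}, so that $\Phi=[\Theta_M,F]_M$ with the coefficients $f_\gamma$ coming from the Fourier expansion in Lemma \ref{Fourinv} (the $M$-invariance of the $f_\gamma$ there is exactly what makes them well defined on $D_M$). The smoothness of $F$ and its linear-exponential growth at the cusps are immediate from the fourth condition in Definition \ref{Jacdef} together with the expansion of Lemma \ref{Fourinv}; holomorphicity/cuspidality would transfer the same way if one restricts attention to those subclasses. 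The modular transformation law \eqref{modeq} for $F$ is then obtained by plugging the theta decomposition into \eqref{modJac} and invoking Theorem \ref{modTheta}: for $A\in\Gamma$ one has
\[
[\Theta_M(A\tau,\cdots;v),F(A\tau)]_M=\Phi(A\tau,\cdots)=j(A,\tau)^k\overline{j(A,\tau)}^l\mathbf{e}(\cdots)[\Theta_M(\tau,\zeta,\omega;v),F(\tau)]_M,
\]
while Theorem \ref{modTheta} rewrites the left-hand side as $j(A,\tau)^{b_+}\overline{j(A,\tau)}^{b_-}\mathbf{e}(\cdots)[\rho_M(A)\Theta_M(\tau,\zeta,\omega;v),F(A\tau)]_M$ with the \emph{same} exponential factor $\mathbf{e}\big(j_A'\langle\zeta_{v_+},\omega_{v_+}\rangle/j(A,\tau)+\overline{j_A'}\langle\zeta_{v_-},\omega_{v_-}\rangle/\overline{j(A,\tau)}\big)$, which therefore cancels on the two sides. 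Using the duality of the pairing $[\cdot,\cdot]_M$ to move $\rho_M(A)$ onto the second argument as $\rho_{\overline{M}}(A)^{-1}$, one gets $[\Theta_M(\tau,\zeta,\omega;v),\rho_{\overline{M}}(A)^{-1}F(A\tau)]_M=j(A,\tau)^{k-b_+}\overline{j(A,\tau)}^{l-b_-}[\Theta_M(\tau,\zeta,\omega;v),F(\tau)]_M$ for all $\zeta,\omega$; since the components $\theta_{M+\gamma}$ of $\Theta_M$ are linearly independent functions (by the uniqueness clause of Corollary \ref{decomtheta}, or equivalently by comparing Fourier coefficients), this forces the $\mathbb{C}[D_{\overline{M}}]$-identity $\rho_{\overline{M}}(A)^{-1}F(A\tau)=j(A,\tau)^{k-b_+}\overline{j(A,\tau)}^{l-b_-}F(\tau)$, i.e.\ \eqref{modeq} in weight $(k-b_+,l-b_-)$.

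For the converse, given a modular form $F$ of weight $(k-b_+,l-b_-)$ and representation $\rho_{\overline{M}}$, set $\Phi_{M,v}^F:=[\Theta_M,F]_M$. Holomorphicity in $\zeta$ and $\omega$ is inherited from $\Theta_M$ (Proposition \ref{difeq}), since $F$ does not depend on those variables; pseudo-holomorphicity likewise passes through when $F$ is holomorphic, because $4\pi i\partial_{\overline\tau}-\Delta^h_{v_-}$ annihilates $\Theta_M$ and differentiates $F$ trivially. The periodicity \eqref{perJac} for $\sigma,\nu\in M$ follows directly from Proposition \ref{perTheta}, whose multiplier $\chi_\nu t_\sigma$ is trivial on $M$, so that $\Theta_M$ — hence any fixed linear combination of its components with $\tau$-dependent coefficients — transforms by the scalar $\mathbf{e}\big(-\tau|\sigma_{v_+}|^2-\overline\tau|\sigma_{v_-}^2|-\langle\zeta,\sigma\rangle-\langle\sigma,\omega\rangle\big)$. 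The modular transformation \eqref{modJac} is the computation of the previous paragraph run backwards: apply Theorem \ref{modTheta} to $\Theta_M(A\tau,\cdots;v)$, move $\rho_M(A)$ across the pairing to hit $F$, use \eqref{modeq} to replace $\rho_{\overline{M}}(A)F(\tau)$ by $j(A,\tau)^{-(k-b_+)}\overline{j(A,\tau)}^{-(l-b_-)}F(A\tau)$, and collect the $j$-powers to $j(A,\tau)^k\overline{j(A,\tau)}^l$ and the exponential factor to precisely the one in \eqref{modJac}. Finally, the fourth (growth) condition: by Corollary \ref{decomtheta} the functions $f_\lambda$ attached to $\Phi_{M,v}^F$ are exactly the components of $F$ (up to the elementary exponential prefactor $\mathbf{e}(\tau|\lambda_{v_+}|^2+\overline\tau|\lambda_{v_-}|^2)$ already built into Lemma \ref{Fourinv}), so their behavior at the cusps of $\Gamma$ is the linear-exponential growth of $F$.

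The only genuinely delicate point — and the one I would write out with care — is the bookkeeping of the automorphy and exponential factors across the pairing $[\cdot,\cdot]_M$, in particular checking that the factor $j(A,\tau)^{b_+}\overline{j(A,\tau)}^{b_-}$ and the quadratic-exponential term produced by Theorem \ref{modTheta} are \emph{independent of $\gamma\in D_M$} and hence pull out of the sum, and that the duality between $\rho_M$ on $\mathbb{C}[D_M]$ and $\rho_{\overline M}$ on $\mathbb{C}[D_{\overline M}]$ (which, as noted after Corollary \ref{decomtheta}, extends to the $\operatorname{U}(1,1)(\mathbb{Z})$-action via Proposition \ref{extWeil}) is unitary, so that passing $\rho_M(A)$ to the other argument really gives $\rho_{\overline M}(A)$ with no stray root of unity. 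Everything else is a direct translation of the orthogonal argument of \cite{[Ze3]} via Lemma \ref{compThetaM}, Corollary \ref{decomtheta}, and the linear independence of the $\theta_{M+\gamma}$, so I would keep those parts brief.
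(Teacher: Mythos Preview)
Your proposal is correct and follows essentially the same route as the paper's proof: both directions hinge on writing $\Phi=[\Theta_M,F]_M$, applying Theorem \ref{modTheta} and Proposition \ref{perTheta} to $\Theta_M$, using the $\operatorname{U}(1,1)(\mathbb{Z})$-equivariance of $[\cdot,\cdot]_M$ to shuttle $\rho_M(A)$ to $\rho_{\overline{M}}(A)^{-1}$, and invoking the uniqueness of the theta expansion (Corollary \ref{decomtheta}) to conclude. Your opening remark about transferring through $\widetilde{\Theta}_M$ is harmless but slightly misleading, since the orthogonal comparison is already absorbed into the cited results and is not used again directly; and the aside on pseudo-holomorphicity belongs to Proposition \ref{pshol} rather than this theorem.
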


\begin{proof}
It is clear from the construction that the pairing with $\Theta_{M}$ in one direction, and the expansion from Lemma \ref{Fourinv} and Corollary \ref{decomtheta} in the other one, are inverse mappings between spaces of functions. We hence need to show that pairing a modular form with $\Theta_{M}$ yields a Hermitian Jacobi form, and that the function arising from the expansion of a Hermitian Jacobi form is a vector-valued modular form.

So let $\Phi$ be as in Definition \ref{Jacdef}, and let $F:\mathcal{H}\to\mathbb{C}[D_{\overline{M}}]$ be the associated function from Corollary \ref{decomtheta}. The assumption on the growth is part of Definition \ref{Jacdef}, so we have to check Equation \eqref{modeq}. Take $\tau\in\mathcal{H}$, $\zeta \in M_{\mathbb{C}}$, $\omega \in M_{\overline{\mathbb{C}}}$, and $A\in\Gamma$, and apply Corollary \ref{decomtheta} to the two sides in Equation \eqref{modJac}. On the right hand side we get
\[j(A,\tau)^{k}\overline{j(A,\tau)}^{l}\mathbf{e}\Big(\tfrac{j_{A}'\langle\zeta_{v_{+}},\omega_{v_{+}}\rangle}{j(A,\tau)}+\tfrac{\overline{j_{A}'}\langle\zeta_{v_{-}},\omega_{v_{-}}\rangle}{\overline{j(A,\tau)}}\Big) [\Theta_{M}(\tau,\zeta,\omega;v),F(\tau)]_{M},\] and for the left hand side we apply Theorem \ref{modTheta} to the theta function in the pairing and obtain, by bilinearity, the expression
\[j(A,\tau)^{b_{+}}\overline{j(A,\tau)}^{b_{-}}\mathbf{e}\Big(\tfrac{j_{A}'\langle\zeta_{v_{+}},\omega_{v_{+}}\rangle}{j(A,\tau)}+\tfrac{\overline{j_{A}'}\langle\zeta_{v_{-}},\omega_{v_{-}}\rangle}{\overline{j(A,\tau)}}\Big) [\rho_{M}(A)\Theta_{M}(\tau,\zeta,\omega;v),F(\tau)]_{M}.\] As the pairing is $\operatorname{U}(1,1)(\mathbb{Z})$-equivariant, we can replace $\rho_{M}(A)$ on one side by $\rho_{\overline{M}}(A)^{-1}$, so by comparing the two expressions we obtain that the $[\cdot,\cdot]_{M}$-pairing of $\Theta_{M}(\tau,\zeta,\omega;v)$ with $j(A,\tau)^{k-b_{+}}\overline{j(A,\tau)}^{l-b_{-}}F(\tau)$ and with $\rho_{\overline{M}}(A)^{-1}F(A\tau)$, both functions of $\tau$ alone, coincide as functions $\mathcal{H} \times M_{\mathbb{C}} \times M_{\overline{\mathbb{C}}}$. But since the proof of Corollary \ref{decomtheta} shows that the pairing with $\Theta_{M}$ determines the function on $\mathcal{H}$ (this is just the uniqueness of Fourier expansions), we obtain that $F$ satisfies the modularity condition from Equation \eqref{modeq}, with the desired weights.

Let now $F$ be a modular form of weight $(k-b_{+},l-b_{-})$ and representation $\rho_{\overline{M}}$ with respect to $\Gamma$, and set $\Phi_{M,v}^{F}$ by the formula above. The fact that $F$ does not depend on $\zeta$ and $\omega$ implies, via Proposition \ref{perTheta} and the bilinearity of the pairing $[\cdot,\cdot]_{M}$, that Equation \eqref{perJac} holds for $\Phi=\Phi_{L,v}^{F}$, and that $\Phi_{L,v}^{F}$ is holomorphic in $\zeta$ and $\omega$. Take now $A\in\Gamma$, and apply Theorem \ref{modTheta} and Equation \eqref{modeq} to transform the left hand side of Equation \eqref{modJac}, with $\Phi=\Phi_{M,v}^{F}$, into \[\mathbf{e}\Big(\!\tfrac{j_{A}'\langle\zeta_{v_{+}},\omega_{v_{+}}\rangle}{j(A,\tau)}+\tfrac{\overline{j_{A}'}\langle\zeta_{v_{-}},\omega_{v_{-}}\rangle}{\overline{j(A,\tau)}}\!\Big)j(A,\tau)^{k}\overline{j(A,\tau)}^{l} [\rho_{M}(A)\Theta_{M}(\tau,\zeta,\omega;v),\!\rho_{\overline{M}}(A)F(\tau)]_{M}\!.\] But the equivariance of the pairing allows us to omit the Weil representations in the latter formula, yielding the desired right hand side by the definition of $\Phi_{M,v}^{F}$. The fact that the function whose pairing with $\Theta_{M}$ yields $\Phi_{M,v}^{F}$ is $F$ by definition, for which the growth condition holds by assumption, complete the verification that $\Phi_{M,v}^{F}$ has all the properties required in Definition \ref{Jacdef}. This proves the theorem.
\end{proof}

\begin{rmk}
Consider, in Equation \eqref{modJac}, the case where $A\in\Gamma$ is a power $T^{N}$ of $T$ (for an appropriate $N$, this power will indeed by in $\Gamma$), where $j$ and $j^{c}$ equal 1 and $j'$ and its complex conjugate vanish. This implies, by Definition \ref{Jacdef}, that if $\Phi$ is a Hermitian Jacobi form, then it is $N$-periodic in $\tau$ (for fixed $\zeta$ and $\omega$), which turns the Fourier expansion from Lemma \ref{Fourinv} into the form \[\textstyle{\sum_{m\in\frac{1}{N}\mathbb{Z}}\sum_{\lambda \in M^{*}}c_{m,\lambda}(y)\mathbf{e}\big(mx+\langle\zeta,\lambda\rangle+\langle\lambda,\omega\rangle\big)e^{-2\pi(|\lambda_{v_{+}}|^{2}-|\lambda_{v_{-}}|^{2})y}}.\] Equation \eqref{perJac} would then imply the equality $c_{m,\lambda}=c_{m+\langle\sigma,\lambda\rangle+\langle\lambda,\sigma\rangle+|\sigma|^{2},\lambda+\sigma}$ of functions of $y$ for any $\sigma \in M$ and $\lambda \in M^{*}$, resembling more the relations showing up in \cite{[Hav1]}, \cite{[Hav2]}, and others (the change in $m$ can also be written as $(\lambda,\sigma)+\frac{\sigma^{2}}{2}$ using the associated orthogonal pairing, as in the orthogonal setting). By setting $a_{n,\lambda}$ to be $c_{n+|\lambda|^{2},\lambda}$ (as functions of $y$ again) for any $n\in\mathbb{Q}$ for which the first index lies in $\frac{1}{N}\mathbb{Z}$, we deduce that $a_{n,\lambda}$ depends on $\lambda$ only modulo $M$, allowing us to write this function as $a_{n,\gamma}$ where $\gamma$ is now in $D_{M}$. Since $c_{m,\lambda}$ is now given by $a_{m-|\lambda|^{2},\lambda+M}$, we can replace $m$ by $n=m-|\lambda|^{2}$ in the previous expansion, which will take it to the form \[\sum_{n\in\mathbb{Q}}\sum_{\gamma \in D_{M}}a_{n,\lambda+L}(y)\mathbf{e}(nx)\sum_{\lambda \in M+\gamma}\mathbf{e}\big(|\lambda|^{2}x+\langle\zeta,\lambda\rangle+\langle\lambda,\omega\rangle\big)e^{-2\pi(|\lambda_{v_{+}}|^{2}-|\lambda_{v_{-}}|^{2})y}.\] But the internal sum over $\lambda$ is just $\theta_{M+\gamma}(\tau,\zeta,\omega;v)$ via Equation \eqref{JacTheta}, and multiplying each element of the first sum by $\mathfrak{e}_{\gamma}^{*}$ produces the Fourier expansion of the function $F$ defined in Corollary \ref{decomtheta} (namely the modular form $F$ for which we have $\Phi=\Phi_{M,v}^{F}$ in Theorem \ref{main}). But working with $F(\tau)$ itself directly makes the argument easier, as we do not have to consider convergence issues for such series. \label{JacFour}
\end{rmk}

\smallskip

In general, Theorem \ref{main} involve non-holomorphic expressions in $\tau$, and indeed, Jacobi forms of indefinite lattice index will never be holomorphic (though see Theorem \ref{holJac} below for the positive definite case). The holomorphicity question for vector-valued modular forms shows up on the Hermitian Jacobi forms side via the following analogue of Proposition 2.7 of \cite{[Ze3]}.
\begin{prop}
Let $F$ be a modular form of weight $(k-b_{+},l-b_{-})$ and representation $\rho_{\overline{M}}$ with respect to $\Gamma$, with the associated Hermitian Jacobi form $\Phi_{M,v}^{F}$ from Theorem \ref{main}. The modular form $F$ is then weakly holomorphic if and only if $\Phi_{M,v}^{F}$ is pseudo-holomorphic, a situation which can occur only when the weight of $\Phi_{M,v}^{F}$ is $(k,b_{-})$. \label{pshol}
\end{prop}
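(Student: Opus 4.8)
The plan is to transfer the corresponding statement from the orthogonal setting, namely Proposition 2.7 of \cite{[Ze3]}, to the Hermitian one via the comparison established in Lemma \ref{compThetaM}, and then to track the weights carefully. First I would recall that $\Phi_{M,v}^{F}(\tau,\zeta,\omega)=[\Theta_{M}(\tau,\zeta,\omega;v),F(\tau)]_{M}$, and that by Lemma \ref{compThetaM} this equals $[\widetilde{\Theta}_{M}(\tau,\zeta+\omega;v),F(\tau)]_{M}$, so that $\Phi_{M,v}^{F}$ coincides with the orthogonal Jacobi form associated with $F$ and $M$ viewed as a $\mathbb{Z}$-lattice of signature $(2b_{+},2b_{-})$, in the variable $\zeta+\omega \in M\otimes_{\mathbb{Z}}\mathbb{C}$. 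The operator $4\pi i\partial_{\overline{\tau}}-\Delta_{v_{-}}^{h}$ from Proposition \ref{difeq} is the same operator that appears in the orthogonal setting (it involves only $\partial_{\overline{\tau}}$ and the Laplacian of the space $v_{-}$, with no reference to the $\mathcal{O}$-structure), and since $\Delta_{v_{-}}^{h}$ is expressed via derivatives in the flat coordinates of $v_{-}$, which are the same whether one uses the coordinate $\zeta+\omega$ or the pair $(\zeta,\omega)$, applying this operator to $\Phi_{M,v}^{F}$ gives exactly the result of applying the orthogonal analogue of the operator to $\widetilde{\Theta}_{M} \cdot F$. Hence $\Phi_{M,v}^{F}$ is pseudo-holomorphic if and only if the orthogonal Jacobi form $[\widetilde{\Theta}_{M},F]_{M}$ is pseudo-holomorphic in the sense of \cite{[Ze3]}.

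Next I would invoke Proposition 2.7 of \cite{[Ze3]} directly: it asserts that the orthogonal Jacobi form attached to $F$ is pseudo-holomorphic precisely when $F$ is weakly holomorphic, and that this forces a constraint on the anti-holomorphic weight. In the orthogonal language, $\widetilde{\Theta}_{M}$ carries weight $(b_{+},b_{-})$ (the exponents of $j(A,\tau)$ and $\overline{j(A,\tau)}$ in Theorem \ref{modTheta}), a weakly holomorphic $F$ has weight $(k-b_{+},l-b_{-})$ with $l-b_{-}=0$ being the holomorphic condition on the $\overline{\tau}$-slot, and the product therefore has weight $(k, b_{-})$; the pseudo-holomorphicity of the Jacobi form exactly pins down $l-b_{-}=0$, i.e. $l=b_{-}$. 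Translating this back through Lemma \ref{compThetaM} and the weight correspondence of Theorem \ref{main} (Hermitian weight $(k,l)$ corresponds to elliptic weight $(k-b_{+},l-b_{-})$), we conclude that $\Phi_{M,v}^{F}$ is pseudo-holomorphic iff $F$ is weakly holomorphic, and in that case the weight of $\Phi_{M,v}^{F}$ is $(k,b_{-})$.

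The one point requiring genuine care, and the place where I expect the main friction, is the verification that the operator $4\pi i\partial_{\overline{\tau}}-\Delta_{v_{-}}^{h}$ genuinely commutes with, or is intertwined by, the substitution of the single variable $\zeta+\omega$ for the pair $(\zeta,\omega)$ — that is, that the Laplacian $\Delta_{v_{-}}^{h}$ computed on functions of $(\zeta,\omega)$ via holomorphic derivatives along $M_{\mathbb{C}}$ and $M_{\overline{\mathbb{C}}}$ agrees with the Laplacian of $v_{-}$ computed on functions of $\zeta+\omega$. This follows from the remark in the excerpt that, when $W$ carries an endomorphism $J$ with $J^{2}=-d\operatorname{Id}_{W}$, each summand of $\Delta_{W}$ pairs a derivative in a $W_{\mathbb{C}}$-variable with one in a $W_{\overline{\mathbb{C}}}$-variable; applied to $W=v_{-}$ (which is $J$-invariant, being an element of $\operatorname{Gr}^{J}(M_{\mathbb{R}})$), this shows the two computations of $\Delta_{v_{-}}^{h}$ coincide under $\zeta+\omega \leftrightarrow (\zeta,\omega)$. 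Once this compatibility is in hand, and combined with the fact (noted after Proposition \ref{difeq}) that $\Theta_{M}$ is already annihilated by all the purely anti-holomorphic derivatives in $\zeta$ and $\omega$, the pseudo-holomorphicity of $\Phi_{M,v}^{F}$ reduces cleanly to the heat-type equation $4\pi i\partial_{\overline{\tau}}\Theta_{M}=\Delta_{v_{-}}^{h}\Theta_{M}$ of Proposition \ref{difeq} together with the holomorphicity of $F$ in $\overline{\tau}$, and the statement follows exactly as in \cite{[Ze3]}.
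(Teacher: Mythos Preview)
Your proposal is correct, and the underlying mechanism is the same as the paper's: the heat equation for the theta function plus a Leibniz-rule argument reducing pseudo-holomorphicity of the pairing to $\partial_{\overline{\tau}}F=0$. The difference is one of packaging. You transfer everything to the orthogonal setting via Lemma \ref{compThetaM} and then cite Proposition 2.7 of \cite{[Ze3]}, which forces you to spend a paragraph verifying that $\Delta_{v_{-}}^{h}$ agrees under the change of variables $(\zeta,\omega)\leftrightarrow\zeta+\omega$. The paper instead works directly in the Hermitian variables: it applies $4\pi i\partial_{\overline{\tau}}-\Delta_{v_{-}}^{h}$ to the pairing $[\Theta_{M},F]_{M}$, uses Leibniz' rule (noting that $F$ is independent of $\zeta$ and $\omega$, so $\Delta_{v_{-}}^{h}$ falls entirely on $\Theta_{M}$), invokes Proposition \ref{difeq} to kill the term $[(4\pi i\partial_{\overline{\tau}}-\Delta_{v_{-}}^{h})\Theta_{M},F]_{M}$, and is left with $4\pi i[\Theta_{M},\partial_{\overline{\tau}}F]_{M}$; the uniqueness of the theta decomposition (Corollary \ref{decomtheta}) then gives $\partial_{\overline{\tau}}F=0$. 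This is shorter precisely because the Laplacian compatibility you labor over has already been absorbed into the statement of Proposition \ref{difeq}, which was itself proved via Lemma \ref{compThetaM}. Your route is valid but re-opens the orthogonal--Hermitian dictionary that Proposition \ref{difeq} was designed to close.
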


\begin{proof}
Apply the operator $4\pi i\partial_{\overline{\tau}}-\Delta_{v_{-}}^{h}$ from Definition \ref{Jacdef} to $\Phi_{M,v}^{F}(\tau,\zeta,\omega)$, considered as the pairing of $\Theta_{M}(\tau,\zeta,\omega;v)$ and $F(\tau)$, with the latter independent of $\zeta$ and $\omega$, and using Leibnitz' rule for $\partial_{\overline{\tau}}$, produces the expression \[\big[(4\pi i\partial_{\tau}-\Delta_{v_{-}}^{h})\Theta_{M}(\tau,\zeta,\omega;v),F(\tau)\big]_{M}+4\pi i\big[\Theta_{M}(\tau,\zeta,\omega;v),\partial_{\overline{\tau}}F(\tau)\big]_{M}.\] Proposition \ref{difeq} yields the vanishing of the first term, and as a $\mathbb{C}[D_{\overline{M}}]$-valued function of $\tau$ is determined by its pairing with $\Theta_{M}$, we deduce from Definition \ref{Jacdef} that $\Phi_{M,v}^{F}$ is pseudo-holomorphic precisely when $\partial_{\overline{\tau}}F=0$. As the modular form $F$ is smooth by assumption and grows at most linearly exponentially at the cusps, the latter condition holds if and only if $F$ is weakly holomorphic. This proves the first assertion, from which the second one immediately follows as weakly holomorphic modular forms have weights $(k,0)$, with $l=0$. This proves the proposition.
\end{proof}

\begin{rmk}
We replace $a_{n,\gamma}(y)$ from Remark \ref{JacFour} by $\tilde{a}_{n,\gamma}(y)e^{-2\pi ny}$, to present the Fourier expansion of $F$ there using the holomorphic exponentials $\mathbf{e}(n\tau)$. Then $c_{m,\lambda}(y)=a_{m-|\lambda|^{2},\lambda+M}(y)$ becomes $\tilde{c}_{m,\lambda}(y)e^{-2\pi my+2\pi|\lambda|^{2}y}$, with the former multiplier being $\tilde{a}_{m-|\lambda|^{2},\lambda+M}(y)$. The full expansion $\Phi_{L,v}^{F}(\tau,\zeta)$ then takes the form $\sum_{m\in\frac{1}{N}\mathbb{Z}}\sum_{\lambda \in M^{*}}\tilde{c}_{m,\lambda}(y)\mathbf{e}\big(m\tau+\langle\zeta,\lambda\rangle+\langle\lambda,\omega\rangle\big)e^{4\pi|\lambda_{v_{-}}|^{2}y}$, using the fact that $|\lambda|^{2}=|\lambda_{v_{+}}|^{2}+|\lambda_{v_{-}}|^{2}$. Noting that $F$ is weakly holomorphic precisely when $\tilde{c}_{m,\lambda}$ is a constant for every $m$ and $\lambda$, which happens if and only if $4\pi i\partial_{\overline{\tau}}$ acts only on the exponentials in $y$. As $\Delta_{v_{-}}^{h}$ operates only on the terms with $\zeta$ and $\omega$, the idea behind Proposition \ref{pshol} is clearer using these conventions. \label{holFour}
\end{rmk}
Remark \ref{holFour} is particularly valuable in the case where $M$ is positive definite, as seen in Equation \eqref{Fourposdef} below.

\smallskip

Section 3 of \cite{[Ze3]} contains a lot of relations that are satisfied between constructions on lattices, modular forms, and Jacobi forms. They all extend, \emph{mutatis mutandis}, to the case of $\mathcal{O}$-lattices and Hermitian Jacobi forms. Here are the important details to note when changing the setting to the Hermitian one:
\begin{itemize}
\item The isomorphism from Lemma 3.1 there extends to the representation of $\operatorname{U}(1,1)(\mathbb{Z})$ constructed in Proposition \ref{extWeil}, by adding conjugation to all the entries and hence inverting the scalar matrices from $\mu(\mathcal{O})$. This can then be used to construct skew-holomorphic Hermitian Jacobi forms, via the other operator from Proposition \ref{difeq}.
\item The over-lattices of an $\mathcal{O}$-lattice, for which one considers the arrow operators (these are introduced here in Equation \eqref{arrowdef} below), can be an even projective $\mathcal{O}$-lattice containing $M$, or obtained by extending scalars to a larger order in $\mathbb{K}$, or combining these operations. Lemma \ref{duallat} and Corollary \ref{Mindual} transfer it to orthogonal over-lattices over $\mathbb{Z}$.
\item Orthogonal direct sums of $\mathcal{O}$-lattices yields an $\mathcal{O}$-lattice if and only if both lattices are over the same order $\mathcal{O}$ in $\mathbb{K}$ (this was already exemplified in Remarks \ref{projlat} and \ref{notlat}). The module structure of Hermitian Jacobi forms over the ring of scalar-valued modular forms holds equally well.
\item A primitive sub-lattice of an $\mathcal{O}$-lattice is an $\mathcal{O}$-lattice, since it is a direct summand of a projective $\mathcal{O}$-module.
\item The theta contraction from \cite{[Ma]}, or the restriction from \cite{[Ze5]}, extend to the unitary setting via embeddings like that from \cite{[Ho2]}, namely via the embedding of the space from Equation \eqref{GrassJ} into that from Equation \eqref{Grassdef}.
\item The isomorphism $\iota$ from Proposition 3.9 of \cite{[Ze3]} should respect the action of $\mathcal{O}$, or equivalently of the endomorphism $J$. Then $v$ and $w$ there are from the joint $\operatorname{Gr}^{J}$-space.
\end{itemize}
In addition, as in Remark 2.9 of \cite{[Ze3]}, one can fix the modular form $F$ and investigate the properties of $(\tau,\zeta,\omega,v)\mapsto\Phi_{M,v}^{F}(\tau,\zeta,\omega)$ as a function of variables from the product $\mathcal{H} \times M_{\mathbb{C}} \times M_{\overline{\mathbb{C}}}\times\operatorname{Gr}^{J}(V_{\mathbb{R}})$ (in particular, it is invariant under the diagonal action on the last three variables of the stable unitary group of $M$, acting trivially on $D_{M}$).

\smallskip

The theta functions from Section 4 of \cite{[Bor]} may also involve characteristics, and Section 4 of \cite{[Ze3]} presents the basic properties of Jacobi theta functions with characteristics. Since such theta functions are, in fact, used implicitly in \cite{[Xi]} and more explicitly in \cite{[BRZ]} (at least in the holomorphic, positive definite setting), we define them here and state their functional equations. With $M$, $\tau$, $\zeta$, $\omega$, and $v$ as above, we take two elements $\alpha$ and $\beta$ in $M_{\mathbb{R}}$, and define the \emph{Hermitian Jacobi--Siegel theta function with characteristics $\binom{\alpha}{\beta}$} to be the expression $\Theta_{M}\big(\tau,\zeta,\omega;\binom{\alpha}{\beta};v\big)=\sum_{\gamma \in D_{M}}\theta_{M+\gamma}\big(\tau,\zeta,\omega;\binom{\alpha}{\beta};v\big)\mathfrak{e}_{\gamma}$ in which the scalar-valued component is the sum over $\lambda \in M+\gamma$ of
\begin{equation}
\mathbf{e}\big(\tau|(\lambda+\beta)_{v_{+}}|^{2}+\overline{\tau}|(\lambda+\beta)_{v_{-}}|^{2}+\langle\zeta,\lambda+\beta\rangle+\langle\lambda+\beta,\omega\rangle\big)-\big\langle\lambda+ \tfrac{\beta}{2},\alpha\big\rangle-\big\langle\alpha,\lambda+\tfrac{\beta}{2}\big\rangle\big]. \label{charTheta}
\end{equation}
Combining the proofs of Theorem 4.1 from \cite{[Ze3]} with Lemma \ref{compThetaM} (whose proof relates the theta function from Equation \eqref{charTheta} to the one with characteristics defined in Equation (16) of \cite{[Ze3]}, with the variable $\zeta+\omega \in M\otimes_{\mathbb{Z}}\mathbb{C}$) as in the proofs of Proposition \ref{perTheta} and Theorem \ref{modTheta}, as well as Proposition \ref{difeq} for the last assertion, yields the following result.
\begin{thm}
The expression \[\Theta_{M}\big(\tau,\zeta+\tau\sigma_{v_{+},\mathbb{C}}+\overline{\tau}\sigma_{v_{-},\mathbb{C}}+\nu_{\mathbb{C}},\omega+\tau\sigma_{v_{+},\overline{\mathbb{C}}}+\overline{\tau}\sigma_{v_{-},\overline{\mathbb{C}}}+ \nu_{\overline{\mathbb{C}}};\textstyle{\binom{\alpha}{\beta}};v\big)\] is equal, when $\sigma$ and $\nu$ are elements of $M$, to \[\mathbf{e}\big(-\tau|\sigma_{v_{+}}|^{2}-\overline{\tau}|\sigma_{v_{-}}^{2}|-\langle\zeta,\sigma\rangle-\langle\sigma,\omega\rangle+\langle\nu,\beta\rangle+\langle\beta,\nu\rangle+\langle\sigma,\alpha\rangle+ \langle\alpha,\sigma\rangle\big)\] times $\Theta_{M}\big(\tau,\zeta,\omega;\binom{\alpha}{\beta};v\big)$. When we view an element $A\in\operatorname{U}(1,1)(\mathbb{Z})$ as a $2\times2$ matrix with entries in $\mathcal{O}$, which can thus act on elements of $M$ hence of $M_{\mathbb{R}}$, we have the equality \[\Theta_{M}\Big(A\tau,\tfrac{\zeta_{v_{+}}}{j(A,\tau)}+\tfrac{\zeta_{v_{-}}}{\overline{j^{c}(A,\tau)}},\tfrac{\omega_{v_{+}}}{j^{c}(A,\tau)}+\tfrac{\omega_{v_{-}}}{\overline{j(A,\tau)}};\textstyle{\binom{\alpha}{\beta}};v \Big)=\] \[=j(A,\tau)^{b_{+}}\overline{j(A,\tau)}^{b_{-}}\mathbf{e}\Big(\tfrac{j_{A}'\langle\zeta_{v_{+}},\omega_{v_{+}}\rangle}{j(A,\tau)}+\tfrac{\overline{j_{A}'}\langle\zeta_{v_{-}},\omega_{v_{-}}\rangle}{\overline{j(A,\tau)}} \Big)\rho_{M}(A)\Theta_{M}(\tau,\zeta,\omega;\textstyle{\binom{\alpha}{\beta}};v).\] The theta functions from Equation \eqref{charTheta} are also annihilated by all the differential operators mentioned in Proposition \ref{difeq}. \label{propgen}
\end{thm}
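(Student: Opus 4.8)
The plan is to reduce all three assertions to the corresponding statements in the orthogonal setting over $\mathbb{Z}$, exactly as Proposition \ref{perTheta} and Theorem \ref{modTheta} were deduced from Proposition 1.1 and Theorem 1.2 of \cite{[Ze3]} via Lemma \ref{compThetaM}. The first step is a characteristic-enhanced version of Lemma \ref{compThetaM}: one checks that the scalar-valued component of $\Theta_M\big(\tau,\zeta,\omega;\binom{\alpha}{\beta};v\big)$ from Equation \eqref{charTheta} equals, term by term, the $\gamma$-component of the orthogonal Siegel--Jacobi theta function with characteristic $\binom{\alpha}{\beta}$ from Equation (16) of \cite{[Ze3]}, evaluated at the single variable $\zeta+\omega\in M\otimes_{\mathbb{Z}}\mathbb{C}$. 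This is the same verification as in Lemma \ref{compThetaM}: the orthogonal projections $\lambda_{v_\pm}$ agree in both readings of $v$, the Hermitian norms $|(\lambda+\beta)_{v_\pm}|^2$ become $(\lambda+\beta)_{v_\pm}^2/2$, the sum $\langle\zeta,\lambda+\beta\rangle+\langle\lambda+\beta,\omega\rangle$ becomes $(\lambda+\beta,\zeta+\omega)$ by parts $(ii)$--$(iii)$ of Proposition \ref{decompair}, the characteristic term $\langle\lambda+\tfrac{\beta}{2},\alpha\rangle+\langle\alpha,\lambda+\tfrac{\beta}{2}\rangle$ becomes $(\lambda+\tfrac{\beta}{2},\alpha)$ by Definition \ref{pairdef}, and the dual lattice $M^*$, the quotient $D_M$, and the cosets $M+\gamma$ are the same objects by Corollary \ref{Mindual}.

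With this in hand, the periodicity assertion follows by applying the periodicity half of Theorem 4.1 of \cite{[Ze3]} to the orthogonal theta function in the variable $\zeta+\omega$: as in the proof of Proposition \ref{perTheta}, adding $\tau\sigma_{v_+,\mathbb{C}}+\overline{\tau}\sigma_{v_-,\mathbb{C}}+\nu_{\mathbb{C}}$ to $\zeta$ and $\tau\sigma_{v_+,\overline{\mathbb{C}}}+\overline{\tau}\sigma_{v_-,\overline{\mathbb{C}}}+\nu_{\overline{\mathbb{C}}}$ to $\omega$ amounts to adding $\tau\sigma_{v_+}+\overline{\tau}\sigma_{v_-}+\nu$ to $\zeta+\omega$, the orthogonal formula produces a factor whose exponent is $-\tau\sigma_{v_+}^2/2-\overline{\tau}\sigma_{v_-}^2/2-(\sigma,\zeta+\omega)+(\nu,\beta)+(\sigma,\alpha)$, and Proposition \ref{decompair} together with Definition \ref{pairdef} splits each orthogonal pairing back into the asserted Hermitian ones (with $\chi_\nu$ and $t_\sigma$ trivial since $\sigma,\nu\in M$, so $\Theta_M$ itself reappears). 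For the modularity assertion one first treats $A\in\operatorname{SL}_2(\mathbb{Z})$, where $j^c(A,\tau)=j(A,\tau)$ and $\overline{j_A'}=j_A'$: the modularity half of Theorem 4.1 of \cite{[Ze3]}, used with the orthogonal signature $(2b_+,2b_-)$, gives the factor $j(A,\tau)^{b_+}\overline{j(A,\tau)}^{b_-}$ times the exponential of $j_A'(\zeta+\omega)_{v_\pm}^2/2j(A,\tau)$, and since the $M_{\mathbb{C}}$- and $M_{\overline{\mathbb{C}}}$-parts of $(\zeta+\omega)_{v_\pm}$ are $\zeta_{v_\pm}$ and $\omega_{v_\pm}$, parts $(i)$--$(iii)$ of Proposition \ref{decompair} convert this into $j_A'\langle\zeta_{v_\pm},\omega_{v_\pm}\rangle/j(A,\tau)$. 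The general case $A=\xi B$ with $\xi\in\mu(\mathcal{O})$ and $B\in\operatorname{SL}_2(\mathbb{Z})$ is then handled verbatim as in the second half of the proof of Theorem \ref{modTheta}: $\xi$ multiplies $\zeta$ by $\xi$ and $\omega$ by $\overline{\xi}$ (Corollary \ref{Konspaces}), the external $j$-factors acquire $\xi^{b_--b_+}$, the pairings $\langle\overline{\xi}\zeta_{v_\pm},\overline{\xi}\omega_{v_\pm}\rangle$ and the Hermitian norms are $\xi$-invariant, and the residual root of unity is absorbed into $\rho_M(\xi)$ by the formula in Proposition \ref{extWeil}. Finally, the differential assertion is immediate from the characteristic version of Lemma \ref{compThetaM}: $\Theta_M\big(\tau,\zeta,\omega;\binom{\alpha}{\beta};v\big)$ is, up to the factorization, an orthogonal characteristic theta in the single holomorphic variable $\zeta+\omega$, and the shift by $\beta$ affects neither holomorphicity nor the heat-type equations, so Proposition \ref{difeq} (i.e. Proposition 1.3 of \cite{[Ze3]}, as recorded at the end of Theorem 4.1 there) carries over.

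The genuine work, exactly as in Proposition \ref{perTheta} and Theorem \ref{modTheta}, is the bookkeeping of the decomposition $V\otimes_{\mathbb{Q}}\mathbb{C}=V_{\mathbb{C}}\oplus V_{\overline{\mathbb{C}}}$ --- keeping straight when $\sqrt{-d}$ acts through $J$ versus through the extended scalars --- now compounded by the characteristic terms, whose contributions $\langle\nu,\beta\rangle+\langle\beta,\nu\rangle+\langle\sigma,\alpha\rangle+\langle\alpha,\sigma\rangle$ must be carried through the substitution $\lambda\mapsto\lambda+\sigma$ and, on the modular side, through the action of $A$ on $M_{\mathbb{R}}$. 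I expect the subtlest single point to be confirming that a general $A\in\operatorname{U}(1,1)(\mathbb{Z})$ acts on the characteristic $\binom{\alpha}{\beta}$ compatibly with the orthogonal transformation law of Theorem 4.1 of \cite{[Ze3]}; but for $A=\xi B$ this decomposes into the benign scalar action of $\xi$ and the already-known $\operatorname{SL}_2(\mathbb{Z})$ case for $B$, so no difficulty beyond that already present in the proof of Theorem \ref{modTheta} actually arises.
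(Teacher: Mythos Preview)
Your proposal is correct and follows essentially the same approach as the paper: reduce to the orthogonal theta function with characteristics from \cite{[Ze3]} via a characteristic-enhanced version of Lemma \ref{compThetaM}, invoke Theorem 4.1 of \cite{[Ze3]} for the periodicity and the $\operatorname{SL}_2(\mathbb{Z})$-modularity, extend to $\operatorname{U}(1,1)(\mathbb{Z})$ by the $A=\xi B$ argument of Theorem \ref{modTheta}, and cite Proposition \ref{difeq} for the differential operators. The paper also notes, matching your anticipated subtle point, that in the $\xi$-step both $\alpha$ and $\beta$ are multiplied by $\overline{\xi}$ so that all pairings are preserved.
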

Note that while the proof of Theorem 4.1 of \cite{[Ze3]} suffices (via Lemma \ref{compThetaM}) for establishing the second equality in Theorem \ref{propgen} when the matrix $A$ is in $\operatorname{SU}(1,1)(\mathbb{Z})=\operatorname{SL}_{2}(\mathbb{Z})$, the considerations from the proof of Theorem \ref{modTheta} apply here too (with $\beta$ also multiplied by $\overline{\xi}$, as is $\alpha$ for preserving the pairings involved) for extending it to the full representation of $\operatorname{SU}(1,1)(\mathbb{Z})$, as defined in Proposition \ref{extWeil}. As in Proposition 4.2 of that reference, the functions on $\mathcal{H} \times M_{\mathbb{C}} \times M_{\overline{\mathbb{C}}}$ that satisfy the first equality from Theorem \ref{propgen} and are holomorphic in $\zeta$ and $\omega$ are $[\cdot,\cdot]_{M}$-pairings of the theta function from Equation \eqref{charTheta} with smooth $\mathbb{C}[D_{\overline{M}}]$-valued functions (that are holomorphic if and only if the pairing is pseudo-holomorphic), but the pairings are no longer Jacobi forms when $\binom{\alpha}{\beta}\neq\binom{0}{0}$. The similarity with the theta functions with characteristics as in \cite{[FZ]} and others, and the relation with the arrow operator as in Proposition 4.4 there, are valid here equally well.

\section{Vector--Valued Jacobi Forms \label{VVJF}}

The vector-valued modular and Jacobi forms (either Hermitian and orthogonal) will be based on Weil representations associated with discriminant forms, that have to be appropriately related to $D_{M}$ for our lattice $M$.

\smallskip

We thus formally define the objects with which we shall work.
\begin{defn}
A \emph{discriminant form} is finite Abelian group $D$ with a $\mathbb{Q}/\mathbb{Z}$-valued quadratic form, denoted by $\gamma\mapsto\frac{\gamma^{2}}{2}$, such that the associated bilinear pairing $(\cdot,\cdot):D \times D\to\mathbb{Q}/\mathbb{Z}$ is non-degenerate in the sense that the homomorphism that it defines from $D$ to $\operatorname{Hom}(D,\mathbb{Q}/\mathbb{Z})$ is injective hence bijective. For a subgroup $H \subseteq D$, we denote by $H^{\perp}$ the set of elements of $D$ pairing trivially with $H$. A subgroup $H \subseteq D$ is called \emph{isotropic} if $\frac{\gamma^{2}}{2}=0$ for all $\gamma \in H$. \label{discform}
\end{defn}

If $H$ is an isotropic subgroup of $D$ as in Definition \ref{discform}, then $H$ pairs trivially with itself, namely we have $H \subseteq H^{\perp}$, and the quotient $\Delta:=H^{\perp}/H$ is non-degenerate with the induced bilinear pairing. However, not every $H$ that is contained in $H^{\perp}$ is isotropic, and only the quotients arising from isotropic subgroups $H$ carry well-defined $\mathbb{Q}/\mathbb{Z}$-valued quadratic forms, and are thus discriminant forms themselves.
\begin{rmk}
Note that discriminant forms like $D=D_{M}$ for an $\mathcal{O}$-lattice $M$ carry additional structure, namely an endomorphism induced by $J$ that squares to $-d\operatorname{Id}_{D}$ and satisfies condition $(iii)$ of Lemma \ref{pairchar}, whence also the conditions $(ii)$, $(iv)$, and $(v)$ there for the pairings. This structure carries over to $\Delta$ in case $J$ preserves $H$, and commutes with operations like the arrow operators defined below. However, this will play no role in what follows. \label{JonD}
\end{rmk}

For such $D$, $H$, and $\Delta$ there are two arrow operators, which we denote by $\uparrow^{D}_{\Delta}=\uparrow_{H}:\mathbb{C}[\Delta]\to\mathbb{C}[D]$ and $\downarrow^{D}_{\Delta}=\downarrow_{H}:\mathbb{C}[D]\to\mathbb{C}[\Delta]$, and their actions on the natural bases is given by
\begin{equation}
\uparrow_{H}\mathfrak{e}_{\gamma}:=\sum_{\delta \in H^{\perp},\ \delta+H=\gamma}\mathfrak{e}_{\delta}\qquad\mathrm{and}\qquad\downarrow_{H}\mathfrak{e}_{\delta}:=\begin{cases} \mathfrak{e}_{\delta+H}, & \mathrm{when\ }\delta \in H^{\perp}, \\  0, & \mathrm{if\ }\delta \not\in H^{\perp}. \end{cases} \label{arrowdef}
\end{equation}
As is shown in \cite{[Ma]} and considered further in \cite{[Ze5]} (but has already been known before), both maps are homomorphisms of representations of $\operatorname{SL}_{2}(\mathbb{Z})$ (or of its metaplectic cover in case the signature is odd) when the spaces are considered as the representation spaces of the corresponding Weil representations (a fact that was used in \cite{[Ze4]} and others for investigating reducibility and related questions), and they are dual to one another when considering pairings like $[\cdot,\cdot]_{M}$ from above.

\smallskip

One of the most natural settings in which these operators show up is when $M$ is a primitive sublattice of another lattice $N$, of higher rank. Then if $M^{\perp}$ stands for the orthogonal complement of $M$ in $N$, then $N$ contains $M \oplus M^{\perp}$, the discriminant form of this direct sum is $D_{M} \oplus D_{M^{\perp}}$, and the isotropic subgroup $N/(M \oplus M^{\perp})$ of this direct sum intersects each of the defining summands trivially. Since the lattice $M$ (and not only $D_{M}$) is part of the data, but the lattices $M^{\perp}$ and $N$ are not, we make the following definition.
\begin{defn}
Let $D$ be a any discriminant form. Then a subgroup $H$ of $D_{M} \oplus D$ is called \emph{horizontal} if $H \cap D_{M}=H \cap D=\{0\}$ inside $D_{M} \oplus D$. \label{horizdef}
\end{defn}
We shall use, for elements $\alpha \in D_{M} \oplus D$, the notations $\alpha_{M}$ and $\alpha_{D}$ for the images of $\alpha$ under the natural projections onto $D_{M}$ and $D$ respectively. For horizontal isotropic subgroups we now prove the following simple lemma, which is mainly used for introducing our notation.
\begin{lem}
The following assertions hold:
\begin{enumerate}[$(i)$]
\item Let $H_{M} \subseteq D_{M}$ be any subgroup, and assume that we have an injective homomorphism $\iota:H_{M} \to D$. Then $H:=\{\gamma_{M}+\iota(\gamma_{M})|\gamma \in H_{M}\}$ is a horizontal subgroup of $D_{M} \oplus D$, and every horizontal subgroup is obtained in this way.
\item An element $\alpha \in D_{M} \oplus D$ lies in $H^{\perp}$ for $H$ from part $(i)$ if and only if the equality $(\alpha_{M},\gamma_{M})=-\big(\alpha_{D},\iota(\gamma_{M})\big)$ holds for every $\gamma \in H$.
\item The group $H$ is isotropic if and only if the equality $\frac{\iota(\gamma_{M})^{2}}{2}=-\frac{\gamma_{M}^{2}}{2}$ holds for any $\gamma_{M} \in H_{M}$.
\end{enumerate} \label{horizdecom}
\end{lem}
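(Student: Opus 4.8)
The plan is to treat the three parts in sequence, each resting on the single structural fact that both the bilinear pairing and the $\mathbb{Q}/\mathbb{Z}$-valued quadratic form on $D_{M}\oplus D$ are the orthogonal direct sums of those on the two summands; in particular $D_{M}$ and $D$, viewed as subgroups of $D_{M}\oplus D$, are mutually orthogonal, so that mixed terms such as $(\alpha_{M},\beta_{D})$ vanish identically.

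For part $(i)$, I would first observe that for an injective homomorphism $\iota\colon H_{M}\to D$ the set $H=\{\gamma_{M}+\iota(\gamma_{M})\mid\gamma_{M}\in H_{M}\}$, the graph of $\iota$ inside $D_{M}\oplus D$, is a subgroup precisely because $\iota$ is a homomorphism. It is horizontal: an element $\gamma_{M}+\iota(\gamma_{M})$ of $H$ lies in $D_{M}$ exactly when its $D$-component $\iota(\gamma_{M})$ is $0$, which by injectivity of $\iota$ forces $\gamma_{M}=0$, so the element is $0$; symmetrically it lies in $D$ only when $\gamma_{M}=0$, hence only when it is $0$. Conversely, if $H$ is horizontal then the restrictions to $H$ of the projections $\alpha\mapsto\alpha_{M}$ and $\alpha\mapsto\alpha_{D}$ have kernels $H\cap D$ and $H\cap D_{M}$ respectively, hence are both injective; taking $H_{M}$ to be the image of the first projection, that projection restricts to an isomorphism $H\to H_{M}$, and composing its inverse with the second projection gives an injective homomorphism $\iota\colon H_{M}\to D$. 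Since every $h\in H$ satisfies $h=h_{M}+h_{D}$ with $h_{D}=\iota(h_{M})$, the graph of $\iota$ is exactly $H$.

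For part $(ii)$, I would expand directly: for $\alpha\in D_{M}\oplus D$ and $\gamma\in H$, bilinearity and the orthogonality of $D_{M}$ and $D$ give $(\alpha,\gamma)=(\alpha_{M},\gamma_{M})+(\alpha_{D},\gamma_{D})=(\alpha_{M},\gamma_{M})+(\alpha_{D},\iota(\gamma_{M}))$ in $\mathbb{Q}/\mathbb{Z}$, using $\gamma_{D}=\iota(\gamma_{M})$ from part $(i)$. Hence $\alpha\in H^{\perp}$, i.e.\ $(\alpha,\gamma)=0$ for all $\gamma\in H$, holds if and only if $(\alpha_{M},\gamma_{M})=-(\alpha_{D},\iota(\gamma_{M}))$ for every such $\gamma$. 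For part $(iii)$, the same orthogonality makes the mixed term $(\gamma_{M},\iota(\gamma_{M}))$ vanish, so that $\frac{(\gamma_{M}+\iota(\gamma_{M}))^{2}}{2}=\frac{\gamma_{M}^{2}}{2}+\frac{\iota(\gamma_{M})^{2}}{2}$, and therefore $H$ is isotropic exactly when $\frac{\iota(\gamma_{M})^{2}}{2}=-\frac{\gamma_{M}^{2}}{2}$ for every $\gamma_{M}\in H_{M}$.

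There is no real obstacle here; the only point requiring care is the bookkeeping, namely recognizing that ``horizontal'' is precisely the statement that both coordinate projections are injective on $H$ (so that $H$ is the graph of a well-defined injective homomorphism), and that passing to an orthogonal direct sum annihilates every mixed term in both the pairing and the quadratic form.
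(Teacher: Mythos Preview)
Your proof is correct and follows essentially the same approach as the paper: both recognize that horizontality is equivalent to injectivity of the two coordinate projections restricted to $H$ (making $H$ the graph of an injective homomorphism), and both derive parts $(ii)$ and $(iii)$ from the orthogonality of $D_{M}$ and $D$ in the direct sum. Your exposition is slightly more explicit in writing out the quadratic-form decomposition for part $(iii)$, whereas the paper simply calls it a direct and straightforward calculation.
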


\begin{proof}
For every non-zero element $\gamma \in H$, we know that $\gamma_{M}\neq0$ as well as $\gamma_{D}=\iota(\gamma_{M})\neq0$ (as $\iota$ is injective), yielding the horizontality from Definition \ref{horizdef}. Conversely, let $H \subseteq D_{M} \oplus D$ be horizontal, and let $H_{M}$ denote the image of $H$ under the projection to $D_{M}$. The fact that $H$ intersects $D$ trivially (by Definition \ref{horizdef}) implies the projection map from $H$ to $H_{M}$ is injective hence bijective, and we can thus invert it. The map $\iota$ is then the composition of this inverted map with the projection onto $D$, and as this projection is also injective when restricted to $H$ (Definition \ref{horizdef} implies that since $H$ intersects $D_{M}$ trivially as well), we deduce that $\iota$ is also injective. This proves part $(i)$.

Part $(ii)$ then follows from the evaluation of the pairing of $\alpha=\alpha_{M}+\alpha_{D}$ and $\gamma=\gamma_{M}+\iota(\gamma_{M}) \in H$ and the orthogonality of $D_{M}$ and $D$ inside their direct sum, and this orthogonality is also applied in the direct and straightforward calculation that is required for part $(iii)$. This proves the lemma.
\end{proof}
Using Lemma \ref{horizdecom} and our notations for the projections, whenever a horizontal subgroup $H$ of $D_{M} \oplus D$ is given, we will use $H_{M}$ and $H_{D}$ for its images in $D_{M}$ and $D$ respectively, and the elements $\gamma \in H$, $\gamma_{M}$, and $\gamma_{D}=\iota(\gamma_{M})$ will be related as in the proof of that lemma.

\smallskip

For considering the quotient $\Delta_{H}:=H^{\perp}/H$ that arises from a horizontal isotropic subgroup $H$ of $D_{M} \oplus D$, we shall need the following description.
\begin{lem}
Let $H \subseteq D_{M} \oplus D$ be a horizontal subgroup, and fix a set $R \subseteq D_{M}$ of representatives for the quotient $D_{M}/H_{M}$.
\begin{enumerate}[$(i)$]
\item Any element $\alpha \in D_{M} \oplus D$ can be uniquely written as a sum $\beta+\gamma$, with $\gamma \in H$ and $\beta_{M} \in R$.
\item When $H$ is isotropic, the condition from part $(ii)$ of Lemma \ref{horizdecom} holds for $\alpha$ if and only if it holds for $\beta$.
\item In the isotropic case, the set $H^{\perp}_{R}$ of $\beta \in D_{M} \oplus D$ for which $\beta_{M} \in R$ and the condition from part $(ii)$ of Lemma \ref{horizdecom} is satisfied map bijectively onto $\Delta_{H}$ in the natural projection. We denote the image of such $\beta$ by $\overline{\beta}$.
\item The element $\mathfrak{e}_{\alpha}$ associated with $\alpha \in D_{M} \oplus D$, decomposed as in part $(i)$, is sent by $\downarrow_{H}$ to $\mathfrak{e}_{\overline{\beta}}$ in case the condition from part $(ii)$ of Lemma \ref{horizdecom} holds for $\alpha$ (or equivalently $\beta$), and to 0 otherwise.
\end{enumerate} \label{arrowhoriz}
\end{lem}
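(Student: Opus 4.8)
The plan is to dispatch the four parts in sequence, each being essentially bookkeeping built on the horizontality of $H$ (Definition \ref{horizdef}) --- which, exactly as in the proof of Lemma \ref{horizdecom}, makes the projection $H \to H_M$ a group isomorphism --- together with the isotropy of $H$ for parts $(ii)$--$(iv)$.

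For part $(i)$, given $\alpha \in D_M \oplus D$ I would use that $\alpha_M$ lies in exactly one coset of $H_M$ in $D_M$, so there is a unique $\beta_M \in R$ with $\alpha_M - \beta_M \in H_M$; by the isomorphism $H \cong H_M$ there is then a unique $\gamma \in H$ with $\gamma_M = \alpha_M - \beta_M$, and setting $\beta := \alpha - \gamma$ yields a decomposition with $\beta_M \in R$ and $\gamma \in H$. For uniqueness, from $\beta + \gamma = \beta' + \gamma'$ one gets $\beta_M - \beta'_M = \gamma'_M - \gamma_M \in H_M$, which forces $\beta_M = \beta'_M$ since $R$ is a set of coset representatives, hence $\gamma_M = \gamma'_M$, hence $\gamma = \gamma'$ (again via $H \cong H_M$) and $\beta = \beta'$.

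Part $(ii)$ is immediate from isotropy: $H$ isotropic means $H \subseteq H^{\perp}$, so in $\alpha = \beta + \gamma$ the summand $\gamma$ lies in $H^{\perp}$, and since $H^{\perp}$ is a subgroup we get $\alpha \in H^{\perp} \Leftrightarrow \beta \in H^{\perp}$; by part $(ii)$ of Lemma \ref{horizdecom} this membership is precisely the stated pairing condition. For part $(iii)$ I would check that the natural surjection $H^{\perp}\twoheadrightarrow H^{\perp}/H = \Delta_H$ restricts to a bijection $H^{\perp}_R \to \Delta_H$. Surjectivity: any class has a representative $\alpha \in H^{\perp}$, and decomposing $\alpha = \beta + \gamma$ as in $(i)$ and invoking $(ii)$ shows $\beta \in H^{\perp}_R$ with $\beta \equiv \alpha \pmod H$. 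Injectivity: if $\beta,\beta' \in H^{\perp}_R$ with $\beta - \beta' \in H$, then $\beta_M - \beta'_M \in H_M$ together with $\beta_M,\beta'_M \in R$ gives $\beta_M = \beta'_M$, so $\beta - \beta'$ lies in $H$ with vanishing $D_M$-component, hence in $H \cap D = \{0\}$ by horizontality. This identification defines $\overline{\beta}$ as the image of $\beta$.

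Finally, part $(iv)$ merely unwinds Equation \eqref{arrowdef}: $\downarrow_{H}\mathfrak{e}_{\alpha}$ equals $\mathfrak{e}_{\alpha+H}$ when $\alpha \in H^{\perp}$ and $0$ otherwise. By $(ii)$ the hypothesis $\alpha \in H^{\perp}$ is equivalent to $\beta \in H^{\perp}$, and when it holds $\alpha + H = \beta + H$ (since $\gamma \in H$), which is $\overline{\beta}$ by the identification in $(iii)$; hence $\downarrow_{H}\mathfrak{e}_{\alpha} = \mathfrak{e}_{\overline{\beta}}$, and $0$ in the complementary case. No step poses a genuine obstacle; the only point needing care is keeping the bookkeeping consistent --- in particular using that $R$ is a true transversal for $D_M/H_M$ and that $H$ meets the summand $\{0\}\oplus D$ trivially, so that an element of $H$ with vanishing $D_M$-component must be $0$.
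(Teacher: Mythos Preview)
Your proof is correct and follows essentially the same approach as the paper's own proof, which is just a terser version of your argument: part $(i)$ from $H\cong H_M$ and $R$ being a transversal, part $(ii)$ from $H\subseteq H^\perp$, and parts $(iii)$ and $(iv)$ by combining these with the definitions of $\Delta_H$ and $\downarrow_H$. You have simply made the bookkeeping (particularly the injectivity step in $(iii)$ via $H\cap D=\{0\}$) explicit where the paper leaves it to the reader.
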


\begin{proof}
Part $(i)$ is a direct consequence from the fact that adding an element $\gamma \in H$ changes the $D_{M}$-coordinate by $\gamma_{M} \in H_{M}$, and the assumption on $R$. As $H$ is isotropic, elements $\gamma \in H$ lie in $H^{\perp}$ and hence satisfy the condition in question, and part $(ii)$ follows. Part $(iii)$ is deduced via parts $(i)$ and $(ii)$ from the definition of $\Delta_{H}$ as a quotient, and so is part $(iv)$ in view of the definition of $\downarrow_{H}$ in Equation \eqref{arrowdef}. This proves the lemma.
\end{proof}

\begin{rmk}
In case $H_{M}$, or equivalently $H_{D}$, is non-degenerate, the papers \cite{[Ma]} and \cite{[Ze5]} give explicit formulae involving the arrow operator $\downarrow_{H}$ using the resulting orthogonal decomposition of $D_{M}$ and $D$. This corresponds to taking the set $R$ in Lemma \ref{arrowhoriz} to be $H_{M}^{\perp}$ (it is a set of representatives for $D_{M}/H_{M}$ precisely when $H_{M}$ is non-degenerate), and then there is a corresponding set for $D/H_{D}$ (which we shall not need). We will, however, work more generally, without assuming anything on $H_{M}$. \label{HMgen}
\end{rmk}

\smallskip

Considering $D_{M} \oplus D$ as a discriminant form, Definition \ref{opersDM} produces the corresponding operators with any index in $D_{M} \oplus D$. In particular, the operators having index from $D_{M} \subseteq D_{M} \oplus D$ act on $\mathbb{C}[D_{M} \oplus D]$, which may be seen as an extension of the action of the operators from Definition \ref{opersDM} (for $D_{M}$) to an action on this larger space. We will need an induced action of the operators from Definition \ref{opersDM} on $\mathbb{C}[\Delta_{H}]$.
\begin{lem}
The operators associated with elements of $M^{*}$ whose $D_{M}$-images are in $H_{M}^{\perp}$ induce a well-defined action on $\mathbb{C}[\Delta_{H}]$. \label{indchit}
\end{lem}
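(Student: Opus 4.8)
The plan is to lift everything to $\mathbb{C}[D_{M}\oplus D]$, on which Definition \ref{opersDM} already furnishes operators $t_{\alpha}$ and $\chi_{\beta}$ for every index $\alpha,\beta\in D_{M}\oplus D$, and then to push the relevant ones down to $\mathbb{C}[\Delta_{H}]$ through the arrow operator $\downarrow_{H}$ of Equation \eqref{arrowdef}. First I would record that if $\sigma\in M^{*}$ has its $D_{M}$-image $\sigma_{M}:=\sigma+M$ in $H_{M}^{\perp}$, then the element $(\sigma_{M},0)\in D_{M}\oplus D$ actually lies in $H^{\perp}$: by part $(ii)$ of Lemma \ref{horizdecom}, applied with vanishing $D$-component, membership in $H^{\perp}$ is exactly the requirement $(\sigma_{M},\gamma_{M})=0$ for all $\gamma\in H$, i.e.\ $\sigma_{M}\in H_{M}^{\perp}$ (since $\gamma_{M}$ runs over all of $H_{M}$). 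The same applies to $\nu$.

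Next I would verify that, under this hypothesis, the operators $t_{(\sigma_{M},0)}$ and $\chi_{(\nu_{M},0)}$ on $\mathbb{C}[D_{M}\oplus D]$ both preserve the subspace spanned by the $\mathfrak{e}_{\delta}$ with $\delta\in H^{\perp}$ and commute with the translations $t_{h}$ for $h\in H$. For $t_{(\sigma_{M},0)}$ the stability of the subspace uses $(\sigma_{M},0)\in H^{\perp}$ from the first step, and the commutation with $t_{h}$ is automatic since $\alpha\mapsto t_{\alpha}$ is a homomorphism (Remark \ref{chitrels}). For $\chi_{(\nu_{M},0)}$ the stability is clear as it is diagonal, and the commutation with $t_{h}$ is equivalent to $\big((\nu_{M},0),h\big)=(\nu_{M},h_{M})=0$ for all $h\in H$, i.e.\ again to $\nu_{M}\in H_{M}^{\perp}$. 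Since $\downarrow_{H}$ realises $\mathbb{C}[\Delta_{H}]$ as the $H$-coinvariants of precisely this subspace, each of the two operators descends to a well-defined operator on $\mathbb{C}[\Delta_{H}]$; unwinding Equation \eqref{arrowdef} and the formulae in Definition \ref{opersDM} then identifies the descended operators with the translation operator and the character operator of Definition \ref{opersDM} for the discriminant form $\Delta_{H}$, attached to the classes $\overline{(\sigma_{M},0)},\overline{(\nu_{M},0)}\in\Delta_{H}$. As $\downarrow_{H}$ is surjective these descended operators are uniquely determined; one may phrase the same facts via $\uparrow_{H}$, whose image is stable under the lifted operators with matching formulae.

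I expect the only point requiring genuine care — the ``main obstacle'' — to be the bookkeeping that shows the single hypothesis ``$D_{M}$-image in $H_{M}^{\perp}$'' is exactly what makes both halves work simultaneously: it is what places $(\sigma_{M},0)$ in $H^{\perp}$ (so that $t_{(\sigma_{M},0)}$ stabilises the $H^{\perp}$-indexed subspace and descends to the $\Delta_{H}$-translation by its class) and, at the same time, what forces $\chi_{(\nu_{M},0)}$ to commute with translation by $H$, equivalently to be constant along $H$-cosets inside $H^{\perp}$. Since only the combination $\chi_{\nu}t_{\sigma}$ is used afterwards (cf.\ Remark \ref{chitrels}), the Heisenberg-type non-commutativity noted there is irrelevant here, and everything beyond the equivalence $\sigma_{M}\in H_{M}^{\perp}\Leftrightarrow(\sigma_{M},0)\in H^{\perp}$ is a routine verification.
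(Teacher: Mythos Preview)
Your proposal is correct and arrives at the same operators, but the paper's argument is considerably shorter and more direct. Rather than descending operators from $\mathbb{C}[D_{M}\oplus D]$ through $\downarrow_{H}$, the paper simply observes that $H_{M}^{\perp}$, embedded as $H_{M}^{\perp}\times\{0\}\subseteq D_{M}\oplus D$, lies in $H^{\perp}$ (this is your first step) and intersects the horizontal subgroup $H$ trivially; hence it maps isomorphically to a subgroup of $\Delta_{H}=H^{\perp}/H$, and Definition \ref{opersDM} applied \emph{directly on the discriminant form $\Delta_{H}$} furnishes the operators $t_{\sigma}$ and $\chi_{\nu}$ for this image. Your descent argument is a valid alternative and has the mild advantage of making the compatibility $\downarrow_{H}\circ t_{(\sigma_{M},0)}=t_{\sigma}\circ\downarrow_{H}$ (and likewise for $\chi$) explicit, which is implicitly used later in Proposition \ref{pairper}; but for the lemma as stated, the intrinsic definition on $\Delta_{H}$ suffices and the coinvariant bookkeeping is unnecessary.
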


\begin{proof}
The subgroup $H_{M}^{\perp}$ of $D_{M}$, viewed as embedded into $D_{M} \oplus D$, is contained in $H^{\perp}$, and clearly intersects the horizontal subgroup $H$ trivially. It therefore admits an isomorphic copy also in $\Delta_{H}$. Definition \ref{opersDM} then yields operators on $\mathbb{C}[\Delta_{H}]$ for every element of $\Delta_{H}$, and the desired operators are those arising from this image of $H_{M}^{\perp}$ inside $\Delta_{H}$. This proves the lemma.
\end{proof}
Using the induced operators from Lemma \ref{indchit}, we can now consider pairings with theta functions. Since we work with the Hermitian ones, we assume that $M$ is a Hermitian $\mathcal{O}$-lattice, producing all the objects defined above. However, recall that we represent the basis for $\mathbb{C}[\Delta_{H}]$ through the set $H^{\perp}_{R}$ from Lemma \ref{arrowhoriz}, which is based on a set of representatives $R \subseteq D_{M}$ for $D_{M}/H_{M}$. As this set need not respect the addition from $H_{M}^{\perp}$, we shall be using the following notation and its properties.
\begin{lem}
Let a set $R \subseteq D_{M}$ of representatives for $D_{M}/H_{M}$ be given.
\begin{enumerate}[$(i)$]
\item For $\delta \in D_{M}$ and $\sigma \in M^{*}$, there exists a unique pair of elements $r_{\sigma}(\delta) \in R$ and $h_{\sigma}(\delta) \in H$ such that $\delta+(\sigma+M) \in D_{M}$ equals $r_{\sigma}(\delta)+h_{\sigma}(\delta)_{M}$.
\item We have the equalities $r_{-\sigma}\big(r_{\sigma}(\delta)\big)=\delta$ and $h_{-\sigma}\big(r_{\sigma}(\delta)\big)=-h_{\sigma}(\delta)$.
\end{enumerate} \label{addRsubgrp}
\end{lem}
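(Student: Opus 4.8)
The plan is to reduce both parts to two structural facts already available: that $R$ is a transversal for $D_M/H_M$, so every coset of $H_M$ in $D_M$ meets $R$ in exactly one point; and that, by part $(i)$ of Lemma \ref{horizdecom}, horizontality of $H$ makes the projection $H\to H_M$ an isomorphism, so that an element of $H$ is pinned down by its $D_M$-component.

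For part $(i)$, I would look at the class of $\delta+(\sigma+M)\in D_M$ in the quotient $D_M/H_M$. The transversal property gives a unique $r_\sigma(\delta)\in R$ in that class, so $\delta+(\sigma+M)-r_\sigma(\delta)$ lies in $H_M$; applying the inverse of the isomorphism $H\to H_M$ then produces a unique $h_\sigma(\delta)\in H$ with $h_\sigma(\delta)_M=\delta+(\sigma+M)-r_\sigma(\delta)$, which rearranges to the stated identity. Uniqueness of the pair is immediate: from $r+h_M=r'+h'_M$ with $r,r'\in R$ and $h,h'\in H$ one gets $r\equiv r'\pmod{H_M}$, hence $r=r'$, hence $h_M=h'_M$, and finally $h=h'$ by injectivity of $H\to H_M$.

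For part $(ii)$, I would substitute the formula from part $(i)$: it gives $r_\sigma(\delta)=\delta+(\sigma+M)-h_\sigma(\delta)_M$, hence $r_\sigma(\delta)+(-\sigma+M)=\delta-h_\sigma(\delta)_M$ with $-h_\sigma(\delta)_M\in H_M$. Reading this against the defining property of $r_{-\sigma}$ and $h_{-\sigma}$ applied to the element $r_\sigma(\delta)\in R$, the uniqueness established above identifies $r_{-\sigma}\big(r_\sigma(\delta)\big)$ with $\delta$ and $h_{-\sigma}\big(r_\sigma(\delta)\big)$ with $-h_\sigma(\delta)$; here one uses that $\delta$ already lies in $R$ — the situation in which these operators get applied — so that $\delta$ is genuinely the $R$-representative of its own class (in general $r_{-\sigma}\circ r_\sigma$ returns the $R$-representative of the class of $\delta$).

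The computations are entirely formal, so there is no real obstacle; the only point that needs attention is keeping the distinction between equalities in $D_M$ and congruences modulo $H_M$ straight, together with the observation that the $H$-component in part $(i)$ is well-defined precisely because $H\cap D=\{0\}$ from Definition \ref{horizdef} forces $H\to H_M$ to be injective — the same feature of horizontality that makes the induced action on $\mathbb{C}[\Delta_H]$ in Lemma \ref{indchit} well defined.
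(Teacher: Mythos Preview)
Your proposal is correct and follows essentially the same approach as the paper's proof: both parts reduce to $R$ being a transversal for $D_M/H_M$ together with the bijectivity of $H\to H_M$, and part $(ii)$ is obtained by rearranging the defining relation and invoking the uniqueness from part $(i)$ with $\delta$ replaced by $r_\sigma(\delta)$ and $\sigma$ by $-\sigma$. Your parenthetical remark that part $(ii)$ as stated requires $\delta\in R$ (since $r_{-\sigma}$ always lands in $R$) is a valid and careful observation; the paper leaves this implicit, but indeed in every application of the lemma (Propositions \ref{pairper} and Theorem \ref{VVmain}) the argument $\delta$ is taken to be some $\beta_M\in R$.
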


\begin{proof}
Part $(i)$ follows directly from the fact that $R$ is a set of representatives for $D_{M}/H_{M}$ and the projection from $H$ onto $H_{M}$ is bijective. For part $(ii)$ we just note that the defining equation from part $(i)$ yields the equality of $r_{\sigma}(\delta)-(\sigma+M)$ and $\delta-h_{\sigma}(\delta)_{M}$, and we apply the uniqueness from part $(i)$ with $\delta$ replaced by $r_{\sigma}(\delta)$ and $\sigma$ inverted. This proves the lemma.
\end{proof}

We can now state and prove the following result.
\begin{prop}
Let $F:\mathcal{H}\to\mathbb{C}[D]$ be any smooth function. Then the function $\Phi_{M,v}^{F,H}:\mathcal{H} \times M_{\mathbb{C}} \times M_{\overline{\mathbb{C}}}\to\mathbb{C}[\Delta_{H}]$ taking $(\tau,\zeta,\omega)$ to $\downarrow_{H}\big(\Theta_{M}(\tau,\zeta,\omega;v) \otimes F(\tau)\big)$ is holomorphic in the variables $\zeta$ and $\omega$ and satisfies, for every $\sigma$ and $\nu$ from $M^{*}$ with $\sigma+M$ and $\nu+M$ in $H_{M}^{\perp}$, the equality \[\Phi_{M,v}^{F,H}(\tau,\zeta+\tau\sigma_{v_{+},\mathbb{C}}+\overline{\tau}\sigma_{v_{-},\mathbb{C}}+\nu_{\mathbb{C}},\omega+\tau\sigma_{v_{+},\overline{\mathbb{C}}}+\overline{\tau}\sigma_{v_{-},\overline{\mathbb{C}}}+ \nu_{\overline{\mathbb{C}}})=\] \[=\mathbf{e}\big(-\tau|\sigma_{v_{+}}|^{2}-\overline{\tau}|\sigma_{v_{-}}^{2}|-\langle\zeta,\sigma\rangle-\langle\sigma,\omega\rangle\big)\chi_{\nu}t_{\sigma}\Phi_{M,v}^{F,H}(\tau,\zeta,\omega).\] \label{pairper}
\end{prop}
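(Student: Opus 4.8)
The plan is to deduce the periodicity of $\Phi_{M,v}^{F,H}$ directly from that of $\Theta_M$ in Proposition \ref{perTheta}, by pushing the latter through the two linear operations $X\mapsto X\otimes F(\tau)$ and $\downarrow_H$; the only genuinely new point is that $\downarrow_H$ intertwines the operators $\chi_\nu t_\sigma$ on $\mathbb{C}[D_M]$ with the induced operators of Lemma \ref{indchit} on $\mathbb{C}[\Delta_H]$, for the allowed $\sigma$ and $\nu$. Holomorphicity of $\Phi_{M,v}^{F,H}$ in $\zeta$ and $\omega$ is immediate and needs no hypothesis: $\Theta_M(\tau,\zeta,\omega;v)$ is holomorphic in these variables by (the remark after) Proposition \ref{difeq}, $F(\tau)$ does not involve them, and both $\downarrow_H$ and $X\mapsto X\otimes F(\tau)$ are $\mathbb{C}$-linear maps of finite-dimensional spaces, hence preserve holomorphic dependence on parameters.

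I would then isolate the commutation identity. Viewing $\mathbb{C}[D_M]\otimes\mathbb{C}[D]$ as $\mathbb{C}[D_M\oplus D]$ and, for $\sigma$ and $\nu$ in $M^{*}$ with $\sigma+M$ and $\nu+M$ in $H_M^{\perp}$, letting $t_\sigma$ and $\chi_\nu$ be the operators of Definition \ref{opersDM} attached to $(\sigma+M,0)$ and $(\nu+M,0)$ in $D_M\oplus D$, the claim is $\downarrow_H\circ(\chi_\nu t_\sigma)=(\chi_\nu t_\sigma)\circ\downarrow_H$, where on the right $\chi_\nu t_\sigma$ denotes the induced operator on $\mathbb{C}[\Delta_H]$. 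By linearity this is checked on a basis vector $\mathfrak{e}_\alpha$, and the two inputs are: first, that $(\sigma+M,0)$ and $(\nu+M,0)$ lie in $H^{\perp}$ --- exactly the observation used in the proof of Lemma \ref{indchit} --- so that $\alpha\in H^{\perp}$ if and only if $\alpha-(\sigma+M,0)\in H^{\perp}$, which makes the vanishing branch of $\downarrow_H$ in Equation \eqref{arrowdef} compatible on both sides and lets $\chi_\nu$ descend to cosets; and second, that the pairing on $\Delta_H$ is the one induced from $D_M\oplus D$, so the exponential factor produced by $\chi_\nu$ is unchanged upon passing to $H^{\perp}/H$. If one describes $\mathbb{C}[\Delta_H]$ through the representatives $H^{\perp}_{R}$ of Lemma \ref{arrowhoriz} rather than through cosets, the bookkeeping of how $t_\sigma$ moves a representative within $R$ is precisely what Lemma \ref{addRsubgrp} supplies, via $r_{-\sigma}$ and $h_{-\sigma}$, and the identity goes through unchanged; I find the coset formulation cleaner.

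Granting this, the proof concludes in a few lines: invoke Proposition \ref{perTheta}, which expresses $\Theta_M$ at the shifted arguments in terms of $\chi_\nu t_\sigma\Theta_M(\tau,\zeta,\omega;v)$, tensor both sides on the right with $F(\tau)$ --- since $F(\tau)$ is independent of $\zeta$ and $\omega$, the left side becomes $\Phi_{M,v}^{F,H}$ evaluated at those shifted arguments --- apply $\downarrow_H$, and move $\downarrow_H$ past $\chi_\nu t_\sigma$ by the commutation identity; the scalar $\mathbf{e}\big(-\tau|\sigma_{v_{+}}|^{2}-\overline{\tau}|\sigma_{v_{-}}^{2}|-\langle\zeta,\sigma\rangle-\langle\sigma,\omega\rangle\big)$ is untouched, as it does not depend on the $\mathbb{C}[D_M]$-coordinate. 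The one place where the hypothesis $\sigma+M,\nu+M\in H_M^{\perp}$ is actually used is the commutation identity, and that is also the step I expect to cost the most care --- not because it is deep, but because one must keep three identifications straight at once: the $D_M$-indexed operators acting on $\mathbb{C}[D_M\oplus D]$, the domain/kernel decomposition built into $\downarrow_H$, and the induced operators on $\mathbb{C}[\Delta_H]$.
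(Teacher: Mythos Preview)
Your proof is correct and rests on the same underlying idea as the paper's, but packaged more abstractly. The paper works explicitly with a fixed set of representatives $H^{\perp}_{R}$ for $\Delta_{H}$ (Lemma \ref{arrowhoriz}), writes out $\Phi_{M,v}^{F,H}$ as in Equation \eqref{downarrow}, and then tracks by hand, via Lemma \ref{addRsubgrp}, how translation by $\sigma$ moves a representative $\beta_{M}$ inside $R$ and which $H$-correction $h_{\sigma}(\beta_{M})$ must be applied; the identification with $\chi_{\nu}t_{\sigma}\Phi_{M,v}^{F,H}$ then falls out of the resulting change of summation variables. You instead isolate the single commutation identity $\downarrow_{H}\circ(\chi_{\nu}t_{\sigma})=(\chi_{\nu}t_{\sigma})\circ\downarrow_{H}$ on $\mathbb{C}[D_{M}\oplus D]$ and verify it directly on cosets, which bypasses the representative bookkeeping entirely. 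The two are equivalent --- your remark that Lemma \ref{addRsubgrp} supplies exactly the translation needed in the representative picture is spot on --- but the coset argument is shorter for this proposition. The paper's explicit expansion \eqref{downarrow} does get reused later (notably in the proof of Theorem \ref{VVmain} and Proposition \ref{FuparrowG}), so the representative machinery is not wasted effort in context; for the present statement alone, your route is the more economical one.
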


\begin{proof}
By writing $F=\sum_{\delta \in D}f_{\delta}\mathfrak{e}_{\delta}$, the tensor product $\Theta_{M} \otimes F$ (with the appropriate values) is, under the natural identification of $\mathbb{C}[D_{M}]\otimes_{\mathbb{C}}\mathbb{C}[D]$ with $\mathbb{C}[D_{M} \oplus D]$, the $\mathbb{C}[D_{M} \oplus D]$-valued function given by $\sum_{\alpha \in D_{M} \oplus D}\theta_{\alpha_{M}}f_{\alpha_{D}}\mathfrak{e}_{\alpha}$. Then its $\downarrow_{H}$-image $\Phi_{M,v}^{F,H}$ is given, by the parts of Lemma \ref{arrowhoriz}, by
\begin{equation}
\Phi_{M,v}^{F,H}(\tau,\zeta,\omega)=\textstyle{\sum_{\beta \in H^{\perp}_{R}}\big[\sum_{\gamma \in H}\theta_{\beta_{M}+\gamma_{M}}(\tau,\zeta,\omega;v)f_{\beta_{D}+\gamma_{D}}(\tau)\big]\mathfrak{e}_{\overline{\beta}}}. \label{downarrow}
\end{equation}
For the periodicity formula, Proposition \ref{perTheta} and its proof show that changing $\zeta$ and $\omega$ by the parameters arising from $\sigma$ and $\nu$ yields the asserted exponent times the expression \[\textstyle{\sum_{\beta \in H^{\perp}_{R}}\big[\sum_{\gamma \in H}\mathbf{e}\big((\beta_{M}+\gamma_{M},\nu+M)\big)\theta_{\beta_{M}+\gamma_{M}+(\sigma+M)}(\tau,\zeta,\omega;v)f_{\beta_{D}+\gamma_{D}}(\tau)\big]\mathfrak{e}_{\overline{\beta}}}.\] But if $\nu+M \in H_{M}^{\perp}$ then the pairing in this exponent reduces to $(\beta_{M},\nu+M)$, or equivalently $(\beta,\nu+M)$ (or also $(\overline{\beta},\nu+M)$), independently of $\gamma$. Moreover, if $\sigma+M \in H_{M}^{\perp}$ then we write $\beta_{M}+\gamma_{M}+(\sigma+M)$ as $r_{\sigma}(\beta_{M})+\gamma_{M}+h_{\sigma}(\beta_{M})_{M}$ via part $(i)$ of Lemma \ref{addRsubgrp}, and thus $\beta_{D}+\gamma_{D}$ becomes the sum of $\beta_{D}-h_{\sigma}(\beta_{M})_{D}$ and the corresponding element $\gamma_{D}+h_{\sigma}(\beta_{M})_{D}$ of $H_{D}$. By taking $\tilde{\beta}$ to be with $\tilde{\beta}_{M}=r_{\sigma}(\beta_{M})$ and $\tilde{\beta}_{D}=\beta_{D}-h_{\sigma}(\beta_{M})_{D}$, and $\tilde{\gamma}:=\gamma+h_{\sigma}(\beta_{M})$, we deduce from part $(ii)$ of that Lemma \ref{addRsubgrp} that $\beta_{M}=r_{-\sigma}(\tilde{\beta}_{M})$ and $\beta_{D}=\tilde{\beta}_{D}-h_{-\sigma}(\tilde{\beta}_{M})_{D}$. We can thus take the sum to be over $\tilde{\beta} \in H^{\perp}_{R}$ and $\tilde{\gamma} \in H$, with the theta function $\theta_{\tilde{\beta}_{M}+\tilde{\gamma}_{M}}$ and the coefficient $f_{\tilde{\beta}_{D}+\tilde{\gamma}_{D}}$, and the associated vector $\mathfrak{e}_{\overline{\beta}}$ being the one associated with the $\Delta_{H}$-image of $\tilde{\beta}-(\sigma+M)$. As this combines with the effect of the exponents $(\overline{\beta},\nu+M)$ to yield the effect of $\chi_{\nu}t_{\sigma}$ from Lemma \ref{indchit} on $\Phi_{M,v}^{F,H}(\tau,\zeta,\omega)$, the desired formula follows. This proves the proposition.
\end{proof}

\smallskip

\begin{rmk}
When $\sigma$ and $\nu$ are in $M$, the relation from Proposition \ref{pairper} amounts to the scalar-valued components of $\Phi=\Phi_{M,v}^{F,H}=\sum_{\beta\in\Delta_{H}}\Phi_{\beta}\mathfrak{e}_{\beta}$ satisfying the one from Equation \eqref{perJac}. Therefore we can, by Corollary \ref{decomtheta}, write every smooth function $\Phi:\mathcal{H} \times M_{\mathbb{C}} \times M_{\overline{\mathbb{C}}}\to\mathbb{C}[\Delta_{H}]$ that is holomorphic in the variables from $M_{\mathbb{C}}$ and from $M_{\overline{\mathbb{C}}}$ and satisfies this relation (given also in Equation \eqref{perHMperp} below) as $\Phi(\tau,\zeta,\omega)=\sum_{\beta\in\Delta_{H}}[\Theta_{M}(\tau,\zeta,\omega;v),F_{\beta}(\tau)]_{M}\mathfrak{e}_{\beta}$ for smooth functions $F_{\beta}:\mathcal{H}\to\mathbb{C}[D_{\overline{M}}]$, which are determined by $\Phi$. \label{expcomp}
\end{rmk}

We can now define our vector-valued Hermitian Jacobi forms.
\begin{defn}
Let an $\mathcal{O}$-lattice $M$ from Definition \ref{Olat}, another discriminant form $D$, a horizontal isotropic subgroup $H \subseteq D_{M} \oplus D$ with image $H_{M} \subseteq D_{M}$, weights $k$ and $l$ from $\mathbb{Z}$, a subgroup $\Gamma\subseteq\operatorname{U}(1,1)(\mathbb{Z})$ of finite index, and an element $v\operatorname{Gr}^{J}(M_{\mathbb{R}})$ from Equation \eqref{GrassJ} be given. We then set $\Delta_{H}:=H^{\perp}/H$, and define a \emph{Hermitian Jacobi form of weight $(k,l)$, index $(M,H,v)$, and representation $\rho_{\Delta_{H}}$ with respect to $\Gamma$} be a function $\Phi:\mathcal{H} \times M_{\mathbb{C}} \times M_{\overline{\mathbb{C}}}\to\mathbb{C}[\Delta_{H}]$, that is smooth and has the following properties. The first one is the usual holomorphicity in the variables from $M_{\mathbb{C}}$ and $M_{\overline{\mathbb{C}}}$. For the second one, recall from Lemma \ref{indchit} the operators on $\mathbb{C}[\Delta_{H}]$ that are induced from those from Definition \ref{opersDM}, and then for $\tau\in\mathcal{H}$, $\zeta \in M_{\mathbb{C}}$, $\omega \in M_{\overline{\mathbb{C}}}$, and $\sigma$ and $\nu$ from $M^{*}$ with $D_{M}$-images in $H_{M}^{\perp}$, the equality
\[\Phi(\tau,\zeta+\tau\sigma_{v_{+},\mathbb{C}}+\overline{\tau}\sigma_{v_{-},\mathbb{C}}+\nu_{\mathbb{C}},\omega+\tau\sigma_{v_{+},\overline{\mathbb{C}}}+\overline{\tau}\sigma_{v_{-},\overline{\mathbb{C}}}+ \nu_{\overline{\mathbb{C}}})=\]
\begin{equation}
=\mathbf{e}\big(-\tau|\sigma_{v_{+}}|^{2}-\overline{\tau}|\sigma_{v_{-}}^{2}|-\langle\zeta,\sigma\rangle-\langle\sigma,\omega\rangle\big)\chi_{\nu}t_{\sigma}\Phi(\tau,\zeta,\omega)
\label{perHMperp}
\end{equation}
has to be satisfied. For the third one, let $j(A,\tau)$ and $j^{c}(A,\tau)$ be as in Theorem \ref{modTheta}, and then for such $\tau$, $\zeta$, and $\omega$, and for $A\in\Gamma$, we require the equality \[\Phi\Big(A\tau,\tfrac{\zeta_{v_{+}}}{j(A,\tau)}+\tfrac{\zeta_{v_{-}}}{\overline{j^{c}(A,\tau)}},\tfrac{\omega_{v_{+}}}{j^{c}(A,\tau)}+\tfrac{\omega_{v_{-}}}{\overline{j(A,\tau)}}\Big)=\] \begin{equation}
=j(A,\tau)^{k}\overline{j(A,\tau)}^{l}\mathbf{e}\Big(\tfrac{j_{A}'\langle\zeta_{v_{+}},\omega_{v_{+}}\rangle}{j(A,\tau)}+\tfrac{\overline{j_{A}'}\langle\zeta_{v_{-}},\omega_{v_{-}}\rangle}{\overline{j(A,\tau)}}\Big) \rho_{\Delta_{H}}(A)\Phi(\tau,\zeta,\omega). \label{VVmodJac}
\end{equation}
The fourth one is the requirement that for each $\beta\in\Delta_{H}$, the components of the associated function $F_{\beta}:\mathcal{H}\to\mathbb{C}[D_{\overline{M}}]$ from Remark \ref{expcomp} grow at most linearly exponentially towards any cusp of $\Gamma$. The condition for $\Phi$ to be \emph{pseudo-holomorphic} is as in Definition \ref{Jacdef}. \label{VVdef}
\end{defn}

\smallskip

We can now prove our main result for vector-valued Hermitian Jacobi forms.
\begin{thm}
Any smooth function $\Phi:\mathcal{H} \times M_{\mathbb{C}} \times M_{\overline{\mathbb{C}}}\to\mathbb{C}[\Delta_{H}]$ that satisfied Equation \eqref{perHMperp} and the holomorphicity condition from Definition \ref{VVdef} is of the form $\Phi_{M,v}^{F,H}$ from Proposition \ref{pairper} for a smooth function $F:\mathcal{H}\to\mathbb{C}[D]$, which is then unique. In this case $\Phi$ is a Hermitian Jacobi form of weight $(k,l)$, index $(M,H,v)$, and representation $\rho_{\Delta_{H}}$ with respect to a finite index subgroup $\Gamma\subseteq\operatorname{U}(1,1)(\mathbb{Z})$ if and only $F$ is a modular form of weight $(k-b_{+},l-b_{-})$ and representation $\rho_{D}$ with respect to $\Gamma$, as in Equation \eqref{modeq}. Moreover, $F$ is weakly holomorphic if and only if $\Phi$ is pseudo-holomorphic, and then $l=b_{-}$. \label{VVmain}
\end{thm}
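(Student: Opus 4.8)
The plan is to follow, step by step, the scalar-valued chain Corollary~\ref{decomtheta} $\Rightarrow$ Theorem~\ref{main} $\Rightarrow$ Proposition~\ref{pshol}, with the operator $\downarrow_{H}$ and the explicit formula \eqref{downarrow} playing the role that $[\cdot,\cdot]_{M}$ and Corollary~\ref{decomtheta} played there.

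\emph{Existence and uniqueness of $F$.} By Remark~\ref{expcomp}, the case $\sigma,\nu\in M$ of \eqref{perHMperp} (which is \eqref{perJac} componentwise) already lets us write $\Phi=\sum_{\beta\in\Delta_{H}}[\Theta_{M}(\cdot\,;v),F_{\beta}]_{M}\mathfrak{e}_{\beta}$ for uniquely determined smooth $F_{\beta}:\mathcal{H}\to\mathbb{C}[D_{\overline{M}}]$. The new input is to feed in \eqref{perHMperp} for \emph{all} $\sigma,\nu\in M^{*}$ whose $D_{M}$-images lie in $H_{M}^{\perp}$, not merely in $M$: expanding both sides in the Fourier expansion of Lemma~\ref{Fourinv} and using the explicit action of $\chi_{\nu}t_{\sigma}$ on $\mathbb{C}[\Delta_{H}]$ from Lemma~\ref{indchit} together with the reindexing through $r_{\sigma},h_{\sigma}$ from Lemma~\ref{addRsubgrp} --- running the computation in the proof of Proposition~\ref{pairper} in reverse --- forces the coefficients of $F_{\overline{\beta}}$ to be supported on the coset $\beta_{M}+H_{M}\subseteq D_{M}$ and, on it, the coefficient of $\mathfrak{e}_{\beta_{M}+\gamma_{M}}^{*}$ to depend only on $\beta_{D}+\gamma_{D}\in D$, for $\beta\in H^{\perp}_{R}$ and $\gamma\in H$. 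Calling this common value $f_{\delta}$ with $\delta:=\beta_{D}+\gamma_{D}$ and setting $F:=\sum_{\delta\in D}f_{\delta}\mathfrak{e}_{\delta}$, formula \eqref{downarrow} gives $\Phi=\downarrow_{H}(\Theta_{M}\otimes F)=\Phi_{M,v}^{F,H}$. For uniqueness: every $\delta\in D$ occurs as $\beta_{D}+\gamma_{D}$ for suitable $\beta\in H^{\perp}_{R}$ and $\gamma\in H$ --- since $D_{M}$ is non-degenerate the restriction $D_{M}\to\operatorname{Hom}(H_{M},\mathbb{Q}/\mathbb{Z})$ is onto, so $\beta_{M}$ can be chosen with $\beta_{M}+(\delta-\gamma_{D})\in H^{\perp}$ --- so $f_{\delta}$ equals an honest Fourier coefficient of $\Phi$, which is unique. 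This step --- in particular the well-definedness of $f_{\delta}$ when several pairs $(\beta,\gamma)$ produce the same $\delta$ --- is where the enhanced periodicity is genuinely used, and is the part I expect to be the main obstacle; the remaining steps are faithful transcriptions of results already proven.

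\emph{The Jacobi $\leftrightarrow$ modular equivalence.} This copies the proof of Theorem~\ref{main}, with $\downarrow_{H}$ replacing $[\cdot,\cdot]_{M}$. The two ingredients are: (a) Theorem~\ref{modTheta}, by which under $A\in\Gamma$ the function $\Theta_{M}$ transforms with the factor $j(A,\tau)^{b_{+}}\overline{j(A,\tau)}^{b_{-}}\mathbf{e}\big(\tfrac{j_{A}'\langle\zeta_{v_{+}},\omega_{v_{+}}\rangle}{j(A,\tau)}+\tfrac{\overline{j_{A}'}\langle\zeta_{v_{-}},\omega_{v_{-}}\rangle}{\overline{j(A,\tau)}}\big)$ and $\rho_{M}(A)$ on $\mathbb{C}[D_{M}]$; and (b) the fact (from \cite{[Ma]} and \cite{[Ze5]}, extended over $\operatorname{U}(1,1)(\mathbb{Z})$ via Proposition~\ref{extWeil}) that $\downarrow_{H}$ intertwines the Weil representation of $D_{M}\oplus D$, i.e.\ $\rho_{M}\otimes\rho_{D}$ on $\mathbb{C}[D_{M}]\otimes\mathbb{C}[D]$, with $\rho_{\Delta_{H}}$. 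Since $F$ is independent of $\zeta,\omega$, if $F$ satisfies \eqref{modeq} with weight $(k-b_{+},l-b_{-})$ and representation $\rho_{D}$, then transforming $\Theta_{M}\otimes F$ under $A$ and applying $\downarrow_{H}$ produces exactly \eqref{VVmodJac} for $\Phi_{M,v}^{F,H}$ --- the $\zeta,\omega$-exponential coming entirely from $\Theta_{M}$ and the weight $j(A,\tau)^{k}\overline{j(A,\tau)}^{l}$ being the product of $j^{b_{+}}\overline{j}^{b_{-}}$ and $j^{k-b_{+}}\overline{j}^{l-b_{-}}$; holomorphicity in $(\zeta,\omega)$ and \eqref{perHMperp} are Proposition~\ref{pairper}. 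Conversely, if $\Phi=\Phi_{M,v}^{F,H}$ satisfies \eqref{VVmodJac}, transforming it the two ways and invoking the uniqueness of $F$ forces \eqref{modeq} on $F$ with the asserted weight. The growth conditions on the two sides are equivalent because, by \eqref{downarrow}, the coefficients of the $F_{\beta}$ of Remark~\ref{expcomp} and the coefficients $f_{\delta}$ of $F$ differ by a fixed finite linear reshuffle, so linearly exponential growth at a cusp of $\Gamma$ of one family is equivalent to that of the other.

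\emph{Pseudo-holomorphicity.} This is Proposition~\ref{pshol} essentially verbatim. Because $F$ does not involve $\zeta,\omega$, because $\Delta_{v_{-}}^{h}$ differentiates only in those variables, and because $\downarrow_{H}$ is a fixed linear map in the discriminant-group index, Leibniz' rule gives
\[(4\pi i\partial_{\overline{\tau}}-\Delta_{v_{-}}^{h})\Phi_{M,v}^{F,H}=\downarrow_{H}\!\big((4\pi i\partial_{\overline{\tau}}-\Delta_{v_{-}}^{h})\Theta_{M}\otimes F\big)+4\pi i\downarrow_{H}(\Theta_{M}\otimes\partial_{\overline{\tau}}F),\]
whose first summand vanishes by Proposition~\ref{difeq}; hence the left-hand side equals $4\pi i\,\Phi_{M,v}^{\partial_{\overline{\tau}}F,H}$, which by the injectivity of $F\mapsto\Phi_{M,v}^{F,H}$ (the uniqueness above) is $0$ if and only if $\partial_{\overline{\tau}}F=0$, i.e., since $F$ is smooth with the moderate growth already assumed, if and only if $F$ is weakly holomorphic. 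Finally, a weakly holomorphic modular form has weight of the form $(k',0)$, so equating this with the weight $(k-b_{+},l-b_{-})$ of $F$ gives $l=b_{-}$.
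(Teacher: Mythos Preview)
Your proposal is correct and follows essentially the same route as the paper's proof: first use the $\chi_{\nu}$-periodicity to pin the support of each $F_{\beta}$ onto the coset $\beta_{M}+H_{M}$, then use the $t_{\sigma}$-periodicity (via the reindexing from Lemma~\ref{addRsubgrp}) to obtain the functional equation that makes $f_{\delta}$ well-defined, and finally transfer the modularity and pseudo-holomorphicity arguments from Theorem~\ref{main} and Proposition~\ref{pshol} by replacing $[\cdot,\cdot]_{M}$ with the $\operatorname{U}(1,1)(\mathbb{Z})$-equivariant map $\downarrow_{H}$. The paper separates the $\nu$-step and the $\sigma$-step more explicitly (deriving Equation~\eqref{funceq} and then constructing $f_{\alpha_{D}}$ from it), whereas you compress both into a single ``reverse of Proposition~\ref{pairper}'' paragraph, but the content is the same.
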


\begin{proof}
We shall be using the representing set $H^{\perp}_{R}$ from Lemma \ref{arrowhoriz} for expressing the elements of $\Delta_{H}$, where $R \subseteq D_{M}$ is some set of representatives for $D_{M}/H_{M}$. Then if $\overline{\beta}\in\Delta_{H}$ is the image of $\beta \in H^{\perp}_{R}$ then Remark \ref{expcomp} allows us, via Corollary \ref{decomtheta}, to write the component $\Phi_{\overline{\beta}}$ as the $[\cdot,\cdot]_{M}$-pairing of $\Theta_{M}$ with a unique smooth function $F_{\beta}:\mathcal{H}\to\mathbb{C}[D_{\overline{M}}]$. By expanding this pairing, we obtain that we can write \[\textstyle{\Phi(\tau,\zeta,\omega)=\sum_{\beta \in H^{\perp}_{R}}\big[\sum_{\alpha \in D_{M}}\theta_{\alpha}(\tau,\zeta,\omega;v)\tilde{f}_{\beta,\alpha}(\tau)\big]\mathfrak{e}_{\overline{\beta}}},\] where $\{\tilde{f}_{\beta,\alpha}\}_{\alpha \in D_{M}}$ are the components of $F_{\beta}$ for any $\beta \in H^{\perp}_{R}$.

We now consider $\sigma=0$ but any $\nu \in M^{*}$ with $\nu+M \in H_{M}^{\perp}$ in Equation \eqref{perHMperp}, and compare it with the corresponding property of $\theta_{\alpha}$ from Proposition \ref{perTheta}. The action of $\chi_{\nu}$ in Equation \eqref{perHMperp} implies that the summand associated with $\beta \in H^{\perp}_{R}$ and $\alpha \in D_{M}$ has to be multiplied by $\mathbf{e}\big((\nu+M,\beta_{M})\big)$, while via Proposition \ref{perTheta} it has to be multiplied by $\mathbf{e}\big((\nu+M,\alpha)\big)$. As both sides have to be equal and the theta expansions are unique, we deduce that this summand can be non-zero, i.e., we have $f_{\beta,\alpha}\neq0$, only when $\mathbf{e}\big((\nu+M,\beta_{M})\big)=\mathbf{e}\big((\nu+M,\alpha)\big)$ for every such $\nu$. But the pairing with the elements $\nu+M \in H_{M}^{\perp}$ determine the element of $D_{M}$ up to translations by $H_{M}$, meaning that $\alpha$ has to be of the form $\beta_{M}+\gamma_{M}$ for $\gamma \in H$. By writing each $\beta \in H^{\perp}_{R}$ as the sum of $\beta_{M} \in R$ and $\beta_{D} \in D$ (which satisfy the equality from part $(ii)$ of Lemma \ref{horizdecom}), it follows that our expansion becomes
\begin{equation}
\textstyle{\Phi(\tau,\zeta,\omega)=\sum_{\beta \in H^{\perp}_{R}}\big[\sum_{\gamma \in H}\theta_{\beta_{M}+\gamma_{M}}(\tau,\zeta,\omega;v)\hat{f}_{\beta_{M}+\beta_{D},\gamma}(\tau)\big]\mathfrak{e}_{\overline{\beta_{M}+\beta_{D}}}},
\label{expnusigma}
\end{equation}
where $\hat{f}_{\beta,\gamma}$, for $\beta=\beta_{M}+\beta_{D}$, is the function $\tilde{f}_{\beta,\beta_{M}+\gamma_{M}}$ from above.

We now turn to $\sigma \in M^{*}$ in Equation \eqref{perHMperp} and Proposition \ref{perTheta}, and observe that the exponents showing up in both of them are the same. We thus take $\sigma$ with $\sigma+M \in H_{M}^{\perp}$ (for preserving $H^{\perp}$ via the proof of Lemma \ref{indchit} and for making Equation \eqref{perHMperp} applicable), replace $\zeta$ and $\omega$ in Equation \eqref{expnusigma} by the resulting variables from the left hand sides of Equation \eqref{perHMperp} and Proposition \ref{perTheta}, and apply them. After canceling the common exponent, changing the summation indices on the left hand side, and expanding the resulting right hand side via Equation \eqref{expnusigma}, we obtain the equality \[t_{\sigma}\sum_{\substack{\tilde{\beta} \in H^{\perp}_{R} \\ \tilde{\gamma} \in H}}\theta_{\tilde{\beta}_{M}+\tilde{\gamma}_{M}}\hat{f}_{\tilde{\beta}_{M}+\tilde{\beta}_{D},\tilde{\gamma}}\mathfrak{e}_{\overline{\tilde{\beta}_{M}+\tilde{\beta}_{D}}}=\sum_{\substack{\beta \in H^{\perp}_{R} \\ \gamma \in H}}\theta_{\beta_{M}+\gamma_{M}+(\sigma+M)}\hat{f}_{\beta_{M}+\beta_{D},\gamma}\mathfrak{e}_{\overline{\beta_{M}+\beta_{D}}},\] with the usual variables $\tau$, $\zeta$, $\omega$, and $v$ suppressed.

Now, as in the proof of Proposition \ref{pairper}, we replace $\tilde{\beta}_{M}$ on the right hand side by $\tilde{\beta}_{M}=r_{\sigma}(\beta_{M})$ from part $(i)$ of Lemma \ref{addRsubgrp}, set $\tilde{\gamma}:=\gamma+h_{\sigma}(\beta_{M})$, and define $\tilde{\beta}_{D}=\beta_{D}-h_{\sigma}(\beta_{M})_{D}$, so that the theta function there becomes $\theta_{\tilde{\beta}_{M}+\tilde{\gamma}_{M}}$ like on the left hand side. Moreover, via part $(ii)$ of that lemma we deduce that the vector $\mathfrak{e}_{\overline{\beta_{M}+\beta_{D}}}$ from the right hand side is the one obtained by subtracting $\sigma+M$ from the index of $\mathfrak{e}_{\overline{\tilde{\beta}_{M}+\tilde{\beta}_{D}}}$, namely it equals the $t_{\sigma}$-image of the latter vector. Since the theta expansion in Remark \ref{expcomp} and Corollary \ref{decomtheta} is unique, we obtain an equality in the coefficients from both sides as functions of $\tau$, which by our substitutions amounts, for any $\beta \in H^{\perp}_{R}$, $\gamma \in H$, and $\sigma \in M^{*}$ with $\sigma+M \in H_{M}^{\perp}$, to the equality
\begin{equation}
\hat{f}_{\beta_{M}+\beta_{D},\gamma}=\hat{f}_{r_{\sigma}(\beta_{M})+\beta_{D}-h_{\sigma}(\beta_{M})_{D},\gamma+h_{\sigma}(\beta_{M})}. \label{funceq}
\end{equation}

We now construct the smooth $\mathbb{C}[D]$-valued function $F=\sum_{\alpha_{D} \in D}f_{\alpha_{D}}$ as follows. Since the pairing on $D_{M}$ is non-degenerate, the map from $D_{M}$ to the quotient $\operatorname{Hom}(H_{M},\mathbb{Q}/\mathbb{Z})$ or $\operatorname{Hom}(D_{M},\mathbb{Q}/\mathbb{Z})$ is surjective, and in particular there exists $\alpha_{M} \in D_{M}$ such that the equality $(\alpha_{M},\gamma_{M})=-\big(\alpha_{D},\iota(\gamma_{M})\big)$ from part $(ii)$ of Lemma \ref{horizdecom} holds for any $\gamma \in H$, so that $\alpha:=\alpha_{M}+\alpha_{D} \in D_{M} \oplus D$ lies in $H^{\perp}$. We thus choose such an $\alpha_{M}$, and note that any other choice is obtained by subtracting some element of $H_{M}^{\perp}$. We express $\alpha$ as $\beta+\gamma$ as in part $(i)$ of Lemma \ref{arrowhoriz}, namely with $\beta_{M} \in R$ and $\gamma \in H$, and we set $f_{\alpha_{D}}$ to be $\hat{f}_{\beta,\gamma}$. We observe that if we replace $\alpha_{M}$ by $\alpha_{M}-(\sigma+M)$ for some $\sigma \in M^{*}$ with $H_{M}^{\perp}$ (which will produce all the other choices for $\alpha_{M}$) then $\beta_{M}$ will be taken to $r_{\sigma}(\beta)_{M}$, $\gamma$ will be sent to $\gamma+h_{\sigma}(\beta_{M})$, and $\beta_{D}$ will thus become $\beta_{D}-h_{\sigma}(\beta_{M})_{D}$. As the function arising from this choice equals the one from our initial choice by Equation \eqref{funceq}, we deduce that $f_{\alpha_{D}}$ is well-defined.

Finally, the construction implies that given $\beta \in H^{\perp}_{R}$ and $\gamma \in H$ is $f_{\alpha_{D}}$, the function $\hat{f}_{\beta,\gamma}$ is $f_{\alpha_{D}}$ for $\alpha:=\beta+\gamma$, namely $f_{\beta_{D}+\gamma_{D}}$. By substituting this equality into the right hand side of Equation \eqref{expnusigma} and comparing with Equation \eqref{downarrow}, we deduce that our $\Phi$ equals $\Phi_{M,v}^{F,H}$ with $F=\sum_{\alpha_{D} \in D}f_{\alpha_{D}}$, as desired. The fact that $F$ is a modular form if and only if $\Phi=\Phi_{M,v}^{F,H}$ is a Hermitian Jacobi form is established using the uniqueness of our theta expansion like in the proof of Theorem \ref{main}, and the relation between the holomorphicity of $F$ and the pseudo-holomorphicity of $\Phi$ is deduced as in Proposition \ref{pshol}, since the operators from Proposition \ref{difeq} act in the same manner on scalar-valued functions and on vector-valued functions with constant bases. This completes the proof of the theorem.
\end{proof}

\begin{rmk}
The proofs of Proposition \ref{pairper} and Theorem \ref{VVmain} can be significantly simplified in the case where $H_{M}$ is non-degenerate as a subgroup of $D_{M}$, where formulae like those mentioned in Remark \ref{HMgen} can also be obtained. Indeed, in this case we can take $R$ to be just the orthogonal complement $H_{M}^{\perp}$, where in Lemma \ref{addRsubgrp} we have $r_{\sigma}(\delta)=\delta+(\sigma+M)$ and $h_{\sigma}(\delta)=0$ for every $\delta \in R$ and $\sigma$ with $D_{M}$-image in $H_{M}^{\perp}$, and the expressions in these proofs become much more natural. This case also has the advantage that $D$ can be constructed directly from $\Delta$, $D_{M}$, and $H_{M}^{\perp}$, a construction that does not seem easy, or even possible, in other situations. In particular, the case where $H=\{0\}$ produces Jacobi forms that are just tensor products $\Theta_{M} \otimes F$ for modular forms $F$, and if $D=D_{\overline{M}}$ and $H$ is the natural diagonal subgroup, with $H^{\perp}=H$, Proposition \ref{pairper} and Theorem \ref{VVmain} reproduce Corollary \ref{decomtheta} and Theorem \ref{main}, because the pairing $[\Theta_{M},F]$ is the same as $\downarrow_{H}(\Theta_{M} \otimes F)$ for this $H$. However, in applications, like those from \cite{[BRZ]} in the holomorphic setting to be expressed explicitly below, this construction has to be carried out with horizontal subgroups with arbitrary projections, so we needed to work in this generality. We also note that in applications in the Hermitian setting, typically $D$ and the isotropic subgroup $H$ are assumed to carry the $J$-structure mentioned in Remark \ref{JonD}, but this does not affect any part of the construction or the results. \label{noconstD}
\end{rmk}

\smallskip

All of the constructions involving discriminant forms only used their structure involving the $\mathbb{Q}/\mathbb{Z}$-valued quadratic form, which can be obtained from the natural construction of the discriminant form associated with an even orthogonal lattice over $\mathbb{Z}$. Since \cite{[Ze3]} only considered scalar-valued Jacobi forms, we give the definition of vector-valued Jacobi forms in this setting as well, and state the corresponding theorem.

So let $L$ be an even lattice over $\mathbb{Z}$, with quadratic form $\lambda\mapsto\frac{\lambda^{2}}{2}$ and induced pairing $(\cdot,\cdot)$, of some signature $(b_{+},b_{-})$, with discriminant $D_{L}$. Given $v$ in the associated Grassmannian from Equation (1) of \cite{[Ze3]} (or equivalently Equation \eqref{GrassJ} here), and elements $\tau\in\mathcal{H}$ and $\zeta \in L_{\mathbb{C}}$, let
\begin{equation}
\Theta_{L}(\tau,\zeta;v):=\textstyle{\sum_{\gamma \in D_{L}}\big[\sum_{\lambda \in L+\gamma}\mathbf{e}\big(\tau\lambda_{v_{+}}^{2}/2+\overline{\tau}\lambda_{v_{-}}^{2}/2+(\lambda,\zeta)\big)\big]\mathfrak{e}_{\gamma}} \label{Thetaorth}
\end{equation}
be the associated theta function from Equation (7) of \cite{[Ze3]}. It satisfies not only the periodicity property from Proposition 1.1 there, but the extended one given in Proposition \ref{perTheta}, using the operators from Definition \ref{opersDM}. We write $\operatorname{Mp}_{2}(\mathbb{Z})$ for the double cover of $\operatorname{SL}_{2}(\mathbb{Z})$ that is used for modular forms of half-integral weight, with $\rho_{L}$ the representation of $\operatorname{Mp}_{2}(\mathbb{Z})$ on $\mathbb{C}[D_{L}]$ as defined in Equation (5) of \cite{[Ze3]} (or Equation \eqref{Weildef} here).

Let $D$ be another discriminant form, take a horizontal isotropic subgroup $H$ of $D_{L} \oplus D$, and let $H_{L}$ and $H_{D}$ be the projections of $H$ as usual, with $\Delta_{H}:=H^{\perp}/H$ the associated quotient as above. Then Lemma \ref{indchit} holds here equally well, when the elements $\nu$ and $\sigma$ from $L^{*}$ are such that their images in $D_{L}$ are in $H_{L}^{\perp}$, and the vector-valued Jacobi forms in this setting are the following functions.
\begin{defn}
Consider a finite index subgroup $\Gamma$ of $\operatorname{Mp}_{2}(\mathbb{Z})$, two weights $k$ and $l$ from $\frac{1}{2}\mathbb{Z}$. Then a \emph{Jacobi form of weight $(k,l)$, index $(L,H,v)$, and representation $\rho_{\Delta_{H}}$ with respect to $\Gamma$} is a smooth function $\Phi:\mathcal{H} \times L_{\mathbb{C}}\to\mathbb{C}$ for which the following four conditions are satisfied: First, we require the equation
\begin{equation}
\Phi(\tau,\zeta+\tau\sigma_{v_{+}}+\overline{\tau}\sigma_{v_{-}}+\nu)=\mathbf{e}\big(-\tau\sigma_{v_{+}}^{2}/2-\overline{\tau}\sigma_{v_{-}}^{2}/2-(\sigma,\zeta)\big)\chi_{\nu}t_{\sigma}\Phi(\tau,\zeta) \label{perorth}
\end{equation}
for any $\tau\in\mathcal{H}$, $\zeta \in L_{\mathbb{C}}$, and elements $\sigma$ and $\nu$ in $L^{*}$ for which $\sigma+L$ and $\nu+L$ are in $H_{L}^{\perp}$; Second, the we demand the equality
\begin{equation}
\Phi\Big(A\tau,\tfrac{\zeta_{v_{+}}}{j(A,\tau)}+\tfrac{\zeta_{v_{-}}}{\overline{j(A,\tau)}}\Big)= j(A,\tau)^{k}\overline{j(A,\tau)}^{l}\mathbf{e}\bigg(\tfrac{j_{A}'\zeta_{v_{+}}^{2}}{2j(A,\tau)}+\tfrac{j_{A}'\zeta_{v_{-}}^{2}}{2\overline{j(A,\tau)}}\bigg)\rho_{\Delta_{H}}(A)\Phi(\tau,\zeta) \label{modorth}
\end{equation}
to hold for any $\tau$ and $\zeta$ and for any $A\in\Gamma$, with the half-integral powers of $j$ determined by the metaplectic data of $A$ if necessary; Third, for any fixed $\tau\in\mathcal{H}$, the function $\zeta\mapsto\Phi(\tau,\zeta)$ is holomorphic on $L_{\mathbb{C}}$; And fourth, the functions deduced from the theta expansions of all the coefficients of $\Phi$ via Corollary 2.2 of \cite{[Ze3]} are bounded, towards any cusp of $\Gamma$, by the exponential of a linear function of the height there. We saw that $\Phi$ is \emph{pseudo-holomorphic} when it is annihilated by the operator $4\pi i\partial_{\overline{\tau}}-\Delta_{v_{-}}^{h}$ from Proposition 1.3 of \cite{[Ze3]} (or Proposition \ref{difeq} here), and \emph{skew-holomorphic} if it is annihilated by the operator $4\pi i\partial_{\tau}-\Delta_{v_{+}}^{h}$ defined there. \label{orthVV}
\end{defn}

The main result concerning the Jacobi forms from Definition \ref{orthVV} is the following analogue of Proposition \ref{pairper} and Definition \ref{VVdef}.
\begin{thm}
Given a smooth function $\Phi:\mathcal{H} \times L_{\mathbb{C}}\to\mathbb{C}[\Delta_{H}]$, Equation \eqref{perorth} and the holomorphicity in Definition \ref{orthVV} are equivalent to the existence of a smooth function $F:\mathcal{H}\to\mathbb{C}[D]$, which is then determined by $\Phi$, such that $\Phi(\tau,\zeta)=\downarrow_{H}\big(\Theta_{L}(\tau,\zeta;v) \otimes F(\tau)\big)$ for every $\tau\in\mathcal{H}$ and $\zeta \in L_{\mathbb{C}}$, where $\Theta_{L}$ is the function from Equation \eqref{Thetaorth}. In this case Equation \eqref{modorth} for $\Phi$ is equivalent to $F$ satisfying Equation \eqref{modeq} with the weight $\big(k-\frac{b_{+}}{2},l-\frac{b_{-}}{2}\big)$ and the representation $\rho_{D}$, and $\Phi$ is a Jacobi form of weight $(k,l)$ and index $(L,H,v)$, and representation $\rho_{\Delta_{H}}$ with respect to $\Gamma$ if and only if $F$ is modular of weight $\big(k-\frac{b_{+}}{2},l-\frac{b_{-}}{2}\big)$ and representation $\rho_{D}$. The Jacobi form is pseudo-holomorphic if and only if $F$ is weakly holomorphic, a case which can only occur when $l=\frac{b_{-}}{2}$. \label{VVorth}
\end{thm}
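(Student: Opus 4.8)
The plan is to run, \emph{mutatis mutandis}, the argument that proves Theorem \ref{VVmain}, now with the single complex variable $\zeta \in L_{\mathbb{C}}$ playing the rôle that $\zeta+\omega$ played there, with the orthogonal theta function $\Theta_{L}$ from Equation \eqref{Thetaorth} in place of $\Theta_{M}$, with the Weil representation $\rho_{L}$ of $\operatorname{Mp}_{2}(\mathbb{Z})$ replacing the one of $\operatorname{U}(1,1)(\mathbb{Z})$, and with the half-integral weight shift $\big(\tfrac{b_{+}}{2},\tfrac{b_{-}}{2}\big)$ replacing $(b_{+},b_{-})$. Concretely, I would first fix a set $R \subseteq D_{L}$ of representatives for $D_{L}/H_{L}$ and express the elements of $\Delta_{H}$ through the set $H^{\perp}_{R}$ from Lemma \ref{arrowhoriz}, applied to the discriminant form $D_{L} \oplus D$. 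Given $\Phi$ satisfying Equation \eqref{perorth} and holomorphic in $\zeta$, every component of $\Phi$ already satisfies the scalar periodicity relation, since $\chi_{\nu}$ and $t_{\sigma}$ are trivial for $\sigma,\nu \in L$; hence Corollary 2.2 of \cite{[Ze3]} (the orthogonal analogue of Corollary \ref{decomtheta}) yields unique smooth functions $F_{\beta}:\mathcal{H}\to\mathbb{C}[D_{\overline{L}}]$ with $\Phi_{\overline{\beta}}(\tau,\zeta)=[\Theta_{L}(\tau,\zeta;v),F_{\beta}(\tau)]_{L}$, and an expansion $\Phi(\tau,\zeta)=\sum_{\beta \in H^{\perp}_{R}}\big[\sum_{\alpha \in D_{L}}\theta_{\alpha}(\tau,\zeta;v)\tilde{f}_{\beta,\alpha}(\tau)\big]\mathfrak{e}_{\overline{\beta}}$.

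Next I would extract the two families of constraints from Equation \eqref{perorth} exactly as in the proof of Theorem \ref{VVmain}. Taking $\sigma=0$ and $\nu \in L^{*}$ with $\nu+L \in H_{L}^{\perp}$, and comparing the action of $\chi_{\nu}$ with the periodicity of each $\theta_{\alpha}$ (the extension of Proposition 1.1 of \cite{[Ze3]} to the dual lattice, as recorded after Proposition \ref{perTheta}), forces $\tilde{f}_{\beta,\alpha}=0$ unless $\alpha \in \beta_{L}+H_{L}$, which rewrites the expansion as the orthogonal counterpart of Equation \eqref{expnusigma}. Then, taking $\sigma \in L^{*}$ with $\sigma+L \in H_{L}^{\perp}$, using that the exponents in Equation \eqref{perorth} and in the orthogonal periodicity of $\theta_{\alpha}$ coincide, changing summation indices by means of Lemma \ref{addRsubgrp}, and invoking the uniqueness of the theta expansion, produces the functional equation that is the exact orthogonal analogue of Equation \eqref{funceq}.

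With this in hand I would build $F=\sum_{\alpha_{D} \in D}f_{\alpha_{D}}\mathfrak{e}_{\alpha_{D}}$: for each $\alpha_{D} \in D$ the non-degeneracy of the pairing on $D_{L}$ provides $\alpha_{L} \in D_{L}$ with $\alpha:=\alpha_{L}+\alpha_{D} \in H^{\perp}$, and writing $\alpha=\beta+\gamma$ with $\beta_{L} \in R$ and $\gamma \in H$ as in part $(i)$ of Lemma \ref{arrowhoriz}, I set $f_{\alpha_{D}}:=\hat{f}_{\beta,\gamma}$; the functional equation shows this is independent of the choice of $\alpha_{L}$, and comparison with Equation \eqref{downarrow} gives $\Phi=\downarrow_{H}\big(\Theta_{L} \otimes F\big)$, with $F$ unique by uniqueness of Fourier expansions. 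For the modularity statement I would use that $\downarrow_{H}$ is a homomorphism of $\operatorname{Mp}_{2}(\mathbb{Z})$-representations, intertwining $\rho_{L}\otimes\rho_{D}$ with $\rho_{\Delta_{H}}$ as recalled after Equation \eqref{arrowdef}, together with Theorem 1.2 of \cite{[Ze3]}, which gives the modularity of $\Theta_{L}$ of weight $\big(\tfrac{b_{+}}{2},\tfrac{b_{-}}{2}\big)$; applying both sides of Equation \eqref{modorth}, cancelling the common automorphy and exponential factors, and using once more the uniqueness of the theta expansion converts Equation \eqref{modorth} for $\Phi$ into Equation \eqref{modeq} for $F$ with weight $\big(k-\tfrac{b_{+}}{2},l-\tfrac{b_{-}}{2}\big)$ and representation $\rho_{D}$.

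The cusp-growth conditions on the two sides match literally through the functions $F_{\beta}$, and the pseudo-holomorphicity equivalence follows as in Proposition \ref{pshol}: Leibniz' rule together with Proposition 1.3 of \cite{[Ze3]} (the orthogonal form of Proposition \ref{difeq}) annihilates the $\Theta_{L}$-part of $(4\pi i\partial_{\overline{\tau}}-\Delta_{v_{-}}^{h})\Phi$, leaving a nonzero multiple of $\downarrow_{H}\big(\Theta_{L}\otimes\partial_{\overline{\tau}}F\big)$, which vanishes iff $\partial_{\overline{\tau}}F=0$, i.e. iff $F$ is weakly holomorphic, and in that case $l-\tfrac{b_{-}}{2}=0$. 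The one genuinely delicate point, exactly as in Theorem \ref{VVmain}, is the bookkeeping with the representatives $R$ and the maps $r_{\sigma},h_{\sigma}$ of Lemma \ref{addRsubgrp} when $H_{L}$ is degenerate inside $D_{L}$; when $H_{L}$ is non-degenerate one may take $R=H_{L}^{\perp}$ and the whole argument simplifies as indicated in Remark \ref{noconstD}.
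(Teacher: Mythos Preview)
Your proposal is correct and follows exactly the route the paper takes: the paper's own proof of Theorem \ref{VVorth} consists of the single sentence ``The proof of Theorem \ref{VVorth} goes just like that of Theorem \ref{VVmain}, with the orthogonal notions,'' and you have spelled out precisely that transfer, including the use of Lemma \ref{arrowhoriz}, the two-step extraction of constraints from $\nu$ and then $\sigma$ via Lemma \ref{addRsubgrp}, the construction of $F$ through the analogue of Equation \eqref{funceq}, and the pseudo-holomorphicity argument via Proposition \ref{pshol}.
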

The proof of Theorem \ref{VVorth} goes just like that of Theorem \ref{VVmain}, with the orthogonal notions. The maps from Lemma 3.1 and Corollary 3.2 of \cite{[Ze3]} also apply to the discriminants $D$ and $\Delta_{H}$, yielding an analogous result to Theorem \ref{VVorth}, in which the holomorphicity of $F$ is equivalent to the corresponding Jacobi form (in this construction) being skew-holomorphic.

\section{Relations with Arrow Operators \label{RelsArrow}}

Recall from Proposition 3.4 of \cite{[Ze3]} that if the modular form $F$ producing a scalar-valued Jacobi form $\Phi$ is the $\uparrow$-operator image of another modular form $G$, then $\Phi$ can also be obtained from $G$ using the theta function of the associated over-lattice (this works equally well for Hermitian $\mathcal{O}$-lattices, as mentioned above). We shall need similar relations in the vector-valued setting. We shall work with an orthogonal $\mathbb{Z}$-lattice $L$, but as above, all the results in this section hold for any Hermitian $\mathcal{O}$-lattice $M$ as well, provided that all the subgroups involved in the calculations, and hence all the resulting quotients, are closed under the action of $\mathcal{O}$.

\smallskip

Our calculations will be based on a certain combination of arrow operators. For defining it, we shall need some notation. Let $A$ be any discriminant form, and take two isotropic subgroups $H$ and $I$ of $A$. Then the subgroup $I+(H \cap I^{\perp})$ is also isotropic (as so are $I$ and $H \cap I^{\perp}$, and they are perpendicular by definition), and as for subgroups $Y$ and $Z$ of $A$ one has $(Y^{\perp})^{\perp}=Y$, $(Y+Z)^{\perp}=Y^{\perp} \cap Z^{\perp}$, and $(Y \cap Z)^{\perp}=Y^{\perp}+Z^{\perp}$, the subgroup perpendicular to ours is $I^{\perp}\cap(H^{\perp}+I)$, which equals $(I^{\perp} \cap H^{\perp})+I$ since $I \subseteq I^{\perp}$.

We shall be using the following simple observation below.
\begin{lem}
The quotients $|H|/|H \cap I^{\perp}|$ and $|I|/|I \cap H^{\perp}|$ have the same size. This size is $|I|$ when $I \cap H^{\perp}=\{0\}$. \label{perfpair}
\end{lem}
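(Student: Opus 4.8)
The plan is to exhibit a perfect pairing between the two quotient groups $H/(H \cap I^{\perp})$ and $I/(I \cap H^{\perp})$ induced by the bilinear form on $A$, which immediately forces them to have the same (finite) size. First I would note that the bilinear pairing $(\cdot,\cdot): A \times A \to \mathbb{Q}/\mathbb{Z}$ restricts to a $\mathbb{Z}$-bilinear map $H \times I \to \mathbb{Q}/\mathbb{Z}$. By the very definition of $I^{\perp}$, an element $h \in H$ pairs trivially with all of $I$ precisely when $h \in H \cap I^{\perp}$; symmetrically, $i \in I$ pairs trivially with all of $H$ exactly when $i \in I \cap H^{\perp}$. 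Hence the pairing descends to a well-defined pairing
\[
H/(H \cap I^{\perp}) \;\times\; I/(I \cap H^{\perp}) \;\longrightarrow\; \mathbb{Q}/\mathbb{Z}
\]
which is nondegenerate on the left by construction, and nondegenerate on the right for the same reason.

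Next I would invoke the standard fact that a nondegenerate $\mathbb{Z}$-bilinear pairing between two finite abelian groups into $\mathbb{Q}/\mathbb{Z}$ that is nondegenerate on both sides forces the two groups to be (non-canonically) isomorphic — equivalently, nondegeneracy on the left gives an injection $H/(H\cap I^{\perp}) \hookrightarrow \operatorname{Hom}\big(I/(I\cap H^{\perp}),\mathbb{Q}/\mathbb{Z}\big)$, so $|H|/|H\cap I^{\perp}| \le |I|/|I\cap H^{\perp}|$, and nondegeneracy on the right gives the reverse inequality; since $\operatorname{Hom}(G,\mathbb{Q}/\mathbb{Z})$ has the same order as the finite group $G$, equality of the two quotient sizes follows. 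The finiteness needed here is automatic since $A$ is a finite group by Definition \ref{discform}. For the last sentence of the lemma, when $I \cap H^{\perp} = \{0\}$ the quotient $I/(I \cap H^{\perp})$ is just $I$ itself, so the common size is $|I|$.

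The only mildly delicate point — and the step I would treat most carefully rather than the others — is verifying nondegeneracy on both sides simultaneously, i.e.\ checking that the pairing $H \times I \to \mathbb{Q}/\mathbb{Z}$ really has left kernel exactly $H \cap I^{\perp}$ \emph{and} right kernel exactly $I \cap H^{\perp}$; this is where one uses that $(\cdot,\cdot)$ on all of $A$ is nondegenerate together with the bookkeeping $(Y^{\perp})^{\perp} = Y$ for subgroups of $A$. Once that is in hand, everything else is the routine duality of finite abelian groups. I do not anticipate any genuine obstacle; the lemma is essentially a restatement of the symmetry of a perfect pairing restricted to a pair of subgroups.
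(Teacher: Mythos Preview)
Your proof is correct and essentially identical to the paper's: both restrict the pairing on $A$ to $H\times I\to\mathbb{Q}/\mathbb{Z}$, identify the left and right kernels as $H\cap I^{\perp}$ and $I\cap H^{\perp}$ by definition, pass to a perfect pairing on the quotients, and use Pontryagin duality (via two opposite inequalities) to conclude equality of sizes. One minor remark: the identification of the kernels follows directly from the definition of $I^{\perp}$ and $H^{\perp}$ and does not actually require the global nondegeneracy of $A$ or the relation $(Y^{\perp})^{\perp}=Y$, so the ``mildly delicate point'' you flag is in fact immediate.
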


\begin{proof}
We restrict the pairing on $A$ to a pairing from $H \times I$ to $\mathbb{Q}/\mathbb{Z}$. Since this pairing is trivial when the first element lies in $H \cap I^{\perp}$ or when second one in is $I \cap H^{\perp}$, we obtain an induced pairing on the product of $H/(H \cap I^{\perp})$ and $I/(I \cap H^{\perp})$, the sizes of which are the asserted quotients. Moreover, these intersections are precisely the subgroups of elements for which the pairing with the other group is trivial, which implies that in the quotients, the pairing is now perfect, and embeds each group with the dual of the other. As the dual group of a finite group has the same size of the group itself, these embeddings yield two opposite inequalities between the sizes of these groups, whence the desired equality (and indeed, for finite groups such embeddings are isomorphisms when the pairing is perfect). The second assertion is an immediate special case of the first. This proves the lemma.
\end{proof}
We shall also use the simplest case of Lemma \ref{perfpair} in which both sizes are 1. This amounts to the equivalence of the statements $I \cap H^{\perp}=I$, $I \subseteq H^{\perp}$, the pairing between $H$ and $I$ is trivial, $H \subseteq I^{\perp}$, and $H \cap I^{\perp}=H$.

\smallskip

Both $H$ and $I+(H \cap I^{\perp})$ contain $H \cap I^{\perp}$, with the quotient $\overline{I}$ resulting from the second inclusion being isomorphic to $I/(I \cap H)$ (since $I \subseteq I^{\perp}$ again). We set $\overline{A}:=(H^{\perp}+I)/(H \cap I^{\perp})$, $B:=H^{\perp}/H$, and $\overline{B}:=[(I^{\perp} \cap H^{\perp})+I]/[I+(H \cap I^{\perp})]$, and then $B$ and $\overline{B}$ can also be obtained from $\overline{A}$ using the isotropic subgroups $H/(H \cap I^{\perp})$ and $\overline{I}$ respectively. We shall need the following operator.
\begin{lem}
The subgroup $[H+(I^{\perp} \cap H^{\perp})]/H \subseteq B$ admits a natural surjective map $p$ onto $\overline{B}$, which preserves the quadratic values in $\mathbb{Q}/\mathbb{Z}$ and whose kernel is $[H+(I \cap H^{\perp})]/H$. The combination $\downarrow_{H/(H \cap I^{\perp})}\uparrow_{\overline{I}}:\mathbb{C}[\overline{B}]\to\mathbb{C}[B]$ is the injective map taking $\mathfrak{e}_{\alpha}$ for $\alpha\in\overline{B}$ to $\sum_{\beta\in[H+(I^{\perp} \cap H^{\perp})]/H,\ p\beta=\alpha}\mathfrak{e}_{\beta}$. \label{combarrow}
\end{lem}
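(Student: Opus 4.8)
The plan is to handle the two assertions separately, since the map $p$ is a purely group-theoretic gadget while the arrow-operator identity is a bookkeeping computation built on top of it. First I would construct $p$. The source $[H+(I^{\perp}\cap H^{\perp})]/H$ is by the second isomorphism theorem isomorphic to $(I^{\perp}\cap H^{\perp})/(H\cap I^{\perp})$, which sits inside $\overline{A}=(H^{\perp}+I)/(H\cap I^{\perp})$ as a subgroup; composing this identification with the quotient map $\overline{A}\to\overline{B}$ defined by the isotropic subgroup $\overline I=[I+(H\cap I^{\perp})]/(H\cap I^{\perp})$ gives a homomorphism into $\overline B$, and I would check it lands in $\overline B$ (i.e.\ that $(I^{\perp}\cap H^{\perp})+(H\cap I^{\perp})\subseteq (I^{\perp}\cap H^{\perp})+I$ is automatic, and that elements of $I^{\perp}\cap H^{\perp}$ are perpendicular to $\overline I$, which holds since $I^{\perp}\cap H^{\perp}\subseteq I^{\perp}$ and $H\cap I^{\perp}\subseteq I^{\perp}$). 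Surjectivity: every class in $\overline B$ has a representative in $I^{\perp}\cap H^{\perp}$ because $\overline B=[(I^{\perp}\cap H^{\perp})+I]/[I+(H\cap I^{\perp})]$ by definition, so each coset already meets $I^{\perp}\cap H^{\perp}$. The kernel consists of those $\beta\in I^{\perp}\cap H^{\perp}$ with $\beta\in I+(H\cap I^{\perp})$; writing $\beta=\iota+h$ with $\iota\in I$, $h\in H\cap I^{\perp}$ and using $\beta,h\in H^{\perp}$ forces $\iota\in H^{\perp}$, so $\beta\in (I\cap H^{\perp})+(H\cap I^{\perp})$, and conversely this subgroup is clearly in the kernel; passing back through the isomorphism, the kernel is $[H+(I\cap H^{\perp})]/H$ as claimed. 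Preservation of quadratic values is immediate because every map in sight is a restriction or quotient of the identity on representatives and all the subgroups quotiented out are isotropic.

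For the second assertion I would just trace $\mathfrak{e}_\alpha$ through the two arrow operators using their definitions in Equation \eqref{arrowdef}. Applying $\uparrow_{\overline I}$ to $\mathfrak{e}_\alpha$ (with $\alpha\in\overline B$, viewed via $B$ obtained from $\overline A$ by $\overline I$, no: via $\overline A$ and $\overline I$ giving $\overline B$) spreads it over the $\overline I$-orbit inside $\overline I{}^{\perp}\subseteq\overline A$; then $\downarrow_{H/(H\cap I^{\perp})}$ kills every summand not lying in $(H/(H\cap I^{\perp}))^{\perp}$ and pushes the survivors down to $B=H^{\perp}/H$. The survivors are exactly the lifts of $\alpha$ that lie in $H^{\perp}\cap I^{\perp}$ modulo $H\cap I^{\perp}$, i.e.\ in $(I^{\perp}\cap H^{\perp})/(H\cap I^{\perp})$, and under $\downarrow$ these land precisely in the subgroup $[H+(I^{\perp}\cap H^{\perp})]/H$ of $B$; and $p$ of each such class is $\alpha$ by construction. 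So the composite sends $\mathfrak{e}_\alpha$ to $\sum_{p\beta=\alpha}\mathfrak{e}_\beta$ over $\beta\in[H+(I^{\perp}\cap H^{\perp})]/H$. Injectivity is then formal: distinct $\alpha$ have disjoint fibers under $p$, so the images are sums over disjoint sets of basis vectors.

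The main obstacle I anticipate is not any single deep step but the consistent management of the nested quotients: one must verify that $B$ and $\overline B$ really are obtained from $\overline A$ via the isotropic subgroups $H/(H\cap I^{\perp})$ and $\overline I$ (so that the arrow operators between $\mathbb C[\overline B]$, $\mathbb C[\overline A]$, $\mathbb C[B]$ are even defined), and that under these identifications the orbit-and-restrict computation matches the subgroup $[H+(I^{\perp}\cap H^{\perp})]/H$ on the nose. This is where the lattice-theoretic identities $(Y+Z)^{\perp}=Y^{\perp}\cap Z^{\perp}$, $(Y\cap Z)^{\perp}=Y^{\perp}+Z^{\perp}$, $(Y^{\perp})^{\perp}=Y$ recalled just before the lemma, together with $I\subseteq I^{\perp}$ and $H\subseteq H^{\perp}$, do all the work; I would invoke them freely rather than re-deriving them. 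Counting-wise, Lemma \ref{perfpair} guarantees the fiber sizes of $p$ are uniform, which is a useful sanity check but is not logically needed for the statement as phrased.
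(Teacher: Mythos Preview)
Your proposal is correct and follows essentially the same route as the paper: identify the source with $(I^{\perp}\cap H^{\perp})/(H\cap I^{\perp})\subseteq\overline{A}$ via the second isomorphism theorem, define $p$ as the restriction of the projection $\overline{I}^{\perp}\to\overline{B}$, compute the kernel by the decomposition $\beta=\iota+h$ (the paper states the resulting intersection more tersely, but your argument is the explicit version of the same step), and then trace $\mathfrak{e}_{\alpha}$ through $\uparrow_{\overline{I}}$ followed by $\downarrow_{H/(H\cap I^{\perp})}$ exactly as the paper does. The identifications of $B$ and $\overline{B}$ as subquotients of $\overline{A}$ that you flag as the main bookkeeping obstacle are established in the paragraph preceding the lemma, so they may simply be quoted.
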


\begin{proof}
The Isomorphism Theorem from basic Group Theory identifies our subgroup of $B$ with the subgroup $(I^{\perp} \cap H^{\perp})/(H \cap I^{\perp})\subseteq\overline{A}$. As $I^{\perp} \cap H^{\perp}$ is perpendicular to both $I$ and $H \cap I^{\perp}$, our subgroup of $\overline{A}$ is contained in $\overline{I}^{\perp}$, and we define $p$ to be the composition of the isomorphism from above with the projection of $\overline{I}^{\perp}$ onto $\overline{B}=\overline{I}^{\perp}/\overline{I}$. The image of this map is the image, modulo the subgroup $I+(H \cap I^{\perp})$ defining $\overline{B}$, of the sum of $I^{\perp} \cap H^{\perp}$ from the numerator of the subgroup of $\overline{A}$ and the numerator $I+(H \cap I^{\perp})$ of $\overline{I}$, which together yield the numerator $(I^{\perp} \cap H^{\perp})+I$ of $\overline{A}$, proving that $p$ is indeed surjective. We now note that the kernel of the projection from $(I^{\perp} \cap H^{\perp})/(H \cap I^{\perp})$ onto $\overline{B}$ is its intersection with $\overline{I}$, which is thus $[(I \cap H^{\perp})+(H \cap I^{\perp})]/(H \cap I^{\perp})$ (because $H \cap I^{\perp} \subseteq I^{\perp} \cap H^{\perp}$), and transferring this kernel by the isomorphism from Group Theory indeed produces the asserted kernel.

It remains to establish the formula for $\downarrow_{H/(H \cap I^{\perp})}\uparrow_{\overline{I}}\mathfrak{e}_{\alpha}$ for any $\alpha\in\overline{B}$, since the surjectivity of $p$ and the fact that any $\beta$ in the subgroup of $B$ only shows up in the image of $\downarrow_{H/(H \cap I^{\perp})}\uparrow_{\overline{I}}\mathfrak{e}_{p\beta}$ immediately imply the injectivity of our combination of arrow operators. But indeed, we have $\uparrow_{\overline{I}}\mathfrak{e}_{\alpha}=\sum_{\delta\in\overline{I}^{\perp},\ \delta+\overline{I}=\alpha}\mathfrak{e}_{\delta}$, where $\overline{I}^{\perp}\subseteq\overline{A}$ consists of elements that must be perpendicular to the image of $I$ in $\overline{A}$ and is thus contained in $I^{\perp}/(H \cap I^{\perp})$. Applying $\downarrow_{H/(H \cap I^{\perp})}$ to this sum restricts the set of $\delta$'s associated with each such $\alpha$ to those in $H^{\perp}/(H \cap I^{\perp})$ and takes every such $\delta$ to its $B$-image $\delta+H/(H \cap I^{\perp})$. But these are thus precisely the set of $\delta$ from the subgroup of $(I^{\perp} \cap H^{\perp})/(H \cap I^{\perp})$ of $\overline{A}$ that are sent to $\alpha$ by our surjective map, and $\delta\mapsto\delta+H$ is precisely the inversion of the isomorphism from the previous paragraph, yielding the asserted formula. The fact that $p$ preserves the quadratic values follows either from the naturality of the maps in the previous paragraph, or from the fact that the arrow operators from Equation \eqref{arrowdef} preserve the action of $T$ via the Weil representations. This proves the lemma.
\end{proof}

We deduce the following consequence.
\begin{cor}
Let $A$, $H$, $I$, $\overline{A}$, $B$, and $\overline{B}$ be as above.
\begin{enumerate}[$(i)$]
\item If $I \cap H=\{0\}$ and $I \subseteq H^{\perp}$ then $B$ equals $\overline{A}$ and contains a isotropic subgroup $I_{B}$ that is isomorphic to $I$, such that $\overline{B}$ is the associated quotient and the operator from Lemma \ref{combarrow} is just $\uparrow_{I_{B}}$.
\item When $I \cap H^{\perp}=\{0\}$, there is an injection $\iota_{\overline{B}}:\overline{B} \to B$ of discriminant forms, such that the combination from Lemma \ref{combarrow} just takes $\mathfrak{e}_{\alpha}$ to $\mathfrak{e}_{\iota_{\overline{B}}\alpha}$.
\end{enumerate} \label{injop}
\end{cor}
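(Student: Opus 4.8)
The plan is to deduce both parts directly from Lemma \ref{combarrow}, observing that each of the two hypotheses forces one of the ``correction'' subgroups $H \cap I^{\perp}$ or $I \cap H^{\perp}$ to be trivial, after which the general description of the operator $\downarrow_{H/(H \cap I^{\perp})}\uparrow_{\overline{I}}$ collapses to the asserted form.

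For part $(i)$, I would begin by recording that the hypothesis $I \subseteq H^{\perp}$ is, by the equivalence of conditions noted after Lemma \ref{perfpair} (the case of trivial quotients), the same as $H \subseteq I^{\perp}$, so that $H^{\perp}+I = H^{\perp}$ and $H \cap I^{\perp} = H$. Substituting into the definitions gives $\overline{A} = (H^{\perp}+I)/(H \cap I^{\perp}) = H^{\perp}/H = B$, an equality of discriminant forms. Under this identification $\overline{I} = [I+(H \cap I^{\perp})]/(H \cap I^{\perp}) = (I+H)/H$ is the image of the isotropic subgroup $I$, hence an isotropic subgroup $I_{B}$ of $B$, and $I_{B} \cong I$ since $I \cap H = \{0\}$ by hypothesis. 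The general setup preceding Lemma \ref{combarrow} already identifies $\overline{B} = \overline{I}^{\perp}/\overline{I}$ computed inside $\overline{A}$, which is now exactly $I_{B}^{\perp}/I_{B}$ inside $B$, i.e.\ the quotient associated with $I_{B}$. Finally, because $H \cap I^{\perp} = H$ the subgroup $H/(H \cap I^{\perp})$ is trivial, so $\downarrow_{H/(H \cap I^{\perp})}$ is the identity map and the operator of Lemma \ref{combarrow} is just $\uparrow_{\overline{I}} = \uparrow_{I_{B}}$.

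For part $(ii)$, the hypothesis $I \cap H^{\perp} = \{0\}$ makes the kernel $[H+(I \cap H^{\perp})]/H$ of the surjection $p$ from Lemma \ref{combarrow} trivial, so $p$ is an isomorphism from $[H+(I^{\perp} \cap H^{\perp})]/H$ onto $\overline{B}$ which preserves the $\mathbb{Q}/\mathbb{Z}$-valued quadratic form. I would then define $\iota_{\overline{B}}:\overline{B} \to B$ as $p^{-1}$ followed by the inclusion $[H+(I^{\perp} \cap H^{\perp})]/H \hookrightarrow B$; it is injective and preserves quadratic values, hence also the induced bilinear pairing, so it is an injection of discriminant forms. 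Since $p$ is now injective, for each $\alpha \in \overline{B}$ there is a unique $\beta$ in the subgroup $[H+(I^{\perp} \cap H^{\perp})]/H$ of $B$ with $p\beta = \alpha$, namely $\beta = \iota_{\overline{B}}(\alpha)$, so the sum in the formula of Lemma \ref{combarrow} for $\downarrow_{H/(H \cap I^{\perp})}\uparrow_{\overline{I}}\mathfrak{e}_{\alpha}$ reduces to the single term $\mathfrak{e}_{\iota_{\overline{B}}\alpha}$, as claimed.

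There is no substantial obstacle beyond bookkeeping; the only points requiring care are verifying that the stated hypotheses genuinely force the relevant correction subgroup to vanish and that the resulting identifications --- of $\overline{A}$ with $B$ in part $(i)$, and of $[H+(I^{\perp} \cap H^{\perp})]/H$ with $\overline{B}$ in part $(ii)$ --- respect the quadratic forms. Both follow at once from Lemma \ref{combarrow} together with the subgroup--perpendicular identities $(Y^{\perp})^{\perp}=Y$ and $(Y+Z)^{\perp}=Y^{\perp} \cap Z^{\perp}$ recalled at the start of the section.
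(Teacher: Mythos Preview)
Your proof is correct and follows essentially the same approach as the paper's own proof: in part $(i)$ you use $I \subseteq H^{\perp} \Leftrightarrow H \subseteq I^{\perp}$ to collapse $H \cap I^{\perp}$ to $H$ and hence trivialize the $\downarrow$-operator, and in part $(ii)$ you use $I \cap H^{\perp}=\{0\}$ to trivialize the kernel of $p$ and reduce the sum in Lemma~\ref{combarrow} to a single term. The paper's argument is the same in both parts, with only cosmetic differences in presentation.
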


\begin{proof}
If $I \subseteq H^{\perp}$ then $H \subseteq I^{\perp}$, so that the group $H \cap I^{\perp}$ used for defining $\overline{A}$ equals $H$ and thus $\overline{A}=B$. Then $B$ contains the isotropic group $I_{B}:=\overline{I}$ from above, which was seen to be isomorphic to $I/(I \cap H)$ and thus to $I$ under our assumption, and we saw that $\overline{B}$ was obtained from $\overline{A}=B$ using the isotropic subgroup $\overline{I}=I_{B}$. Since $\uparrow_{\overline{I}}$ clearly equals $\uparrow_{I_{B}}$ and the operator $\downarrow_{H/(H \cap I^{\perp})}=\downarrow_{H/H}$ it trivial, this establishes part $(i)$.

For part $(ii)$ note that our assumption makes the kernel of the map $p$ from Lemma \ref{combarrow}, which is isomorphic to $(I \cap H^{\perp})/(I \cap H)$ (as $H \subseteq H^{\perp}$), trivial, so that $p$ becomes an isomorphism in this case, which was seen to preserve the $\mathbb{Q}/\mathbb{Z}$-quadratic structure as well. By defining $\iota_{\overline{B}}$ to be the composition of the inverse isomorphism with the inclusion of the subgroup from Lemma \ref{combarrow} into $B$, the assertion follows directly from the formula given in that lemma. This proves the corollary.
\end{proof}
Under the condition from part $(ii)$ of Corollary \ref{injop}, we shall allow ourselves the slight abuse of notation by using $\iota_{\overline{B}}$ also for the map from $\mathbb{C}[\overline{B}]$ to $\mathbb{C}[B]$ taking $\mathfrak{e}_{\alpha}$ to $\mathfrak{e}_{\iota_{\overline{B}}\alpha}$, namely the operator from Lemma \ref{combarrow} reduces to just $\iota_{\overline{B}}$ in this case.

\smallskip

In our case the discriminant form $A$ is $D_{L} \oplus D$, and the group $H$ is a horizontal one as in Definition \ref{horizdef} (so that $B$ is $\Delta_{H}$ from above), thus yielding the isomorphic images $H_{L} \subseteq D_{L}$ and $H_{D} \subseteq D$, inside which $H_{L}^{\perp}$ and $H_{D}^{\perp}$ are the respective perpendicular group. We shall only consider isotropic subgroups $I$ that are contained in $D$, for which $I \cap H=\{0\}$ by horizontality, and then we have $I \cap H^{\perp}=I \cap H_{D}^{\perp}$ and $H \cap I^{\perp}$ maps isomorphically onto $H_{D} \cap I^{\perp}$ in restriction of the projection from $A$ onto $D$ (we write just $(H \cap I^{\perp})_{L}$ for its image in $D_{L}$). In this setting we have many additional groups, as follows.
\begin{defn}
Write $I_{D}$ for the subgroup $I \cap H_{D} \subseteq H_{D}$, let $I_{H}$ and $I_{L}$ be its isomorphic images in $H$ and in $H_{L}$ respectively, and set $\overline{H}:=(H \cap I^{\perp})/I_{H}$. Denote by $\Lambda$ the over-lattice of $L$ for which $\Lambda/L=I_{L}$, let $D_{\Lambda}$ be the discriminant group $I_{L}^{\perp}/I_{L}$ for $I_{L}^{\perp} \subseteq D_{L}$, and with $I^{\perp} \subseteq D$, denote $I^{\perp}/I$ by $\overline{D}$. \label{subgps}
\end{defn}

We shall need the following relation arising from Definition \ref{subgps}.
\begin{lem}
The group $\overline{H}$ can be identified with a horizontal isotropic subgroup of $D_{\Lambda}\oplus\overline{D}$. Its projections are $\overline{H}_{\Lambda}=(H \cap I^{\perp})_{L}/I_{L}$ and $\overline{H}_{\overline{D}}=(H_{D} \cap I^{\perp})/I_{D}$. \label{Hbarhoriz}
\end{lem}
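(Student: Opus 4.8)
The plan is to exhibit an explicit injection $\overline{H}\hookrightarrow D_{\Lambda}\oplus\overline{D}$ and then verify the three required properties together with the formulae for the projections. First I would record the preliminary facts that $I_{H}\subseteq H\cap I^{\perp}$, so that the quotient $\overline{H}=(H\cap I^{\perp})/I_{H}$ is meaningful, and that $I_{L}$ is isotropic, so that $D_{\Lambda}=I_{L}^{\perp}/I_{L}$ is genuinely a discriminant form. For the former: an element $\eta\in I_{H}$ has $\eta_{D}\in I_{D}\subseteq I$, and since $D_{L}$ and $D$ are orthogonal inside $A=D_{L}\oplus D$, pairing $\eta$ with any $x\in I$ reduces to $(\eta_{D},x)$, which vanishes because $I$ is isotropic; for the latter, part $(iii)$ of Lemma \ref{horizdecom} gives $\tfrac{\eta_{L}^{2}}{2}=-\tfrac{\eta_{D}^{2}}{2}=0$.

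The core construction is the map $\phi$ sending the class of $\gamma=\gamma_{L}+\gamma_{D}\in H\cap I^{\perp}$ to $(\gamma_{L}+I_{L},\gamma_{D}+I)\in D_{\Lambda}\oplus\overline{D}$. To see that this is well-defined I must check $\gamma_{D}\in I^{\perp}$ (inside $D$) and $\gamma_{L}\in I_{L}^{\perp}$. The first is immediate from $\gamma\in I^{\perp}$ and orthogonality of the two summands. The second is the one genuinely load-bearing point, and it uses that $H$ is isotropic and hence pairs trivially with itself: for $\eta\in I_{H}$ we have $(\gamma,\eta)=0$, while $(\gamma_{D},\eta_{D})=0$ since $\eta_{D}\in I_{D}\subseteq I$ and $\gamma\in I^{\perp}$, so subtracting gives $(\gamma_{L},\eta_{L})=0$; as $\eta$ runs over $I_{H}$ the component $\eta_{L}$ runs over $I_{L}$, whence $\gamma_{L}\in I_{L}^{\perp}$. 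The kernel of $\phi$ consists of those $\gamma$ with $\gamma_{L}\in I_{L}$ and $\gamma_{D}\in I$; the second condition forces $\gamma_{D}\in H_{D}\cap I=I_{D}$, so $\gamma\in I_{H}$, and conversely $I_{H}\subseteq\ker\phi$, so $\phi$ descends to an injection of $\overline{H}$ into $D_{\Lambda}\oplus\overline{D}$.

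It then remains to check that the image is horizontal and isotropic and to identify its projections. Horizontality amounts to $\overline{H}\cap D_{\Lambda}=\overline{H}\cap\overline{D}=\{0\}$: if the $\overline{D}$-component of $\phi(\gamma)$ vanishes then $\gamma_{D}\in I$, which as above puts $\gamma$ in $I_{H}$; if the $D_{\Lambda}$-component vanishes then $\gamma_{L}\in I_{L}$, whence $\gamma_{D}=\iota(\gamma_{L})\in I_{D}\subseteq I$ and again $\gamma\in I_{H}$. For isotropy I would compute $\tfrac{\phi(\gamma)^{2}}{2}=\tfrac{\gamma_{L}^{2}}{2}+\tfrac{\gamma_{D}^{2}}{2}=\tfrac{\gamma^{2}}{2}=0$, using that the quadratic forms on $D_{\Lambda}$ and on $\overline{D}$ are those induced from $D_{L}$ and $D$ and that $H$ is isotropic in $A$. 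Finally, the projection of $\overline{H}$ to $D_{\Lambda}$ is $\{\gamma_{L}+I_{L}\mid\gamma\in H\cap I^{\perp}\}=(H\cap I^{\perp})_{L}/I_{L}$, and the projection to $\overline{D}$ is $\{\gamma_{D}+I\mid\gamma\in H\cap I^{\perp}\}$; since the $D$-projection carries $H\cap I^{\perp}$ bijectively onto $H_{D}\cap I^{\perp}$ and $(H_{D}\cap I^{\perp})\cap I=H_{D}\cap I=I_{D}$, this is identified with $(H_{D}\cap I^{\perp})/I_{D}$, as claimed. Apart from the isotropy-of-$H$ input in the well-definedness step, every verification here is routine bookkeeping with the orthogonal decomposition $A=D_{L}\oplus D$.
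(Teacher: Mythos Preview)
Your proof is correct and follows essentially the same route as the paper: both arguments hinge on the observation that $(H\cap I^{\perp})_{L}\subseteq I_{L}^{\perp}$, proved by combining the isotropy of $H$ (so $(\gamma,\eta)=0$ for $\gamma,\eta\in H$) with the fact that $\gamma_{D}\perp\eta_{D}$ when $\gamma\in I^{\perp}$ and $\eta_{D}\in I$, and then both construct the map to $D_{\Lambda}\oplus\overline{D}$ and identify its kernel with $I_{H}$. Your version is in fact more thorough than the paper's, since you explicitly verify the preliminary inclusions $I_{H}\subseteq H\cap I^{\perp}$ and the isotropy of $I_{L}$, and you spell out the isotropy of the image (which the paper leaves implicit).
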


\begin{proof}
Any element of $(H \cap I^{\perp})_{L}$ is $\gamma_{L}$ for some $\gamma \in H \cap I^{\perp}$, and we write an element of $I_{L}$ as $\delta_{L}$ with $\delta \in I_{H}$ via the isomorphism $H \to H_{L}$ and the notation from Definition \ref{subgps}. We write $\gamma_{D}$ and $\delta_{D}$ for their images in $H_{D} \cap I^{\perp}$ and $I_{D} \subseteq I$, meaning that $(\gamma_{D},\delta_{D})=0$, and since $(\gamma,\delta)=0$ as well (since $\gamma$ and $\delta$ are both in the isotropic subgroup $H$), and the latter pairing is the sum of the pairings of the projections (as such is the pairings of elements of $D_{L} \oplus D$), we deduce that $(\gamma_{L},\delta_{L})=0$ as well. This proves that $(H \cap I^{\perp})_{L} \subseteq I_{L}^{\perp}$, and thus has an image in $D_{\Lambda}$. Similarly, $H_{D} \cap I^{\perp}$ is contained in $I^{\perp}$ and thus has an image in $\overline{D}$, meaning that we have a natural map from $H \cap I^{\perp}$ to $D_{\Lambda}\oplus\overline{D}$. The fact that the kernel of the map to $D_{\Lambda}$ is the inverse image $I_{H}$ of $I_{L}$ (by which we divided to obtain $D_{\Lambda}$) in $H$ and the kernel of the map to $\overline{D}$ is the inverse image $I_{H}$ of $I \cap H_{D}=I_{D}$ in $H$ implies that this map factors through an injective map from $\overline{H}$, as defined in Definition \ref{subgps}, to $D_{\Lambda}\oplus\overline{D}$, and that the image of $\overline{H}$ is indeed horizontal as in Definition \ref{horizdef}. The expressions for the images of $\overline{H}$ also follow from this description. This proves the lemma.
\end{proof}
We shall thus henceforth identify $\overline{H}$ with its image in $D_{\Lambda}\oplus\overline{D}$, and with $\overline{H}^{\perp} \subseteq D_{\Lambda}\oplus\overline{D}$, and as above we denote the discriminant group $\overline{H}^{\perp}/\overline{H}$ resulting from Lemma \ref{Hbarhoriz} by $\overline{\Delta}_{\overline{H}}$.

\begin{rmk}
While $H_{L}$, and even its subgroup $(H \cap I^{\perp})_{L}$, contain $I_{L}$ (which allowed us to obtain the formula for $\overline{H}_{\Lambda}$ in Lemma \ref{Hbarhoriz}), it is not necessarily true for the complement $H_{L}^{\perp} \subseteq D_{L}$ showing up in the periodicity property for the index $(L,H)$. This group is contained in $I_{L}^{\perp}$, and its image in $D_{\Lambda}$ is $(H_{L}^{\perp}+I_{L})/I_{L} \cong H_{L}^{\perp}/(I_{L} \cap H_{L}^{\perp})$. The same argument from the proof of Lemma \ref{Hbarhoriz} shows that an element of $I_{L} \subseteq H_{L}$ is perpendicular to all of $H_{L}$ if and only if its image in $I_{D}=I \cap H_{D}$ is perpendicular to all of $H_{D}$, so that the size of this image, which is clearly contained in $\overline{H}_{\Lambda}^{\perp} \subseteq D_{\Lambda}$, is $|H_{L}^{\perp}|/|I \cap H_{D} \cap H_{D}^{\perp}|$, which we compare with the size of $\overline{H}_{\Lambda}^{\perp}$ in Remark \ref{dimreps} below. \label{sizeinperp}
\end{rmk}

\smallskip

We will need an analogue of Lemma \ref{arrowhoriz} that will respect the relation with $I$ as well. Recall that for any subgroup $I \subseteq D$, the elements of $D_{L} \oplus D$ that are perpendicular to $I$ are precisely those that lie in $D_{L} \oplus I^{\perp}$.
\begin{lem}
Given a horizontal subgroup $H \subseteq D_{L} \oplus D$ and a subgroup $I \subseteq D$, let $R_{I} \subseteq D_{L}$ be a set of representatives for $D_{L}/(H \cap I^{\perp})_{L}$.
\begin{enumerate}[$(i)$]
\item Any element of $D_{L} \oplus D$ such that its $H$-coset intersects $D_{L} \oplus I^{\perp}$ has a unique presentation as $\beta+\gamma$, in which $\gamma$ lies in $H$ and $\beta$ is with $\beta_{L} \in R_{I}$ and $\beta_{D} \in I^{\perp}$.
\item If $H$ is isotropic, then the map $\beta\mapsto\overline{\beta}:=\beta+H$ is a bijection from the set $H^{\perp}_{R_{I},I}$ consisting of $\beta\in(D_{L} \oplus I^{\perp}) \cap H^{\perp}$ with $\beta_{L} \in R_{I}$ onto its image in $\Delta_{H}$, and adding $\gamma \in H$ to $\beta$ does not affect the $H^{\perp}$ condition.
\item In case $I$ is also isotropic, the values of the pairings of $\beta_{L} \in R_{I}$ with $I_{L}$ depend only on its image in $D_{L}/(H \cap I^{\perp})_{L} \oplus I^{\perp}$ (and not on the choice of the set of representatives $R_{I}$). Moreover, if $\beta$ lies in $H^{\perp}_{R_{I},I}$ then $\beta_{L} \in R_{I}$ is perpendicular to $I_{L}$.
\item In the latter case, the set $\overline{R}:=\{\beta_{L}+I_{L}|\beta_{L} \in R_{I} \cap I_{L}^{\perp}\} \subseteq D_{\Lambda}$ is a set of representatives for $D_{\Lambda}/\overline{H}_{\Lambda}$ which is in bijection with $R_{I} \cap I_{L}^{\perp}$. The map taking $\beta \in H^{\perp}_{R_{I},I}$ to $\beta+I_{L}+I \in D_{\Lambda}\oplus\overline{D}$ is surjection from $H^{\perp}_{R_{I},I}$ onto the set $\overline{H}^{\perp}_{\overline{R}}$ associated via Lemma \ref{arrowhoriz} with $\overline{H} \subseteq D_{\Lambda}\oplus\overline{D}$ and $\overline{R}$, such that the map $p$ from Lemma \ref{combarrow} takes $\beta+H$ to $(\beta+I_{L}+I)+\overline{H}$.
\end{enumerate} \label{repswithI}
\end{lem}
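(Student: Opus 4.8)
The plan is to follow the pattern of Lemma~\ref{arrowhoriz}, carrying along the extra constraint on the $D$-coordinate imposed by $I$, and then to read off the compatibility with the combination of arrow operators from Lemma~\ref{combarrow}. For part~$(i)$, given an element of $D_{L} \oplus D$ whose $H$-coset meets $D_{L} \oplus I^{\perp}$, I would first replace it by a representative $\alpha$ with $\alpha_{D} \in I^{\perp}$; the elements $\gamma \in H$ one may subtract while keeping the $D$-part in $I^{\perp}$ are precisely those in $H \cap I^{\perp}$, and subtracting such a $\gamma$ changes $\alpha_{L}$ by an element of $(H \cap I^{\perp})_{L}$, so since $R_{I}$ represents $D_{L}/(H \cap I^{\perp})_{L}$ there is a unique such $\gamma$ bringing $\alpha_{L}$ into $R_{I}$, giving existence. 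For uniqueness, if $\beta+\gamma=\beta'+\gamma'$ with $\beta,\beta'$ of the prescribed form, then $\beta-\beta' \in H$ has $D$-part in $I^{\perp}$, hence lies in $H \cap I^{\perp}$, so $\beta_{L}-\beta'_{L} \in (H \cap I^{\perp})_{L}$ forces $\beta_{L}=\beta'_{L}$, and then $\beta-\beta' \in H \cap D=\{0\}$ by Definition~\ref{horizdef}. Part~$(ii)$ is immediate: injectivity of $\beta \mapsto \overline{\beta}$ is exactly this uniqueness, and adding $\gamma \in H$ leaves the $H^{\perp}$ condition unchanged because $H \subseteq H^{\perp}$ by isotropy.

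For part~$(iii)$ I would reuse the splitting-of-pairings argument from the proof of Lemma~\ref{Hbarhoriz}: for $\gamma \in H \cap I^{\perp}$ and $\delta \in I_{H} \subseteq H$ the isotropy of $H$ gives $(\gamma,\delta)=0$, and as $\gamma_{D} \in I^{\perp}$ while $\delta_{D} \in I_{D} \subseteq I$ the $D$-parts pair trivially, so $(\gamma_{L},\delta_{L})=0$; hence $(H \cap I^{\perp})_{L} \perp I_{L}$, which shows that $(\beta_{L},I_{L})$ depends only on $\beta_{L}+(H \cap I^{\perp})_{L}$ and so not on the choice of $R_{I}$. The same splitting applied to $\beta \in H^{\perp}_{R_{I},I}$ and $\delta \in I_{H} \subseteq H$, using $\beta \in H^{\perp}$ and $\beta_{D} \in I^{\perp}$, gives $(\beta_{L},\delta_{L})=0$, i.e. $\beta_{L} \in I_{L}^{\perp}$.

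For part~$(iv)$, I would first note $I_{L} \subseteq (H \cap I^{\perp})_{L} \subseteq I_{L}^{\perp}$ (the first inclusion because $I_{H} \subseteq H \cap I^{\perp}$, the second by part~$(iii)$), so that $D_{\Lambda}/\overline{H}_{\Lambda} \cong I_{L}^{\perp}/(H \cap I^{\perp})_{L}$ by Lemma~\ref{Hbarhoriz}; each $(H \cap I^{\perp})_{L}$-coset contained in $I_{L}^{\perp}$ meets $R_{I}$ in a single point, automatically lying in $I_{L}^{\perp}$, so $R_{I} \cap I_{L}^{\perp}$ represents that quotient and $\overline{R}$ is its bijective image in $D_{\Lambda}$. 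By part~$(iii)$ and the definition of $H^{\perp}_{R_{I},I}$ the assignment $\beta \mapsto (\beta_{L}+I_{L},\beta_{D}+I)$ is a well-defined element of $D_{\Lambda} \oplus \overline{D}$ with $D_{\Lambda}$-part in $\overline{R}$, and splitting its pairing with an element $(\gamma_{L}+I_{L},\gamma_{D}+I)$ of $\overline{H}$ recovers $(\beta,\gamma)=0$, so it lands in $\overline{H}^{\perp}_{\overline{R}}$. The identity $p(\beta+H)=(\beta_{L}+I_{L},\beta_{D}+I)+\overline{H}$ then follows by unwinding the definition of $p$ in Lemma~\ref{combarrow}, identifying its target $\overline{B}$ with $\overline{\Delta}_{\overline{H}}=\overline{H}^{\perp}/\overline{H}$ via $z \mapsto (z_{L}+I_{L},z_{D}+I)+\overline{H}$.

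The main obstacle will be the surjectivity of $\beta \mapsto (\beta_{L}+I_{L},\beta_{D}+I)$ onto $\overline{H}^{\perp}_{\overline{R}}$. Given a target $(r+I_{L},s+I)$ with $r \in R_{I} \cap I_{L}^{\perp}$ and $s \in I^{\perp}$, its coordinates force $\beta_{L}=r$ (since $I_{L} \subseteq (H \cap I^{\perp})_{L}$) and $\beta_{D} \in s+I$, so writing $\beta_{D}=s+t$ with $t \in I$ the requirement $\beta \in H^{\perp}$ becomes the identity of functionals $\gamma \mapsto (t,\gamma_{D})$ and $\gamma \mapsto -(r+s,\gamma)$ on $H$. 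Both kill $H \cap I^{\perp}$ (the second because $(r+I_{L},s+I) \in \overline{H}^{\perp}$), hence factor through $H/(H \cap I^{\perp})$, and the key point is that $t \mapsto \big(\gamma \mapsto (t,\gamma_{D})\big)$ maps $I$ \emph{onto} $\operatorname{Hom}\big(H/(H \cap I^{\perp}),\mathbb{Q}/\mathbb{Z}\big)$: its kernel is $I \cap H_{D}^{\perp}=I \cap H^{\perp}$, so its image has order $|I|/|I \cap H^{\perp}|$, which equals $|H|/|H \cap I^{\perp}|=|H/(H \cap I^{\perp})|$ by Lemma~\ref{perfpair}, and a finite abelian group has the order of its dual. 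Hence the required $t \in I$ exists, which also explains why the map is only a surjection, with fibers of size $|I \cap H^{\perp}|$ matching the kernel of $p$ computed in Lemma~\ref{combarrow}.
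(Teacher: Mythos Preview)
Your proposal is correct and, for parts $(i)$--$(iii)$, follows essentially the same argument as the paper: pick a representative in $D_{L}\oplus I^{\perp}$, adjust by $H\cap I^{\perp}$ to land in $R_{I}$, and use the splitting-of-pairings trick from Lemma~\ref{Hbarhoriz} for the orthogonality claims.

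For part~$(iv)$ there is a mild difference worth noting. The paper observes that $H^{\perp}_{R_{I},I}$ is a set of representatives for $(I^{\perp}\cap H^{\perp})/(H\cap I^{\perp})$, identifies this quotient with the domain of the map $p$ from Lemma~\ref{combarrow}, and then simply invokes the surjectivity of $p$ already established there. You instead give a direct construction of a preimage: fixing $\beta_{L}=r$ and solving for $t\in I$ so that $\beta=(r,s+t)\in H^{\perp}$, using Lemma~\ref{perfpair} to see that the pairing $I\times H/(H\cap I^{\perp})\to\mathbb{Q}/\mathbb{Z}$ is perfect. Both arguments are correct; yours is more self-contained and makes the fiber size $|I\cap H^{\perp}|$ explicit, while the paper's is shorter since the relevant surjectivity was packaged into Lemma~\ref{combarrow}.
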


\begin{proof}
For part $(i)$, the assumption on the intersection allows us to write our element as $\tilde{\beta}+\tilde{\gamma}$ with $\tilde{\gamma} \in H$ and $\tilde{\beta} \in D_{L} \oplus I^{\perp}$. Any other possibility of expressing our element as such a sum is obtained by changing $\tilde{\beta}$ and $\tilde{\gamma}$ by the same element, which must be in $H$ but also in $I^{\perp}$, hence part $(i)$ follows from $R_{I}$ being a set of representatives modulo the isomorphic image $(H \cap I^{\perp})_{L}$ of the intersection $H \cap I^{\perp}$ in $D_{L}$. The second assertion in part $(ii)$ is immediate (as for part $(ii)$ of Lemma \ref{arrowhoriz}), and the first one follows directly from part $(i)$ (with the image being in $\Delta_{H}$ by the $H^{\perp}$ condition). We now recall from the proof of Lemma \ref{Hbarhoriz} that when $I$ and $H$ are isotropic, the defining group $(H \cap I^{\perp})_{L}$ for $R_{I}$ is perpendicular to $I_{L}$ (i.e., we have $(H \cap I^{\perp})_{L} \subseteq I_{L}^{\perp}$), yielding the first assertion in part $(iii)$. The fact that for $\beta \in H^{\perp}_{R_{I},I}$ and $\gamma \in I_{H}$ we have $(\beta,\gamma)=0$ (since $\beta \in H^{\perp}$ and $\gamma \in H$) and also $(\beta_{D},\gamma_{D})=0$ (because $\beta_{D} \in I^{\perp}$ and $\gamma_{D} \in I_{D} \subseteq I$) proves the second assertion there as well. Intersecting with $I_{L}^{\perp}$ thus gives a set of representatives for the quotient $I_{L}^{\perp}/(H \cap I^{\perp})_{L}$, and as this quotient is isomorphic to $D_{\Lambda}/\overline{H}_{\Lambda}$ by Lemma \ref{Hbarhoriz}, the first assertion in part $(iv)$ follows. Noting that $H^{\perp}_{R_{I},I}$ is contained in $I^{\perp} \cap H^{\perp}$, this set can also serve as a set of representatives for $(I^{\perp} \cap H^{\perp})/(H \cap I^{\perp})$ from the proof of Lemma \ref{combarrow}, which are all perpendicular to $I_{L} \oplus I$ and, in the quotient modulo the latter group, to $\overline{H}$. Thus projecting modulo this direct sum, as in the construction of the map $p$ from Lemma \ref{combarrow}, yields the set of $\alpha \in D_{\Lambda}\oplus\overline{D}$ that are perpendicular to $\overline{H}$ and satisfy $\alpha_{\Lambda}\in\overline{R}$, which were seen in Lemma \ref{arrowhoriz} to represent $\overline{\Delta}_{\overline{H}}$, as desired. This proves the lemma.
\end{proof}

\smallskip

We can now state our extension of Proposition 3.4 of \cite{[Ze3]}, in which we recall that the Grassmannian associated with $L$ is the same as the one arising from any over-lattice $\Lambda$ of $L$ (a fact that clearly holds also when $L$ and $\Lambda$ are replaced by an $\mathcal{O}$-lattice $M$ and an appropriate over-lattice, with Hermitian structures inducing the orthogonal ones, and the Grassmannian is the Hermitian one from Equation \eqref{GrassJ}), and that $\overline{I}$ stands for the group $[I+(H \cap I^{\perp})]/(H \cap I^{\perp})$, which is isomorphic to $I$ since $I \cap H=\{0\}$.
\begin{prop}
Assume that the smooth function $F:\mathcal{H}\to\mathbb{C}[D]$ from Proposition \ref{pairper} is $\uparrow_{I}G$ for a smooth function $G:\mathcal{H}\to\mathbb{C}[\overline{D}]$. Then the associated function $\Phi_{L,v}^{F,H}=\downarrow_{H}(\Theta_{L} \otimes F)$ equals $\downarrow_{H/(H \cap I^{\perp})}\uparrow_{\overline{I}}\Phi_{\Lambda,v}^{G,\overline{H}}$, where $\Phi_{\Lambda,v}^{G,\overline{H}}$ is similarly defined to be $\downarrow_{\overline{H}}(\Theta_{\Lambda} \otimes G)$ and the combination of the arrow operators is the one from Lemma \ref{combarrow}. \label{FuparrowG}
\end{prop}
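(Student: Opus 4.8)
The plan is to write out both sides of the claimed equality as explicit $\mathbb{C}[\Delta_{H}]$-valued series in the theta components $\theta_{L+\cdot}$ and the components $g_{\cdot}$ of $G$, using the expansion \eqref{downarrow} together with the combinatorial Lemmas \ref{combarrow} and \ref{repswithI}, and then to see that the two series agree coefficient by coefficient. The one external ingredient I would record first is the over-lattice relation $\Theta_{\Lambda}=\downarrow_{I_{L}}\Theta_{L}$: for $\delta\in D_{\Lambda}$ the coset $\Lambda+\delta$ is the union of the $|I_{L}|$ cosets $L+\gamma_{0}+i_{L}$ with $i_{L}\in I_{L}$, for any coset $L+\gamma_{0}\subseteq I_{L}^{\perp}$ lifting $\delta$, so $\theta^{\Lambda}_{\Lambda+\delta}=\sum_{i_{L}\in I_{L}}\theta_{L+\gamma_{0}+i_{L}}$, which is precisely the coefficient of $\mathfrak{e}_{\delta}$ in $\downarrow_{I_{L}}\Theta_{L}$; this is the vector-valued (and, word for word, Hermitian) form of the relation underlying Proposition 3.4 of \cite{[Ze3]}. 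I would also use that $F=\uparrow_{I}G$ means $f_{\delta}=g_{\delta+I}$ when $\delta\in I^{\perp}$ and $f_{\delta}=0$ otherwise, so that $\Phi_{L,v}^{F,H}$ is supported on the part of $\Delta_{H}$ represented by the set $H^{\perp}_{R_{I},I}$ of part $(ii)$ of Lemma \ref{repswithI}.

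Expanding the left-hand side through \eqref{downarrow} over $H^{\perp}_{R_{I},I}$ (where every $\beta$ has $\beta_{D}\in I^{\perp}$) gives
\[\Phi_{L,v}^{F,H}=\sum_{\beta\in H^{\perp}_{R_{I},I}}\Big[\sum_{\gamma\in H\cap I^{\perp}}\theta_{L+\beta_{L}+\gamma_{L}}\,g_{\beta_{D}+\gamma_{D}+I}\Big]\mathfrak{e}_{\overline{\beta}},\]
the restriction to $\gamma\in H\cap I^{\perp}$ coming from requiring $\beta_{D}+\gamma_{D}\in I^{\perp}$. For the right-hand side I would expand $\Phi_{\Lambda,v}^{G,\overline{H}}=\downarrow_{\overline{H}}(\Theta_{\Lambda}\otimes G)$ by \eqref{downarrow} over the representing set $\overline{R}$ of $D_{\Lambda}/\overline{H}_{\Lambda}$ from part $(iv)$ of Lemma \ref{repswithI}, substitute $\Theta_{\Lambda}=\downarrow_{I_{L}}\Theta_{L}$ to turn each $\theta^{\Lambda}_{\Lambda+\cdot}$ into an $I_{L}$-sum of $\theta_{L+\cdot}$'s, and finally apply $\downarrow_{H/(H\cap I^{\perp})}\uparrow_{\overline{I}}$ using the explicit formula of Lemma \ref{combarrow} together with the identification $p(\beta+H)=(\beta+I_{L}+I)+\overline{H}$ from part $(iv)$ of Lemma \ref{repswithI}. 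This presents the right-hand side, again indexed by $\beta\in H^{\perp}_{R_{I},I}$, with the coefficient of $\mathfrak{e}_{\overline{\beta}}$ equal to the double sum over $\overline{\gamma}\in\overline{H}=(H\cap I^{\perp})/I_{H}$ and $i_{L}\in I_{L}$ of $\theta_{L+\beta_{L}+\gamma_{L}+i_{L}}\,g_{\beta_{D}+\gamma_{D}+I}$, where $\gamma\in H\cap I^{\perp}$ is any lift of $\overline{\gamma}$ (the summand is independent of the lift since $\gamma_{D}$ then changes only by an element of $I_{D}\subseteq I$).

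Matching the two coefficients of $\mathfrak{e}_{\overline{\beta}}$ is now pure group theory: horizontality makes the projection $H\to H_{L}$ an isomorphism, hence so is $H\cap I^{\perp}\to(H\cap I^{\perp})_{L}$, and therefore $(H\cap I^{\perp})/I_{H}\cong(H\cap I^{\perp})_{L}/I_{L}$; under this isomorphism the double sum over $(\overline{\gamma},i_{L})$ on the right is exactly the single sum over $\gamma\in H\cap I^{\perp}$ on the left, since each coset $\gamma_{0}+I_{H}$ contributes the $|I_{L}|$ lifts $\gamma_{0,L}+i_{L}$, over all of which the $g$-factor is constant. Once the expansions are identified, nothing further is needed: $\Phi_{L,v}^{F,H}$ is a vector-valued Jacobi form of the stated weight, index and representation by Proposition \ref{pairper} and Theorem \ref{VVmain} applied to $F=\uparrow_{I}G$, and as in the rest of this section the Hermitian case, with an $\mathcal{O}$-lattice $M$ in place of $L$, goes through verbatim under the standing assumption that all the subgroups in play are $\mathcal{O}$-stable.

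I expect the genuine difficulty to be this final reconciliation of index sets --- keeping straight the representatives $R_{I}$, the summation over $H\cap I^{\perp}$, and the $I_{L}$-summation carried into the right-hand side by $\Theta_{\Lambda}=\downarrow_{I_{L}}\Theta_{L}$, and making sure no multiplicity is gained or lost --- rather than anything analytic. One can alternatively package the same content as the operator identity $\downarrow_{H}\circ\uparrow_{\{0\}\oplus I}=\bigl(\downarrow_{H/(H\cap I^{\perp})}\uparrow_{\overline{I}}\bigr)\circ\downarrow_{\overline{H}}\circ\downarrow_{I_{L}\oplus\{0\}}$ on $\mathbb{C}[D_{L}\oplus\overline{D}]$, using $\Theta_{L}\otimes\uparrow_{I}G=\uparrow_{\{0\}\oplus I}(\Theta_{L}\otimes G)$ and $\Theta_{\Lambda}\otimes G=\downarrow_{I_{L}\oplus\{0\}}(\Theta_{L}\otimes G)$; but since the bookkeeping is the same, I would carry out the computation directly on the theta expansions, where every relevant coset stays in view.
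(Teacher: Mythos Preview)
Your proposal is correct and follows essentially the same approach as the paper: both expand $\Phi_{L,v}^{F,H}$ via the representing set $H^{\perp}_{R_{I},I}$ of Lemma \ref{repswithI} to obtain Equation \eqref{Phiuparrow}, invoke the over-lattice relation $\Theta_{\Lambda}=\downarrow_{I_{L}}\Theta_{L}$ to collapse the $I_{H}$-orbits in the sum over $H\cap I^{\perp}$ into a sum over $\overline{H}$, and then use part $(iv)$ of Lemma \ref{repswithI} together with the formula from Lemma \ref{combarrow} to identify the result with $\downarrow_{H/(H\cap I^{\perp})}\uparrow_{\overline{I}}\Phi_{\Lambda,v}^{G,\overline{H}}$. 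The only cosmetic difference is that the paper transforms the left-hand side directly into the right-hand side, whereas you expand both sides separately and match coefficients; the bookkeeping and the ingredients are identical.
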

As the formulation and result of Proposition \ref{FuparrowG} may be a bit complicated in general, we present four simpler and clearer cases in Corollary \ref{spcases} below, from which the general case can, in fact, be constructed via Remark \ref{compoarrow} below.

\begin{proof}
Recall from the proof of Proposition \ref{pairper} that by taking any set of representatives $\tilde{R}$ inside $H^{\perp} \subseteq D_{L} \oplus D$ for $\Delta_{H}$, the parts of Lemma \ref{arrowhoriz} express $\Phi_{L,v}^{F,H}$ as $\sum_{\beta\in\tilde{R}}\big[\sum_{\gamma \in H}\theta_{\beta_{M}+\gamma_{L}}f_{\beta_{D}+\gamma_{D}}\big]\mathfrak{e}_{\overline{\beta}}$, with $\overline{\beta}=\beta+H$ for every $\beta\in\tilde{R}$ (Equation \eqref{downarrow} does this with the set $H^{\perp}_{R}$, but we will soon need a different one). The assumption that $F=\uparrow_{I}G$ means, via the definition in Equation \eqref{arrowdef}, that $f_{\beta_{D}+\gamma_{D}}$ equals $g_{\beta_{D}+\gamma_{D}+I}$ when $\beta_{D}+\gamma_{D} \in I^{\perp}$, and 0 otherwise. Therefore the coefficient $\mathfrak{e}_{\overline{\beta}}$ in $\Phi_{L,v}^{F,H}$ may be non-zero only if the coset $\beta+H$ intersects $D_{L} \oplus I^{\perp}$, so that by Lemma \ref{repswithI} we can take, for the corresponding subset of $\Delta_{H}$, the set $H^{\perp}_{R_{I},I}$ as the associated part of our $\tilde{R}$, where $R_{I} \subseteq D_{L}$ is any fixed set of representatives for $D_{L}/(H \cap I^{\perp})_{L}$. Moreover, as for $\beta \in H^{\perp}_{R_{I},I}$ and $\gamma \in H$ we have $\beta_{D}+\gamma_{D} \in I^{\perp}$ precisely when $\gamma \in H \cap I^{\perp}$, the formula that we get is
\begin{equation}
\Phi_{L,v}^{F,H}=\textstyle{\sum_{\beta \in H^{\perp}_{R_{I},I}}\big[\sum_{\gamma \in H \cap I^{\perp}}\theta_{\beta_{L}+\gamma_{L}}g_{\beta_{D}+\gamma_{D}+I}\big]\mathfrak{e}_{\beta+H}}. \label{Phiuparrow}
\end{equation}

Now, part $(iii)$ of Lemma \ref{repswithI} shows that if $\beta \in H^{\perp}_{R_{I},I}$ then $\beta_{L} \in I_{L}^{\perp}$, and recalling from the proof of Lemma \ref{Hbarhoriz} that $\gamma_{L}$ is also in $I_{L}^{\perp}$ for any $\gamma \in H \cap I^{\perp}$, we deduce that Equation \eqref{Phiuparrow} contains only theta functions with indices from $I_{L}^{\perp}$. Moreover, changing $\gamma$ by an element of $I_{H} \subseteq H \cap I^{\perp}$ leaves the index $\beta_{D}+\gamma_{D}+I$ of $g$ invariant, and since $\Theta_{\Lambda}$ equals $\downarrow_{I_{L}}\Theta_{L}$ (see, e.g., Lemma 1.5 of \cite{[Ze5]}), the sum of the resulting theta functions produces the corresponding component of $\Theta_{\Lambda}$. We can thus take the internal sum to be over $\overline{H}$, and part $(iv)$ of Lemma \ref{repswithI} shows that if we write $p(\beta+H)\in\overline{\Delta}_{\overline{H}}$, for $\beta \in H^{\perp}_{R_{I},I}$, as $\alpha+\overline{H}$ for $\alpha\in\overline{H}^{\perp}_{\overline{R}}$ (which decomposes as $\alpha_{\Lambda}+\alpha_{\overline{D}}$), then the multiplier of $\mathfrak{e}_{\overline{\beta}}$ in Equation \eqref{Phiuparrow} is $\sum_{\gamma\in\overline{H}}\theta_{\alpha_{\Lambda}+\gamma_{\Lambda}}g_{\alpha_{\overline{D}}+\gamma_{\overline{D}}}$. But as this is the coefficient of $\mathfrak{e}_{\alpha+\overline{H}}$ in Equation \eqref{downarrow} for $\Phi_{\Lambda,v}^{G,\overline{H}}$, and Lemma \ref{combarrow} combines with part $(iv)$ of Lemma \ref{repswithI} to show that $\downarrow_{H/(H \cap I^{\perp})}\uparrow_{\overline{I}}\mathfrak{e}_{\alpha+\overline{H}}$ equals $\sum_{\beta \in H^{\perp}_{R_{I},I},\ p(\beta+H)=\alpha+\overline{H}}\mathfrak{e}_{\beta+H}$, the sum from Equation \eqref{Phiuparrow} indeed equals $\downarrow_{H/(H \cap I^{\perp})}\uparrow_{\overline{I}}\Phi_{\Lambda,v}^{G,\overline{H}}$, as desired. This proves the proposition.
\end{proof}
Recall from Theorem \ref{VVorth} that $F$ is a modular form if and only if $\Phi_{L,v}^{F,H}$ is a Jacobi form as in Definition \ref{Jacdef}, and similarly $G$ is a modular form if and only if $\Phi_{\Lambda,v}^{G,\overline{H}}$ is a Jacobi form. This corresponds, via Proposition \ref{FuparrowG}, to the fact that the arrow operators take modular forms to modular forms and Jacobi forms to Jacobi forms.

\begin{rmk}
Note that the dimension of the space $\mathbb{C}[\Delta_{H}]$ associated with $\Phi_{L,v}^{F,H}$ is $|D_{L}|\cdot|D|/|H|^{2}$, while that of $\mathbb{C}[\overline{\Delta}_{\overline{H}}]$ and $\Phi_{\Lambda,v}^{G,\overline{H}}$ is $|D_{\Lambda}|\cdot|\overline{D}|/|\overline{H}|^{2}$. Moreover, we have $|D_{\Lambda}|=|D_{L}|/|I_{L}|^{2}$, $|\overline{D}|=|D|/|I|^{2}$, and $|\overline{H}|=|H \cap I^{\perp}|/|I_{H}|$, with $|I_{H}|=|I_{L}|$. As applying $\uparrow_{\overline{I}}$ to $\mathbb{C}[\overline{\Delta}_{\overline{H}}]$ multiplies the dimension of the space by $|\overline{I}|^{2}=|I|^{2}$, and $\downarrow_{H/(H \cap I^{\perp})}$ divides the dimension by $|H|^{2}/|H \cap I^{\perp}|^{2}$, the dimensions indeed match. In addition, the scalar-valued components of $\Phi_{L,v}^{F,H}$ satisfy the periodicity relations from Proposition 1.1 of \cite{[Ze3]} for every $\sigma$ and $\nu$ in the larger lattice $\Lambda$ (as so do those of $\Phi_{\Lambda,v}^{G,\overline{H}}$), and not only $L$, and the more general periodicity property holds for $\sigma$ and $\nu$ in $\overline{H}_{\Lambda}^{\perp} \subseteq D_{\Lambda}$. As $|\overline{H}_{\Lambda}|=|\overline{H}|$, its size is $|D_{\Lambda}|/|\overline{H}|$, which is the size $|D_{L}|/|H|$ of $H_{L}^{\perp} \subseteq D_{L}$ multiplied by the quotient $|H|/|H \cap I^{\perp}|\cdot|I_{H}|$, which also equals $|I|/|I \cap H^{\perp}|\cdot|I_{H}|$ by Lemma \ref{perfpair}. Comparing with the size of the image of $H_{L}^{\perp}$ in $\overline{H}_{\Lambda}^{\perp}$ given in Remark \ref{sizeinperp}, we deduce that this subgroup has index $|I|\cdot|I_{D} \cap H^{\perp}|/|I \cap H^{\perp}|\cdot|I_{H}|$ in $\overline{H}_{\Lambda}^{\perp}$. For more explicit descriptions in some special cases, see Remark \ref{propspcs} below. \label{dimreps}
\end{rmk}

\smallskip

We now illustrate Proposition \ref{FuparrowG} by presenting some interesting special cases, which will also be useful later. We recall that $I \subseteq D$ intersects the horizontal subgroup $H$ trivially, so in case $I$ and $H$ are perpendicular to one another, part $(i)$ of Corollary \ref{injop} produces a subgroup $I_{\Delta}\subseteq\Delta_{H}$ that is an isomorphic image of $I$. In case $I$ intersects $H^{\perp}$ trivially, we denote the embedding from part $(ii)$ of that corollary by $\iota_{\overline{\Delta}}:\overline{\Delta}_{\overline{H}}\to\Delta_{H}$, using again the same notation for the resulting map from $\mathbb{C}[\overline{\Delta}_{\overline{H}}]$ to $\mathbb{C}[\Delta_{H}]$.
\begin{cor}
Assume that $F=\uparrow_{I}G$ as in Proposition \ref{FuparrowG}.
\begin{enumerate}[$(i)$]
\item If $I \subseteq H_{D} \cap H_{D}^{\perp}$ then $\overline{H}$ equals $H/I_{H} \subseteq D_{\Lambda}\oplus\overline{D}$ with $I_{H} \cong I$, and we have $\Phi_{L,v}^{F,H}=\uparrow_{I_{\Delta}}\Phi_{\Lambda,v}^{G,H/I_{H}}$.
\item When $I \subseteq H_{D}^{\perp}$ but $I \cap H_{D}=\{0\}$, the group $H$ has an isomorphic image in $D_{L}\oplus\overline{D}$, and we get $\Phi_{L,v}^{F,H}=\uparrow_{I_{\Delta}}\Phi_{L,v}^{G,H}$.
\item In case $I \subseteq H_{D}$ and $I \cap H_{D}^{\perp}=\{0\}$, the subgroup $I_{H}$ is again isomorphic to $I$, and we obtain $\Phi_{L,v}^{F,H}=\iota_{\overline{\Delta}}\Phi_{\Lambda,v}^{G,\overline{H}}$.
\item Assuming that $I \cap H_{D}=I \cap H_{D}^{\perp}=\{0\}$, there is an isomorphic image of $H \cap I^{\perp}$ inside $D_{L}\oplus\overline{D}$, which produces the equality $\Phi_{L,v}^{F,H}=\iota_{\overline{\Delta}}\Phi_{L,v}^{G,H \cap I^{\perp}}$.
\end{enumerate} \label{spcases}
\end{cor}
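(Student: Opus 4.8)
The plan is to obtain all four assertions by specializing Proposition \ref{FuparrowG}: in each of the four cases I will identify the groups $\Lambda$, $\overline{D}$, $\overline{H}$ explicitly and, more importantly, recognize the composite arrow operator $\downarrow_{H/(H \cap I^{\perp})}\uparrow_{\overline{I}}$ from Lemma \ref{combarrow} via Corollary \ref{injop}. The one elementary observation that drives everything is that, since $I$ sits inside $D$, an element $\delta \in I$ pairs trivially with $\gamma=\gamma_{L}+\gamma_{D}\in H$ precisely when $(\gamma_{D},\delta)=0$; consequently $I \subseteq H^{\perp}$ inside $D_{L} \oplus D$ is equivalent to $I \subseteq H_{D}^{\perp}$, and $I \cap H^{\perp}=\{0\}$ is equivalent to $I \cap H_{D}^{\perp}=\{0\}$. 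Likewise $I_{D}=I \cap H_{D}$, so that $I \subseteq H_{D}$ forces $I_{H} \cong I_{L} \cong I$, whereas $I \cap H_{D}=\{0\}$ forces $I_{H}=I_{L}=\{0\}$ and hence $\Lambda=L$ and $D_{\Lambda}=D_{L}$.

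Granting these translations, case $(i)$ has $I \subseteq H_{D} \cap H_{D}^{\perp}$; then $I \subseteq H^{\perp}$ gives $H \cap I^{\perp}=H$, so $\overline{H}=H/I_{H}$ with $I_{H} \cong I$, the factor $\downarrow_{H/(H \cap I^{\perp})}$ is trivial, and $\overline{I}=(I+H)/H$ is exactly the isotropic subgroup $I_{\Delta}\subseteq\Delta_{H}$ of part $(i)$ of Corollary \ref{injop}, so the composite reduces to $\uparrow_{I_{\Delta}}$. Case $(ii)$ again has $I \subseteq H^{\perp}$, so $H \cap I^{\perp}=H$ and the downarrow is trivial, but now $I_{H}=\{0\}$, whence $\Lambda=L$ and $\overline{H}$ is the isomorphic copy of $H$ inside $D_{L} \oplus \overline{D}$ (with projections $H_{L}$ and $H_{D}$ read off from Lemma \ref{Hbarhoriz}), and once more $\overline{I}=I_{\Delta}$. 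In cases $(iii)$ and $(iv)$ one has $I \cap H_{D}^{\perp}=\{0\}$, i.e. $I \cap H^{\perp}=\{0\}$, so part $(ii)$ of Corollary \ref{injop} identifies $\downarrow_{H/(H \cap I^{\perp})}\uparrow_{\overline{I}}$ with the inclusion $\iota_{\overline{\Delta}}$; in case $(iii)$, additionally $I \subseteq H_{D}$, so $I_{H} \cong I$ and $\Lambda/L=I_{L}$, yielding $\Phi_{L,v}^{F,H}=\iota_{\overline{\Delta}}\Phi_{\Lambda,v}^{G,\overline{H}}$ verbatim, while in case $(iv)$, $I \cap H_{D}=\{0\}$ gives $I_{H}=\{0\}$, $\Lambda=L$, and $\overline{H}=H \cap I^{\perp}$, identified inside $D_{L} \oplus \overline{D}$.

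Substituting these reductions into Proposition \ref{FuparrowG}, and using $\Phi_{\Lambda,v}^{G,\overline{H}}=\Phi_{L,v}^{G,H}$ in case $(ii)$, $\Phi_{\Lambda,v}^{G,\overline{H}}=\Phi_{\Lambda,v}^{G,H/I_{H}}$ in case $(i)$, and $\Phi_{\Lambda,v}^{G,\overline{H}}=\Phi_{L,v}^{G,H \cap I^{\perp}}$ in case $(iv)$, gives the stated formulas. The only real difficulty is the bookkeeping: one must check that the subgroup $I_{\Delta}$ and the map $\iota_{\overline{\Delta}}$ named in the corollary are the ones produced by Corollary \ref{injop}, and that the discriminant form $\overline{B}$ appearing in Lemma \ref{combarrow} is indeed the quotient $\overline{\Delta}_{\overline{H}}$ computed from $\overline{H}$ via Lemma \ref{Hbarhoriz}. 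But this compatibility is already forced by the fact that $\Phi_{\Lambda,v}^{G,\overline{H}}$ is $\mathbb{C}[\overline{\Delta}_{\overline{H}}]$-valued while Proposition \ref{FuparrowG} matches its target with $\mathbb{C}[\Delta_{H}]$, so nothing beyond tracing the definitions in Lemmas \ref{horizdecom}--\ref{combarrow} and Definition \ref{subgps} is required; no analytic input enters at all.
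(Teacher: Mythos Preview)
Your proof is correct and follows essentially the same approach as the paper's: in each case you identify $\Lambda$, $\overline{D}$, and $\overline{H}$ from the hypotheses on $I$, then invoke the appropriate part of Corollary \ref{injop} to reduce the composite $\downarrow_{H/(H \cap I^{\perp})}\uparrow_{\overline{I}}$ in Proposition \ref{FuparrowG} to $\uparrow_{I_{\Delta}}$ or $\iota_{\overline{\Delta}}$. Your explicit translation between conditions on $I$ relative to $H_{D}$, $H_{D}^{\perp}$ and the abstract conditions $I \subseteq H^{\perp}$, $I \cap H^{\perp}=\{0\}$ of Corollary \ref{injop} (using that $I \subseteq D$) is exactly the mechanism the paper uses, only stated a bit more systematically up front.
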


\begin{proof}
In case $(i)$ we have $I=I_{D}$ and $H \subseteq I^{\perp}$, so that $H \cap I^{\perp}=H$ and $\overline{H}=H/I_{H}$. Part $(i)$ of Corollary \ref{injop} shows that $\Delta_{H}$ contains the subgroup $I_{\Delta} \cong I$, which produces the subquotient $\overline{\Delta}_{\overline{H}}$, and the $\uparrow$ operator is trivial, so that the formula from Proposition \ref{FuparrowG} takes the asserted form. For case $(ii)$, note that $I_{D}=I_{H}=I_{L}=\{0\}$ (so that in particular $\Lambda=L$) and again $H \cap I^{\perp}=H$, implying that $\overline{H}$ is an isomorphic copy of $H$ inside $D_{L}\oplus\overline{D}$. The result thus follows from Proposition \ref{FuparrowG} via part $(i)$ of Corollary \ref{injop} again. Moving to case $(iii)$, we again have $I=I_{D}$ but the index of $H \cap I^{\perp}$ in $H$ is $|I|$ (because $I$ injects into $\operatorname{Hom}(H,\mathbb{Q}/\mathbb{Z})$, and $\operatorname{Hom}(H \cap I^{\perp},\mathbb{Q}/\mathbb{Z})$ is the resulting quotient), as and part $(ii)$ of Corollary \ref{injop} shows that the combination of operators from Lemma \ref{combarrow} reduces to the injection $\iota_{\overline{\Delta}}$, we deduce the desired formula from Proposition \ref{FuparrowG} once again. Finally, the situation from case $(iv)$ implies that $I_{D}=I_{H}=I_{L}=\{0\}$ again and $H \cap I^{\perp}$, which again has index $|I|$ in $H$, has $\overline{H}\subseteq D_{L}\oplus\overline{D}$ as its isomorphic image. Using part $(ii)$ of Corollary \ref{injop} once more and Proposition \ref{FuparrowG} yet again produces the required result. This proves the corollary.
\end{proof}

\begin{rmk}
The cardinalities $|D_{\Lambda}|$, $|\overline{D}|$, and $|\overline{H}|^{2}$ are obtained, in case $(i)$ of Corollary \ref{spcases}, from $|D_{L}|$, $|D|$, and $|H|^{2}$ by division by $|I|^{2}$, and $\uparrow_{I_{\Delta}}$ multiplies the dimension by $|I|^{2}$, exemplifying the matching from Remark \ref{dimreps}. The group $\overline{H}_{\Lambda}^{\perp}$ with respect to which the vector-valued periodicity holds in this case is of size $|H_{L}^{\perp}|/|I|$, as is the image of $H_{L}^{\perp}$ given in Remark \ref{sizeinperp}, and indeed the index in the former remark is 1. In case $(ii)$ we have the same group $D_{L}$ as well as $H$ (up to isomorphism), again with $|\overline{D}|=|D|/|I|^{2}$ and the dimension extension by $|I|^{2}$ via $\uparrow_{I_{\Delta}}$. The periodicity properties are the same, as so are $L$ and $H$, and indeed both $\overline{H}_{\Lambda}^{\perp}$ and the image of $H_{L}^{\perp}$ in it via Remark \ref{sizeinperp} have the same size as $H_{L}^{\perp}$ itself (with the index from Remark \ref{dimreps} again being 1). Turning to case $(iii)$, here division by $|I|^{2}$ produces $|D_{\Lambda}|$, $|\overline{D}|$, and $|\overline{H}|$ from $|D_{L}|$, $|D|$, and $|H|$, the sizes of $\Delta_{H}$ and $\overline{\Delta}_{\overline{H}}$ are the same, and indeed $\iota_{\overline{\Delta}}$ is an isomorphism. Also here $\overline{H}_{\Lambda}^{\perp}$ and the subgroup arising from $H_{L}^{\perp}$ are of size $|H_{L}^{\perp}|$, and the index is 1 yet again. Finally, case $(iv)$ yields $D_{L}$ again, with $|\overline{D}|$, and $|\overline{H}|^{2}$ being $|D|$ and $|H|$ divided by $|I|^{2}$ respectively, and again the relation uses the isomorphism $\iota_{\overline{\Delta}}$. Here $H_{L}^{\perp}$ injects into $\overline{H}_{\Lambda}^{\perp}=(H \cap I^{\perp})_{L}$, but the latter contains the former with the index $|I|$ of $H \cap I^{\perp}$ in $H$, which is indeed the index showing up in Remark \ref{dimreps} in this case. \label{propspcs}
\end{rmk}
The size comparisons in cases $(iii)$ and $(iv)$ of Corollary \ref{spcases}, as given in Remark \ref{propspcs}, also indicate that in this case $\iota_{\overline{\Delta}}$ is an isomorphism between $\overline{\Delta}_{\overline{H}}$ and $\Delta_{H}$, with the subgroup $H_{L}^{\perp}$ being mapped onto $\overline{H}_{\Lambda}^{\perp}$ in the former case (and onto a subgroup of index $|I|$ in it in the latter). The scalar-valued case from Proposition 3.4 of \cite{[Ze3]}, which is obtained when $D=D_{\overline{L}}$, $\overline{D}=D_{\overline{\Lambda}}$, and $H$ and $\overline{H}$ are the associated diagonal subgroups, is contained in case $(iii)$ of Corollary \ref{spcases}, including the trivial groups $H_{L}^{\perp}$ and $\overline{H}_{\Lambda}^{\perp}$ which are matched by Remark \ref{propspcs} in this case.

\begin{rmk}
If $J$ is a subgroup of $I$, then the operator $\uparrow_{I}$, taking $G$ to $F$ in Proposition \ref{FuparrowG}, decomposes as the composition $\uparrow_{J}\uparrow_{I/J}$, through the intermediate subquotient between $D$ and $\overline{D}$. The formula from that proposition thus decomposes correspondingly, a fact that can be checked directly but is an immediate consequence of the proposition itself via the uniqueness of theta expansions observed above. This can be used in order to construct the general case in Proposition \ref{FuparrowG} from the special cases given in Corollary \ref{spcases} (with the details about $H$ being as described in Remark \ref{propspcs} in each stage). \label{compoarrow}
\end{rmk}

\smallskip

The function $\Phi_{L,v}^{F,H}$ that is obtained from $\Phi_{\Lambda,v}^{G,\overline{H}}$ via the arrow operators as in Proposition \ref{FuparrowG} inherits periodicity properties that are sometimes finer than those given in Proposition \ref{pairper} (or Equation \eqref{perorth} in the orthogonal case). Indeed, in cases $(i)$ and $(iii)$ of Corollary \ref{spcases} the scalar-valued components have the stronger periodicity property concerning $\sigma$ and $\nu$ from $\Lambda$ (rather than just $L$), while case $(iv)$ there introduces a periodicity property with respect to $(H \cap I^{\perp})_{L}^{\perp}$, which can be larger than $H_{L}^{\perp}$ (as Remark \ref{propspcs} shows). We now show that these enhanced periodicity properties characterize functions that arise in this way. This complements Theorem \ref{VVorth} (as well as Theorem \ref{VVmain} in the Hermitian case), but note that here we have the image of the connecting operator as well.

We begin with the scalar-valued periodicity.
\begin{thm}
Assume that the scalar-valued components of $\Phi_{L,v}^{F,H}$ satisfy the periodicity property for elements of $\Lambda$, where $\Lambda$ is an over-lattice of $L$ such that the isotropic subgroup $I_{L}:=\Lambda/L \subseteq D_{L}$ is contained in $H_{L}$, and let $I=I_{D}$ be the corresponding subgroup of $H_{D} \subseteq D$, with $I_{H} \subseteq H$ as usual. Then $F=\uparrow_{I}G$ for a unique function $G:\mathcal{H}\to\mathbb{C}[\overline{D}]$, where $\overline{D}=I^{\perp}/I$ for $I^{\perp} \subseteq D$, and $\Phi_{L,v}^{F,H}$ is obtained from $\Phi_{\Lambda,v}^{G,\overline{H}}$ as in Proposition \ref{FuparrowG}, where $\overline{H}=(H \cap I^{\perp})/I_{H}$ as in Definition \ref{subgps}. \label{peruparrow}
\end{thm}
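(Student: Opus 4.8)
The plan is to translate the strengthened periodicity hypothesis on $\Phi:=\Phi_{L,v}^{F,H}$ into two explicit constraints on the components $f_{\delta}$ of $F$ — that $f_{\delta}=0$ unless $\delta\in I^{\perp}$, and that $f_{\delta}$ depends only on $\delta+I$ — and then to recognize these constraints as saying exactly that $F=\uparrow_{I}G$, after which Proposition \ref{FuparrowG} supplies the asserted factorization through $\Phi_{\Lambda,v}^{G,\overline{H}}$ for free. Uniqueness of $G$ will follow from injectivity of $\uparrow_{I}$.

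First I would fix a set $R\subseteq D_{L}$ of representatives for $D_{L}/H_{L}$ and recall from the proof of Proposition \ref{pairper} (Equation \eqref{downarrow} in the orthogonal guise) the expansion
\[\Phi(\tau,\zeta)=\sum_{\beta\in H^{\perp}_{R}}\Big[\sum_{\gamma\in H}\theta_{\beta_{L}+\gamma_{L}}(\tau,\zeta;v)f_{\beta_{D}+\gamma_{D}}(\tau)\Big]\mathfrak{e}_{\overline{\beta}}.\]
Two simple facts will be used throughout. By horizontality the projection $\gamma\mapsto\gamma_{L}$ is injective on $H$, so inside a fixed $\mathfrak{e}_{\overline{\beta}}$-component the theta functions $\theta_{\beta_{L}+\gamma_{L}}$ carry pairwise distinct indices in $D_{L}$ and are hence linearly independent over functions of $\tau$ (uniqueness of theta expansions). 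And every $\delta\in D$ occurs as $\beta_{D}+\gamma_{D}$ for suitable $\beta\in H^{\perp}_{R}$ and $\gamma\in H$: by non-degeneracy of $D_{L}$ one can find $\alpha_{L}\in D_{L}$ with $\alpha_{L}+\delta\in H^{\perp}$, and then part $(i)$ of Lemma \ref{arrowhoriz} decomposes $\alpha_{L}+\delta$ in the required form, so that $\delta=\beta_{D}+\gamma_{D}$.

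Now I would feed in the hypothesis. Applying the assumed $\Lambda$-periodicity of each scalar component with $\sigma=0$ and $\nu\in\Lambda$, and comparing (via the $\sigma=0$ case of Proposition \ref{perTheta}) with the effect of the translation $\zeta\mapsto\zeta+\nu$ on each $\theta_{\beta_{L}+\gamma_{L}}$, which is multiplication by $\mathbf{e}\big((\nu,\beta_{L}+\gamma_{L})\big)$, linear independence forces $\mathbf{e}\big((\nu,\beta_{L}+\gamma_{L})\big)f_{\beta_{D}+\gamma_{D}}=f_{\beta_{D}+\gamma_{D}}$ for all such $\nu$; since $\nu+L$ runs over $I_{L}=\Lambda/L$, this yields $f_{\beta_{D}+\gamma_{D}}=0$ unless $\beta_{L}+\gamma_{L}\in I_{L}^{\perp}$. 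But $\beta+\gamma\in H^{\perp}$ and $H$ is isotropic, so part $(ii)$ of Lemma \ref{horizdecom}, applied to the subgroup $I_{H}\subseteq H$ whose projections to $D_{L}$ and to $D$ are $I_{L}$ and $I=\iota(I_{L})$, gives $(\beta_{L}+\gamma_{L},x)=-(\beta_{D}+\gamma_{D},\iota(x))$ for every $x\in I_{L}$, whence $\beta_{L}+\gamma_{L}\in I_{L}^{\perp}$ if and only if $\beta_{D}+\gamma_{D}\in I^{\perp}$. Running over all $\delta$ we get $f_{\delta}=0$ for $\delta\notin I^{\perp}$. Next, taking the $\Lambda$-periodicity with $\nu=0$ and $\sigma\in\Lambda$: by Proposition \ref{perTheta} the translation sends $\theta_{\beta_{L}+\gamma_{L}}$ to $\mathbf{e}(\cdots)\theta_{\beta_{L}+\gamma_{L}+(\sigma+L)}$, and since $\sigma+L\in I_{L}\subseteq H_{L}$ there is a unique $\gamma_{\sigma}\in H$ with $(\gamma_{\sigma})_{L}=\sigma+L$ (and then $(\gamma_{\sigma})_{D}=\iota(\sigma+L)\in I$); after cancelling the common exponent, reindexing $\gamma\mapsto\gamma+\gamma_{\sigma}$ and matching coefficients of $\theta_{\beta_{L}+\gamma_{L}}$ gives $f_{\delta-(\gamma_{\sigma})_{D}}=f_{\delta}$ for every $\delta\in D$, and $(\gamma_{\sigma})_{D}$ runs over all of $I$ as $\sigma$ runs over $\Lambda$. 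Hence $f_{\delta}$ is invariant under translation by $I$.

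These two constraints say precisely that $F=\uparrow_{I}G$ for the unique smooth function $G:\mathcal{H}\to\mathbb{C}[\overline{D}]$, with $\overline{D}=I^{\perp}/I$, whose $(\delta+I)$-component equals $f_{\delta}$ for $\delta\in I^{\perp}$ — well defined by the $I$-invariance, smooth because $F$ is, and unique because $\uparrow_{I}$ is injective. With $F$ exhibited in this form, Proposition \ref{FuparrowG} applies verbatim and delivers $\Phi_{L,v}^{F,H}=\downarrow_{H/(H\cap I^{\perp})}\uparrow_{\overline{I}}\Phi_{\Lambda,v}^{G,\overline{H}}$ with $\overline{H}=(H\cap I^{\perp})/I_{H}$ as in Definition \ref{subgps}, which is exactly the assertion (the Hermitian variant for an $\mathcal{O}$-lattice $M$ is identical, all the groups occurring being $\mathcal{O}$-stable by hypothesis). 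The step I expect to require the most care is the equivalence $\beta_{L}+\gamma_{L}\in I_{L}^{\perp}\iff\beta_{D}+\gamma_{D}\in I^{\perp}$, i.e.\ keeping the three incarnations $I_{L}$, $I_{H}$, $I$ of one group straight and using the defining orthogonality of $H^{\perp}$ together with the isotropy of $H$; the remainder is the by-now-routine game of equating two theta expansions and invoking their uniqueness.
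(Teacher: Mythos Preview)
Your proposal is correct and follows essentially the same route as the paper: use the $\nu$-periodicity together with Proposition \ref{perTheta} and uniqueness of theta expansions to force $f_{\delta}=0$ for $\delta\notin I^{\perp}$ (via the equivalence $\beta_{L}+\gamma_{L}\in I_{L}^{\perp}\iff\beta_{D}+\gamma_{D}\in I^{\perp}$, which the paper attributes to the proof of Lemma \ref{Hbarhoriz} and you extract from Lemma \ref{horizdecom}$(ii)$), then use the $\sigma$-periodicity and the reindexing $\gamma\mapsto\gamma+\gamma_{\sigma}$ to obtain $I$-invariance of $f_{\delta}$, and conclude by invoking Proposition \ref{FuparrowG}. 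The only cosmetic difference is that the paper switches to the representatives $H^{\perp}_{R_{I},I}$ of Lemma \ref{repswithI} midway, whereas you stay with $H^{\perp}_{R}$ and supply the explicit surjectivity argument showing every $\delta\in D$ arises as some $\beta_{D}+\gamma_{D}$; both accomplish the same thing.
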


\begin{proof}
As adding $\nu\in\Lambda \subseteq L^{*}$ to the argument of $\Theta_{L}$ multiplies the component associated with some element $\beta_{L}+\gamma_{L} \in D_{L}$ by $\mathbf{e}\big((\nu+L,\beta_{L}+\gamma_{L})\big)$, the fact that $\Phi_{L,v}^{F,H}$ remains invariant under this operation implies that in its expansion as in Equation \eqref{downarrow} (or its orthogonal analogue), only terms $\theta_{\beta_{L}+\gamma_{L}}$ in which $\beta_{L}+\gamma_{L} \in I_{L}^{\perp}$ may show up. The same argument from the proof of Lemma \ref{Hbarhoriz} shows that if $\beta+\gamma \in H^{\perp} \subseteq I_{H}^{\perp}$ and $\beta_{L}+\gamma_{L} \in I_{L}^{\perp}$ then $\beta_{D}+\gamma_{D} \in I^{\perp}$, meaning first that $f_{\beta_{D}+\gamma_{D}}=0$ unless $\beta_{D}+\gamma_{D} \in I^{\perp}$, and second that we may present $\Phi_{L,v}^{F,H}$ using the set of representatives $H^{\perp}_{R_{I},I}$ from Lemma \ref{repswithI}, and with $\gamma$ taken only from $H \cap I^{\perp}$.

For the periodicity property associated with $\sigma\in\Lambda$, recall that its effect, except for the usual exponent, simply takes each theta function $\theta_{\beta_{L}+\gamma_{L}}$ to $\theta_{\beta_{L}+\gamma_{L}+(\sigma+L)}$, and note that in our situation, $\sigma+L \in I_{L}$ is contained in $(H \cap I^{\perp})_{L}=H_{L} \cap I_{L}^{\perp}$. When comparing it with the expression resulting from the assumption that the scalar-valued components of $\Phi_{L,v}^{F,H}$ satisfy this periodicity property, we deduce that for every $\beta \in H^{\perp}_{R_{I},I}$, the equality \[\textstyle{\sum_{\gamma \in H \cap I^{\perp}}\theta_{\beta_{L}+\gamma_{L}+(\sigma+L)}f_{\beta_{D}+\gamma_{D}}=\sum_{\gamma \in H \cap I^{\perp}}\theta_{\beta_{L}+\gamma_{L}}f_{\beta_{D}+\gamma_{D}}}\] must hold. Replacing $\gamma$ on the left hand side by $\gamma-\sigma_{H}$, where $\sigma_{H} \in I_{H} \subseteq H \cap I^{\perp}$ is the element with $(\sigma_{H})_{L}=\sigma+L$, we obtain the same theta functions on both sides, and the uniqueness of theta expansions implies that changing the index of $f$ by $(\sigma_{H})_{D}$, namely by an arbitrary element of $I$, does not change the function. We may therefore write $f_{\beta_{D}+\gamma_{D}}$, for $\beta_{D}+\gamma_{D} \in I^{\perp}$, as $g_{\beta_{D}+\gamma_{D}+I}$ for some function $G:\mathcal{H}\to\mathbb{C}[\overline{D}]$ (which is clearly unique), and obtain that $F=\uparrow_{I}G$ as desired. Our expression for $\Phi_{L,v}^{F,H}$ also becomes the one from Equation \eqref{Phiuparrow} in the proof of Proposition \ref{FuparrowG} in this case, so that $\Phi_{L,v}^{F,H}$ is indeed the image of $\Phi_{\Lambda,v}^{G,\overline{H}}$, with this $\overline{H}$, under the corresponding operator. This proves the theorem.
\end{proof}
Note that the proof of Theorem \ref{peruparrow} did not require any assumption on whether $I$ is contained in $H_{D}^{\perp}$ intersects it trivially, so that it corresponds to any mixture of cases $(i)$ and $(iii)$ in Corollary \ref{spcases}, combined via Remark \ref{compoarrow} to yield the general combination (with indeed the groups $\overline{H}$ and $\overline{H}_{\Lambda}^{\perp}$ being the ones constructed from $H$ and $H_{L}^{\perp}$, as Remark \ref{propspcs} shows).

\smallskip

We now leave the lattice $L$ and the scalar-valued periodicity properties as in Definition \ref{orthVV}, but increase the validity of Equation \eqref{perorth} to a larger subgroup. We recall that this equation is based on $\Delta_{H}$ containing the isomorphic copy $\{\delta_{L}+H|\delta_{L} \in H_{L}^{\perp} \subseteq H^{\perp} \subseteq D_{L} \oplus D\}$ of $H_{L}^{\perp}$, in which $\delta_{L}$ can be recognized as the projection onto $D_{L}$ of a particular lift $\delta$ of its $\Delta_{H}$-image to $H^{\perp} \subseteq D_{L} \oplus D$. Our way of extending $H_{L}^{\perp}$ is as follows.
\begin{lem}
Assume that there is a subgroup $K_{L} \subseteq D_{L}$, which contains $H_{L}^{\perp}$, and which admits a lift into $H^{\perp} \subseteq D_{L} \oplus D$ whose composition with the projection onto $D_{L}$ is $\operatorname{Id}_{K_{L}}$, such that the composition with the projection onto $\Delta_{H}$ identifies $K_{L}$ with a subgroup of $\Delta_{H}$, with the $\mathbb{Q}/\mathbb{Z}$-quadratic structure, which contains the image of $H_{L}^{\perp}$. Then $K_{L}/H_{L}^{\perp}$ embeds into $D$ as an isotropic subgroup $I$ which intersects both $H_{D}$ and $H_{D}^{\perp}$ trivially, and $K_{L}$ can be reproduced from $I$ and $H$ as the group $(H \cap I^{\perp})_{L}^{\perp}$. \label{extHperp}
\end{lem}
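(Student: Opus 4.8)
The plan is to extract from the hypothesis a splitting homomorphism and to read off every assertion from its ``$D$-part''. The stipulated lift of $K_L$ into $H^{\perp}$ whose composition with the projection onto $D_L$ is $\operatorname{Id}_{K_L}$ is a homomorphism $s\colon K_L\to H^{\perp}\subseteq D_L\oplus D$ of the shape $s(x)=x+\phi(x)$ for a homomorphism $\phi\colon K_L\to D$, where $x$ also denotes the image of $x\in K_L$ under the embedding $D_L\hookrightarrow D_L\oplus D$. I would first record the elementary facts, used repeatedly, that an element of $D_L$ lies in $H^{\perp}$ precisely when it lies in $H_L^{\perp}$, and an element of $D$ lies in $H^{\perp}$ precisely when it lies in $H_D^{\perp}$ (both because $D_L$ and $D$ are orthogonal inside $D_L\oplus D$), together with $H\cap D=\{0\}$ from horizontality. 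The compatibility of the map $x\mapsto s(x)+H$ with the canonical copy $\{\delta_L+H\mid\delta_L\in H_L^{\perp}\}$ of $H_L^{\perp}$ inside $\Delta_H$ then forces $s(\delta_L)-\delta_L\in H$ for $\delta_L\in H_L^{\perp}$; since $s(\delta_L)-\delta_L=\phi(\delta_L)\in D$ and $H\cap D=\{0\}$, the homomorphism $\phi$ vanishes on $H_L^{\perp}$, hence factors through $\overline{\phi}\colon K_L/H_L^{\perp}\to D$, and I set $I:=\phi(K_L)=\operatorname{im}\overline{\phi}\subseteq D$.

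Next come the structural claims about $I$, each a short diagram chase. For injectivity of $\overline{\phi}$: if $\phi(x)=0$ then $s(x)=x\in H^{\perp}\cap D_L=H_L^{\perp}$, so $x$ is trivial modulo $H_L^{\perp}$. For isotropy: by hypothesis $x\mapsto s(x)+H$ preserves the $\mathbb{Q}/\mathbb{Z}$-quadratic form, and since $D_L\perp D$ the value of $s(x)=x+\phi(x)$ in $D_L\oplus D$ is $\tfrac{x^{2}}{2}+\tfrac{\phi(x)^{2}}{2}$ while the value of $x$ in $K_L$ is $\tfrac{x^{2}}{2}$; comparing gives $\tfrac{\phi(x)^{2}}{2}=0$ for all $x\in K_L$, so $I$ is isotropic. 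For $I\cap H_D^{\perp}=\{0\}$: if $\eta=\phi(x)\in H_D^{\perp}$, pairing $s(x)\in H^{\perp}$ against $\gamma\in H$ gives $(x,\gamma_L)+(\eta,\gamma_D)=0$, and $(\eta,\gamma_D)=0$ because $\gamma_D\in H_D$; hence $x\in H_L^{\perp}$ and $\eta=0$. For $I\cap H_D=\{0\}$: if $\eta=\phi(x)=\delta_D$ with $\delta=\delta_L+\delta_D\in H$ (so $\delta_D=\iota(\delta_L)$), then $s(x)-\delta=(x-\delta_L)+0\in H^{\perp}\cap D_L=H_L^{\perp}$, so $\delta_L=x-(x-\delta_L)\in K_L$ and, using $\phi|_{H_L^{\perp}}=0$, $\phi(\delta_L)=\phi(x)=\delta_D$; therefore $s(\delta_L)=\delta_L+\delta_D=\delta\in H$, so $\delta_L$ lies in the kernel of the injective map $x\mapsto s(x)+H$, forcing $\delta_L=0$ and $\eta=0$.

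It remains to recover $K_L$ as $(H\cap I^{\perp})_L^{\perp}$. Pairing $s(x)\in H^{\perp}$ against $\gamma\in H$ yields the identity $(\phi(x),\gamma_D)=-(x,\gamma_L)$ for all $x\in K_L$ and $\gamma\in H$. Since $I\subseteq D$, the condition $\gamma\in I^{\perp}$ is exactly $(\phi(x),\gamma_D)=0$ for all $x$, equivalently $(x,\gamma_L)=0$ for all $x\in K_L$, i.e.\ $\gamma_L\in K_L^{\perp}$; using that $\gamma\mapsto\gamma_L$ identifies $H$ with $H_L$, this reads $(H\cap I^{\perp})_L=H_L\cap K_L^{\perp}$. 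Then the identity $(Y\cap Z)^{\perp}=Y^{\perp}+Z^{\perp}$ recalled at the start of this section gives $(H\cap I^{\perp})_L^{\perp}=H_L^{\perp}+K_L$, which equals $K_L$ because $H_L^{\perp}\subseteq K_L$ by hypothesis.

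I expect the main difficulty to be bookkeeping rather than a genuine obstacle: at every step one must keep straight in which group ($D_L$, $D$, $D_L\oplus D$, or $\Delta_H$) a given pairing or quadratic value is computed, and in particular one must pin down from the somewhat condensed hypothesis the precise statements that $\phi$ annihilates $H_L^{\perp}$ and that $x\mapsto s(x)+H$ restricts on $H_L^{\perp}$ to the canonical embedding. Once these are fixed, the argument reduces to the three elementary facts $H\cap D=\{0\}$, $H^{\perp}\cap D_L=H_L^{\perp}$, $H^{\perp}\cap D=H_D^{\perp}$ together with the standard perpendicularity identities in the non-degenerate discriminant form $D_L$.
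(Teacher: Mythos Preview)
Your proof is correct and, for the construction of $I$ and the verification that it is isotropic with $I\cap H_D=I\cap H_D^{\perp}=\{0\}$, essentially identical to the paper's argument (your $\phi$ is the paper's $\tilde{\kappa}$, your $\overline{\phi}$ its $\kappa$). The one genuine difference is in the final identification $K_L=(H\cap I^{\perp})_L^{\perp}$. The paper argues by cardinality: Lemma~\ref{perfpair} with $I\cap H^{\perp}=\{0\}$ gives $[H:H\cap I^{\perp}]=|I|$, hence $[(H\cap I^{\perp})_L^{\perp}:H_L^{\perp}]=|I|=[K_L:H_L^{\perp}]$, and then verifies the inclusion $K_L\subseteq(H\cap I^{\perp})_L^{\perp}$. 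You instead compute $(H\cap I^{\perp})_L$ directly as $H_L\cap K_L^{\perp}$ via the identity $(\phi(x),\gamma_D)=-(x,\gamma_L)$, and then take perpendiculars in $D_L$ using $(Y\cap Z)^{\perp}=Y^{\perp}+Z^{\perp}$. Your route is slightly cleaner in that it avoids the counting detour through Lemma~\ref{perfpair} and gives the equality in one stroke; the paper's route has the minor advantage of making the index $[K_L:H_L^{\perp}]=|I|$ explicit, which is used again in Remarks~\ref{dimreps} and~\ref{propspcs}.
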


\begin{proof}
The lift $\delta$ of $\delta_{L} \in K_{L}$ to $H^{\perp} \subseteq D_{L} \oplus D$ must be of the form $\delta_{L}+\delta_{D}$ with $\delta_{D}=\tilde{\kappa}\delta_{L}$ for a group homomorphism $\tilde{\kappa}:K_{L} \to D$. The fact that an element $\delta_{L} \in H_{L}^{\perp} \subseteq K_{L}$ lifts to $\delta_{L}+0$ for $0 \in D$ implies that $H_{L}^{\perp}\subseteq\ker\tilde{\kappa}$, so that $\tilde{\kappa}$ induces a map $\kappa:K_{L}/H_{L}^{\perp}$ into $D$. As the lift respects the $\mathbb{Q}/\mathbb{Z}$-quadratic structure, and $D_{L} \oplus D$ is an orthogonal direct summand, we deduce that $\kappa(K_{L}/H_{L}^{\perp})$, which will be our subgroup $I$, is isotropic.

Assume now that $\delta_{D}=\tilde{\kappa}\delta_{L}$ lies in $H_{D}^{\perp}$ for some $\delta_{L} \in K_{L}$, with $\delta$ being the lift $\delta_{L}+\delta_{D} \in D_{L} \oplus D$, and take any $\gamma \in H$. The fact that $(\delta,\gamma)=0$ for $\delta \in H^{\perp}$ and $\gamma \in H$, and that $(\delta_{D},\gamma_{D})=0$ since $\delta_{D} \in H_{D}^{\perp}$ and $\gamma_{D} \in H_{D}$, implies that $(\delta_{L},\gamma_{L})=0$, and as $\gamma \in H$ is arbitrary, and with it so is $\gamma_{L} \in H_{L}$, we deduce that $\delta_{L} \in H_{L}^{\perp}$. In particular, with $\delta_{D}=0$ this shows that $\ker\tilde{\kappa}=H_{L}^{\perp}$ and $\kappa$ is injective (so that $I \cong K_{L}/H_{L}^{\perp}$), but also that $I \cap H_{D}^{\perp}=\{0\}$.

Consider now the case that $\delta_{D}=\tilde{\kappa}\delta_{L}$ is in $H_{D}$, and write $\delta_{H}$ for the corresponding element of $H$, which we write as $\delta_{H,L}+\delta_{D}$ with $\delta_{H,L}$ the $H_{L}$-image of $\delta_{H}$. Then the $\Delta_{H}$-image of $\delta$, hence of $\delta_{L} \in K_{L}$, coincides with that of $\delta-\delta_{H}$, which is the sum of $\delta_{L}-\delta_{H,L} \in D_{L}$ and $0 \in D$ and lies in $H^{\perp}$, so that $\delta_{L}-\delta_{H,L} \in H_{L}^{\perp}$ and is taken to the same element of $\Delta_{H}$. As the map from $K_{L}$ to $\Delta_{H}$ is injective by hypothesis, we obtain that $\delta_{L}=\delta_{L}-\delta_{H,L} \in H_{L}^{\perp}$, and again $\delta_{D}$ vanishes, which proves that $I \cap H_{D}=\{0\}$ as well. Altogether, $\kappa$ embeds $K_{L}/H_{L}^{\perp}$ into $D$ as the isotropic subgroup $I$, with the desired properties.

Now, Lemma \ref{perfpair} combines with the fact that $I \cap H^{\perp}=\{0\}$ to show that $H$ contains $H \cap I^{\perp}$ with index $|I|$, hence the same relation holds between their isomorphs $H_{L}$ and $(H \cap I^{\perp})_{L}$, which implies that $(H \cap I^{\perp})_{L}^{\perp}$
contains $H_{L}^{\perp}$ with the same index inside the discriminant for $D_{L}$. As $K_{L}$ also contains $H_{L}^{\perp}$ with the same index (indeed, $I$ is isomorphic to the quotient), it has the same size as $(H \cap I^{\perp})_{L}^{\perp}$, and it suffices to prove that $K_{L}$ is contained in the latter subgroup. But indeed, for any $\delta_{L} \in K_{L}$, embedded as $\delta_{L}+\tilde{\kappa}\delta_{L} \in H^{\perp} \subseteq D_{L} \oplus D$, its pairing with $\gamma \in H$ was seen to vanish, and if $\gamma$ lies in $H \cap I^{\perp}$ then $(\gamma_{D},\tilde{\kappa}\delta_{L})=0$ as well, yielding the orthogonality of $\delta_{L}$ to any $\gamma_{L}$ for $\gamma$ in that intersection. This completes the proof of the lemma.
\end{proof}

Recalling from the proof of Lemma \ref{indchit} that the operators on $\mathbb{C}[\Delta_{H}]$ that arise from $H_{M}^{\perp}$ (or $H_{L}^{\perp}$ in our setting) are obtained via Definition \ref{opersDM} through the embedding of this group into $\Delta_{H}$ considered also in Lemma \ref{extHperp}, we can thus state and prove the following result.
\begin{thm}
Let $K_{L} \subseteq D_{L}$ be as in Lemma \ref{extHperp}, with $I \subseteq D$ the corresponding isotropic subgroup, and set $\overline{D}:=I^{\perp}/I$ as usual. Assume that the function $\Phi_{L,v}^{F,H}$ associated with $F:\mathcal{H}\to\mathbb{C}[\Delta_{H}]$ via Proposition \ref{pairper} satisfies the periodicity from Equation \eqref{perorth}, in which $\sigma$ and $\nu$ from $L^{*}$ can have arbitrary $D_{L}$-images in $K_{L}$, and where $t_{\sigma}$ and $\chi_{\nu}$ are defined on $\Phi_{L,v}^{F,H}$ via this embedding of $K_{L}$ into $\Delta_{H}$. Then there is a unique function $G:\mathcal{H}\to\mathbb{C}[\overline{D}]$ such that $F=\uparrow_{I}G$, and $\Phi_{L,v}^{F,H}$ is $\iota_{\overline{\Delta}}\Phi_{L,v}^{G,H \cap I^{\perp}}$ as in case $(iv)$ of Corollary \ref{spcases}. \label{largerHperp}
\end{thm}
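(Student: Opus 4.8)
The plan is to follow the pattern of the proof of Theorem \ref{peruparrow}, which extracted the ``horizontal'' enhancement of periodicity coming from an over-lattice (cases $(i)$ and $(iii)$ of Corollary \ref{spcases}); here the enhancement is ``vertical'' and must be read off instead from the translation and character operators in Equation \eqref{perorth}, while keeping careful track of the homomorphism $\tilde\kappa:K_L\to D$ produced in the proof of Lemma \ref{extHperp}. Recall that $\tilde\kappa$ has kernel $H_L^\perp$ and image $I$, and that the embedding $K_L\hookrightarrow\Delta_H$ from the hypothesis sends $\delta_L$ to the $\Delta_H$-class of the lift $\delta_L+\tilde\kappa(\delta_L)\in H^\perp\subseteq D_L\oplus D$; the operators $\chi_\nu$ and $t_\sigma$ in the theorem are $\chi$ and $t$ with that index on $\mathbb{C}[\Delta_H]$. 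Writing $F=\sum_{\alpha_D\in D}f_{\alpha_D}\mathfrak{e}_{\alpha_D}$ and fixing a set $R\subseteq D_L$ of representatives for $D_L/H_L$, I would expand $\Phi:=\Phi_{L,v}^{F,H}$ as in Equation \eqref{downarrow} (in the orthogonal notation), so that the $\mathfrak{e}_{\overline\beta}$-component of $\Phi$, for $\beta\in H^\perp_R$ with $\Delta_H$-image $\overline\beta$, is $\sum_{\gamma\in H}\theta_{\beta_L+\gamma_L}(\cdot\,;v)f_{\beta_D+\gamma_D}(\cdot)$. Since the pairing on $D_L$ is non-degenerate, the projection of $H^\perp$ onto $D$ exhausts $D$, so every component $f_{\alpha_D}$ of $F$ occurs in such an expansion.

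First I would put $\sigma=0$ and let $\nu\in L^*$ with $\nu+L\in K_L$ be arbitrary in Equation \eqref{perorth}. By Proposition \ref{perTheta} the left-hand side multiplies the $\gamma$-summand of the $\mathfrak{e}_{\overline\beta}$-component by $\mathbf{e}\big((\nu+L,\beta_L+\gamma_L)\big)$, whereas on the right-hand side $\chi_\nu$ multiplies $\mathfrak{e}_{\overline\beta}$ by $\mathbf{e}\big((\overline\nu,\overline\beta)_{\Delta_H}\big)=\mathbf{e}\big((\delta,\beta)\big)$ with $\delta=(\nu+L)+\tilde\kappa(\nu+L)\in H^\perp$, i.e.\ by $\mathbf{e}\big((\nu+L,\beta_L)+(\tilde\kappa(\nu+L),\beta_D)\big)$, as $D_L$ and $D$ are orthogonal inside their direct sum. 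Cancelling the common factor and invoking the uniqueness of theta (Fourier) expansions from Lemma \ref{Fourinv}, I get that $f_{\beta_D+\gamma_D}\neq0$ forces $\mathbf{e}\big((\nu+L,\gamma_L)\big)=\mathbf{e}\big((\tilde\kappa(\nu+L),\beta_D)\big)$ for every such $\nu$. The one genuinely computational point is that pairing the lift $\delta\in H^\perp$ against $\gamma=\gamma_L+\iota(\gamma_L)\in H$ yields $(\nu+L,\gamma_L)=-(\tilde\kappa(\nu+L),\gamma_D)$, so that the previous identity becomes $(\tilde\kappa(\nu+L),\beta_D+\gamma_D)=0$; since $\tilde\kappa$ is onto $I$, this says precisely that $\beta_D+\gamma_D\in I^\perp$. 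Hence $f_{\alpha_D}=0$ unless $\alpha_D\in I^\perp$.

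Next I would take $\nu=0$ and $\sigma\in L^*$ with $\sigma+L\in K_L$. Now Proposition \ref{perTheta} replaces $\theta_{\beta_L+\gamma_L}$ by $\theta_{\beta_L+\gamma_L+(\sigma+L)}$ in the $\mathfrak{e}_{\overline\beta}$-component (the extra exponential being identical on the two sides and cancelling), while $t_\sigma$ moves the $\mathfrak{e}_{\overline\beta+\overline\sigma}$-component of $\Phi$ into the $\mathfrak{e}_{\overline\beta}$-place, where $\overline\sigma$ lifts to $(\sigma+L)+\tilde\kappa(\sigma+L)$. Reading off that component from the expansion and again using uniqueness of theta expansions yields $f_{\beta_D+\gamma_D}=f_{\beta_D+\gamma_D+\tilde\kappa(\sigma+L)}$ for every $\gamma\in H$; since $\tilde\kappa(\sigma+L)$ runs over all of $I$, the components of $F$ are constant along $I$-cosets. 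Together with the support statement this gives $F=\uparrow_I G$, where $G:\mathcal{H}\to\mathbb{C}[\overline D]$ has $(\alpha_D+I)$-component $f_{\alpha_D}$ for $\alpha_D\in I^\perp$; this $G$ is visibly unique, and it is determined by $\Phi$ because $\Phi$ determines $F$ by Theorem \ref{VVorth}. Finally, since Lemma \ref{extHperp} furnishes $I\cap H_D=I\cap H_D^\perp=\{0\}$ (so that here $\Lambda=L$, $I_H=\{0\}$, and $\overline H=H\cap I^\perp$), case $(iv)$ of Corollary \ref{spcases} applied to $F=\uparrow_I G$ gives directly $\Phi_{L,v}^{F,H}=\iota_{\overline\Delta}\Phi_{L,v}^{G,H\cap I^\perp}$, as asserted, and the same argument works verbatim for a Hermitian $\mathcal{O}$-lattice $M$ since all groups involved are $\mathcal{O}$-stable. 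The main obstacle throughout is bookkeeping: correctly matching the operators $\chi_\nu$ and $t_\sigma$ on $\mathbb{C}[\Delta_H]$ (which act through the lift $\delta_L+\tilde\kappa(\delta_L)$) against the a priori different action induced by the periodicity of $\Theta_L$ in $L^*$, the discrepancy being exactly the $\tilde\kappa$-terms that the two computations above exploit.
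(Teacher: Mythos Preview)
Your proof is correct and follows essentially the same approach as the paper: first use the $\chi_\nu$-periodicity with $\nu+L\in K_L$ (exploiting that the lift $(\nu+L)+\tilde\kappa(\nu+L)$ lies in $H^\perp$) to force the support condition $f_{\alpha_D}=0$ unless $\alpha_D\in I^\perp$, then use the $t_\sigma$-periodicity to obtain invariance of $f$ along $I$-cosets, and conclude via case~$(iv)$ of Corollary~\ref{spcases}. The paper carries out the same two steps but organizes the bookkeeping through the representative set $H^\perp_{R_I,I}$ of Lemma~\ref{repswithI} and re-derives the final identification with $\iota_{\overline\Delta}\Phi_{L,v}^{G,H\cap I^\perp}$ explicitly rather than quoting the corollary; your more direct route to the support and invariance conditions on $F$ is a minor streamlining of the same argument.
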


\begin{proof}
We begin by considering an element $\nu \in L^{*}$ with $\nu+L \in K_{L}$, and comparing the resulting periodicity relations for the theta components $\theta_{\beta_{L}+\gamma_{L}}$ and using $\chi_{\nu}$ for this $\nu$ by assumption. We recall that for the simple case of $\nu+L \in H_{L}^{\perp}$, the action of $t_{\nu}$ on the basic vector $\mathfrak{e}_{\beta}=\mathfrak{e}_{\beta_{L}}\otimes\mathfrak{e}_{\beta_{D}}$ of $\mathbb{C}[D_{L} \oplus D]$ multiplies it by $\mathbf{e}\big((\nu+L,\beta_{L})\big)$, and the pairing is that $\nu+L \in H_{L}^{\perp} \subseteq D_{L}$ as contained in $D_{L} \oplus D$, with $\beta=\beta_{L}+\beta_{D}$. Our assumption means that if $\nu+L$ is a more general element of $K_{L}$, which is embedded in $H^{\perp} \subseteq D_{L} \oplus D$, then the action of $\chi_{\nu}$ multiplies this vector $\mathfrak{e}_{\beta}$ by $\mathbf{e}\big((\nu+L,\beta)\big)$ (which for $\beta \in H^{\perp}$ can be evaluated using the image of $\nu+L \in K_{L}$ in $\Delta_{H}$), with the pairing being the sum of $(\nu+L,\beta_{L})$ and $\big(\tilde{\kappa}(\nu+L),\beta_{D}\big)$, with $\tilde{\kappa}:K_{L} \to D$ the map defined in the proof of Lemma \ref{extHperp}.

So take now $\beta \in H^{\perp} \subseteq D_{L} \oplus D$, and consider the vector $\mathfrak{e}_{\beta+H}\in\mathbb{C}[\Delta_{H}]$ and the coefficient $\sum_{\gamma \in H}\theta_{\beta_{L}+\gamma_{L}}f_{\beta_{D}+\gamma_{D}}$ multiplying it in $\Phi_{L,v}^{F,H}$. The translation of the argument by our $\nu$ multiplies the theta function by the exponent of $(\nu+L,\beta_{L}+\gamma_{L})$, and when we compare it with the action of $\chi_{\nu}$ on this vector, as evaluated above, we deduce that only summands associated with $\gamma \in H$ for which $(\nu+L,\gamma_{L})$ equals $\big(\tilde{\kappa}(\nu+L),\beta_{D}\big)$ can occur, where $\tilde{\kappa}(\nu+L)$ or equivalently $\kappa[(\nu+L)+H_{L}^{\perp}]$ for $(\nu+L)+H_{L}^{\perp} \in K_{L}/H_{L}^{\perp}$, runs over $I$ as $\nu$ varies over the elements of $L^{*}$ with $D_{L}$-images in $K_{L}$.

We now observe that as $I \cap H^{\perp}=\{0\}$ by Lemma \ref{extHperp}, the proof of Lemma \ref{perfpair} produces a perfect pairing from $I\times[H/(H \cap I^{\perp})]$ to $\mathbb{Q}/\mathbb{Z}$, so that for every such $\beta$ we have an element $\alpha \in H$ such that $(\delta_{D},\alpha_{D})=(\delta_{D},\beta_{D})$ for every $\delta_{D} \in I$. We can thus represent $\beta+H\in\Delta_{H}$ by $\beta+\delta \in D_{L} \oplus I^{\perp}$, noting that for an element of $D_{L} \oplus I^{\perp}$ the set of $\gamma$ from the previous paragraph becomes the subgroup $H \cap I^{\perp}$ (via the argument from the proof of Lemma \ref{Hbarhoriz} once again). We thus take the set of representatives $H^{\perp}_{R_{I},I}$ from Lemma \ref{repswithI}, and deduce that for each $\beta$ in that set, the coefficient in front of $\mathfrak{e}_{\beta+H}$ in $\Phi_{L,v}^{F,H}$ is $\sum_{\gamma \in H \cap I^{\perp}}\theta_{\beta_{L}+\gamma_{L}}f_{\beta_{D}+\gamma_{D}}$, recalling that all the indices $\beta_{D}+\gamma_{D}$ of $f$ that show up here lie in $I^{\perp}$.

We now turn to the periodicity property arising from $\sigma \in L^{*}$ with $\sigma+L \in K_{L}$ (which is indeed the group $(H \cap I^{\perp})_{L}^{\perp}$ by Lemma \ref{extHperp}), and to the action of $t_{\sigma}$ for such $\sigma$ as defined here. The argument from the proofs of Proposition \ref{pairper} and Theorem \ref{VVmain} combines with the set of representatives in our setting to yield the equality \[t_{\sigma}\sum_{\substack{\tilde{\beta} \in H^{\perp}_{R_{I},I} \\ \tilde{\gamma} \in H \cap I^{\perp}}}\theta_{\tilde{\beta}_{L}+\tilde{\gamma}_{L}}f_{\tilde{\beta}_{D}+\tilde{\gamma}_{D}}\mathfrak{e}_{\tilde{\beta}_{L}+\tilde{\beta}_{D}+H}=\sum_{\substack{\beta \in H^{\perp}_{R_{I},I} \\ \gamma \in H \cap I^{\perp}}}\theta_{\beta_{L}+\gamma_{L}+(\sigma+M)}f_{\beta_{D}+\gamma_{D}}\mathfrak{e}_{\beta_{L}+\beta_{D}+H}\] and we employ once again the notation from Lemma \ref{addRsubgrp}, for the set of representatives $R_{I}$ for $D_{L}/(H \cap I^{\perp})_{L}$ (the perpendicularity condition from part $(iii)$ of Lemma \ref{repswithI} it trivial, since $I_{L}=\{0\}$ in our case).

We make again the substitution $\tilde{\beta}_{L}=r_{\sigma}(\beta_{L})$ and $\tilde{\gamma}:=\gamma+h_{\sigma}(\beta_{L})$ from that lemma with these parameters, but note that $t_{\sigma}$ subtracts the image of $\sigma+L \in K_{L}$ from $\beta$, and that image is the sum of $\sigma+L \in D_{L}$ and $\tilde{\kappa}(\sigma+L) \in I \subseteq I^{\perp} \subseteq D$. We subtract $h_{-\sigma}(\tilde{\beta}_{L}) \in H \cap I^{\perp} \subseteq H$ to remain in the same coset, and the representing element of $H^{\perp}$ is the sum of $h_{-\sigma}(\tilde{\beta}_{L}) \in R_{I} \subseteq D_{L}$ and the element $\tilde{\beta}_{D}-h_{-\sigma}(\tilde{\beta}_{L})_{D}-\tilde{\kappa}(\sigma+L)$ from $I^{\perp} \subseteq D$. Part $(ii)$ of Lemma \ref{addRsubgrp} implies that the first expression is $\beta_{L}$ back again, and we set $\tilde{\beta}_{D}$ to be $\beta_{D}+h_{-\sigma}(\tilde{\beta}_{L})_{D}+\tilde{\kappa}(\sigma+L)$, or equivalently $\beta_{D}-h_{\sigma}(\beta_{L})_{D}+\tilde{\kappa}(\sigma+L)$ (by the same lemma), so that we get the same vector $\mathfrak{e}_{\beta_{L}+\beta_{D}+H}$ on both sides. Using the uniqueness of theta expansions once again, and the fact that $\tilde{\beta}_{D}+\tilde{\gamma}_{D}$ equals $\beta_{D}+\gamma_{D}$ plus the arbitrary element $\tilde{\kappa}(\sigma+L) \in I$, we deduce that $f_{\beta_{D}+\gamma_{D}}$ remains invariant under changing the index from $D$, which lies in $I^{\perp}$, by an element of $I$.

We can thus write $f_{\beta_{D}+\gamma_{D}}$ for an index from $I^{\perp}$ as $g_{\beta_{D}+\gamma_{D}+I}$ for function $G:\mathcal{H}\to\mathbb{C}[\overline{D}]$, that is unique as usual, and obtain that $F=\uparrow_{I}G$. We also observe that if $\beta=\beta_{L}+\beta_{D} \in H^{\perp}_{R_{I},I}$ and $\alpha+(H \cap I^{\perp})=p(\beta+H)$ for $\alpha=\alpha_{L}+\alpha_{\overline{D}} \in H^{\perp}_{R_{I}} \subseteq D_{L}\oplus\overline{D}$ then $\alpha_{L}=\beta_{L} \in R_{I}$ and $\alpha_{\overline{D}}=\beta_{D}+I$, so that the theta function is $\theta_{\alpha_{L}+\gamma_{L}}$ and it multiplies $g_{\alpha_{\overline{D}}+\gamma_{\overline{D}}}$. Moreover, $\beta_{D}$ is the unique element of $I^{\perp}$ with $\overline{D}$-image $\alpha_{\overline{D}}$ and such that $\beta_{L}+\beta_{D}$, with $\beta_{L}=\alpha_{L}$, lies in $H^{\perp}$, which defines $\beta+H$ for this $\beta$ to be $\iota_{\overline{\Delta}}\big(\alpha+(H \cap I^{\perp})\big)$. Altogether we deduce that $\Phi_{L,v}^{F,H}$ is indeed $\iota_{\overline{\Delta}}\Phi_{L,v}^{G,H \cap I^{\perp}}$, like Proposition \ref{FuparrowG} and case $(iv)$ of Corollary \ref{spcases} imply. This completes the proof of the theorem.
\end{proof}
Like with Proposition \ref{FuparrowG}, Theorems \ref{peruparrow} and \ref{largerHperp} only deal with the periodicity properties, and the uniqueness in the constructions imply that $\Phi_{L,v}^{F,H}$ is a Jacobi form of some weight with respect to some group $\Gamma$ if and only if the corresponding functions arising from some $G$ have the same property (with the appropriate representations), as indeed $F$ is modular with the associated parameters if and only if $G$ is. However, as we saw, the modularity property plays no role in these results. We remark again that when $L$ is an $\mathcal{O}$-lattice $M$, with the quadratic structure arising from the Hermitian one on $M$, then all of the results of this section extend to this setting, provided that all the groups involved in the constructions are preserved under the action of $\mathcal{O}$.

\begin{rmk}
In some sense, expressing a function as $\Phi_{L,v}^{F,H}$, or as $\Phi_{M,v}^{F,H}$ in the Hermitian setting, implies that it satisfies at least the periodicity properties arising from $L$ for the scalar-valued components and from the inverse image of $H_{L}^{\perp}$ inside $D_{L}$ with the action. Theorems \ref{peruparrow} and \ref{largerHperp} can be seen as saying that the precise periodicity properties (the scalar-valued and the vector-valued ones respectively) that are satisfied by $\Phi_{L,v}^{F,H}$ (or $\Phi_{M,v}^{F,H}$) detect what are the most natural lattice $\Lambda \subseteq L$ and group $\overline{H}_{\Lambda}^{\perp}$ (containing the image of $H_{L}^{\perp}$) that really define this function. This will be important for defining vector-valued Jacobi forms with matrix index below. Note that case $(ii)$ of Corollary \ref{spcases} does not affect the periodicity properties at all, and simply presents an interplay between the $\uparrow$ operators on $G$ and on $\Phi$. \label{maxlat}
\end{rmk}

\section{Holomorphic Hermitian Jacobi Forms and Jacobi Forms of Matrix Index \label{HolJForms}}

The main motivation and most applications for considering Jacobi forms with lattice index are clearer and more tangible when the lattice is positive definite. Since (as far as I know) no reference gathers their properties in this particularly important special case, we write the simpler expressions and properties that are obtained from our general ones in this case.

So let $M$ be an $\mathcal{O}$-lattice, which we now assume to be positive definite of some rank $b_{+}$. Then the sets from Equations \eqref{Grassdef} and \eqref{GrassJ} consist of one point, so that the variable $v$ can be omitted. In addition, the projections onto the negative definite space $v_{-}=\{0\}$ vanish (as does the Laplacian operator $\Delta_{v_{-}}^{h}$), and projecting onto the positive definite space $v_{+}=M_{\mathbb{R}}$ leaves every vector invariant (with $\Delta_{v_{+}}^{h}$ being just $\Delta_{M_{\mathbb{R}}}^{h}$). Therefore the definition in Equation \eqref{JacTheta} and the properties from Propositions \ref{perTheta} and \ref{difeq} and Theorem \ref{modTheta} combine to the following statement, in which we recall again the operators from Definition \ref{opersDM}.
\begin{thm}
The theta function \[\Theta_{M}(\tau,\zeta,\omega):=\textstyle{\sum_{\lambda \in M^{*}}\mathbf{e}\big(\tau|\lambda}|^{2}+\langle\zeta,\lambda\rangle+\langle\lambda,\omega\rangle\big)\mathfrak{e}_{\lambda+M}\] is a holomorphic function from $\mathcal{H} \times M_{\mathbb{C}} \times M_{\overline{\mathbb{C}}}$ to $\mathbb{C}[D_{M}]$ that is a solution to the holomorphic heat equation and satisfies the periodicity equation \[\Theta_{M}(\tau,\zeta+\tau\sigma_{\mathbb{C}}+\nu_{\mathbb{C}},\omega+\tau\sigma_{\overline{\mathbb{C}}}+\nu_{\overline{\mathbb{C}}})=\mathbf{e}\big(-\tau|\sigma|^{2}-\langle\zeta,\sigma\rangle-\langle\sigma,\omega\rangle\big) \chi_{\nu}t_{\sigma}\Theta_{M}(\tau,\zeta,\omega)\] for any $\sigma$ and $\nu$ from $M^{*}$, as well as, for every $A\in\operatorname{U}(1,1)(\mathbb{Z})$, the equality
\[\Theta_{M}\Big(A\tau,\tfrac{\zeta}{j(A,\tau)},\tfrac{\omega}{j^{c}(A,\tau)}\Big)=j(A,\tau)^{b_{+}}\mathbf{e}\Big(\tfrac{j_{A}'\langle\zeta,\omega\rangle}{j(A,\tau)}\Big)\rho_{M}(A)\Theta_{M}(\tau,\zeta,\omega).\] \label{holTheta}
\end{thm}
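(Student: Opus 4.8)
The plan is to obtain Theorem \ref{holTheta} by specializing the results already proved for the Hermitian Siegel--Jacobi theta function to the positive definite case, in which the Grassmannian degenerates to a point. First I would record the simplifications. Since $M$ is positive definite of rank $b_{+}$, both $\operatorname{Gr}(M_{\mathbb{R}})$ from Equation \eqref{Grassdef} and $\operatorname{Gr}^{J}(M_{\mathbb{R}})$ from Equation \eqref{GrassJ} consist of the single decomposition $M_{\mathbb{R}} = M_{\mathbb{R}} \oplus \{0\}$, so the variable $v$ can be dropped; moreover $v_{+} = M_{\mathbb{R}}$ and $v_{-} = \{0\}$, whence $\lambda_{v_{+}} = \lambda$ and $\lambda_{v_{-}} = 0$ for every $\lambda$, and correspondingly $\sigma_{v_{+},\mathbb{C}} = \sigma_{\mathbb{C}}$, $\sigma_{v_{+},\overline{\mathbb{C}}} = \sigma_{\overline{\mathbb{C}}}$, $\sigma_{v_{-},\mathbb{C}} = \sigma_{v_{-},\overline{\mathbb{C}}} = 0$, and $\zeta_{v_{+}} = \zeta$, $\omega_{v_{+}} = \omega$, $\zeta_{v_{-}} = \omega_{v_{-}} = 0$. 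In particular $|\lambda_{v_{+}}|^{2} = |\lambda|^{2}$ and $|\lambda_{v_{-}}|^{2} = 0$, so Equation \eqref{JacTheta}, after regrouping the double sum $\sum_{\gamma \in D_{M}}\sum_{\lambda \in M+\gamma}$ into the single sum $\sum_{\lambda \in M^{*}}$ with $\mathfrak{e}_{\gamma} = \mathfrak{e}_{\lambda+M}$, becomes exactly the displayed expression for $\Theta_{M}(\tau,\zeta,\omega)$.

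Next I would treat holomorphicity and the heat equation. Positive definiteness forces $|\lambda|^{2} \to \infty$ along $M^{*}$, so the series converges absolutely and locally uniformly on $\mathcal{H} \times M_{\mathbb{C}} \times M_{\overline{\mathbb{C}}}$ and defines a holomorphic function there; alternatively this is the content of Proposition \ref{difeq}, all of whose anti-holomorphic derivative operators annihilate $\Theta_{M}$. The same proposition shows that $4\pi i\partial_{\tau} - \Delta_{v_{+}}^{h}$ kills $\Theta_{M}$, and since $v_{+} = M_{\mathbb{R}}$ the operator $\Delta_{v_{+}}^{h}$ is just $\Delta_{M_{\mathbb{R}}}^{h}$, which is precisely the holomorphic heat equation asserted; the operator $4\pi i\partial_{\overline{\tau}} - \Delta_{v_{-}}^{h}$ degenerates to $4\pi i\partial_{\overline{\tau}}$, consistent with holomorphicity in $\tau$.

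Then I would derive the periodicity identity by substituting the above simplifications into Proposition \ref{perTheta}: for $\sigma$ and $\nu$ in $M^{*}$ the arguments $\zeta + \tau\sigma_{v_{+},\mathbb{C}} + \overline{\tau}\sigma_{v_{-},\mathbb{C}} + \nu_{\mathbb{C}}$ and its $\omega$-counterpart collapse to $\zeta + \tau\sigma_{\mathbb{C}} + \nu_{\mathbb{C}}$ and $\omega + \tau\sigma_{\overline{\mathbb{C}}} + \nu_{\overline{\mathbb{C}}}$, the exponential argument $-\tau|\sigma_{v_{+}}|^{2} - \overline{\tau}|\sigma_{v_{-}}^{2}| - \langle\zeta,\sigma\rangle - \langle\sigma,\omega\rangle$ reduces to $-\tau|\sigma|^{2} - \langle\zeta,\sigma\rangle - \langle\sigma,\omega\rangle$, and $\chi_{\nu}t_{\sigma}$ is unchanged, giving the stated formula. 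Finally, the modular transformation follows from Theorem \ref{modTheta} with $b_{-} = 0$: then $\overline{j(A,\tau)}^{b_{-}} = 1$, only $j(A,\tau)^{b_{+}}$ survives, the arguments simplify to $\zeta/j(A,\tau)$ and $\omega/j^{c}(A,\tau)$ since $\zeta_{v_{-}} = \omega_{v_{-}} = 0$, the exponential factor becomes $\mathbf{e}\big(j_{A}'\langle\zeta,\omega\rangle/j(A,\tau)\big)$, and $\rho_{M}(A)$ is the extended representation of Proposition \ref{extWeil}. I do not expect a real obstacle here: the only point needing care is applying the degeneracy of the Grassmannian and the regrouping of the sum consistently in every clause, and checking that the $v_{-}$-terms and anti-holomorphic pieces in Propositions \ref{perTheta}, \ref{difeq} and Theorem \ref{modTheta} all specialize without any surviving contribution.
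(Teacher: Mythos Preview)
Your proposal is correct and follows exactly the approach the paper takes: the paper states Theorem \ref{holTheta} without a separate proof, noting just before it that in the positive definite case the Grassmannian degenerates to a point, the $v_{-}$-projections and $\Delta_{v_{-}}^{h}$ vanish, and hence Equation \eqref{JacTheta} together with Propositions \ref{perTheta} and \ref{difeq} and Theorem \ref{modTheta} combine to give the statement. Your write-up spells out these specializations in more detail than the paper does, but the strategy is identical.
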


\smallskip

In this setting, from Definition \ref{Jacdef} we deduce that a \emph{Hermitian Jacobi form of weight $k\in\mathbb{Z}$ and index $M$ with respect to $\Gamma\subseteq\operatorname{U}(1,1)(\mathbb{Z})$} (of finite index) is a function $\Phi:\mathcal{H} \times M_{\mathbb{C}} \times M_{\overline{\mathbb{C}}}\to\mathbb{C}$, holomorphic in all the variables except perhaps the first one (in which it is smooth), for which the equality \[\Phi(\tau,\zeta+\tau\sigma_{\mathbb{C}}+\nu_{\mathbb{C}},\omega+\tau\sigma_{\overline{\mathbb{C}}}+\nu_{\overline{\mathbb{C}}})=\mathbf{e}\big(-\tau|\sigma|^{2}-\langle\zeta,\sigma\rangle-\langle\sigma,\omega\rangle\big) \Phi(\tau,\zeta,\omega)\] holds for every two elements $\sigma$ and $\nu$ from $M$, as does the equality
\[\Phi\Big(A\tau,\tfrac{\zeta}{j(A,\tau)},\tfrac{\omega}{j^{c}(A,\tau)}\Big)=j(A,\tau)^{k}\mathbf{e}\Big(\tfrac{j_{A}'\langle\zeta,\omega\rangle}{j(A,\tau)}\Big)\Phi(\tau,\zeta,\omega)\] for every $A\in\Gamma$, plus some growth conditions. Via Remarks \ref{JacFour} and \ref{holFour} we see that every such Jacobi form has a Fourier expansion of the sort
\begin{equation}
\Phi(\tau,\zeta,\omega)=\textstyle{\sum_{m\in\frac{1}{N}\mathbb{Z}}\sum_{\lambda \in M^{*}}\tilde{c}_{m,\lambda}(y)\mathbf{e}\big(m\tau+\langle\zeta,\lambda\rangle+\langle\lambda,\omega\rangle\big)} \label{Fourposdef}
\end{equation}
(for some $N$ depending on $\Gamma$), with the coefficients $\tilde{c}_{m,\lambda}(y)$ being constant if and only of $\Phi$ is holomorphic also in $\tau\in\mathcal{H}$, a situation where the growth condition amounts to $\tilde{c}_{m,\lambda}=0$ unless $m\gg-\infty$ (such functions are called \emph{weakly holomorphic Hermitian Jacobi forms}). The Fourier coefficients satisfy the relation $\tilde{c}_{m,\lambda}=\tilde{c}_{m+\langle\sigma,\lambda\rangle+\langle\lambda,\sigma\rangle+|\sigma|^{2},\lambda+\sigma}$ for every $\sigma \in M$ (regardless of holomorphicity), and we say that a weakly holomorphic Hermitian Jacobi form is a \emph{weak Hermitian Jacobi form}, or a \emph{holomorphic Hermitian Jacobi form}, or a \emph{Hermitian Jacobi cusp form}, if a non-zero Fourier coefficient $\tilde{c}_{m,\lambda}\neq0$ can occur only if $m\geq0$ (resp. $m\geq|\lambda|^{2}$, resp. $m>|\lambda|^{2}$). In the coordinates used when the index is a matrix, the Fourier expansion takes the form presented in Remark \ref{FourS} below.

Theorem \ref{main} and Proposition \ref{pshol}, with some additional details involving holomorphicity, now read as follows.
\begin{thm}
The Hermitian Jacobi forms of weight $k\in\mathbb{Z}$ and index $M$ with respect to $\Gamma$ are precisely the functions on $\mathcal{H} \times M_{\mathbb{C}} \times M_{\overline{\mathbb{C}}}\to\mathbb{C}$ that are obtained as pairings $\Phi_{M}^{F}(\tau,\zeta,\omega):=[\Theta_{M}(\tau,\zeta,\omega),F(\tau)]_{M}$ of $\Theta_{M}$ with modular forms $F$ of weight $k-b_{+}$ and representation $\rho_{\overline{M}}$ with respect to $\Gamma$, as in Equation \eqref{modeq}. The Hermitian Jacobi form $\Phi_{M}^{F}$ and the associated modular form $F$ are weakly holomorphic, holomorphic, and cuspidal together. \label{holJac}
\end{thm}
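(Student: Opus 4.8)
The plan is to read everything off from Theorem~\ref{main} and Proposition~\ref{pshol}, specialized to the positive definite case. First I would record that when $M$ is positive definite of rank $b_{+}$ one has $b_{-}=0$, the Grassmannians of Equations~\eqref{Grassdef} and~\eqref{GrassJ} are single points so the parameter $v$ may be dropped, and the theta function of Theorem~\ref{holTheta} is precisely the $\Theta_{M}$ of Equation~\eqref{JacTheta} in this degenerate situation. A Hermitian Jacobi form of weight $k$ and index $M$ as defined just above the statement is the same thing as a Hermitian Jacobi form of weight $(k,0)$ and index $(M,v)$ in the sense of Definition~\ref{Jacdef} (the transformation Equation~\eqref{modJac} only carries the factor $j(A,\tau)^{k}$, forcing $l=0$), and similarly a modular form of weight $k-b_{+}$ and representation $\rho_{\overline{M}}$ is one of weight $(k-b_{+},0)$. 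Hence the first sentence of the statement is exactly Theorem~\ref{main} after these substitutions: $F\mapsto\Phi_{M}^{F}=[\Theta_{M},F]_{M}$ and the theta expansion of Corollary~\ref{decomtheta} are mutually inverse bijections between the two spaces.

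For the weakly holomorphic equivalence I would note that $v_{-}=\{0\}$ here, so $\Delta_{v_{-}}^{h}=0$ and the operator $4\pi i\partial_{\overline{\tau}}-\Delta_{v_{-}}^{h}$ of Definition~\ref{Jacdef} is just $4\pi i\partial_{\overline{\tau}}$. Thus $\Phi_{M}^{F}$ is pseudo-holomorphic exactly when it is holomorphic in $\tau$, which, since it is automatically holomorphic in $\zeta$ and $\omega$, is exactly the condition that $\Phi_{M}^{F}$ be a weakly holomorphic Hermitian Jacobi form in the sense fixed above. Proposition~\ref{pshol} then identifies this with $F$ being weakly holomorphic (and is consistent with $l=b_{-}=0$).

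The remaining two equivalences come from the Fourier expansions. Assume $F$, equivalently $\Phi_{M}^{F}$, is weakly holomorphic, and write $F=\sum_{\gamma\in D_{M}}f_{\gamma}\mathfrak{e}_{\gamma}^{*}$ with $f_{\gamma}(\tau)=\sum_{n}\tilde{a}_{n,\gamma}\mathbf{e}(n\tau)$ for constants $\tilde{a}_{n,\gamma}$ supported on $n\gg-\infty$. Pairing with the $\Theta_{M}$ of Theorem~\ref{holTheta} and expanding each component $\theta_{M+\gamma}$ produces, as recorded in Remarks~\ref{JacFour} and~\ref{holFour}, the expansion~\eqref{Fourposdef} of $\Phi_{M}^{F}$ with $\tilde{c}_{m,\lambda}=\tilde{a}_{m-|\lambda|^{2},\,\lambda+M}$. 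Consequently a nonzero $\tilde{c}_{m,\lambda}$ forces $m\geq|\lambda|^{2}$ for every $\lambda\in M^{*}$ if and only if a nonzero $\tilde{a}_{n,\gamma}$ forces $n\geq0$, i.e.\ iff $F$ is bounded at the cusp $\infty$; and it forces $m>|\lambda|^{2}$ if and only if a nonzero $\tilde{a}_{n,\gamma}$ forces $n>0$, i.e.\ iff $F$ decreases towards $\infty$. Running the same computation after conjugating by a matrix of $\operatorname{SL}_{2}(\mathbb{Z})$ carrying $\infty$ to any other cusp of $\Gamma$, using the explicit transformation of $\Theta_{M}$ in Theorem~\ref{holTheta} and the $\operatorname{U}(1,1)(\mathbb{Z})$-equivariance of $[\cdot,\cdot]_{M}$ (which yields a relation of the same shape between the two Fourier expansions at that cusp), shows that $\Phi_{M}^{F}$ is a holomorphic Hermitian Jacobi form (resp.\ a Hermitian Jacobi cusp form) precisely when $F$ is holomorphic (resp.\ cuspidal).

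The only mildly delicate step is this last one: tracking the index shift $m\mapsto m-|\lambda|^{2}$ and verifying that it intertwines the growth conditions uniformly over the finitely many cosets $\gamma\in D_{M}$ and over all cusps of $\Gamma$. Everything else is a direct specialization of Theorem~\ref{main} and Proposition~\ref{pshol} together with the Fourier bookkeeping of Remarks~\ref{JacFour} and~\ref{holFour}, so I do not anticipate a genuine obstacle.
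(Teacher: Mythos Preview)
Your proposal is correct and follows essentially the same approach as the paper, which presents Theorem~\ref{holJac} explicitly as the specialization of Theorem~\ref{main} and Proposition~\ref{pshol} to the positive definite case ``with some additional details involving holomorphicity'' and gives no separate proof. The Fourier-coefficient bookkeeping you spell out via Remarks~\ref{JacFour} and~\ref{holFour}, together with the index shift $\tilde{c}_{m,\lambda}=\tilde{a}_{m-|\lambda|^{2},\lambda+M}$, is exactly what those ``additional details'' amount to.
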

As with other types of Jacobi forms, the weakly holomorphic modular form $F$ that is associated with a weak Hermitian Jacobi form $\Phi:=\Phi_{M}^{F}$ via Theorem \ref{holJac} need not be holomorphic, but rather its components multiplying basis vectors $\mathfrak{e}_{\gamma}^{*}$ for anisotropic $\gamma \in D_{M}$ are allowed to have poles of very small order at the cusps.

\smallskip

We shall also be needing the vector-valued objects, for which we assume that $D$, $H$, and $\Delta_{H}$ are given as in Definition \ref{VVdef}, and $k$ and $\Gamma$ are as above. Then a \emph{Hermitian Jacobi form of weight $k$, index $(M,H)$, and representation $\rho_{\Delta_{H}}$ with respect to $\Gamma$} is defined to be a smooth function $\Phi:\mathcal{H} \times M_{\mathbb{C}} \times M_{\overline{\mathbb{C}}}\to\mathbb{C}[\Delta_{H}]$, which is holomorphic when the first variable is fixed, and which satisfies the equality \[\Phi(\tau,\zeta+\tau\sigma_{\mathbb{C}}+\nu_{\mathbb{C}},\omega+\tau\sigma_{\overline{\mathbb{C}}}+\nu_{\overline{\mathbb{C}}})=\mathbf{e}\big(-\tau|\sigma|^{2}-\langle\zeta,\sigma\rangle-\langle\sigma,\omega\rangle\big) \chi_{\nu}t_{\sigma}\Phi(\tau,\zeta,\omega)\] whenever $\sigma$ and $\nu$ are $M^{*}$ and $\sigma+M$ and $\nu+M$ lie in $H_{M}^{\perp}$, as well as \[\Phi\Big(A\tau,\tfrac{\zeta}{j(A,\tau)},\tfrac{\omega}{j^{c}(A,\tau)}\Big)=j(A,\tau)^{k}\mathbf{e}\Big(\tfrac{j_{A}'\langle\zeta,\omega\rangle}{j(A,\tau)}\Big)\rho_{\Delta_{H}}(A)\Phi(\tau,\zeta,\omega)\] whenever $A\in\Gamma$, and the usual growth conditions on the resulting Fourier coefficients as functions on $\mathcal{H}$. When $\Phi$ is also holomorphic in $\tau$, it is a weakly holomorphic Jacobi form (with these parameters), and one can extend the definitions of weak, holomorphic, and cuspidal Jacobi forms (or Jacobi cusp forms) to the vector-valued setting in an analogous manner, using the resulting Fourier expansions of the scalar-valued components and assumptions required for non-vanishing.

In this setting, Theorem \ref{VVmain} transforms into the following result.
\begin{thm}
Vector-valued Hermitian Jacobi forms $\Phi$ of weight $k$, index $(M,H)$, and representation $\rho_{\Delta_{H}}$ with respect to $\Gamma$ are in a bijective correspondence with modular forms $F$ of weight $k-b_{+}$ and representation $\rho_{D}$ with respect to $\Gamma$, with the map $F\mapsto\Phi$ given by $\Phi(\tau,\zeta,\omega):=\downarrow_{H}\big(\Theta_{M}(\tau,\zeta,\omega) \otimes F(\tau)\big)$. This bijection preserves weak holomorphicity, holomorphicity, and cuspidality. \label{VVhol}
\end{thm}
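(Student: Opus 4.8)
The plan is to read off Theorem \ref{VVhol} from the positive definite specialization of Theorem \ref{VVmain}, the only genuinely new content being the compatibility with the weak/holomorphic/cuspidal trichotomy. First I would record the simplifications that occur when $M$ is positive definite of rank $b_{+}$: the Grassmannian from Equation \eqref{GrassJ} is a single point, so $v$ is suppressed; one has $b_{-}=0$, the projection onto $v_{-}=\{0\}$ vanishes together with $\Delta_{v_{-}}^{h}$, and the projection onto $v_{+}=M_{\mathbb{R}}$ is the identity, so $\Theta_{M}$ becomes the holomorphic theta function of Theorem \ref{holTheta}. With these reductions a vector-valued Hermitian Jacobi form of weight $k$, index $(M,H)$, and representation $\rho_{\Delta_{H}}$ as defined in this section is exactly a Hermitian Jacobi form of weight $(k,0)$ with these parameters in the sense of Definition \ref{VVdef}, and such a form is holomorphic in $\tau$ if and only if it is pseudo-holomorphic, since the operator $4\pi i\partial_{\overline{\tau}}-\Delta_{v_{-}}^{h}$ reduces to $4\pi i\partial_{\overline{\tau}}$. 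Theorem \ref{VVmain} then supplies, for every smooth $\Phi:\mathcal{H}\times M_{\mathbb{C}}\times M_{\overline{\mathbb{C}}}\to\mathbb{C}[\Delta_{H}]$ satisfying Equation \eqref{perHMperp} and holomorphic in $\zeta$ and $\omega$, a unique smooth $F:\mathcal{H}\to\mathbb{C}[D]$ with $\Phi=\downarrow_{H}(\Theta_{M}\otimes F)$; it identifies the Jacobi forms with the stated parameters with the modular forms $F$ of weight $(k-b_{+},0)$, i.e.\ weight $k-b_{+}$, and representation $\rho_{D}$ with respect to $\Gamma$; and it matches pseudo-holomorphicity of $\Phi$ with weak holomorphicity of $F$, the constraint $l=b_{-}$ there being vacuous since $b_{-}=0$. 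This already yields the claimed bijection, the weight shift, and the preservation of weak holomorphicity.

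It remains to match holomorphicity and cuspidality, for which I would compare Fourier expansions at the cusps of $\Gamma$; by the previous paragraph we may assume $\Phi$, and hence $F$, weakly holomorphic. At a cusp, after the usual normalization placing it at $\infty$ with $N$ the width, expand $F$ in the holomorphic exponentials $\mathbf{e}(n\tau)$ as in Remark \ref{holFour}, and substitute into $\Phi=\downarrow_{H}(\Theta_{M}\otimes F)$ using the explicit Fourier series of $\Theta_{M}$ from Theorem \ref{holTheta}. Since $M$ is positive definite, $|\lambda_{v_{-}}|^{2}=0$ for every $\lambda$, so the exponential-in-$y$ factors appearing in Remarks \ref{JacFour} and \ref{holFour} disappear, and the constant Fourier coefficients $\tilde{c}_{m,\lambda}$ of the scalar-valued components of $\Phi$, as in Equation \eqref{Fourposdef}, are $\tilde{c}_{m,\lambda}=\tilde{a}_{m-|\lambda|^{2},\,\lambda+M}$, where $\tilde{a}_{n,\gamma}$ denote the holomorphic Fourier coefficients at that cusp of the corresponding component of $F$. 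Reading off supports, the condition $\tilde{c}_{m,\lambda}=0$ unless $m\geq|\lambda|^{2}$ (resp.\ unless $m>|\lambda|^{2}$), for all $\lambda$, is equivalent to $\tilde{a}_{n,\gamma}=0$ unless $n\geq0$ (resp.\ unless $n>0$), for all $\gamma$, i.e.\ to $F$ being holomorphic (resp.\ cuspidal) at that cusp; running over all cusps settles the cases of holomorphicity and cuspidality. The weak case follows the same dictionary, with the caveat already noted after Theorem \ref{holJac} that the matching weakly holomorphic $F$ may carry poles of bounded order at the cusps, arising from the cosets $\lambda+M$ with $|\lambda|^{2}>0$.

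The only delicate point I expect is precisely this last bookkeeping --- tracking which growth conditions on the components of $F$ at the cusps correspond to the conditions $m\geq0$, $m\geq|\lambda|^{2}$, $m>|\lambda|^{2}$ on the coefficients $\tilde{c}_{m,\lambda}$ of $\Phi$. This is the classical theta-decomposition dictionary of \cite{[EZ]}, only mildly complicated here by the horizontal subgroup $H$ and the cosets $\lambda+M$, and it uses nothing beyond the shift $m\mapsto m-|\lambda|^{2}$ and the vanishing of $\lambda_{v_{-}}$. Everything else --- bijectivity, equivariance, the shift of the weight by $b_{+}$, holomorphicity in $\zeta$ and $\omega$, and the equivalence of holomorphicity in $\tau$ with weak holomorphicity of $F$ --- is inherited verbatim from Theorem \ref{VVmain}, which rests in the scalar-valued case on Theorems \ref{main} and \ref{holJac} together with Proposition \ref{pshol}.
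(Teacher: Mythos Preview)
Your proposal is correct and follows exactly the approach the paper takes: the paper presents Theorem \ref{VVhol} simply as what Theorem \ref{VVmain} ``transforms into'' in the positive definite case, without writing out a separate proof, and your first paragraph carries out precisely that specialization. Your second paragraph, spelling out the Fourier-coefficient dictionary $\tilde{c}_{m,\lambda}=\tilde{a}_{m-|\lambda|^{2},\lambda+M}$ for the holomorphic and cuspidal cases, supplies more detail than the paper itself gives (the paper only alludes to ``some additional details involving holomorphicity'' before Theorem \ref{holJac} and does not repeat even that before Theorem \ref{VVhol}), but the argument is the standard one and is exactly what is needed.
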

As before, taking $D=D_{\overline{M}}$ and $H$ the diagonal subgroup in Theorem \ref{VVhol} reproduces Theorem \ref{holJac}.

\smallskip

Since \cite{[BRZ]} will also be using the holomorphic version of Theorem \ref{VVorth}, we give it here as well. Let now $L$ be an positive definite even lattice of rank $b_{+}$ over $\mathbb{Z}$, take $D$, $H$ with the projections $H_{L}$ and $H_{D}$, and $\Delta_{H}$ as above. The theta function \[\Theta_{L}(\tau,\zeta):=\textstyle{\sum_{\gamma \in D_{L}}\big[\sum_{\lambda \in L+\gamma}\mathbf{e}\big(\tau\frac{\lambda^{2}}{2}+(\lambda,\zeta)\big)\big]\mathfrak{e}_{\gamma}}\] (with no Grassmannian variable) then satisfies the usual, holomorphic modularity of weight $\frac{b_{+}}{2}$ and representation $\rho_{L}$ with respect to $\operatorname{Mp}_{2}(\mathbb{Z})$ and periodicity with respect to $L^{*} \times L^{*}$ (with the operators $\chi_{\nu}$ and $t_{\sigma}$ as usual), and take a weight $k\in\frac{1}{2}\mathbb{Z}$ and a subgroup $\Gamma\subseteq\operatorname{Mp}_{2}(\mathbb{Z})$. Then, via Definition \ref{orthVV}, a \emph{Jacobi form of weight $k$, index $(L,H)$, and representation $\rho_{\Delta_{H}}$ with respect to $\Gamma$} is a smooth function $\Phi:\mathcal{H} \times L_{\mathbb{C}}\to\mathbb{C}$ which is holomorphic in $\zeta \in L_{\mathbb{C}}$, satisfies \[\Phi(\tau,\zeta+\tau\sigma+\nu)=\mathbf{e}\big(-\tau\frac{\sigma^{2}}{2}-(\sigma,\zeta)\big)\chi_{\nu}t_{\sigma}\Phi(\tau,\zeta)\] for any $\sigma$ and $\nu$ in $L^{*}$ with $D_{L}$-images in $H_{L}^{\perp}$ and \[\Phi\big(A\tau,\tfrac{\zeta}{j(A,\tau)}\big)=j(A,\tau)^{k}\mathbf{e}\Big(\tfrac{j_{A}'\zeta^{2}}{2j(A,\tau)}\Big)\rho_{\Delta_{H}}(A)\Phi(\tau,\zeta)\] wherever $A\in\Gamma$, and an appropriate growth condition. Weak holomorphicity, and holomorphicity are defined as usual, as are weak Jacobi forms and Jacobi cusp forms with these parameters.

The associated special case of Theorem \ref{VVorth} can now be given.
\begin{thm}
The Jacobi forms $\Phi$ of weight $k$, index $(L,H)$, and representation $\rho_{\Delta_{H}}$ with respect to $\Gamma$ are in one-to-one correspondence with modular forms $F$ of weight $k-\frac{b_{+}}{2}$ and representation $\rho_{D}$ with respect to $\Gamma$, which to $F$ associates the Jacobi form $\Phi:(\tau,\zeta)\mapsto\downarrow_{H}\big(\Theta_{L}(\tau,\zeta) \otimes F(\tau)\big)$. In this correspondence, $\Phi$ is weakly holomorphic if and only if $F$ is, and the same for holomorphicity and cuspidality. \label{holorth}
\end{thm}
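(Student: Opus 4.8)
The plan is to deduce Theorem \ref{holorth} as the positive-definite, holomorphic specialization of Theorem \ref{VVorth}, in exactly the same way that Theorem \ref{VVhol} was deduced from Theorem \ref{VVmain} and Theorem \ref{holJac} from Theorem \ref{main}. First I would observe that when $L$ is positive definite of rank $b_{+}$, the Grassmannian from Equation \eqref{GrassJ} (equivalently Equation (1) of \cite{[Ze3]}) is a single point, so the variable $v$ disappears; the projection $\lambda_{v_{+}}$ is the identity and $\lambda_{v_{-}}=0$, so the theta function $\Theta_{L}(\tau,\zeta;v)$ from Equation \eqref{Thetaorth} becomes exactly the holomorphic $\Theta_{L}(\tau,\zeta)$ written just before the statement. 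The signature is $(b_{+},b_{-})=(b_{+},0)$, and the operator $\Delta_{v_{-}}^{h}$ vanishes identically, so the notion of pseudo-holomorphicity from Proposition \ref{difeq} collapses to plain holomorphicity in $\tau$. Likewise the periodicity of Proposition \ref{perTheta} and the modularity of Theorem 1.2 of \cite{[Ze3]} reduce to the displayed equations for $\Theta_{L}$, with weight $k-\frac{b_{+}}{2}$ now honestly holomorphic (weight $(k-\frac{b_{+}}{2},0)$), so that $l-\frac{b_{-}}{2}=0$ automatically and $\Gamma$ is just a finite index subgroup of $\operatorname{Mp}_{2}(\mathbb{Z})$.

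Next I would spell out how Theorem \ref{VVorth} specializes. Given the smooth $\Phi:\mathcal{H}\times L_{\mathbb{C}}\to\mathbb{C}[\Delta_{H}]$ holomorphic in $\zeta$ and satisfying the periodicity $\Phi(\tau,\zeta+\tau\sigma+\nu)=\mathbf{e}\big(-\tau\sigma^{2}/2-(\sigma,\zeta)\big)\chi_{\nu}t_{\sigma}\Phi(\tau,\zeta)$ for $\sigma,\nu\in L^{*}$ with $D_{L}$-images in $H_{L}^{\perp}$, Theorem \ref{VVorth} produces a unique smooth $F:\mathcal{H}\to\mathbb{C}[D]$ with $\Phi(\tau,\zeta)=\downarrow_{H}\big(\Theta_{L}(\tau,\zeta)\otimes F(\tau)\big)$, and it identifies the Jacobi-form property (Equation \eqref{modorth} plus the growth condition) with the modularity of $F$ of weight $k-\frac{b_{+}}{2}$ and representation $\rho_{D}$ (Equation \eqref{modeq}). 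It also identifies pseudo-holomorphicity of $\Phi$ with weak holomorphicity of $F$. In the present setting pseudo-holomorphicity is just holomorphicity in $\tau$, so $\Phi$ is weakly holomorphic if and only if $F$ is. It therefore remains only to promote this to the three-fold equivalence "weakly holomorphic / holomorphic / cuspidal," and here I would argue at the level of the Fourier expansions: writing the $\mathfrak{e}_{\overline{\beta}}$-component of $\Phi$ as the $[\cdot,\cdot]_{L}$-pairing of $\Theta_{L}$ with a component $F_{\beta}$ of $F$ (as in Remark \ref{expcomp} and Corollary \ref{decomtheta}), the Fourier coefficients of $\Phi$ in $\tau$ are, up to shifts by $|\lambda|^{2}$, the Fourier coefficients of $F$, exactly as in Remark \ref{JacFour} and Remark \ref{holFour} in the Hermitian case and as spelled out for orthogonal Jacobi forms in \cite{[Ze3]}. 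Boundedness of all Fourier coefficients of $F$ at every cusp (holomorphicity of $F$) is then equivalent to boundedness of the $\tau$-Fourier coefficients of $\Phi$ (holomorphicity of $\Phi$), and decay at every cusp (cuspidality) matches cuspidality of $\Phi$ in the same way; the one-to-one correspondence is already supplied by Theorem \ref{VVorth}, so these are just the matching statements for the cusp behavior under the dictionary $n\leftrightarrow n-|\lambda|^{2}$.

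Finally I would note the last sentence of the section, that taking $D=D_{\overline{L}}$ and $H$ the diagonal subgroup recovers Theorem \ref{holJac}'s orthogonal analogue, follows immediately as in Remark \ref{noconstD}, since for that $H$ one has $H^{\perp}=H$, $\Delta_{H}$ trivial, and $\downarrow_{H}(\Theta_{L}\otimes F)=[\Theta_{L},F]_{L}$. The only genuine point requiring care—and the one I would treat as the main obstacle, though it is a mild one—is the precise bookkeeping of the cusp-behavior dictionary: Theorem \ref{VVorth} as stated gives the weakly-holomorphic-vs-pseudo-holomorphic equivalence but not explicitly the holomorphic and cuspidal refinements, so one must invoke the Fourier-expansion description (the orthogonal analogues of Remarks \ref{JacFour} and \ref{holFour}, which are exactly the contents of the relevant remarks in \cite{[Ze3]}) to see that the growth/boundedness/decay conditions transfer faithfully across the shift by the quadratic value $|\lambda|^{2}$. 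Everything else is a direct transcription of the general theorem to the case $b_{-}=0$, $v$ a point.
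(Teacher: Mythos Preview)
Your proposal is correct and follows exactly the approach the paper takes: the paper introduces Theorem \ref{holorth} with the words ``The associated special case of Theorem \ref{VVorth} can now be given'' and provides no further proof, so it is simply the positive-definite specialization you describe. Your write-up is in fact more detailed than the paper's treatment, which leaves the reductions (the trivialization of the Grassmannian, the collapse of pseudo-holomorphicity to holomorphicity, and the Fourier-coefficient bookkeeping for the holomorphic and cuspidal refinements) entirely implicit, relying on the parallel discussion already given for Theorems \ref{holJac} and \ref{VVhol}.
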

Once again, if $D=D_{\overline{M}}$ and $H$ is the diagonal subgroup, then Theorem \ref{VVhol} yields the holomorphic case of Theorem 2.5 of \cite{[Ze3]}, and the known cases of theta expansions of scalar-valued Jacobi forms with positive definite lattice index. The analogues of Proposition \ref{FuparrowG} and Theorems \ref{peruparrow} and \ref{largerHperp} in the holomorphic special case (also in the Hermitian setting) look the same as the general case, and we shall not rewrite them here.

\smallskip

The classical Jacobi forms, from \cite{[EZ]} and others, had integers as their indices, with some higher rank ones having symmetric matrices as their indices (see, e.g., \cite{[Zi]}), and the $\zeta$-variable was replaced by a complex number $z$ (or a vector of complex numbers $\{z_{i}\}_{i=1}^{b_{+}}$ for some rank $b_{+}$). These matrices (or integers, as $1\times1$ matrices) were later realized to be just the Gram matrices of a basis of positive definite orthogonal $\mathbb{Z}$-lattice, that now serves as a more natural index as in \cite{[A]}, \cite{[Mo]}, and in other contexts in \cite{[Boy]}, where $\zeta$ was the complex linear combination of the chosen basis elements, with the coefficients $z_{i}$, $1 \leq i \leq b_{+}$ (or just $z$ when $b_{+}=1$). Note that in Fourier--Jacobi expansions of Siegel modular forms, the Fourier--Jacobi coefficients are Jacobi forms with matrix (or integer) index, and when establishing modularity results of various objects, as in \cite{[Zh]}, \cite{[BRa]}, or the orthogonal part of \cite{[BRZ]}, indeed this interpretation is the required one. Since \cite{[Xi]} and the unitary part of \cite{[BRZ]} need Hermitian Jacobi forms also of matrix index, We will now carry out the same transfer for defining those formally. We shall stick to the positive definite setting, as when the pairing gets involved with the decomposition associated with an element $v$ in some Grassmannian, more complicated notation is required, and the simpler special case suffices for the applications.

Since we shall work with a basis over $\mathcal{O}$, we thus take the $\mathcal{O}$-lattice $M$ to be a \emph{free} module over $\mathcal{O}$, and we fix some basis $M=\bigoplus_{i=1}^{b_{+}}\mathcal{O}\varepsilon_{i}$. Let $S$ be the matrix whose $ij$-entry is $\langle\varepsilon_{j},\varepsilon_{i}\rangle\in\mathcal{D}^{-1}$, which is Hermitian and thus its diagonal entries are in $\frac{1}{2}\mathbb{Z}$. The fact that $M$ is assumed to be even and positive definite implies that the diagonal entries of $S$ are integral and that $S$ is a positive definite matrix. We let $\pi$ be the map identifying a vector $a\in\mathbb{K}^{b_{+}}$ with the element $\sum_{i=1}^{b_{+}}a_{i}\varepsilon_{i}$ of $V:=M\otimes_{\mathbb{Z}}\mathbb{Q}=M\otimes_{\mathcal{O}}\mathbb{K}$, which thus identifies $\mathcal{O}^{b_{+}}$ with $M$.

Now, as $\frac{1}{\sqrt{-d}}$ generates $\mathcal{D}^{-1}$ over $\mathcal{D}$, we say that the \emph{dual basis} for $\{\varepsilon_{i}\}_{i=1}^{b_{+}}$ are the elements $\{\epsilon_{i}\}_{i=1}^{b_{+}} \subseteq M_{\mathbb{Q}}$ such that $\langle\epsilon_{j},\varepsilon_{i}\rangle=\frac{\delta_{ij}}{\sqrt{-d}}$. Therefore the transition matrix from the basis to its dual is $\frac{1}{\sqrt{-d}}S^{-t}=\frac{1}{\sqrt{-d}}\overline{S}^{-1}$ (recall that $S$ is Hermitian and invertible), and thus $M^{*}$ is spanned (freely) over $\mathcal{O}$ by $\big\{S^{-t}\varepsilon_{i}\big/\sqrt{-d}\big\}_{i=1}^{b_{+}}$, namely the dual lattice $M^{*}$ is the image of $S^{-t}\mathcal{O}^{b_{+}}\big/\sqrt{-d}$ under the map $\pi$ from the previous paragraph.

We shall be using, for our variables, the convention that the vector $\zeta$ is $\sum_{i=1}^{b_{+}}z_{i}\varepsilon_{i,\mathbb{C}}$ with some $z\in\mathbb{C}^{b_{+}}$, and $\omega=\sum_{i=1}^{b_{+}}w_{i}\varepsilon_{i,\overline{\mathbb{C}}}$ for $w\in\mathbb{C}^{b_{+}}$. We shall use the notation $r^{*}$ for the conjugate transpose of any vector $r\in\mathbb{C}^{b_{+}}$, and all this converts Theorem \ref{holTheta} into the following form.
\begin{thm}
Consider the $\mathbb{C}\big[\big(S^{-t}\mathcal{O}^{b_{+}}\big/\sqrt{-d}\big)/\mathcal{O}^{b_{+}}\big]$-valued function $\Theta_{S}$ on the product $\mathcal{H}\times\mathbb{C}^{b_{+}}\times\mathbb{C}^{b_{+}}$, which is defined by \[\Theta_{S}(\tau,z,w):=\textstyle{\sum_{a \in S^{-t}\mathcal{O}^{b_{+}}/\sqrt{-d}}\mathbf{e}(\tau a^{*}Sa+a^{*}Sz+w^{t}Sa)\mathfrak{e}_{a+\mathcal{O}^{b_{+}}}}.\] This function is holomorphic and satisfies the heat equation associated with $S$, its scalar-valued periodicity law is \[\Theta_{S}(\tau,z+\tau u+v,w+\tau\overline{u}+\overline{v})=\mathbf{e}\big(-\tau u^{*}Su-u^{*}Sz-w^{t}Su\big)\Theta_{S}(\tau,z,w)\] wherever $u$ and $v$ are in $\mathcal{O}^{b_{+}}$, and for $A\in\operatorname{U}(1,1)(\mathbb{Z})$ it satisfies \[\Theta_{S}\Big(A\tau,\tfrac{z}{j(A,\tau)},\tfrac{w}{j^{c}(A,\tau)}\Big)=j(A,\tau)^{b_{+}}\mathbf{e}\Big(\tfrac{j_{A}' \cdot w^{t}Sz}{j(A,\tau)}\Big)\rho_{M}(A)\Theta_{S}(\tau,z,w).\] \label{ThetaS}
\end{thm}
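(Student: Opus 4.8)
The plan is to obtain Theorem~\ref{ThetaS} as a purely coordinate-level restatement of Theorem~\ref{holTheta}, pushing everything through the map $\pi$ and the dual-basis description of $M^{*}$ recorded just before the statement. First I would fix the dictionary. Under $\pi$ the dual lattice $M^{*}$ corresponds to $S^{-t}\mathcal{O}^{b_{+}}/\sqrt{-d}$, so $D_{M}=M^{*}/M$ corresponds to $\big(S^{-t}\mathcal{O}^{b_{+}}/\sqrt{-d}\big)/\mathcal{O}^{b_{+}}$ and, for $\lambda=\pi(a)$, the basis vector $\mathfrak{e}_{\lambda+M}$ of $\mathbb{C}[D_{M}]$ becomes $\mathfrak{e}_{a+\mathcal{O}^{b_{+}}}$; in particular $\rho_{M}(A)$ acts on the space in which $\Theta_{S}$ is valued. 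On the variable side, writing $\zeta=\sum_{i}z_{i}\varepsilon_{i,\mathbb{C}}$ and $\omega=\sum_{i}w_{i}\varepsilon_{i,\overline{\mathbb{C}}}$, the $\mathbb{C}$-vector-space scalings $\zeta\mapsto\zeta/j(A,\tau)$ and $\omega\mapsto\omega/j^{c}(A,\tau)$ are literally $z\mapsto z/j(A,\tau)$ and $w\mapsto w/j^{c}(A,\tau)$. For the translations I would invoke Corollary~\ref{Konspaces}: $\lambda\mapsto\lambda_{\mathbb{C}}$ is $\mathbb{K}$-linear, so for $\sigma=\pi(u)$ one has $\sigma_{\mathbb{C}}=\sum_{i}u_{i}\varepsilon_{i,\mathbb{C}}$ and the $\zeta$-argument becomes $z+\tau u+v$, while $\lambda\mapsto\lambda_{\overline{\mathbb{C}}}$ is $\mathbb{K}$-\emph{conjugate}-linear, so $\sigma_{\overline{\mathbb{C}}}=\sum_{i}\overline{u_{i}}\varepsilon_{i,\overline{\mathbb{C}}}$ and the $\omega$-argument becomes $w+\tau\overline{u}+\overline{v}$. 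This conjugation is precisely what produces the $\overline{u},\overline{v}$ and the occurrence of $j^{c}$ rather than $j$ in the $w$-slot.

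The second step is to rewrite the bilinear expressions of Theorem~\ref{holTheta} in terms of $S$. Decompose $\lambda=\pi(a)$ as $\lambda_{\mathbb{C}}+\lambda_{\overline{\mathbb{C}}}$ with $\lambda_{\mathbb{C}}=\sum_{i}a_{i}\varepsilon_{i,\mathbb{C}}$ and (again by Corollary~\ref{Konspaces}) $\lambda_{\overline{\mathbb{C}}}=\sum_{i}\overline{a_{i}}\varepsilon_{i,\overline{\mathbb{C}}}$. By Proposition~\ref{decompair}, the pairing $\langle\cdot,\cdot\rangle$ kills a second slot lying in $V_{\mathbb{C}}$, kills a first slot lying in $V_{\overline{\mathbb{C}}}$, and the cross pairing of a $V_{\mathbb{C}}$-vector with a $V_{\overline{\mathbb{C}}}$-vector equals the Hermitian pairing of the underlying real vectors (part~$(iv)$ there with part~$(iii)$ of Lemma~\ref{VQdecom}); for the basis this gives $\langle\varepsilon_{i,\mathbb{C}},\varepsilon_{j,\overline{\mathbb{C}}}\rangle=\langle\varepsilon_{i},\varepsilon_{j}\rangle=S_{ji}$, using the Hermiticity $S_{ij}=\langle\varepsilon_{j},\varepsilon_{i}\rangle$. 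Expanding by $\mathbb{C}$-bilinearity of the extended pairing then yields $|\lambda|^{2}=\langle\lambda_{\mathbb{C}},\lambda_{\overline{\mathbb{C}}}\rangle=a^{*}Sa$, $\langle\zeta,\lambda\rangle=\langle\zeta,\lambda_{\overline{\mathbb{C}}}\rangle=a^{*}Sz$, $\langle\lambda,\omega\rangle=\langle\lambda_{\mathbb{C}},\omega\rangle=w^{t}Sa$, and likewise $\langle\zeta,\omega\rangle=w^{t}Sz$. Substituting these together with the dictionary of the first step into the periodicity law of Theorem~\ref{holTheta} (specialized to $\sigma=\pi(u),\nu=\pi(v)\in M$, where $\chi_{\nu}$ and $t_{\sigma}$ are trivial), into its modular law, and into its defining sum reproduces verbatim the three displays of Theorem~\ref{ThetaS}; holomorphicity is immediate, and the heat equation $4\pi i\partial_{\tau}-\Delta_{M_{\mathbb{R}}}^{h}$ of Proposition~\ref{difeq} becomes the heat equation attached to $S$ once $\Delta_{M_{\mathbb{R}}}^{h}$ is written in the coordinates $z,w$.

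The only genuine obstacle is the bookkeeping of conjugations and indices in the two steps above: that $\pi$ is $\mathcal{O}$-linear while the projection onto $V_{\overline{\mathbb{C}}}$ conjugates $\mathbb{K}$-scalars (Corollary~\ref{Konspaces}), so that the coordinate vector of $\lambda=\pi(a)$ in the basis $\{\varepsilon_{i,\overline{\mathbb{C}}}\}$ is $\overline{a}$ rather than $a$ — this is exactly what turns $a^{t}$ into the conjugate transpose $a^{*}$ in the first two exponents and forces the $\overline{u},\overline{v},j^{c}$ on the $w$-side — and that the cross pairings $\langle\varepsilon_{i,\mathbb{C}},\varepsilon_{j,\overline{\mathbb{C}}}\rangle$ and $\langle\varepsilon_{j,\mathbb{C}},\varepsilon_{i,\overline{\mathbb{C}}}\rangle$ come out as $S_{ji}$ and $S_{ij}$ in the correct order (a direct application of Proposition~\ref{decompair}$(iv)$ and the Hermiticity of $S$). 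Once these are pinned down, the rest is a mechanical rewrite of Theorem~\ref{holTheta} with no further computation.
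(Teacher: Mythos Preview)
Your proposal is correct and matches the paper's approach exactly: the paper states that Theorem~\ref{ThetaS} is obtained from Theorem~\ref{holTheta} by the coordinate substitutions $\zeta=\sum_i z_i\varepsilon_{i,\mathbb{C}}$, $\omega=\sum_i w_i\varepsilon_{i,\overline{\mathbb{C}}}$ and the identification $M^{*}\cong S^{-t}\mathcal{O}^{b_{+}}/\sqrt{-d}$, with the conjugations on the $w$-side explained via Corollary~\ref{Konspaces}. Your write-up simply makes explicit, via Proposition~\ref{decompair}, the computations $|\lambda|^{2}=a^{*}Sa$, $\langle\zeta,\lambda\rangle=a^{*}Sz$, $\langle\lambda,\omega\rangle=w^{t}Sa$, $\langle\zeta,\omega\rangle=w^{t}Sz$ that the paper leaves implicit.
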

Note that for the vector $a$ in the sum from Theorem \ref{ThetaS}, as well as in $u$ representing the translationn by $\sigma \in M$, the action of $\mathcal{O}$ involves the endomorphism $J$, meaning that $a$ and $u$ have to be conjugated (hence taken to $a^{*}$) when it represents $\lambda \in M^{*}$ in the second variable of the Hermitian pairing (including for $|\lambda|^{2}=\langle\lambda,\lambda\rangle$), while for $w$ the extension by scalars is $\mathbb{C}$-linear also in this second variable, leaving $w$ unconjugated (which is also required for the holomorphicity). Also, recall from Corollary \ref{Konspaces} that in the basis $\varepsilon_{i,\mathbb{C}}$ associated with the coordinates from $z$, the action of $\mathbb{K}$ is linear, while using the basis $\varepsilon_{i,\overline{\mathbb{C}}}$, it is conjugate linear, explaining the conjugation in the second variable in the periodicity equation in that theorem (the scalars $j(A,\tau)$ and $j^{c}(A,\tau)$ in the modularity equation are not affected, as they do not involve the action of $\mathbb{K}$). We remark that $\Theta_{S}$ depends also on the order $\mathcal{O}$, despite it not appearing in our notation.

\smallskip

We can now adapt the definition of Jacobi forms to have matrix index as well. For $\Gamma$ as above, we say that $\Phi:\mathcal{H}\times\mathbb{C}^{b_{+}}\times\mathbb{C}^{b_{+}}\to\mathbb{C}$ is a \emph{Hermitian Jacobi form of weight $k$ and index $S$ with respect to $\Gamma$} in case it is holomorphic in the variables from $\mathbb{C}^{b_{+}}\times\mathbb{C}^{b_{+}}$, satisfies the equality \[\Phi(\tau,z+\tau u+v,w+\tau\overline{u}+\overline{v})=\mathbf{e}\big(-\tau u^{*}Su-u^{*}Sz-w^{t}Su\big)\Phi(\tau,z,w)\] for any $u$ and $v$ in $\mathcal{O}^{b_{+}}$ (with $\tau\in\mathcal{H}$ and $z$ and $w$ from $\mathbb{C}^{b_{+}}$), as well as \[\Phi\Big(A\tau,\tfrac{z}{j(A,\tau)},\tfrac{w}{j^{c}(A,\tau)}\Big)=j(A,\tau)^{k}\mathbf{e}\Big(\tfrac{j_{A}' \cdot w^{t}Sz}{j(A,\tau)}\Big)\Phi(\tau,z,w)\] for any $A\in\Gamma$ (and the growth conditions). The Fourier expansion from Equation \eqref{Fourposdef} thus involves the coefficients $\tilde{c}_{m,r}$ for $r \in S^{-t}\mathcal{O}^{b_{+}}/\sqrt{-d}$ (which are constants if and only if $\Phi$ is holomorphic) and the exponents $\mathbf{e}(m\tau+r^{*}Sz+w^{t}Sr)$, the equality of the coefficients becomes $\tilde{c}_{m,r}=\tilde{c}_{m+u^{*}Sr+r^{*}Su+u^{*}Su,r+u}$ for any $u\in\mathcal{O}^{b_{+}}$. The definitions of weakly holomorphic, weak, holomorphic, and cuspidal Hermitian Jacobi forms of index $S$ take the obvious analogous forms.

We can now state the matrix index analogue of Theorem \ref{holJac}.
\begin{thm}
The map taking a modular form $F$ of weight $k-b_{+}$ and representation $\rho_{\overline{M}}$ with respect to $\Gamma$ to the function $\Phi_{S}^{F}:\mathcal{H}\times\mathbb{C}^{b_{+}}\times\mathbb{C}^{b_{+}}\to\mathbb{C}$ defined by $\Phi_{S}^{F}(\tau,z,w):=[\Theta_{S}(\tau,z,w),F(\tau)]_{M}$ is a linear bijection from such modular forms onto the space of Hermitian Jacobi forms of weight $k$ and index $S$ with respect to $\Gamma$, which preserves weak holomorphicity, usual holomorphicity, and cuspidality. \label{JacS}
\end{thm}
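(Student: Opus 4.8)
The plan is to reduce Theorem \ref{JacS} to the already-established Theorem \ref{holJac} by showing that the change of coordinates $z \mapsto \zeta := \sum_i z_i \varepsilon_{i,\mathbb{C}}$ and $w \mapsto \omega := \sum_i w_i \varepsilon_{i,\overline{\mathbb{C}}}$, together with the identification $\pi$ of $\mathcal{O}^{b_+}$ with $M$ (and of $S^{-t}\mathcal{O}^{b_+}/\sqrt{-d}$ with $M^*$), is a dictionary translating each defining condition of a Hermitian Jacobi form of index $S$ into the corresponding defining condition of a Hermitian Jacobi form of index $M$. First I would record the basic translation formulae: for $\lambda = \pi(a)$ and $\mu = \pi(b)$ in $V$ one has $\langle\mu,\lambda\rangle = a^* S b$ by definition of $S$ (being careful that the first variable of $\langle\cdot,\cdot\rangle$ is $\mathbb{K}$-linear and the second $\mathbb{K}$-conjugate-linear, which is exactly why $a^*$, not $a^t$, appears), so that $|\lambda|^2 = a^* S a$; and, using Corollary \ref{Konspaces} and the convention $\zeta = \sum z_i\varepsilon_{i,\mathbb{C}}$, $\omega = \sum w_i \varepsilon_{i,\overline{\mathbb{C}}}$, one gets $\langle\zeta,\lambda\rangle = a^* S z$ and $\langle\lambda,\omega\rangle = w^t S a$ (the $w$ staying unconjugated because the $\varepsilon_{i,\overline{\mathbb{C}}}$-coordinate map is $\mathbb{K}$-conjugate-linear, so the composition with the conjugate-linearity of the second slot of $\langle\cdot,\cdot\rangle$ is $\mathbb{K}$-linear). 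This is precisely the content already summarized in Theorem \ref{ThetaS} and the remark following it, so it can be cited rather than recomputed.

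With this dictionary in hand, the argument is a straightforward matching. I would first observe that $\Theta_S(\tau,z,w) = \Theta_M(\tau,\zeta,\omega)$ under the coordinate change, directly from the formula in Theorem \ref{holTheta} and the translation formulae above (here $M$ is free, so $M^* = \pi\big(S^{-t}\mathcal{O}^{b_+}/\sqrt{-d}\big)$ as established in the text, and $D_M = \big(S^{-t}\mathcal{O}^{b_+}/\sqrt{-d}\big)/\mathcal{O}^{b_+}$). Consequently the pairing $\Phi_S^F(\tau,z,w) = [\Theta_S(\tau,z,w),F(\tau)]_M$ equals $\Phi_M^F(\tau,\zeta,\omega) = [\Theta_M(\tau,\zeta,\omega),F(\tau)]_M$. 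Next I would check that a function $\Phi$ on $\mathcal{H}\times\mathbb{C}^{b_+}\times\mathbb{C}^{b_+}$ is a Hermitian Jacobi form of weight $k$ and index $S$ with respect to $\Gamma$ if and only if the function $\widetilde\Phi(\tau,\zeta,\omega) := \Phi(\tau,z,w)$ (well-defined since the coordinate change is a linear isomorphism) is a Hermitian Jacobi form of weight $k$ and index $M$: holomorphicity in the vector variables is preserved because the coordinate change is $\mathbb{C}$-linear; the periodicity condition for $u,v \in \mathcal{O}^{b_+}$ becomes exactly Equation \eqref{perJac} for $\sigma = \pi(u)$, $\nu = \pi(v)$ in $M$ (note $\tau u + v$ in the $z$-coordinates corresponds to $\tau\sigma_{\mathbb{C}} + \nu_{\mathbb{C}}$ in $\zeta$ and $\tau\overline u + \overline v$ corresponds to $\tau\sigma_{\overline{\mathbb{C}}}+\nu_{\overline{\mathbb{C}}}$ in $\omega$ — this is the content of part $(ii)$ of Lemma \ref{VQdecom} applied to the generator of $\mathcal{O}$, or more simply of Corollary \ref{Konspaces}); the $\operatorname{U}(1,1)(\mathbb{Z})$-transformation law with the factor $\mathbf{e}\big(j_A' w^t S z/j(A,\tau)\big)$ becomes Equation \eqref{modJac} with $l = 0$ since $w^t S z = \langle\zeta,\omega\rangle$; and the growth condition is literally the same, as the associated modular form $F$ (obtained via Corollary \ref{decomtheta}, which applies verbatim once $M$ is viewed as a lattice) is unchanged. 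Finally, I would invoke Theorem \ref{holJac}: under this correspondence $\widetilde\Phi$ ranges over all Hermitian Jacobi forms of weight $k$ and index $M$, which are exactly the $\Phi_M^F$ for $F$ modular of weight $k-b_+$ and representation $\rho_{\overline M}$, and weak holomorphicity / holomorphicity / cuspidality of $\Phi_S^F = \widetilde\Phi$, $F$, and of the Fourier coefficients $\tilde c_{m,r}$ all match up by the same theorem together with the Fourier-expansion description (Equation \eqref{Fourposdef} and its matrix-index rewriting, where $r = \pi^{-1}(\lambda)$ and $|\lambda|^2 = r^* S r$). Linearity of $F \mapsto \Phi_S^F$ is immediate from linearity of the pairing $[\cdot,\cdot]_M$ and of $F \mapsto \Theta_M \otimes F$ being irrelevant here.

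I expect no serious obstacle; the only thing requiring genuine care is bookkeeping of conjugations — making sure that $u^*Su$, $u^*Sz$, $w^tSu$ appear (and not $u^tSu$ etc.) and that this is consistent both with the Hermitian-ness of $S$ and with the $\mathbb{K}$-(conjugate-)linearity conventions of Definition \ref{pairdef}. This is exactly the point flagged in the remark after Theorem \ref{ThetaS}, so I would state it cleanly once (via $\langle\pi(b),\pi(a)\rangle = a^*Sb$ and Corollary \ref{Konspaces}) and then let all the individual verifications follow mechanically. The one structural point worth spelling out is that the coordinate change $(z,w)\mapsto(\zeta,\omega)$ is a genuine real-linear (indeed $\mathbb{C}$-linear in each factor) isomorphism $\mathbb{C}^{b_+}\times\mathbb{C}^{b_+} \xrightarrow{\sim} M_{\mathbb{C}}\times M_{\overline{\mathbb{C}}}$, so that "is a Jacobi form of index $S$" and "is a Jacobi form of index $M$" are genuinely equivalent conditions on corresponding functions, not merely that one implies the other; this is clear since $\{\varepsilon_{i,\mathbb{C}}\}$ is a $\mathbb{C}$-basis of $M_{\mathbb{C}}$ and $\{\varepsilon_{i,\overline{\mathbb{C}}}\}$ a $\mathbb{C}$-basis of $M_{\overline{\mathbb{C}}}$ by part $(i)$ of Lemma \ref{VQdecom} and the freeness of $M$.
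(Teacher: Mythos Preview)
Your proposal is correct and follows exactly the approach the paper intends: the paper does not even write out a separate proof for Theorem \ref{JacS}, treating it as the immediate ``matrix index analogue of Theorem \ref{holJac}'' obtained via the coordinate change $(z,w)\leftrightarrow(\zeta,\omega)$ and the identification $\Theta_{S}=\Theta_{M}$ already recorded in Theorem \ref{ThetaS}. Your careful bookkeeping of the conjugations and the verification that the periodicity, modularity, and growth conditions translate verbatim is precisely the content the paper leaves implicit.
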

For appropriate rank 1 lattices, Theorem \ref{JacS} reproduces some of the results of \cite{[Hav1]}, \cite{[Hav2]}, \cite{[De]}, and others (at least in the integral weight case).

\smallskip

The positive definite Hermitian matrix $S$ in Theorems \ref{ThetaS} and \ref{JacS} represented the pairings of elements from some even $\mathcal{O}$-lattice $M$, and were thus taken from $\mathcal{D}^{-1}$ (with diagonal entries from $\mathbb{Z}$). Applications like those from \cite{[Xi]} and \cite{[BRZ]} require, however, Jacobi forms whose index is a Hermitian matrix $S$ which does not necessarily satisfy these integrality conditions. These will no longer exist as scalar-valued Jacobi forms, and will have to be constructed as vector-valued Jacobi forms. Moreover, in these papers the Hermitian matrix does not have to be positive definite, but can be positive semidefinite as well.

So let $S$ be any positive semidefinite Hermitian matrix of size $c_{+} \times c_{+}$ over $\mathbb{K}$, consider the order $\mathcal{O}\subseteq\mathbb{K}$ as fixed, and let $\tilde{V}$ be a vector space over $\mathbb{K}$ with basis $\{\tilde{\varepsilon}_{i}\}_{i=1}^{c_{+}}$, which we endow with the Hermitian form defined by $\langle\tilde{\varepsilon}_{i},\tilde{\varepsilon}_{j}\rangle=S_{ji}$. This Hermitian form is thus positive semidefinite, in which the elements of $\tilde{V}$ that are perpendicular to all of $\tilde{V}$ are the combinations $\sum_{i=1}^{c_{+}}a_{i}\tilde{\varepsilon}_{i}$ for vectors $a$ that are annihilated by $S$ as a linear operator. We thus define $V$ to be the resulting quotient, denote its dimension by $b_{+}$ and the image of $\tilde{\varepsilon}_{i}\in\tilde{V}$ by $\varepsilon_{i} \in V$, and write $\pi:\mathbb{K}^{c_{+}} \to V$ for the map taking $a\in\mathbb{K}^{c_{+}}$ to $\sum_{i=1}^{c_{+}}a_{i}\varepsilon_{i}$ once again, which is now surjective with kernel equal to $\ker S$.

Set $M_{S}:=\sum_{i=1}^{c_{+}}\mathcal{O}\varepsilon_{i} \subseteq V$ (it is no longer an $\mathcal{O}$-lattice in the sense of Definition \ref{Olat} since it may be not projective and the pairings from $M_{S}$ need not necessarily by in $\mathcal{D}^{-1}$ anymore), and let $M_{S}^{*}$ be its dual $\{\lambda \in V|\langle\lambda,\tilde{M}_{S}\rangle\subseteq\mathcal{D}^{-1}\}$. Then, if $M$ is any even $\mathcal{O}$-lattice, as in Definition \ref{Olat}, that is contained in $M_{S}^{*}$, then $M_{S}$ is contained in $M^{*}$, namely we have $\varepsilon_{i} \in M^{*}$ for every $1 \leq i \leq c_{+}$. For such $M$ we set $\Omega:=\pi^{-1}(M)$ and $\Omega^{*}:=\pi^{-1}(M^{*})$, with the latter containing $\mathcal{O}^{c_{+}}$ since $\pi(\mathcal{O}^{c_{+}})=M_{S} \subseteq M^{*}$ by our assumption, and let $\overline{\Omega}:=\Omega/\ker S$ and $\overline{\Omega}^{*}:=\Omega^{*}/\ker S$ be the associated quotients. Then the quotients $D_{\Omega}:=\Omega^{*}/\Omega$, $D_{\overline{\Omega}}:=\overline{\Omega}^{*}/\overline{\Omega}$, and $D_{M}$ are all isomorphic (the latter via $\pi$), and we set $D_{M,S}$ to be the image $(M_{S}+M)/M$ of $M_{S}$ in $D_{M}$, as well as $D_{\Omega,S}$ its isomorphic image in $D_{\Omega}$, namely $(\mathcal{O}^{c_{+}}+\Omega)/\Omega$.

The orthogonal pairing takes the $\pi$-images of two elements $a$ and $b$ of $\Omega^{*}$ (and more generally, in $\mathbb{K}^{c_{+}}$) to $\operatorname{Tr}^{\mathbb{K}}_{\mathbb{Q}}(a^{*}Sb)\in\mathbb{Q}$, and by projecting to $\mathbb{Q}/\mathbb{Z}$ the result depends only on the class of $b$ (and also just that of $a$) in $D_{\Omega}$. We can thus write $\operatorname{Tr}^{\mathbb{K}}_{\mathbb{Q}}(a^{*}S\gamma)$ for the resulting well-defined expression in $\mathbb{Q}/\mathbb{Z}$ for $a\in\Omega^{*}$ and $\gamma \in D_{\Omega}$, and similarly $\operatorname{Tr}^{\mathbb{K}}_{\mathbb{Q}}(\delta^{*}S\gamma)\in\mathbb{Q}/\mathbb{Z}$ when $\delta$ is also from $D_{\Omega}$, which represents the pairing of their $D_{M}$-images.

The same considerations leading to Theorem \ref{ThetaS}, including the substitutions $\zeta=\sum_{i=1}^{c_{+}}z_{i}\varepsilon_{i,\mathbb{C}}$ and $\omega=\sum_{i=1}^{c_{+}}w_{i}\varepsilon_{i,\overline{\mathbb{C}}}$ for vectors $z$ and $w$ from $\mathbb{C}^{c_{+}}$, then transforms the theta function $\Theta_{M}$ into the function
\begin{equation}
\Theta_{S,\Omega}(\tau,z,w):=\textstyle{\sum_{\gamma \in D_{\Omega}}\big[\sum_{a\in\gamma+\Omega}\mathbf{e}(\tau a^{*}Sa+a^{*}Sz+w^{t}Sa)\big]\mathfrak{e}_{\gamma}} \label{ThetaOmega}
\end{equation}
(note that both sides indeed remain invariant when one changes $z$ or $w$ by an element of the appropriate complexification of $\ker S$). The map taking $\mathcal{O}^{c_{+}}$ to its image in $D_{\Omega,S} \subseteq D_{\Omega} \cong D_{M}$ (with the quadratic structure) combines with the operators from Definition \ref{opersDM} to produce the operators $t_{u}:\mathfrak{e}_{\gamma}\mapsto\mathfrak{e}_{\gamma-(u+\Omega)}$ and $\chi_{v}:\mathfrak{e}_{\gamma}\mapsto\mathbf{e}\big(\operatorname{Tr}^{\mathbb{K}}_{\mathbb{Q}}(v^{*}S\gamma)\big)\mathfrak{e}_{\gamma}$ for every $u$ and $v$ from $\mathcal{O}^{c_{+}}$. Then the periodicity from Theorem \ref{holTheta}, restricted to $\mathcal{O}^{c_{+}}\subseteq\Omega^{*}$, becomes
\begin{equation}
\Theta_{S,\Omega}(\tau,z+\tau u+v,w+\tau\overline{u}+\overline{v})=\mathbf{e}\big(-\tau u^{*}Su-u^{*}Sz-w^{t}Su\big)\chi_{v}t_{u}\Theta_{S,\Omega}(\tau,z,w) \label{perOmega}
\end{equation}
for any such $u$ and $v$ (this includes the scalar-valued periodicity for the sublattice $\Omega\cap\mathcal{O}^{c_{+}}=\pi^{-1}(M \cap M_{S}^{*})$, but not necessarily for all of $\Omega$ and $M$). The theta function $\Theta_{S,\Omega}$ also has the modularity property from Theorem \ref{ThetaS}, with the representation $\rho_{M}$ associated with our $\mathcal{O}$-lattice $M$.

Note that in the case considered in that theorem, the module $M_{S}$ is an even free $\mathcal{O}$-lattice and is thus contained in $M_{S}^{*}$, the lattice $M$ is $M_{S}$, and thus $\Omega$ (which also equals $\overline{\Omega}$, and $c_{+}=b_{+}$, since $S$ is positive definite now) is $\mathcal{O}^{b_{+}}$. Hence the function $\Theta_{S,\Omega}$ from Equation \eqref{ThetaOmega} is $\Theta_{S}$ defined there, and Equation \eqref{perOmega} reduces to the scalar-valued periodicity property from that theorem, since $t_{u}$ and $\chi_{v}$ become trivial for $u$ and $v$ in $\mathcal{O}^{b_{+}}=\Omega$.

\smallskip

Consider now any other discriminant form $D$ and a horizontal isotropic subgroup $H$ in $D_{\Omega} \oplus D$, such that the projection $H_{\Omega}$ of $H$ to $D_{\Omega}$ is perpendicular to $D_{\Omega,S}$. Then Lemma \ref{indchit} implies that $t_{u}$ and $\chi_{v}$ remain well-defined on the associated quotient $\Delta_{H}:=H^{\perp}/H$ for any $u$ and $v$ in $\mathcal{O}^{c_{+}}$. Recalling that for any $a\in\mathcal{O}^{c_{+}}$, its image in $D_{\Omega}$ must have quadratic value $a^{*}Sa+\mathbb{Z}\in\mathbb{Q}/\mathbb{Z}$, we may define vector-valued Hermitian Jacobi forms with index $S$ as follows.
\begin{defn}
Let $\Delta$ be a discriminant form, admitting a group homomorphism $\psi:\mathcal{O}^{c_{+}}\to\Delta$ such that $\psi(a)=0$ in case $Sa=0$ and $|\psi a|^{2}=a^{*}Sa+\mathbb{Z}$ for any $a\in\mathcal{O}^{c_{+}}$, and let $\Gamma$ be a finite index subgroup of $\operatorname{U}(1,1)(\mathbb{Z})$. Then a \emph{Hermitian Jacobi form of weight $k$, representation $\rho_{\Delta}$, and index $(S,\psi)$} is a smooth function $\Phi:\mathcal{H}\times\mathbb{C}^{c_{+}}\times\mathbb{C}^{c_{+}}\to\mathbb{C}[\Delta]$ which satisfies the periodicity property
\begin{equation}
\Phi(\tau,z+\tau u+v,w+\tau\overline{u}+\overline{v})=\mathbf{e}\big(-\tau u^{*}Su-u^{*}Sz-w^{t}Su\big)\chi_{\psi v}t_{\psi u}\Phi(\tau,z,w) \label{SperHerm}
\end{equation}
for every $u$ and $v$ in $\mathcal{O}^{c_{+}}$, the modularity property
\begin{equation}
\Phi\Big(A\tau,\tfrac{z}{j(A,\tau)},\tfrac{w}{j^{c}(A,\tau)}\Big)=j(A,\tau)^{k}\mathbf{e}\Big(\tfrac{j_{A}' \cdot w^{t}Sz}{j(A,\tau)}\Big)\rho_{\Delta}(A)\Phi(\tau,z,w), \label{SmodHerm}
\end{equation}
and the exponential growth condition on the resulting Fourier coefficients. \label{defindS}
\end{defn}

Before we state the result for the Hermitian Jacobi forms from Definition \ref{defindS}, we consider the orthogonal analogue, for which establishing the result is simpler, for reasons to be explained just before the result itself, which appears as Theorem \ref{HJS} below.

\smallskip

In a similar manner, if $S$ is any positive semidefinite symmetric matrix over $\mathbb{Q}$, of size $c_{+} \times c_{+}$, then we can endow the vector space $\tilde{V}:=\bigoplus_{i=1}^{c_{+}}\mathbb{Q}\tilde{\varepsilon}_{i}$ by the positive definite quadratic form taking $\sum_{i=1}^{c_{+}}a_{i}\tilde{\varepsilon}_{i}$ for $a\in\mathbb{Q}^{c_{+}}$ to $a^{t}Sa$. Then we get $(\tilde{\varepsilon}_{i},\tilde{\varepsilon}_{j})=2S_{ji}$, yielding a positive semidefinite symmetric bilinear form whose kernel again coincides with that of $S$, and we define the corresponding quotient $V$ of dimension $b_{+}$, the images $\varepsilon_{i}$, $1 \leq i \leq c_{+}$, and the surjection $\pi:\mathbb{Q}^{c_{+}} \to V$ as above. We again set $L_{S}=\sum_{i=1}^{c_{+}}\mathbb{Z}\varepsilon_{i} \subseteq V$ and its dual $L_{S}^{*}$ there, and given an even lattice $L \subseteq L_{S}^{*} \subseteq V$ we again define $\Omega:=\pi^{-1}(L)$ as contained in $\Omega^{*}:=\pi^{-1}(L^{*})$, and the quotients $\overline{\Omega}:=\Omega/\ker S$ and $\overline{\Omega}^{*}:=\Omega^{*}/\ker S$, with the isomorphic quotients $D_{\Omega}$, $D_{\overline{\Omega}}$, and $D_{L}$. Observing that the pairing of $\pi(a)$ with $\pi(b)$ is now $2a^{t}Sb$, and expressing the variable $\zeta \in V_{\mathbb{C}}$ of any orthogonal Jacobi theta function (such as the one from Equation \eqref{Thetaorth}, but for positive definite $L$) as $\sum_{i=1}^{c_{+}}z_{i}\varepsilon_{i}$ for some $z\in\mathbb{C}^{c_{+}}$, we can give the orthogonal analogue of Definition \ref{defindS}.
\begin{defn}
Consider a discriminant form $\Delta$, a subgroup $\Gamma$ of finite index in $\operatorname{SL}_{2}(\mathbb{Z})$ (or in its metaplectic cover), and take a map $\psi:\mathbb{Z}^{c_{+}}\to\Delta$ satisfying $\frac{(\psi a)^{2}}{2}=a^{t}Sa+\mathbb{Z}$ for every $a\in\mathbb{Z}^{c_{+}}$ and vanishing on elements of $\ker S$. We then define a \emph{Jacobi form of weight $k$, representation $\rho_{\Delta}$, and index $(S,\psi)$} to be a smooth function $\Phi:\mathcal{H}\times\mathbb{C}^{c_{+}}\to\mathbb{C}[\Delta]$ for which the equality
\begin{equation}
\Phi(\tau,z+\tau u+v)=\mathbf{e}\big(-\tau u^{t}Su-2u^{t}Sz\big)\chi_{\psi v}t_{\psi u}\Phi(\tau,z) \label{Sperorth}
\end{equation}
holds for any to elements $u$ and $v$ from $\mathbb{Z}^{c_{+}}$ as does the equality
\begin{equation}
\Phi\big(A\tau,\tfrac{z}{j(A,\tau)}\big)=j(A,\tau)^{k}\mathbf{e}\Big(\tfrac{j_{A}' \cdot z^{t}Sz}{j(A,\tau)}\Big)\rho_{\Delta}(A)\Phi(\tau,z) \label{Smodorth}
\end{equation}
for $A\in\Gamma$, with an exponential growth condition on the Fourier coefficients. \label{indSdef}
\end{defn}
The Jacobi forms from Definition \ref{indSdef} are slightly related, in the positive definite case, to those investigated in \cite{[W]}.

Let $\psi$ be as in Definition \ref{defindS}, and set $\Omega_{\mathbb{Z}}:=\ker\psi$, $\Omega:=\Omega_{\mathbb{Z}}+\ker S\subseteq\mathbb{Q}^{c_{+}}$ (this equals $\Omega_{\mathbb{Z}}$ in the positive definite case, but not otherwise), $\overline{\Omega}=\Omega/\ker S$, and $L:=\pi(\Omega)=\pi(\Omega_{\mathbb{Z}})$. The fact that $\psi$ preserves the quadratic value modulo $\mathbb{Z}$ implies that $L$ is an even lattice in $V$, and since changing an element of $\mathbb{Z}^{c_{+}}$ by $\Omega$ does not affect the quadratic value (by the same assumption), we deduce that $L_{S}=\pi(\mathbb{Z}^{c_{+}})$ is contained in $L^{*}$. We define $\Omega^{*}:=\pi^{-1}(L^{*})$ and $\overline{\Omega}:=\Omega^{*}/\ker S$, with the quotients $D_{\Omega} \cong D_{\overline{\Omega}} \cong D_{L}$ (thus inheriting the quadratic structure), and then $\psi$ identifies the quotient $D_{\Omega,S}:=\mathbb{Z}^{c_{+}}/\Omega_{\mathbb{Z}} \subseteq D_{\Omega}$ with a subgroup of $\Delta$. Set $H_{\Omega}:=D_{\Omega,S}^{\perp} \subseteq D_{\Omega}$, so that $D_{\Omega,S}$ is $H_{\Omega}^{\perp}$, and $H_{\Omega}^{\perp}$ is embedded in $\Delta$.

Now, note that Definition \ref{indSdef} involves only the discriminant form $\Delta$ and not $D$ (as does Definition \ref{defindS}), and the construction of $D$ from $\Delta$, $D_{L} \cong D_{\Omega}$, and the image $H_{L}$ of $H_{\Omega}$ is not always clear (see Remark \ref{noconstD}). We define $\Theta_{S,\Omega}$ in this setting to be the orthogonal analogue $\sum_{a\in\Omega^{*}}\mathbf{e}(\tau a^{t}Sa+2a^{t}Sz)\mathfrak{e}_{a+\Omega}$ of the theta function from Equation \eqref{ThetaOmega} (which satisfies the periodicity condition comparing $\Theta_{S,\Omega}(\tau,z+\tau u+v)$ with $\mathbf{e}(-\tau u^{t}Su-2u^{t}Sz)\chi_{v}t_{u}\Theta_{S,\Omega}(\tau,z)$ for any $u$ and $v$ in $\mathbb{Z}^{c_{+}}$), and obtain the following result.
\begin{thm}
Assume that there exists a discriminant form $D$, with a subgroup $H_{D}$ that is isomorphic to $H_{\overline{L}}$ such that if $H$ is the isotropic horizonal subgroup of $D_{\Omega} \oplus D$ resulting from this identification as in Lemma \ref{horizdecom}, then the associated quotient $\Delta_{H}:=H^{\perp}/H$ is isomorphic with $\Delta$ in a way that commutes with the embedddings of $H_{L}^{\perp}=H_{\Omega}^{\perp}$ into $\Delta_{H}$ and into $\Delta$. Then any Jacobi form $\Phi$ of weight $k$, representation $\rho_{\Delta}$, and index $(S,\psi)$ with respect to $\Gamma$, as in Definition \ref{indSdef}, is given by $\Phi(\tau,z)=\downarrow_{H}\big(\Theta_{\Omega,S}(\tau,z) \otimes F(\tau)\big)$ for a unique modular form $F$ of weight $k-\frac{b_{+}}{2}$ and representation $\rho_{D}$ with respect to $\Gamma$, which is holomorphic if and only if $\Phi$ is. \label{JacindS}
\end{thm}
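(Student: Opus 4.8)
The plan is to reduce Theorem~\ref{JacindS} to Theorem~\ref{VVorth} (in its holomorphic positive definite form, Theorem~\ref{holorth}), by recognizing a Jacobi form of index $(S,\psi)$ as a vector-valued Jacobi form of lattice index $(L,H,v)$, where $v$ is the unique point of the (trivial) Grassmannian of the positive definite lattice $L$. Most of the analytic content then comes for free; the work lies in matching the two sets of data.

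First I would show that $\Phi$ factors through $\pi$. Applying the periodicity \eqref{Sperorth} with $v=0$, and separately with $u=0$, to vectors in $\ker S\cap\mathbb{Z}^{c_{+}}$ --- for which $Su=Sv=0$ and $\psi u=\psi v=0$ by the defining properties of $\psi$ --- shows that for fixed $\tau$ the function $z\mapsto\Phi(\tau,z)$ is invariant under translation by the lattice $\big(\ker S\cap\mathbb{Z}^{c_{+}}\big)+\tau\big(\ker S\cap\mathbb{Z}^{c_{+}}\big)$, which has full rank $2(c_{+}-b_{+})$ in $\ker\pi_{\mathbb{C}}=(\ker S)\otimes_{\mathbb{Q}}\mathbb{C}$. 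Being holomorphic in $z$, the restriction of $\Phi$ to any coset of $\ker\pi_{\mathbb{C}}$ thus descends to a holomorphic function on a compact complex torus, hence is constant; so $\Phi(\tau,z)$ depends only on $\tau$ and on $\zeta:=\pi_{\mathbb{C}}(z)\in L_{\mathbb{C}}$, and we may regard $\Phi$ as a function $\mathcal{H}\times L_{\mathbb{C}}\to\mathbb{C}[\Delta]$. Under this substitution and the isomorphism $D_{\Omega}\cong D_{L}$ induced by $\pi$, the theta function $\Theta_{S,\Omega}$ becomes exactly the positive definite orthogonal theta function of Equation~\eqref{Thetaorth} with the variable $v$ suppressed, since $a^{t}Sa=\tfrac{(\pi a)^{2}}{2}$ and $2a^{t}Sz=(\pi a,\zeta)$.

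Next I would translate the transformation laws. For $u,v\in\mathbb{Z}^{c_{+}}$ set $\sigma:=\pi(u)$ and $\nu:=\pi(v)$, which lie in $L_{S}\subseteq L^{*}$; then $u^{t}Su=\tfrac{\sigma^{2}}{2}$, $2u^{t}Sz=(\sigma,\zeta)$, and $z^{t}Sz=\tfrac{\zeta^{2}}{2}$, so the exponential prefactors in \eqref{Sperorth} and \eqref{Smodorth} become those in \eqref{perorth} and \eqref{modorth} (with $\zeta_{v_{-}}=0$). From $H_{\Omega}=D_{\Omega,S}^{\perp}$ one gets $H_{\Omega}^{\perp}=D_{\Omega,S}$, and applying $\pi$ gives $H_{L}^{\perp}=D_{L,S}=(L_{S}+L)/L$; since $L\subseteq L_{S}$, the elements $\sigma\in L^{*}$ with $\sigma+L\in H_{L}^{\perp}$ are precisely those of $L_{S}=\pi(\mathbb{Z}^{c_{+}})$. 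Moreover $\psi$ factors through $\mathbb{Z}^{c_{+}}/\Omega_{\mathbb{Z}}=D_{\Omega,S}=H_{\Omega}^{\perp}$, and the compatibility hypothesis --- the isomorphism $\Delta\cong\Delta_{H}$ intertwining the two embeddings of $H_{L}^{\perp}$ --- identifies the operators $\chi_{\psi v},t_{\psi u}$ on $\mathbb{C}[\Delta]$ with the operators $\chi_{\nu},t_{\sigma}$ on $\mathbb{C}[\Delta_{H}]$ of Lemma~\ref{indchit}. Hence \eqref{Sperorth} for all $u,v\in\mathbb{Z}^{c_{+}}$ is equivalent to \eqref{perorth} for all admissible $\sigma,\nu$, while \eqref{Smodorth} is \eqref{modorth} with weight $(k,0)$, and the Fourier growth conditions correspond directly. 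Thus $\Phi$ is a Jacobi form of weight $(k,0)$, index $(L,H,v)$ and representation $\rho_{\Delta_{H}}\cong\rho_{\Delta}$ in the sense of Definition~\ref{orthVV}, and Theorem~\ref{VVorth} produces the unique $F:\mathcal{H}\to\mathbb{C}[D]$ with $\Phi=\downarrow_{H}\big(\Theta_{S,\Omega}\otimes F\big)$, modular of weight $k-\tfrac{b_{+}}{2}$ (as $b_{-}=0$) and representation $\rho_{D}$; and since for positive definite $L$ the operator $4\pi i\partial_{\overline{\tau}}-\Delta_{v_{-}}^{h}$ reduces to $4\pi i\partial_{\overline{\tau}}$, the equivalence ``$\Phi$ pseudo-holomorphic $\iff$ $F$ weakly holomorphic'' from Theorem~\ref{VVorth} becomes ``$\Phi$ holomorphic in $\tau$ $\iff$ $F$ holomorphic'', as asserted.

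I expect the main obstacle to be exactly this last piece of bookkeeping: showing that the admissible translation vectors match on both sides and that the operators $\chi,t$ are intertwined --- i.e.\ that the hypothesis relating $D$, $H$ and $\Delta$ is precisely what is needed to pass between Definition~\ref{indSdef} and Definition~\ref{orthVV}. The descent of $\Phi$ to $L_{\mathbb{C}}$ via the compact-torus argument is the only substantive point beyond unwinding notation, and everything else follows from Theorem~\ref{VVorth}.
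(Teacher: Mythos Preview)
Your proposal is correct and follows essentially the same approach as the paper: both show that $\Phi$ descends to $\mathcal{H}\times L_{\mathbb{C}}$ via the holomorphic-plus-lattice-periodic argument on $(\ker S)_{\mathbb{C}}$, then translate the conditions of Definition~\ref{indSdef} into those of Definition~\ref{orthVV} and invoke Theorem~\ref{VVorth}. Your write-up is in fact slightly more explicit than the paper's on two points --- the identification $H_{L}^{\perp}=L_{S}/L$ that makes the admissible $\sigma,\nu$ match exactly $\pi(\mathbb{Z}^{c_{+}})$, and the observation that for positive definite $L$ pseudo-holomorphicity reduces to ordinary holomorphicity since $\Delta_{v_{-}}^{h}=0$ --- but the logical structure is identical.
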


\begin{proof}
The fact that $\psi$ is trivial on elements of $(\ker S)_{\mathbb{Z}}:=\ker S\cap\mathbb{Z}^{c_{+}}$ in Definition \ref{indSdef} implies that Equation \eqref{Sperorth} there yields the invariance of $\Phi$, for fixed $\tau\in\mathcal{H}$, under translations from the full lattice $(\ker S)_{\mathbb{Z}}\oplus\tau(\ker S)_{\mathbb{Z}}$ inside the complex vector space $(\ker S)_{\mathbb{C}}:=(\ker S)\otimes_{\mathbb{Q}}\mathbb{C}$. As $\Phi$ is holomorphic in $z$, this implies that $\Phi(\tau,z)$ does not change when $z$ is altered by an element of this vector space, meaning that its dependence on $z$ can be given in terms of $z+(\ker S)_{\mathbb{C}}$, or equivalently its isomorph $V_{\mathbb{C}}$ via the complexification of $\pi$. We thus write $\Phi(\tau,z)=\tilde{\Phi}(\tau,\zeta)$ for $\zeta=\sum_{i=1}^{c_{+}}z_{i}\varepsilon_{i}$ and some smooth function $\tilde{\Phi}$ which is holomorphic in $\zeta$.

By substituting $\sigma=\sum_{i=1}^{c_{+}}u_{i}\varepsilon_{i}$ and $\nu=\sum_{i=1}^{c_{+}}v_{i}\varepsilon_{i}$, and noting that $t_{\sigma}$ and $\chi_{\nu}$ act as $t_{\psi u}$ and $\chi_{\psi v}$ respectively, Equation \eqref{Sperorth} from Definition \ref{indSdef} for $\Phi$ becomes Equation \eqref{perorth} inside Definition \ref{orthVV} for the function $\tilde{\Phi}$ and the positive definite lattice $L$. Moreover, the fact that $\frac{\zeta^{2}}{2}$ equals $z^{t}Sz$ compares Equation \eqref{Smodorth} for $\Phi$ with Equation \eqref{modorth} for $\tilde{\Phi}$. This implies that $\tilde{\Phi}$ is a Jacobi form of weight $k$, index $(L,H)$ with our $H$, and representation $\rho_{\Delta}$, to which we can apply  Theorem \ref{VVorth} by the assumption on the existence of $D$. The fact that the theta function $\Theta_{L}(\tau,\zeta)$ from that theorem, evaluated at our $\zeta$, is just $\Theta_{S,\Omega}(\tau,z)$ establishes the desired result. This proves the theorem.
\end{proof}

\begin{rmk}
Some Jacobi forms $\Phi$ may have finer periodicity properties, thus increasing $\Omega$ to a larger sublattice in $\mathbb{Q}^{c_{+}}$ via Theorem \ref{peruparrow} (up to some arrow operators), and in some situations it may be more convenient to work with the maximal one from Remark \ref{maxlat}. Note that decreasing the subgroup $H_{L}^{\perp}$ as in Theorem \ref{largerHperp} means increasing the image $D_{\Omega,S}$ of $\mathbb{Z}^{c_{+}}$ in $D_{\Omega}$, producing functions that may be better described in terms of another map $\psi$, hence a different matrix $S$. \label{maxS}
\end{rmk}

\smallskip

In order to establish an analogue of Theorem \ref{JacindS} for the Hermitian Jacobi forms from Definition \ref{defindS}, we can use the same construction, but we need the image $\overline{\Omega}$ of the kernel $\Omega_{\mathcal{O}}$ of $\psi$ inside $\mathcal{O}^{c_{+}}$, or of $\Omega:=\Omega_{\mathcal{O}}+\ker S$, modulo $\ker S$ to be an $\mathcal{O}$-module, and a projective one (then it will be an $\mathcal{O}$-lattice as in Definition \ref{Olat}). The module condition can be obtained by assuming that $\Delta$ carries an $\mathcal{O}$-structure as in Remark \ref{JonD}, and $\psi$ to be a map of $\mathcal{O}$-modules. The projectivity is now immediate in case $\mathcal{O}=\mathcal{O}_{\mathbb{K}}$, but not at all clear otherwise. Since we need $\psi$ to embed $D_{\Omega,S}$ into $\Delta$, we cannot replace $\Omega$ by a subgroup that may have this properties in a straightforward manner. This is why we make the additional assumptions in the following result.
\begin{thm}
Take $S$ and $\psi$ as in Definition \ref{defindS} for which the quotient $\overline{\Omega}$ of $\ker\psi$ modulo its intersection with $\ker S$ is a projective $\mathcal{O}$-module, and let $D$ be a discriminant form satisfying the assumptions from Theorem \ref{JacindS}. In this case, for any Hermitian Jacobi form $\Phi$ of weight $k$, representation $\rho_{\Delta}$, and index $(S,\psi)$ with respect to $\Gamma$ there exists a unique modular form $F$, holomorphic precisely when $\Phi$ is and having weight $k-b_{+}$ and representation $\rho_{D}$ with respect to $\Gamma$, such that the equality $\Phi(\tau,z,w)=\downarrow_{H}\big(\Theta_{\Omega,S}(\tau,z,w) \otimes F(\tau)\big)$, with $\Theta_{\Omega,S}$ from Equation \eqref{ThetaOmega}, holds for every $\tau\in\mathcal{H}$ and $z$ and $w$ from $\mathbb{C}^{c_{+}}$. \label{HJS}
\end{thm}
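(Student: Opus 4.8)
The plan is to mimic the proof of Theorem \ref{JacindS}, reducing the statement to the vector-valued Hermitian theory of Section \ref{VVJF} via the comparison $\Theta_M\leftrightarrow\Theta_{\Omega,S}$ and Theorem \ref{VVhol}. The first step is to exploit the hypothesis that $\psi$ vanishes on $\ker S$: Equation \eqref{SperHerm} with $u,v$ in $\ker S\cap\mathcal{O}^{c_{+}}$ (together with the translates coming from $\tau u$ and $\overline{\tau}u$) shows that, for fixed $\tau\in\mathcal{H}$, the holomorphic function $(z,w)\mapsto\Phi(\tau,z,w)$ is invariant under the full period lattice inside $(\ker S)_{\mathbb{C}}\oplus(\ker S)_{\overline{\mathbb{C}}}$, hence by holomorphicity depends on $z$ and $w$ only through their images in $V_{\mathbb{C}}$ and $V_{\overline{\mathbb{C}}}$. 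Writing $\zeta=\sum_{i}z_{i}\varepsilon_{i,\mathbb{C}}$ and $\omega=\sum_{i}w_{i}\varepsilon_{i,\overline{\mathbb{C}}}$ therefore produces a smooth function $\widetilde{\Phi}(\tau,\zeta,\omega)$ on $\mathcal{H}\times M_{\mathbb{C}}\times M_{\overline{\mathbb{C}}}$, holomorphic in $\zeta$ and $\omega$, where $M:=\overline{\Omega}=\pi(\ker\psi)$.

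Next I would verify that $M$, with the Hermitian pairing inherited from $S$ (positive semidefinite, descending to a positive definite form on $V$), is an even positive definite $\mathcal{O}$-lattice in the sense of Definition \ref{Olat}. The $\mathcal{O}$-module structure and the projectivity are precisely the running hypotheses (the $\mathcal{O}$-structure on $\Delta$ with $\psi$ an $\mathcal{O}$-module map, plus the assumed projectivity of $\overline{\Omega}$). The remaining point, $\langle M,M\rangle\subseteq\mathcal{D}^{-1}$ and in fact $M\subseteq M_{S}^{*}$, follows by polarizing the identity $|\psi a|^{2}=a^{*}Sa+\mathbb{Z}$: for $a\in\ker\psi$ and any $b\in\mathcal{O}^{c_{+}}$ one obtains $\operatorname{Tr}^{\mathbb{K}}_{\mathbb{Q}}(a^{*}Sb)\in\mathbb{Z}$, and replacing $a$ by $ra$ for $r\in\mathcal{O}$ (permissible since $\psi$ is $\mathcal{O}$-linear) gives $\operatorname{Tr}^{\mathbb{K}}_{\mathbb{Q}}\big((a^{*}Sb)r\big)\in\mathbb{Z}$ for all $r\in\mathcal{O}$, whence $a^{*}Sb\in\mathcal{D}^{-1}$ by the characterization of the inverse different used in the proof of Lemma \ref{duallat}. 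Corollary \ref{Mindual} then makes $D_{M}$ finite and $M^{*}$ projective, and the construction preceding the theorem identifies $D_{\Omega,S}$ with $H_{\Omega}^{\perp}\subseteq D_{\Omega}\cong D_{M}$, embedding it into $\Delta$ via $\psi$; together with the assumed discriminant form $D$ and the horizontal isotropic $H\subseteq D_{\Omega}\oplus D$ this places us in the situation of Definition \ref{VVdef}, with $\Delta_{H}\cong\Delta$ compatibly with the embeddings of $H_{M}^{\perp}$.

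Then I would translate the two functional equations. Substituting $\sigma=\sum_{i}u_{i}\varepsilon_{i}$, $\nu=\sum_{i}v_{i}\varepsilon_{i}$, whose $D_{M}$-images run over $H_{M}^{\perp}$, and using that $t_{\psi u}$ and $\chi_{\psi v}$ are exactly the operators $t_{\sigma}$ and $\chi_{\nu}$ of Definition \ref{opersDM} induced on $\mathbb{C}[\Delta_{H}]$ by Lemma \ref{indchit}, Equation \eqref{SperHerm} for $\Phi$ becomes Equation \eqref{perHMperp} for $\widetilde{\Phi}$ with the lattice $M$ and the subgroup $H$ (note $u^{*}Su=|\sigma|^{2}$, while $u^{*}Sz=\langle\zeta,\sigma\rangle$ and $w^{t}Su=\langle\sigma,\omega\rangle$ by parts $(ii)$ and $(iii)$ of Proposition \ref{decompair}); and since $w^{t}Sz=\langle\zeta,\omega\rangle$ by part $(iv)$ of that proposition, Equation \eqref{SmodHerm} becomes Equation \eqref{VVmodJac} for $\widetilde{\Phi}$. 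Thus $\widetilde{\Phi}$ is a vector-valued Hermitian Jacobi form of weight $k$, index $(M,H)$, and representation $\rho_{\Delta_{H}}\cong\rho_{\Delta}$ with respect to $\Gamma$, and Theorem \ref{VVhol} (or Theorem \ref{VVmain} in general) produces a unique modular form $F$ of weight $k-b_{+}$ and representation $\rho_{D}$ with $\widetilde{\Phi}=\downarrow_{H}(\Theta_{M}\otimes F)$, holomorphic exactly when $\widetilde{\Phi}$, equivalently $\Phi$, is. Finally, unwinding the coordinate choice shows that $\Theta_{M}$ evaluated at $(\zeta,\omega)$ coincides with $\Theta_{\Omega,S}$ from Equation \eqref{ThetaOmega} evaluated at $(z,w)$, which yields the asserted formula.

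The step I expect to be the main obstacle is the second one: confirming that $M=\overline{\Omega}$ is a genuine even $\mathcal{O}$-lattice as in Definition \ref{Olat}. Projectivity is handed to us, but one must be careful that the induced pairing actually lands in $\mathcal{D}^{-1}$ (the polarization argument above genuinely uses $\mathcal{O}$-linearity of $\psi$, not merely $\mathbb{Z}$-linearity, and would fail for an arbitrary $\psi$) and that $L_{S}\subseteq M^{*}$, so that the sum defining $\Theta_{\Omega,S}$ ranges over the correct coset space $D_{\Omega}\cong D_{M}$; everything else is bookkeeping already performed for the orthogonal case in Theorem \ref{JacindS}.
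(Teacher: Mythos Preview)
Your proposal is correct and follows essentially the same route as the paper's proof: reduce to the lattice setting by descending through $\ker S$, translate Equations \eqref{SperHerm} and \eqref{SmodHerm} into Equations \eqref{perHMperp} and \eqref{VVmodJac} via Proposition \ref{decompair}, and then invoke Theorem \ref{VVmain} (equivalently Theorem \ref{VVhol} in the positive definite case) together with the identification of $\Theta_{M}$ with $\Theta_{\Omega,S}$. The paper's proof is terser and simply states that ``the assumptions on $\overline{\Omega}$ and $D$'' place $\widetilde{\Phi}$ in the setting of Definition \ref{VVdef}, whereas you spell out the verification that $M=\overline{\Omega}$ is an even $\mathcal{O}$-lattice via the polarization argument using $\mathcal{O}$-linearity of $\psi$; this is a welcome clarification of a point the paper leaves implicit in the paragraph preceding the theorem. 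One small notational slip: in your first paragraph the translates in the $w$-variable are by $\tau\overline{u}$ (not $\overline{\tau}u$), matching Equation \eqref{SperHerm}, but this does not affect the argument.
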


\begin{proof}
We follow the proof of Theorem \ref{JacindS}. Equation \eqref{SperHerm} and the triviality assumption on $\psi$ yields the invariance of $\Phi$ under translating the pair $(z,w)$ by the sum of $\{(a,\overline{a})|a\in(\ker S)_{\mathbb{Z}}\}$ and $\tau\{(a,\overline{a})|a\in(\ker S)_{\mathbb{Z}}\}$ (via Corollary \ref{Konspaces}), which is a full lattice inside $(\ker S)_{\mathbb{C}}\oplus(\ker S)_{\overline{\mathbb{C}}}$. We can thus write $\Phi(\tau,z,w)$ as $\tilde{\Phi}(\tau,\zeta,\omega)$ with $\zeta=\sum_{i=1}^{c_{+}}z_{i}\varepsilon_{i,\mathbb{C}}$ and $\omega=\sum_{i=1}^{c_{+}}w_{i}\varepsilon_{i,\overline{\mathbb{C}}}$ for some smooth $\tilde{\Phi}$. Using the same substitutions (and Corollary \ref{Konspaces} again), Equations \eqref{SperHerm} and \eqref{SmodHerm} for $\Phi$ translate into Equations \eqref{perHMperp} and \eqref{VVmodJac} in Definition \ref{VVdef} for $\tilde{\Phi}$ (the latter uses the equality $\langle\zeta,\omega\rangle=w^{t}Sz$ obtained via Proposition \ref{decompair}), so that the assumptions on $\overline{\Omega}$ and $D$ show that $\tilde{\Phi}$ is a Hermitian Jacobi form of weight $k$, index $(M,H)$ for $M:=\pi(\ker\psi)$ and our $H$, and representation $\rho_{\Delta}$. Theorem \ref{VVmain} and the fact that substituting our $\zeta$ and $\omega$ inside the theta function from Equation \eqref{JacTheta} for our positive definite $\mathcal{O}$-lattice $M$ produces $\Theta_{S,\Omega}(\tau,z,w)$ from Equation \eqref{ThetaOmega} yields the desired assertion. This proves the theorem.
\end{proof}
Of course, Remark \ref{maxS} holds equally well in the Hermitian setting considered in Theorem \ref{HJS}.

\smallskip

We conclude with the following formula, which is also useful for several applications of orthogonal and Hermitian Jacobi forms of matrix index.
\begin{rmk}
The Fourier expansion from Equation \eqref{Fourposdef} for $\zeta$ and $\omega$ given in terms of $z$ and $w$ from $\mathbb{C}^{c_{+}}$, can be written in terms of coordinates from $\mathbb{K}$, also in the vector-valued setting considered in Definition \ref{defindS} and Theorem \ref{HJS}. In these coordinates, the expression $\langle\zeta,\lambda\rangle+\langle\lambda,\omega\rangle$ will be $r^{*}z+w^{t}r$ for some $r\in\mathbb{K}^{c_{+}}$ representing the coordinates of $\lambda$ in the appropriate dual space, and the fact that the Jacobi form $\Phi$ was seen in the proof of Theorem \ref{HJS} to be unaffected by changing $z$ or $w$ by elements of $\ker S$, we deduce that only vectors $r\in\mathbb{K}^{c_{+}}$ such that $r^{t}\ker S=\{0\}$, or equivalently $r^{*}\ker S=\{0\}$, may be considered (indeed, this is the dual condition to the division by $\ker S$ resulting from $\pi$). We thus extend the notation from Theorem \ref{ThetaS} and write $S^{-t}\mathcal{O}^{c_{+}}\big/\sqrt{-d}$ for the set of elements $r\in\mathbb{K}^{c_{+}}$ satisfying $S^{t}r\in(\mathcal{D}^{-1})^{c_{+}}$ and $r^{t}\ker S=\{0\}$ (this will indeed produce $S^{-t}\mathcal{O}^{c_{+}}\big/\sqrt{-d}$ in the positive definite setting, when $S$ can be inverted), and the Fourier expansion thus takes the form \[\Phi(\tau,z,w)=\textstyle{\sum_{\beta\in\Delta}\sum_{m\in\frac{\beta^{2}}{2}+\mathbb{Z}}\sum_{r \in S^{-t}\mathcal{O}^{c_{+}}/\sqrt{-d}}\tilde{c}_{m,r,\beta}(y)\mathbf{e}(m\tau+r^{*}z+w^{t}r)\mathfrak{e}_{\beta}}\] (with some conditions relating the possible indices $r$ that may show up for every $\beta\in\Delta$). Similarly, for orthogonal Jacobi forms of index a positive semidefinite symmetric matrix $S$ of that size, we denote by $S^{-1}\mathbb{Z}^{c_{+}}$ the set of $r\in\mathbb{Q}^{c_{+}}$ for which $Sr\in\mathbb{Z}^{c_{+}}$ and $r^{t}\ker S=\{0\}$ (again yielding the meaning of this notation for invertible, positive definite $S$), yielding the Fourier expansion \[\Phi(\tau,z)=\textstyle{\sum_{\beta\in\Delta}\sum_{m\in\frac{\beta^{2}}{2}+\mathbb{Z}}\sum_{r \in S^{-1}\mathbb{Z}^{c_{+}}}\tilde{c}_{m,r,\beta}(y)\mathbf{e}(m\tau+r^{t}z)\mathfrak{e}_{\beta}}.\] \label{FourS}
\end{rmk}

\noindent\textsc{Einstein Institute of Mathematics, the Hebrew University of Jerusalem, Edmund Safra Campus, Jerusalem 91904, Israel}

\noindent E-mail address: zemels@math.huji.ac.il

\end{document}